\documentclass[reqno,11pt]{amsart}

\usepackage[all]{xy}
\usepackage{amsthm, amssymb, amsmath, amscd}
\usepackage{braket}
\usepackage{tikz}
\usepackage{threeparttable, mathtools, multirow, makecell}
\usepackage{longtable}
\usepackage[justification=centering]{caption}
\usepackage{tabularx}
\usepackage{geometry}
\usepackage{mathrsfs}
\usepackage[colorlinks=true,citecolor=blue,linkcolor=blue]{hyperref}
\usepackage{array}

\def\a{\alpha}
\def\b{\beta}
\def\c{\gamma}
\def\d{\delta}

\def\l{\lambda}
\def\s{\sigma}

\def\NN{{\mathbb N}}
\def\PP{{\mathbb P}}

\def\ZZ{{\mathbb Z}}

\def\cal{\mathcal}

\def\cA{{\cal A}}
\def\cB{{\cal B}}

\def\cD{{\cal D}}

\def\cF{{\cal F}}

\def\cM{{\cal M}}

\def\cO{{\cal O}}

\def\fm{{\mathfrak m}}

\def\Aut{\operatorname{Aut}}

\def\dim{\operatorname{dim}}

\def\Ext{\operatorname {Ext}}

\def\GL{\operatorname {GL}}

\def\gcd{\operatorname{gcd}}
\def\GKdim{\operatorname{GKdim}}
\def\GL{\operatorname{GL}}
\def\gldim{\operatorname{gldim}}

\def\grmod{\operatorname{grmod}}
\def\GrMod{\operatorname{GrMod}}

\def\id{\operatorname {id}}

\def\ker{\operatorname {ker}}
\def\kdim{\operatorname{Kdim}}

\def\Max{\operatorname{Max}}

\def\mod{\operatorname{mod}}
\def\Mod{\operatorname{Mod}}

\def\Proj{\operatorname{Proj}}

\def\Spec{\operatorname {Spec}}

\def\tors{\operatorname{tors}}

\def\tails{\operatorname{tails}}

\def\uExt{\operatorname{\underline{Ext}}}

\def\uExt{\operatorname{\underline{Ext}}}

\def\uCM{\underline {\operatorname{CM}}}

\def\<{\langle}
\def\>{\rangle}

\def\sD{\mathscr D}
\def\sT{\mathscr T}
\def\sH{\mathscr H}

\def\Projn{\operatorname {Proj_{nc}}}
\def\Specn{\operatorname {Spec_{nc}}}

\def\rnum#1{\expandafter{\romannumeral #1}}
\def\Rnum#1{\uppercase\expandafter{\romannumeral #1}}

\theoremstyle{plain} 
\newtheorem{theorem}{Theorem}[section]
\newtheorem{corollary}[theorem]{Corollary}
\newtheorem{lemma}[theorem]{Lemma}
\newtheorem{proposition}[theorem]{Proposition}

\theoremstyle{definition}
\newtheorem{definition}[theorem]{Definition}
\newtheorem{example}[theorem]{Example}

\theoremstyle{remark}

\newtheorem{remark}[theorem]{Remark}

\numberwithin{equation}{section}

\begin{document}


\title{Classification of noncommutative central conics} 

\author{Haigang Hu, Izuru Mori, Wenchao Wu}

\address{\parbox{16.19cm}{Hu: School of Mathematical Sciences, University of Science and Technology of China, 
Hefei, Anhui 230026, CHINA}}
\email{huhaigang@ustc.edu.cn; huhaigang\_phy@163.com}

\address{\parbox{16.19cm}{Mori: Department of Mathematics, Faculty of Science, Shizuoka University, 
Shizuoka 422-8529, JAPAN}}
\email{mori.izuru@shizuoka.ac.jp}


\address{\parbox{16.19cm}{Wu: School of Mathematical Sciences, University of Science and Technology of China, 
Hefei, Anhui 230026, CHINA\\
and\\
Department of Information Science and Technology, Shizuoka University, 
 Shizuoka 422-8529, JAPAN}}
\email{wuwch20@mail.ustc.edu.cn}

\keywords{Noncommutative central conic, noncommutative (de)homogenization, noncommutative affine pencil of conics}

\thanks{{\it 2020 MSC}: 16E65, 16S38, 16W50}

\thanks{The first author was supported by the NSF of China (Grant Nos.\,12371042 and 12501054). The second author was supported by JSPS Grant-in-Aid for Scientific Research (C)  Grant Number JP25K06917. The third author was supported by China Scholarship Council (CSC)}


\begin{abstract}  
Classification of noncommutative quadric hypersurfaces is one of the major projects in noncommutative algebraic geometry. 
In recent years, we are dedicated to complete the classification of noncommutative central conics. 
To achieve this goal, we and other authors develop some theories to study and classify some classes of noncommutative quadric hypersurfaces 
in a series of papers \cite{H,HMM,HM,HMTW,HWY, MU3, MU}. 
Finally, in this paper,  we completely classify noncommutative central conics by developing the general theory of homogenization and dehomogenization for noncommutative algebras and by previous results.   As a main result, we show that there are bijections among the following sets of objects (i) the set of isomorphism classes of $4$-dimensional Frobenius algebras, (ii) the set of isomorphism classes of noncommutative affine pencils of conics, and (iii) the set of isomorphism classes of noncommutative central conics.
\end{abstract} 

\maketitle

\tableofcontents


\section{Introduction}

Throughout this paper, let $k$ be an algebraically closed field of characteristic $0$.
All algebras and vector spaces are over $k$ unless otherwise mentioned.  

Noncommutative algebraic geometry introduced by Artin, Schelter, Tate, Van den Bergh, et al, is a vibrant field that extends classical geometric concepts to noncommutative settings, offering new insights and tools for understanding algebraic structures \cite{AS, ATV, AZ, Rog}. A central theme in this area is the study of noncommutative analogues of classical geometric objects, such as quantum polynomial algebras, noncommutative quadratic complete intersections, etc \cite{AS, HM, KKZ, SV}. 

In recent years, we are dedicated to complete the classification of noncommutative central conics. 
In fact, we classify noncommutative central conics in Calabi-Yau quantum projective planes \cite {H, HMM}, develop some theory about Clifford quadratic complete intersections \cite{HM}, classify 4-dimensional Frobenius algebras \cite{HMTW}, develop the theory of noncommutative affine pencils of conics \cite{HMTW}, show that every noncommutative conic satisfies (G1) condition \cite{HWY}, etc.  

A noncommutative (Calabi-Yau) projective plane is defined by the noncommutative projective scheme $\Projn S$ associated to a 3-dimensional (Calabi-Yau) quantum polynomial algebra $S$, and a noncommutative (Calabi-Yau) (central) conic is defined by the noncommutative projective scheme $\Projn S/(f)$ where $S$ is a 3-dimensional (Calabi-Yau) quantum polynomial algebra and $f\in S_2$ is a homogeneous normal (central) element of degree 2.  Although there are infinitely many isomorphism classes of noncommutative Calabi-Yau projective planes, it is rather surprising that we showed  in our previous paper \cite{HMM} that there are only 9 isomorphism classes of noncommutative Calabi-Yau central conics. 
In order to finish the classification of noncommutative central conics, we have to drop the Calabi-Yau condition. However, it turns out to be very difficult to classify the non-Calabi-Yau cases. Fortunately, after the previous work about Clifford quadratic complete intersections, noncommutative affine pencils of conics, point varieties, etc, we are getting closer to our goal. 
Finally, in this paper, by developing the techniques of noncommutative homogenization and dehomogenization, and by using previous results, 
we are able to completely classify the noncommutative central conics. 

We recall some background about noncommutative algebraic geometry. A quantum polynomial algebra defined below is a noncommutative analogue of a polynomial algebra in noncommutative algebraic geometry.

\begin{definition}[\cite{AS}] \label{def.qpa}
A noetherian connected graded algebra $S$ is called an $n$-dimensional quantum polynomial algebra if 
\begin{enumerate}
\item{} $\gldim S=n$, 
\item{} $\uExt^i_S(k, S)=\begin{cases}  k(n) & \textnormal {if } i=n, \\
0 & \textnormal {if } i\neq n, \end{cases}$ and
\item{} $H_S(t)=(1-t)^{-n}$.
\end{enumerate} 
\end{definition}

The noncommutative projective scheme associate to an $n$-dimensional quantum polynomial algebra in the sense of Artin-Zhang \cite{AZ} is regarded as a noncommutative analogue of $\PP^{n-1}$ in noncommutative algebraic geometry.  It
plays the role of the ambient space where noncommutative hypersurfaces are embedded into.

In this paper, we mainly study the following noncommutative objects. 

\begin{definition}[Definition \ref{defn-nci}, Section \ref{subsec-srns}]
$$
\cA_{n,m} = \left\{ 
\begin{tabular}{l} 
$A = S/(h_1, \dots, h_m)$ where
$S$ is an $n$-dimensional quantum polynomial algebra, \\
and $(h_1, \dots, h_m)$ is a homogeneous  regular normal sequence of degree $2$ in $S$
\end{tabular}
\right\} / \cong,
$$
and
$$
\cA^\vee_{n,m} = \left\{ 
\begin{tabular}{l} 
$R = S/(f_1, \dots, f_m)$ where
$S$ is an $n$-dimensional quantum polynomial algebra, \\
and $(f_1, \dots, f_m)$ is a strongly regular normal sequence of degree $2$ in $S$
\end{tabular}
\right\} / \cong.
$$
\end{definition}

We call a graded algebra $A\in \cA_{3, 1}$ the {\it homogeneous coordinate ring of a noncommutative conic}, and $\Projn A$ {\it a noncommutative conic}.  Following \cite{HMTW},  we call an algebra $R\in \cA_{2, 2}^\vee$ the {\it coordinate ring of a noncommutative affine pencil of conics}, and $\Specn R$ {\it a noncommutative affine pencil of conics}.

For a noncommutative quadric hypersurface $A = S/(f) \in \cA_{n,1}$, there is a unique regular normal element $f^! \in A^!_2$ such that $A^!/(f^!) = S^!$, where $(-)^!$ is the operation by taking quadratic dual. We define $C(A) : = A^![(f^!)^{-1}]_0$. Smith and Van den Bergh showed that Cohen-Macaulay representation $\uCM^{\ZZ}(A)$ is equivalent to $\cD^b(\mod C(A))$ (\cite[Proposition 5.2]{SV}, see also \cite[Lemma 4.13]{MU}). We give a proof of the well-known result that $C(A)$ is a self-injective algebra, and that $C(A)$ is a 4-dimensional Frobenius algebra when $A\in \cA_{3, 1}$ (Theorem \ref{thm.37}). 

We use techniques of noncommutative homogenization and dehomogenization in this paper. 
One subtle point about these techniques is that a homogenization of an algebra
$R$ depends on the choice of defining relations of $R$ (so it is not uniquely determined by $R$), 
and the dehomogenization of a graded algebra $A$ depends on a choice of a regular normal (central) element of degree $1$ in $A$ (so it is not uniquely determined by $A$). In fact, even in the commutative case, they are not inverse operations to each other, and in the noncommutative case, these operations are  even more complicated. (Many examples violating our naive intuition are given in this paper and in our previous paper \cite{HMTW}.)  But for the purpose of this paper, we partly solve this problem by introducing the notion of strongly regular normal sequence.  We hope this part of the paper can be a useful reference for these techniques.

We show that the homogenization $\sH^z(R)$ of a noncommutative affine pencil of conics $R = S/(f_1,f_2)\in \cA_{2, 2}^\vee$
is independent of the choice of $S$ and a strongly regular normal sequence $(f_1,f_2)$ in $S$, and $\sH^z(R)\in \cA_{3, 2}$ 
(Theorem \ref{thm.csfg}). 
We also show that the dehomogenization $\sD_{w}(A)$ of $A \in \cA_{n,n-1}$ is independent of the choice of the regular normal element $w$ (Lemma \ref{lem.awwd}). 

Let
\begin{align*}
\cA^c_{3,1} & := \{A = S/(f) \in \cA_{3,1} \mid f \in Z(S)_2 \} / \cong, \\
\cA^z_{3,1} & := \{A \in \cA^c_{3,1} \mid 
\exists
\textnormal { a regular central element $z\in Z(A^!)_1$}
 \} / \cong.
\end{align*}
We call a graded algebra $A\in \cA_{3, 1}^c$ the {\it homogeneous coordinate ring of a noncommutative central conic}.
The crucial part of this paper is to show that there is a bijection between $\cA_{3, 1}^z$ and $\cA_{2, 2}^\vee$ where techniques of noncommutative homogenization and dehomogenization play essential roles.  
We prove that the composition
$$
\xymatrix{
\cA^z_{3,1} \ar[r]^-{(-)^!} & \cA_{3,2} \ar[r]^-{\sD_z} & \cA^\vee_{2,2}
}
$$
is a bijection having the inverse (Theorem \ref{thm.acb})
$$
\xymatrix{
\cA^\vee_{2,2} \ar[r]^-{\sH^z} &\cA_{3,2} \ar[r]^-{(-)^!} &\cA^z_{3,1}.
}
$$

Combining the techniques of noncommutative homogenization and dehomogenization together with the explicit calculation results of regular central elements of degree $2$ of $3$-dimensional quantum polynomial algebras and regular normal elements of degree $1$ in the dual algebras of $A \in \cA^c_{3,1}$ in Section \ref{sec-clafi},
we have the following result.  

\begin{theorem} [Theorem \ref{con.main}] \label{intro-con.main}
For algebras $A, A' \in \cA^c_{3, 1}$, the following are equivalent: 
\begin{enumerate}
\item{} $\GrMod A\cong \GrMod A'$. 
\item{} $\Projn A\cong \Projn A'$. 
\item{} $\GrMod A^!\cong \GrMod {A'}^!$. 
\item{} $\Projn A^!\cong \Projn {A'}^!$. 
\item{} $\uCM^{\ZZ}(A)\cong \uCM^{\ZZ}(A')$.  
\item{} $\cD^b(\mod C(A))\cong \cD^b(\mod C(A'))$. 
\item{} $C(A)\cong C(A')$. 
\end{enumerate} 
\end{theorem}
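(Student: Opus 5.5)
Most of the implications are formal, so the plan is to close a cycle of implications. The only genuine content is the step from (7) back to (1).

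The easy implications come first.
\begin{itemize}
\item (1)$\Rightarrow$(2) and (3)$\Rightarrow$(4): a graded Morita equivalence preserves torsion modules, hence induces an equivalence of the quotient categories $\tails$.
\item (1)$\Rightarrow$(5): $\uCM^{\ZZ}(A)$ is defined intrinsically from $\GrMod A$ via the subcategory of graded maximal Cohen--Macaulay modules and the shift.
\item (5)$\Leftrightarrow$(6): this is the Smith--Van den Bergh equivalence $\uCM^{\ZZ}(A)\cong \cD^b(\mod C(A))$.
\item (6)$\Rightarrow$(7): by Theorem \ref{thm.37}, $C(A)$ is a $4$-dimensional Frobenius algebra. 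For such small self-injective algebras, derived equivalence should force isomorphism. The route is to show that $C(A)$ is basic, or a product of local algebras and matrix algebras, using the classification list of \cite{HMTW}, and then invoke the fact that derived equivalent local algebras (or algebras with few simples in the list) are isomorphic. Alternatively, derived equivalence gives stable equivalence of Morita type, and one compares the finitely many cases in the \cite{HMTW} table directly using derived invariants: the center, the number of simples, and $HH^*$.
\item (1)$\Leftrightarrow$(3): this should follow from Zhang twists. $\GrMod A\cong\GrMod A'$ iff $A'$ is a twist of $A$, twists are compatible with quadratic duality, and a twisting system on $A$ induces one on $A^!$.
\item For (2),(4)$\Rightarrow$(1)/(3): I would use that $A$ and $A^!$ satisfy the hypotheses under which $\Projn$ determines $\GrMod$ up to twist. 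For $A$, use $\GKdim\ge 2$ and the (G1)/AS–Gorenstein properties from \cite{HWY}. For $A^!$, the argument should again reduce to the $C(A)$ invariant, since $\tails A^!$ relates to $\mod C(A)$ through the localization at $f^!$.
\end{itemize}

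The main step is (7)$\Rightarrow$(1). I would go through the bijections. By the results of \cite{HMTW}, isomorphism classes of $4$-dimensional Frobenius algebras correspond to $\cA^\vee_{2,2}$. By Theorem \ref{thm.acb}, $\cA^\vee_{2,2}$ corresponds to $\cA^z_{3,1}$, via $\sD_z\circ(-)^!$ and inverse $(-)^!\circ\sH^z$. One must check that $C(A)$ is the pencil's Frobenius algebra, namely $C(A)\cong \sD_z(A^!)/(\text{pencil})$-type data, so that $C(A)\cong C(A')$ forces $\sD_z(A^!)\cong\sD_{z'}(A'^!)$. It then follows that $A^!\cong A'^!$, hence $A\cong A'$, at least for $A,A'\in\cA^z_{3,1}$.

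The obstacle is that not every $A\in\cA^c_{3,1}$ lies in $\cA^z_{3,1}$. Here I would use the explicit computations of Section \ref{sec-clafi} to show that every $A\in\cA^c_{3,1}$ is graded Morita equivalent, via a Zhang twist, to some member of $\cA^z_{3,1}$. Concretely, twisting $S$ by a suitable automorphism should make a normal degree-$1$ element of $A^!$ central while keeping $f$ central. Combined with the invariance of $C(A)$ under twisting, which comes from (1)$\Rightarrow$(5)$\Rightarrow$(7), this gives (7)$\Rightarrow$(1) in general. Verifying this twisting reduction case by case on the list of central regular elements is where I expect the real work.
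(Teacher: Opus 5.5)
Your overall architecture is the paper's. Apart from (7)$\Rightarrow$(1), the implications are standard; the paper simply cites \cite{M2}, \cite{MU2}, \cite{Z}, \cite{MU} and \cite[Theorem 4.11]{HMTW} for them. For central conics, (7)$\Rightarrow$(1) goes exactly through $C(A)\cong\sD_z(A^!)$ (Lemma \ref{lem.cwt}), the bijection of Theorem \ref{thm.acb}, and a reduction of $\cA^c_{3,1}/\sim$ to $\cA^z_{3,1}$ (Theorems \ref{thm.zwA} and \ref{thm.main}). Note that there is no further quotient in the first step: $\sD_z(A^!)$ is itself the pencil in $\cA^\vee_{2,2}=\cF_2$.

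Two minor points on the routine part:
\begin{itemize}
\item For (2)$\Rightarrow$(1), the paper uses that $A$ is AS-Gorenstein of dimension $2$ and Gorenstein parameter $1$ (\cite[Lemma 3.3]{M2}); (G1) plays no role.
\item (4)$\Leftrightarrow$(7) is immediate. Since $A^!/(f^!)=S^!$ is finite dimensional, Lemma \ref{lem-pitails} gives $\Projn A^!\cong\Specn C(A)$, and then Lemma \ref{lem.spnc} applies.
\end{itemize}

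The gap is in the step you yourself flag as the real work. Making a normal degree-$1$ element of $A^!$ central by a twist is Theorem \ref{thm.awz}, and it only reaches $\cA^w_{3,1}$.

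Even there, it is not a twist of the original $S$ that keeps the original $f$ central. One twists $E^!=(A^!/(w^2))^!$ by $\nu^!$, and it is $(w^2)^!\in E^!_2$ that becomes central. Here $E^!$ is a different quotient of $k\langle x,y,z\rangle$ than $S$ whenever $w^2$ is not proportional to $f^!$. For example, in $B_5$ take $w=x-\sqrt{-1}y$. Then $w^2=x^2-y^2$ while $f^!=x^2+y^2$, and twisting $S$ itself by $\nu^!$ makes $x*x$ anticommute with $z$.

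More seriously, for the ten classes in $\cA^c_{3,1}\setminus\cA^w_{3,1}$ (e.g.\ $A_4$, $B_4(\alpha)$, $B_7$, $C_3$, $G_8$) one has $RN(A^!)_1=\emptyset$, so there is no normal element to make central at all. The paper needs a separate case-by-case argument here, running in the opposite direction. For each such $A$ it finds, by trial and error, some $A'\in\cA^w_{3,1}$ with $A\sim A'$. For instance, $A_4\cong\bigl(k[x,y,z]/(x^2+y(x+z))\bigr)^{\sigma}$ for a suitable $\sigma\in\GL_3(k)$ fixing the quadric. Without this step, your argument gives (7)$\Rightarrow$(1) only for conics that are graded Morita equivalent to a member of $\cA^w_{3,1}$.
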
 

The main result of this paper is the following bijections. 

\begin{theorem}[{Corollary \ref{cor-main.bij}}] \label{intro-main.bij}
    There are bijections among the following sets:
\begin{itemize}
    \item [(i)] the set of isomorphism classes of 4-dimensional Frobenius algebras.
    \item [(ii)] the set of isomorphism classes of noncommutative affine pencils of conics.
    \item [(iii)] the set of isomorphism classes of noncommutative central conics.
\end{itemize} 
\end{theorem}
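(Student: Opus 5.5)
The plan is to build the bijections out of the map $A\mapsto C(A)$ and the dehomogenization/homogenization correspondence of Theorem \ref{thm.acb}. Here ``noncommutative central conics'' means $\Projn A$ for $A\in\cA^c_{3,1}$, taken up to isomorphism of $\Projn$. Likewise ``noncommutative affine pencils of conics'' means $R\in\cA^\vee_{2,2}$ up to algebra isomorphism, i.e.\ $\Specn R$ up to isomorphism.

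\emph{Step 1: (iii) and (i).} Define a map from (iii) to (i) by $\Projn A\mapsto C(A)$. By Theorem \ref{thm.37}, $C(A)$ is a $4$-dimensional Frobenius algebra. Theorem \ref{intro-con.main} gives (ii)$\Leftrightarrow$(vii), i.e.\ $\Projn A\cong\Projn A'$ iff $C(A)\cong C(A')$. So the map is well defined on isomorphism classes and injective.

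Surjectivity needs a $4$-dimensional Frobenius algebra $E$ to be realized as $C(A)$ for some $A\in\cA^c_{3,1}$. I would use the classification of $4$-dimensional Frobenius algebras from \cite{HMTW}. For each isomorphism class in that list I would exhibit an explicit $S$ (a $3$-dimensional quantum polynomial algebra) and $f\in Z(S)_2$, drawn from the computations of Section \ref{sec-clafi}, and compute $C(S/(f))$. This is a finite check, using that $C(A)$ is computable from $A^!$ and $f^!$. It is the main obstacle, because the list includes non-Calabi-Yau and non-symmetric Frobenius algebras that do not arise from the $9$ Calabi-Yau classes of \cite{HMM}.

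I would first try to get surjectivity uniformly via pencils. \cite{HMTW} constructs, for each $4$-dimensional Frobenius algebra $E$, a noncommutative affine pencil $R\in\cA^\vee_{2,2}$, plausibly with $E$ recovered from $R$ (for instance as a fibre or associated algebra). Passing to $A=(\sH^z(R))^!\in\cA^z_{3,1}$ via Theorem \ref{thm.acb}, one then checks that $C(A)\cong E$. That check amounts to identifying $A^![(f^!)^{-1}]_0$ with the algebra attached to $R$, using that $z$ is central regular in $A^!$ of degree $1$.

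\emph{Step 2: (ii) and (iii).} Theorem \ref{thm.acb} gives the bijection $\cA^z_{3,1}\leftrightarrow\cA^\vee_{2,2}$, namely $A\mapsto\sD_z(A^!)$ with inverse $R\mapsto\sH^z(R)^!$. It is well defined by Theorem \ref{thm.csfg} and Lemma \ref{lem.awwd}. To descend to (iii) I would prove two further facts.
\begin{enumerate}
\item Every class in (iii) has a representative in $\cA^z_{3,1}$. This follows from the Section \ref{sec-clafi} computation of regular normal elements of degree $1$ in $A^!$, possibly after replacing $A$ by an $A'$ with $\GrMod A'\cong\GrMod A$ (a twist) that has a central one.
\item For $A,A'\in\cA^z_{3,1}$, $\Projn A\cong\Projn A'$ implies $A\cong A'$, or at least $\sD_z(A^!)\cong\sD_z(A'^!)$.
\end{enumerate}
For (2) I would use Theorem \ref{intro-con.main}, which gives $\GrMod A^!\cong\GrMod A'^!$. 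Then $A'^!$ is a Zhang twist of $A^!$, and dehomogenizing at a central degree-$1$ element kills the twist, since a twist acting on a central $z$ becomes trivial after setting $z=1$ up to isomorphism.

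Alternatively, and more economically, compose the two bijections already in hand, (ii)$\to$(iii) and (iii)$\to$(i), and check directly that $R\mapsto C(\sH^z(R)^!)$ is injective and surjective onto (i). This reduces everything to the single identification of $C(A)$ with the Frobenius algebra attached to $R$ in \cite{HMTW}, combined with the bijection (i)$\leftrightarrow$(ii) established there.
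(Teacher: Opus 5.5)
Your proposal is correct and is essentially the paper's argument. The paper derives the corollary from three ingredients: $\cA^\vee_{2,2}=\cF_2$ (Theorem \ref{thm.CF}); the bijection $\cA^c_{3,1}/\!\sim\;\to\cF_2$ induced by $C$ (Theorem \ref{thm.main}), whose surjectivity is exactly your pencil step $C(\sH^z(E)^!)\cong\sD_z(\sH^z(E))\cong E$ from Theorem \ref{thm.c4f}, with $R=E$ literally rather than ``$E$ recovered from $R$''; and Theorem \ref{con.main}, which replaces $\GrMod$-equivalence by isomorphism of $\Projn$.

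The injectivity you quote from Theorem \ref{intro-con.main} is, in the paper, itself deduced from Theorem \ref{thm.main} via Theorem \ref{thm.zwA}. That step uses twisting by $\nu^!$ as in Theorem \ref{thm.awz}, plus a case check for $\cA^c_{3,1}\setminus\cA^w_{3,1}$, and it is what your Step 2 re-sketches. There, your claim that setting $z=1$ ``kills'' a Zhang twist is not justified as stated, since a twisting system need not fix $z$ or keep it normal; cite Lemma \ref{lem.awwd} instead.
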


In \cite{HMTW}, Takeda and the authors give complete classifications of 4-dimensional Frobenius algebras and noncommutative affine pencils of conics, so in this paper, we can give a complete classification of noncommutative central conics by the above bijections.  

\begin{theorem}[{Theorem \ref{thm-iso-conics}}] Every noncommutative central conic is isomorphic to the noncommutative projective scheme associated to exactly one of graded algebras in Table \ref{intro.tab.alg}, where $\Projn \mathcal{S}_\lambda/(x^2) \cong \Projn \mathcal{S}_{\lambda'}/(x^2)$ if and only if $\l' = \l^{\pm1}$. 

\begin{center}
\begin{table}[ht]
\begin{threeparttable}
\centering
\caption{$A \in A^c_{3,1}/\sim$.} \label{intro.tab.alg}
\begin{tabular}{|c|}
\hline
{\rm Commutative}  \\ \hline 
$k[x,y,z]/(x^2)$, $k[x,y,z]/(x^2 + y^2)$, $k[x,y,z]/(x^2 + y^2 + z^2)$ \\ \hline \hline
{\rm Not-commutative}  \\ \hline 
$\mathcal{W}/(x^2)$, $\mathcal{S}_\lambda/(x^2)$, $\mathcal{S}/(x^2+y^2)$, $\mathcal{S}/(x^2+y^2+z^2)$, 
$\mathcal{N}/(x^2)$, $\mathcal{N}/(x^2 + y^2 - 4 z^2)$, \\ $\mathcal{N}/(3x^2 + 3y^2 + 4z^2)$ \\\hline
\end{tabular}
\begin{tablenotes}
\linespread{1}
\item\hspace*{-\fontdimen2\font} 
$\mathcal{S} := k \< x,y,z \> /(yz+zy,zx + xz, xy+yx)$, \\
$\mathcal{S}_\l := k \< x,y,z \> /(yz-\l zy,zx - xz, xy - yx)$, $\l \neq 0,1$,\\
$\mathcal{W} := k \< x,y,z \> /(yz-zy-y^2,zx - xz, xy-yx)$, \\
$\mathcal{N} := k \< x,y,z \> /(yz+zy+x^2,zx + xz+y^2, xy+yx)$. 
\end{tablenotes}
\end{threeparttable}
\end{table}
\end{center}
\end{theorem}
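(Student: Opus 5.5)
The plan is to reduce the classification of noncommutative central conics to the known classification of $4$-dimensional Frobenius algebras from \cite{HMTW}, using Theorem \ref{intro-main.bij} and Theorem \ref{intro-con.main}. By Theorem \ref{intro-con.main}, $\Projn A\cong \Projn A'$ for $A,A'\in\cA^c_{3,1}$ if and only if $C(A)\cong C(A')$. So the isomorphism classes of noncommutative central conics correspond injectively to isomorphism classes of $4$-dimensional Frobenius algebras via $A\mapsto C(A)$; this uses Theorem \ref{thm.37}, which says that $C(A)$ is a $4$-dimensional Frobenius algebra. The bijection in Theorem \ref{intro-main.bij} then shows that this map is onto. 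Hence it suffices to do two things. First, exhibit one algebra $A$ in $\cA^c_{3,1}$ for each isomorphism class of $4$-dimensional Frobenius algebra in the list of \cite{HMTW}. Second, check that the algebras in Table \ref{intro.tab.alg} realize exactly these classes, each once.

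Concretely, I would proceed in three steps. (1) Check that each algebra in the table lies in $\cA^c_{3,1}$. The algebras $\mathcal{S},\mathcal{S}_\l,\mathcal{W},\mathcal{N}$ are $3$-dimensional quantum polynomial algebras (Sklyanin-type or skew-polynomial), and one verifies directly from the relations that the listed quadrics are central; they are then automatically regular, since these algebras are domains. (2) For each table entry, compute $A^!$ and the element $f^!$, and identify $C(A)=A^![(f^!)^{-1}]_0$. Equivalently, when $A\in\cA^z_{3,1}$, compute the affine pencil $\sD_z(A^!)\in\cA^\vee_{2,2}$ via Theorem \ref{thm.acb} and read off its class in the classification of noncommutative affine pencils of conics in \cite{HMTW}. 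The point of using the pencil route is that it makes the matching with \cite{HMTW} a direct table lookup. (3) Show that these $C(A)$ are pairwise non-isomorphic and exhaust the Frobenius list. Non-isomorphism follows by comparing invariants: commutativity, the dimension of the center, the Jacobson radical and its Loewy length, and the Nakayama automorphism. Exhaustiveness follows by counting against \cite{HMTW}. The family $\mathcal{S}_\l/(x^2)$ should correspond to the one-parameter family of Frobenius algebras there, and the parameter identification $\l\leftrightarrow\l^{-1}$ should match the isomorphism condition in that family. By Theorem \ref{intro-con.main}, this gives the claim that $\Projn\mathcal{S}_\l/(x^2)\cong\Projn\mathcal{S}_{\l'}/(x^2)$ if and only if $\l'=\l^{\pm1}$.

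The main obstacle is step (2) for the algebras that are not in $\cA^z_{3,1}$, or where finding a regular central degree-one element $z\in A^!$ is delicate (notably the $\mathcal{N}$ cases). There I would compute $C(A)$ directly: take a $k$-basis of $A^!_{2i}$ modulo multiplication by $f^!$, which gives $4$ basis elements of the form $u(f^!)^{-1}$ with $u\in A^!_2$, and find their multiplication table. I would then match this table with a Frobenius algebra in the list of \cite{HMTW} by an explicit change of basis. A secondary difficulty is making sure that the surjectivity direction of Theorem \ref{intro-main.bij} is realized by exactly the representatives in the table, with no class missing. For this I would rely on the explicit computation in Section \ref{sec-clafi} of regular central elements of degree $2$ in $3$-dimensional quantum polynomial algebras, which already reduces every $A\in\cA^c_{3,1}$, up to $\GrMod$-equivalence, to one of the listed forms.
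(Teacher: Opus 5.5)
Your argument is correct, but it runs the same bijection in the opposite direction from the paper. The paper \emph{generates} the list: it applies $\nabla(E)=\sH^z(E)^!$ to each pencil in Table \ref{tab-avee22}. Completeness and irredundancy come from Theorems \ref{thm.acb} and \ref{thm.zwA}, and Theorem \ref{con.main} (1)$\Leftrightarrow$(2) converts $\sim$ into isomorphism of $\Projn$. You instead \emph{verify} a list given in advance: you check membership in $\cA^c_{3,1}$, compute $C(A)$ for each entry, and match the results against the classification of $\cF_2$. The key input is then injectivity of $C$ on central conics, i.e.\ Theorem \ref{con.main} (2)$\Leftrightarrow$(7).

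What your route buys is a direct certificate for the particular representatives shown in the table. Five of them (the three commutative conics, $\mathcal W/(x^2)$ and $\mathcal S_\l/(x^2)$; rows $A_1,B_1,D_1,C_5,E_1(\l)$, $F_3$) have $RZ(A^!)_1=\emptyset$, so they are not literal $\nabla$-images. For instance, $\nabla(k_{-1}[x,y]/(x^2,y^2))\cong D_3$, which is only $\sim$-equivalent to $D_1=k[x,y,z]/(x^2)$. The paper's route, on the other hand, explains where the table comes from rather than presupposing it. Neither route is more elementary: your injectivity input is Theorem \ref{thm.main}, which rests on the same $\nabla/\Delta$ machinery and the same case checks.

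Some smaller points.
\begin{enumerate}
\item You never need surjectivity of $C$. Theorem \ref{intro-main.bij} only asserts abstract bijections; ontoness of $C$ itself is Theorem \ref{thm.main}. Once your entries hit every class of $\cF_2$, injectivity already shows that every central conic occurs. So your ``secondary difficulty'' and the appeal to Section \ref{sec-clafi} can be dropped.
\item The computational difficulty is misplaced. The entries $\mathcal S/(x^2+y^2)$, $\mathcal S/(x^2+y^2+z^2)$ and all three $\mathcal N$-entries have commutative Koszul duals; for example, $(\mathcal N/(x^2))^!\cong k[x,y,z]/(y^2-zx,z^2)$. For these, $C(A)$ is just the dehomogenization of a commutative complete intersection.
\item The other five entries have $x\in RN(A^!)_1$ with normalizing automorphism $\nu\colon y\mapsto -y,\ z\mapsto -z$. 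Hence $C(A)\cong (A^!)^{\nu}/(x-1)$ by Lemmas \ref{lem.cwt} and \ref{lem.633}, and no brute-force localization is needed.
\item In your count, note that $C(\mathcal S_\l/(x^2))\cong k_{-\l}[u,v]/(u^2,v^2)$. So $\l=-1$ gives the commutative $k[u,v]/(u^2,v^2)$, while the class $k_{-1}[u,v]/(u^2,v^2)$ comes from $k[x,y,z]/(x^2)$.
\item $\mathcal W=k_J[y,z][x]$ and $\mathcal N$ (type NC$_1$ with $\a=-1$) are neither Sklyanin nor skew polynomial, though they are still quantum polynomial algebras and hence domains.
\item $C(A)$ is Frobenius by Theorem \ref{thm.37} together with Example \ref{ex-4dimsjfro}; Theorem \ref{thm.37} alone only gives self-injectivity.
\end{enumerate}
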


\noindent{\bf Acknowledgments.}
We would like to thank Saya Ogawa, Ryoma Suzuki, Ryota Takahashi and Koki Takeda for the helpful discussion and for helping our computations.

\section{Preliminaries} 
 
\subsection{Quasi-schemes} 

\begin{definition} 
A {\it quasi-scheme} $X=(\mod X, \cO_X)$ over $k$ consists of a $k$-linear abelian category $\mod X$ and an object $\cO_X\in \mod X$.  We say that two quasi-schemes $X$ and $Y$ are isomorphic, denoted by $X\cong Y$, if there exists an equivalence functor $F:\mod X\to \mod Y$ such that $F(\cO_X)\cong \cO_Y$.   
\end{definition} 

If $X$ is a noetherian scheme over $k$, then we view $X$ as a quasi-scheme $X=(\mod X, \cO_X)$ over $k$ where $\mod X$ is the category of coherent sheaves on $X$, and $\cO_X\in \mod X$ is the structure sheaf on $X$.  There are two typical examples of a quasi-scheme in noncommutative algebraic geometry, namely, a noncommutative affine scheme and a noncommutative projective scheme.

For an algebra $R$, we denote by $\Mod R$ the category of right $R$-modules, and by $\mod R$ the full subcategory consisting of finitely generated modules. If $R$ is right noetherian, the {\it noncommutative affine scheme} associated to $R$ is defined by $\Specn R = (\mod R, R)$. The following easy lemma is crucial in this paper. 

\begin{lemma} [{\cite[Lemma 5.2]{HMM}}]  \label{lem.spnc} For right noetherian algebras $R$ and $R'$,  $\Specn R\cong \Specn R'$ as quasi-schemes if and only if $R\cong R'$ as algebras.  
\end{lemma}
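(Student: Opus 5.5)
The reverse direction is immediate. If $\phi:R\to R'$ is an algebra isomorphism, restriction of scalars along $\phi^{-1}$ gives an equivalence $F=(-)_{\phi^{-1}}:\mod R\to\mod R'$. Explicitly, $M$ is sent to $M$ with the action $m\cdot r'=m\phi^{-1}(r')$. Then $F(R)\cong R'$ via $\phi$ itself, so $\Specn R\cong\Specn R'$.

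For the forward direction, let $F:\mod R\to\mod R'$ be an equivalence with $F(R)\cong R'$. The plan is to compare endomorphism rings of the distinguished objects. For any right module $M$ there is the standard isomorphism $\operatorname{End}_R(R_R)\cong R$, given by sending $r$ to left multiplication $\lambda_r$, which is right $R$-linear. Composition corresponds to multiplication: $\lambda_r\lambda_s=\lambda_{rs}$. Any full and faithful functor induces a ring isomorphism $\operatorname{End}_R(R)\to\operatorname{End}_{R'}(F(R))$. It is $k$-linear because $F$ is $k$-linear, as required of equivalences of $k$-linear categories. The fixed isomorphism $F(R)\cong R'$ then gives, by conjugation, an isomorphism of $k$-algebras $\operatorname{End}_{R'}(F(R))\cong\operatorname{End}_{R'}(R')$. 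Chaining these yields
$$R\cong\operatorname{End}_R(R)\cong\operatorname{End}_{R'}(F(R))\cong\operatorname{End}_{R'}(R')\cong R'.$$

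The only point needing care is bookkeeping of conventions, so that we get $R$ rather than $R^{op}$. With right modules and endomorphisms acting on the left, $\operatorname{End}(R_R)\cong R$ on the nose, and the same convention on both sides makes the composite a genuine algebra isomorphism. There is no real obstacle. The noetherian hypothesis is used only so that $\mod R$ is abelian and $\Specn R$ is a quasi-scheme; it plays no role in the argument.
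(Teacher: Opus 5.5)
The paper gives no proof of this lemma and only cites \cite[Lemma 5.2]{HMM}. Your argument, $R\cong\operatorname{End}_R(R_R)\cong\operatorname{End}_{R'}(F(R))\cong\operatorname{End}_{R'}(R'_{R'})\cong R'$, is the standard proof of that result and is correct. You were right to say explicitly that the equivalence must be $k$-linear, which is implicit in the definition of quasi-schemes over $k$; without it you would only obtain a ring isomorphism, not a $k$-algebra isomorphism.
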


In this paper, a graded algebra $A$ means a $\mathbb{Z}$-graded algebra $A = \bigoplus_{i \in \ZZ} A_i$.  A connected graded algebra $A$ is an $\NN$-graded algebra $A = \bigoplus_{i \in \NN} A_i$ such that $A_0 = k$.   
When we write a graded algebra $A$ of the form $k \<x_1, \dots, x_n \>/I$,  we implicitly assume $\deg x_i = 1$ so that $A$ is a connected graded algebra finitely generated in degree $1$.
For a graded algebra $A$, we denote by $\GrMod A$ the category of graded right $A$-modules, where a morphism is a right $A$-module homomorphism preserving degrees, and by $\grmod A$ the full subcategory of $\GrMod A$ consisting of finitely generated graded right $A$-modules. 
For a right noetherian connected graded algebra $A$, the {\it noncommutative projective scheme} associated to $A$ is defined by $\Projn  A : = (\tails A, \cA)$, where $\tors A$ is the full subcategory of $\grmod A$ consisting of finite dimensional modules over $k$,  $\tails A: = \grmod A / \tors A$ is the quotient category, and $\cA$ is the image of $A\in \grmod A$ in $\tails A$.

Let $A$ be a graded algebra. 
For $M \in \GrMod A$ and $j\in \ZZ$, we define the shift $M(j) \in \GrMod A$ by $M(j)_i : = M_{j+i}$ for $i \in \mathbb{Z}$.  
For $M, N \in \GrMod A$, we write $\Ext^i_A(M, N):=\Ext^{i}_{\GrMod A}(M,N)$
and 
$$
\uExt^i_A (M,N): = \bigoplus_{j \in \mathbb{Z}} \Ext^i_A (M, N(j)).
$$
We say that $M\in \GrMod A$ is locally finite if $\dim _kM_i<\infty$ for all $i\in \ZZ$, and in this case, we define the Hilbert series of $M$ by  
$
H_M(t) : = \sum_{i \in \mathbb{Z}} (\dim_{k} M_i) t^i \in \mathbb{Z}[[t,t^{-1}]].
$
The Gelfand--Kirillov dimension (GK-dimension for short) of $A$ is defined to be 
\[
\mathrm{GKdim} A := \inf\{d \in \mathbb{R}^{+} \mid \dim_{k} A_{i} \leq c i^{d - 1} \text{ for some constant } c > 0, i \gg 0\}.
\]

A quantum polynomial algebra defined below is a noncommutative analogue of a commutative polynomial algebra in noncommutative algebraic geometry.  

\begin{definition} \label{def.qpa}
A noetherian connected graded algebra $S$ is called an $n$-dimensional quantum polynomial algebra if 
\begin{enumerate}
\item{} $\gldim S=n$, 
\item{} $\uExt^i_S(k, S)=\begin{cases}  k(n) & \textnormal {if } i=n, \\
0 & \textnormal {if } i\neq n, \end{cases}$ and
\item{} $H_S(t)=(1-t)^{-n}$.
\end{enumerate} 
\end{definition}

If $S$ is an $n$-dimensional quantum polynomial algebra, then $\Projn S$ is regarded as a noncommutative analogue of $\PP^{n-1}$ in noncommutative algebraic geometry. 

\begin{example} \label{exm-2dimqpa}
A graded algebra $A$ is a $2$-dimensional quantum polynomial algebra if and only if 
$$
A \cong k \langle x,y \rangle / (ax^2 +bxy+cyx+dy^2), 
$$
where $a,b,c,d \in k$ such that $ad \neq bc$, if and only if $A$ is isomorphic to one of the following 
$$
k_{\l}[x,y] : = k \langle x,y \rangle / (xy - \l yx), \; k_J[x,y] := k \langle x,y \rangle /(x^2 - xy + yx),
$$ 
where $0 \neq \l \in k$. It is known that $k_{\l}[x, y]\cong k_{\l'}[x, y]$ as graded algebras if and only if $\l'=\l^{\pm 1}$.
\end{example}

The following lemma is well-known. 

\begin{lemma}\label{lem.Ore} 
If $S$ is an $n$-dimensional quantum polynomial algebra, then $S[z]$ is an $n+1$-dimensional quantum polynomial algebra. 
\end{lemma}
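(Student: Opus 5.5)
We verify the three defining conditions of Definition \ref{def.qpa} for $S[z]$, where $z$ is a central indeterminate of degree $1$, so that $S[z]=S\otimes_k k[z]$ as graded algebras. First, $S[z]$ is noetherian and connected graded: it is a polynomial extension of the noetherian algebra $S$, so the Hilbert basis theorem applies (on both sides). The Hilbert series is immediate, since $S[z]\cong S\otimes k[z]$ as graded vector spaces:
$$H_{S[z]}(t)=H_S(t)\,(1-t)^{-1}=(1-t)^{-(n+1)}.$$

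The key structural fact is that $z$ is a central regular element of degree $1$ with $S[z]/(z)\cong S$. We use the standard change-of-rings arguments for a central regular element. For the Gorenstein condition, apply $\uExt_{S[z]}(k,-)$ to the exact sequence $0\to S[z](-1)\xrightarrow{z} S[z]\to S\to 0$. Since $z$ annihilates $k$, the connecting maps induced by multiplication by $z$ on $\uExt^i_{S[z]}(k,S[z])$ are zero. This gives short exact sequences
$$0\to \uExt^{i}_{S[z]}(k,S[z])\to \uExt^{i}_{S[z]}(k,S)\to \uExt^{i+1}_{S[z]}(k,S[z])(-1)\to 0.$$
Rees' lemma gives $\uExt^{i}_{S[z]}(k,S)\cong \uExt^{i-1}_S(k,S)$, which is $k(n)$ for $i=n+1$ and $0$ otherwise. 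Arguing by induction on $i$ from below (the terms vanish for small $i$), we obtain $\uExt^i_{S[z]}(k,S[z])=0$ for $i\le n$ and $\uExt^{n+1}_{S[z]}(k,S[z])(-1)\cong k(n)$, i.e.\ $\uExt^{n+1}_{S[z]}(k,S[z])\cong k(n+1)$. The vanishing for $i>n+1$ follows from $\gldim S[z]=n+1$, which we establish next.

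For the global dimension, we use that for connected graded noetherian algebras $\gldim$ equals the projective dimension of $k$. Take a minimal graded free resolution $P_\bullet\to k_S$ of length $n$. Then $P_\bullet\otimes_k k[z]$ is a free resolution of $k[z]=S[z]/S_{\ge1}S[z]$ over $S[z]$. Tensoring it with the Koszul resolution $0\to k[z](-1)\to k[z]\to k\to 0$ gives a resolution of $k_{S[z]}$ of length $n+1$, so $\gldim S[z]\le n+1$. Equality holds because $\uExt^{n+1}_{S[z]}(k,S[z])\neq 0$. Alternatively, one may simply cite the general fact that graded Ore extensions preserve the AS-regular property.

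The main obstacle is bookkeeping the degree shifts and the order of these steps. The Ext computation needs the bound $\gldim\le n+1$ first, to kill the high-degree terms, so in the write-up we establish the global dimension bound before the Ext computation.
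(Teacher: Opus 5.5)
The paper gives no proof of this lemma; it simply calls it well known. So your argument has to stand on its own. Several parts are fine:
\begin{itemize}
\item noetherianity, via the Hilbert basis theorem;
\item the Hilbert series;
\item the bound $\gldim S[z]\le n+1$, via the complex $P_\bullet\otimes_k K_\bullet$, which is exact by K\"unneth;
\item the short exact sequences you extract from $0\to S[z](-1)\xrightarrow{z}S[z]\to S\to 0$. The maps induced by $z$ on $\uExt^i_{S[z]}(k,S[z])$ vanish because $z$ is central and kills $k$. These are the maps induced by the first arrow, not connecting maps, but the conclusion is right.
\end{itemize}

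\textbf{The gap: the input to those sequences is wrong.} The isomorphism $\uExt^{i}_{S[z]}(k,S)\cong\uExt^{i-1}_S(k,S)$ is false, and it is not what Rees' lemma says. Rees' lemma computes $\uExt$ into a module on which $z$ is regular, whereas $z$ acts by zero on $S$.
\begin{itemize}
\item It already fails for $S=k$: $\uExt^0_{k[z]}(k,k)=k\neq 0$.
\item In general, computing from your own resolution $P_\bullet\otimes_k K_\bullet$ gives $\uExt^i_{S[z]}(k,S)\cong\uExt^i_S(k,S)\oplus\uExt^{i-1}_S(k,S)(1)$. This is $k(n)$ for $i=n$, $k(n+1)$ for $i=n+1$, and $0$ otherwise.
\item Feed your stated values into your sequences. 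The one for $i=n$ forces $\uExt^{n+1}_{S[z]}(k,S[z])=0$. The one for $i=n+1$ then forces $\uExt^{n+2}_{S[z]}(k,S[z])\cong k(n+1)\neq 0$.
\item So the conclusion you write down does not follow from what you use. It even contradicts your bound $\gldim S[z]\le n+1$.
\end{itemize}

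\textbf{Repair.} Any of the following works.
\begin{itemize}
\item Insert the correct values above into your sequences. The $i=n$ sequence then gives $\uExt^{n+1}_{S[z]}(k,S[z])\cong k(n+1)$. The $i=n+1$ sequence is an injection between one-dimensional spaces, hence an isomorphism, so $\uExt^{n+2}_{S[z]}(k,S[z])=0$ with no appeal to $\gldim$.
\item Drop the long exact sequence and apply Rees' lemma to the module $S[z]$ itself. This gives $\uExt^0_{S[z]}(k,S[z])=0$ and $\uExt^{i+1}_{S[z]}(k,S[z])\cong\uExt^{i}_S(k,S)(1)$. This is exactly Lemma \ref{lem.Ree} (2) of the paper with $f=z\in RZ(S[z])_1$ and $d=1$: since $S=S[z]/(z)$ is AS-Gorenstein of dimension $n$ and parameter $n$, $S[z]$ is AS-Gorenstein of dimension $n+1$ and parameter $n+1$. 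Lemma \ref{lem.Ree} (1) also gives noetherianity for free.
\item Compute directly from $P_\bullet\otimes_k K_\bullet$ by K\"unneth: $\uExt^i_{S[z]}(k,S[z])\cong\bigoplus_{p+q=i}\uExt^p_S(k,S)\otimes_k\uExt^q_{k[z]}(k,k[z])$. This is $k(n)\otimes k(1)=k(n+1)$ for $i=n+1$ and $0$ otherwise.
\end{itemize}
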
  

\subsection{Regular and normal elements}
\begin{definition} Let $R$ be an algebra and $f\in R$.
\begin{enumerate}
\item{} We say that $f$ is right (left) regular if, for every $g\in R$,  $gf=0$ ($fg=0$) implies $g=0$.   
We say that $f$ is regular if it is both right and left regular. 
\item{} We say that $f$ is normal if $Rf=fR$.
\end{enumerate} 
\end{definition} 

Note that if $f\in R$ is a normal element, then $Rf=fR=(f)\lhd R$ is a two-sided ideal.  We denote by $RN(R)$ ($RZ(R)$) the set of all regular normal (central) elements which are not units.  If $A$ is a graded algebra, then we denote by $RN(A)_d$ ($RZ(A)_d$) the set of all homogeneous regular normal (central) elements of degree $d$ which are not units. 

In this paper, it is essential to find all regular normal elements for a given algebra.  It is not easy to determine if a given element is regular or normal in general.  Here are some criteria.  

\begin{lemma} \label{lem.reno}
Let $R$ be an algebra. 
\begin{enumerate}
\item{} 
If there exists a surjective map $\nu: R \to R$ such that $fw=w\nu (f)$ for $f\in R$, then $w\in R$ is a normal element.  In particular, if $R = k\<x_1, \dots, x_n\>/I$ and 
if there exists $(a_{ij})\in \GL_n(k)$ such that $x_iw=w\left(\sum_{j=1}^na_{ij}x_j\right)$, then $w$ is normal.
\item{}  $w\in R$ is a regular normal element if and only if there exists a unique $\nu\in \Aut R$ such that $fw=w\nu(f)$ for $f\in R$.  We call $\nu$ the {\it normalizing automorphism} of $w$. 
\end{enumerate}
\end{lemma}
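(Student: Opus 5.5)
For (1), I will show that $w$ generates the same one-sided ideals on both sides. From $fw = w\nu(f)$ for every $f\in R$ we get $Rw \subseteq wR$. For the reverse inclusion, take any $g\in R$. Since $\nu$ is surjective, $g=\nu(f)$ for some $f$, so $wg = w\nu(f) = fw \in Rw$. Hence $wR\subseteq Rw$, and $w$ is normal.

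For the special case, suppose $R = k\<x_1,\dots,x_n\>/I$ and $x_iw = w\left(\sum_j a_{ij}x_j\right)$. I will show directly that $x_jw\in wR$ and $wx_j\in Rw$ for all $j$.
\begin{itemize}
\item Let $(b_{ij}) = (a_{ij})^{-1}$. Then
$$
w x_j = w\sum_{l} \delta_{jl} x_l = \sum_i b_{ji}\, w\sum_l a_{il}x_l = \sum_i b_{ji}\, x_i w \in Rw .
$$
\item By the hypothesis, $x_i w \in wR$.
\end{itemize}
Induction on word length then gives $Rw\subseteq wR$ and $wR\subseteq Rw$. For instance, if $x_{i_1}\cdots x_{i_m}w = w r$, then $x_{i_0}x_{i_1}\cdots x_{i_m} w = x_{i_0} w r \in wR$. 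So $w$ is normal.

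Alternatively, one can try to define $\nu$ on generators by $\nu(x_i)=\sum_j a_{ij}x_j$ and check that it preserves $I$ by using regularity. This is not available here, so the induction argument above is the cleaner route.

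For (2), assume first that $w$ is regular and normal. For $f\in R$ we have $fw\in Rw=wR$, so $fw = wg$ for some $g$. This $g$ is unique because $w$ is left regular: $wg=wg'$ implies $g=g'$. Put $\nu(f)=g$. Then:
\begin{itemize}
\item Uniqueness of $g$ makes $\nu$ well defined. The same uniqueness shows $\nu$ is $k$-linear and multiplicative: from $fhw = fw\nu(h) = w\nu(f)\nu(h)$ we get $\nu(fh)=\nu(f)\nu(h)$. Also $\nu(1)=1$.
\item $\nu$ is injective: $\nu(f)=0$ gives $fw=0$, so $f=0$ by right regularity.
\item $\nu$ is surjective: any $g$ satisfies $wg\in wR=Rw$, so $wg=fw$ for some $f$, and then $\nu(f)=g$.
\end{itemize}
Uniqueness of $\nu$ among all maps satisfying $fw=w\nu(f)$ again follows from left regularity.

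Conversely, assume there is a unique $\nu\in\Aut R$ with $fw=w\nu(f)$. Normality of $w$ follows from (1). The regularity of $w$ is the main obstacle, since uniqueness of $\nu$ alone does not obviously give regularity. I will read the condition as the uniqueness statement "$w\nu(f)=wg$ implies $g=\nu(f)$". Since $\nu$ is bijective, this is exactly left regularity of $w$. Right regularity then follows: if $fw=0$, then $w\nu(f)=0$, so $\nu(f)=0$ by left regularity, and $f=0$ because $\nu$ is injective.

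If the intended meaning of "unique" is weaker than this reading, I would state the lemma in the standard form, where $w$ is assumed regular and (2) asserts existence and uniqueness of $\nu$, and prove exactly that.
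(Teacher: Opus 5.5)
The paper states this lemma without proof, as a standard criterion, so there is no argument of the paper's to compare against.

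Your proof of (1) is correct. Treating the generator version by the direct word-length induction (using $x_iw\in wR$ and $wx_j=\bigl(\sum_i b_{ji}x_i\bigr)w\in Rw$) is the right call. The substitution $x_i\mapsto\sum_j a_{ij}x_j$ need not preserve $I$ when $w$ is not left regular, so this clause does not formally reduce to the first sentence of (1).

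Your proof of the ``only if'' direction of (2) is also complete:
\begin{itemize}
\item well-definedness, linearity, multiplicativity and $\nu(1)=1$ come from $Rw\subseteq wR$ together with left regularity;
\item injectivity comes from right regularity;
\item surjectivity comes from $wR\subseteq Rw$.
\end{itemize}

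For the ``if'' direction of (2), your hesitation is more than justified. Read literally, with uniqueness only among automorphisms, that direction is false. Take $R=k\times k$ and $w=(1,0)$. Then $\Aut R=\{\id,\tau\}$ with $\tau(a,b)=(b,a)$. For $f=(a,b)$ we have $fw=(a,0)=w\,\id(f)$, while $w\tau(f)=(b,0)$. So $\id$ is the unique automorphism with $fw=w\nu(f)$ for all $f$, and $w$ is even central. Yet $(0,1)w=0$, so $w$ is not regular.

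Hence your strengthened reading is genuinely needed rather than a stylistic choice. That reading says: for each $f$, $\nu(f)$ is the only $g$ with $fw=wg$. Equivalently, $\nu$ is unique among all maps $R\to R$, which, given existence, is exactly left regularity of $w$. Your fallback statement with regularity assumed would serve equally well. Under either version your argument is correct.

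This costs the paper nothing. In its applications (Lemma~\ref{lem.lrn}, Lemma~\ref{lem.62}, Lemma~\ref{lem.nag}, and the $J_7$ example), the lemma is invoked only to get normality via (1), or to attach a normalizing automorphism to an element already known to be regular and normal. Regularity is always proved independently when it is needed.
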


A regular normal element $w\in R$ is central if and only if $\nu=\id$.  
For a  graded algebra $A$ and a homogeneous regular normal element $w\in A$, $\nu\in \Aut^{\ZZ}A $ is a graded algebra automorphism.

\begin{definition}  \label{defn-regseq}
Let $R$ be an algebra, and $F = (f_1, \dots, f_m)$ a sequence in $R$.
\begin{enumerate}
\item[(1)] We say $F$ is {\it regular}  if $\bar f_i\in R/(f_1, \dots, f_{i-1})$ is regular for every $i=1, \dots, m$. 
\item[(2)] We say $F$ is {\it normal (resp. central)} if $\bar f_i\in R/(f_1, \dots, f_{i-1})$ is normal (resp. central) for every $i=1, \dots, m$.  
\end{enumerate} 
In addition, let $R$ be a graded algebra.
\begin{enumerate}
\item[(3)] We say $F$ is of degree $d$, denoted by $\deg F = d$, if $\deg f_i = d$  for every $i=1, \dots, m$.
\item[(4)] We say $F$ is {\it homogeneous} if $f_i$ is homogeneous for every $i=1, \dots, m$.
\end{enumerate} 
We denote by $I_F\lhd R$ the (homogeneous) ideal of $R$ generated by $\{f_1, \dots, f_m\}$,
and by $(F, g) : = (f_1, \dots, f_m, g)$ the sequence in $R$ obtained by adding one more element $g \in R$. 
\end{definition}

For an algebra $R$, a sequence  $F = (f_1,\dots,f_m)$ in $R$ can be regarded as a sequence in $R[z]$ by the canonical inclusion $R \to R[z]$.  The following lemma is obvious.

\begin{lemma} \label{lem-suci}
Keep the notations as above. The following are equivalent: 
\begin{enumerate}
\item{} $F$ is regular (resp. normal or central) in $R$. 
\item{} $F$ is regular (resp. normal or central) in $R[z]$. 
\item{} $(F,z)$ is regular (resp. normal or central) in $R[z]$. 
\end{enumerate}
\end{lemma}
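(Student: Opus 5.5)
The plan is to reduce everything to two elementary facts about the polynomial extension $R[z]$, with $z$ central. First, $R[z]=\bigoplus_{j\ge 0}Rz^j$ is a free left and right $R$-module on the powers of $z$. Second, for any two-sided ideal $I\lhd R$ we have $R[z]/IR[z]\cong (R/I)[z]$. Together with $R[z]/(z)\cong R$, this identifies every quotient appearing in the three conditions. Write $I_i=(f_1,\dots,f_i)\lhd R$ and $\bar R_i=R/I_i$. The ideal generated by $f_1,\dots,f_i$ in $R[z]$ is $I_iR[z]=I_i[z]$. Hence $R[z]/(f_1,\dots,f_i)\cong \bar R_i[z]$, and $R[z]/(f_1,\dots,f_m,z)\cong \bar R_m$.

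\textbf{(1)$\Leftrightarrow$(2).} The key claim is that for an algebra $T$ and $g\in T$, the element $g$ is regular (resp. normal, central) in $T$ if and only if it is so in $T[z]$. I would apply this with $T=\bar R_{i-1}$ and $g=\bar f_i$ for each $i$.
\begin{itemize}
\item Regularity: write $h=\sum_j h_jz^j$. Since $z$ is central, $hg=\sum_j (h_jg)z^j$, so $hg=0$ if and only if $h_jg=0$ for all $j$; the same holds on the other side.
\item Normality: if $Tg=gT$, then $T[z]g=\sum_j Tg\,z^j=\sum_j gT\,z^j=gT[z]$. Conversely, comparing $z^0$-coefficients in $T[z]g=gT[z]$ gives $Tg=gT$.
\item Centrality: $g$ commutes with $z$ automatically, so $g$ commutes with all of $T[z]$ if and only if it commutes with $T$.
\end{itemize}

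\textbf{(2)$\Leftrightarrow$(3).} The sequence $(F,z)$ differs from $F$ only by one final element, namely the image of $z$ in $R[z]/(F)\cong\bar R_m[z]$, which is the central variable $z$ of $\bar R_m[z]$. That element is always central, hence normal. It is regular because multiplying by $z$ shifts coefficients injectively. So the conditions on $(F,z)$ reduce to the conditions on $F$ in $R[z]$.

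\textbf{Main obstacle.} The only step needing any care is justifying $R[z]/I[z]\cong (R/I)[z]$, i.e. that the ideal of $R[z]$ generated by $I$ is exactly $I[z]$. This holds because $z$ is central: $R[z]\,I\,R[z]=\sum RIR\,z^j=I[z]$. Once this is in place, all the coefficientwise arguments above go through, and in the graded setting degrees are preserved throughout.
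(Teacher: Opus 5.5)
Your proof is correct and is the routine verification the paper has in mind when it calls this lemma obvious; the paper gives no proof. Your argument identifies $R[z]/(f_1,\dots,f_i)$ with $(R/(f_1,\dots,f_i))[z]$, checks regularity, normality and centrality coefficientwise in $z$, and observes that $\bar z$ is always a regular central element of $(R/I_F)[z]$, which is all that is needed.
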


\begin{lemma}[{\cite[Lemma 3.10]{HMTW}}] \label{lem.Tak}  
Let $A$ be a locally finite $\NN$-graded algebra, and $F = (f_1, \dots, f_m)$ a homogeneous normal sequence.
Then $F$ is regular if and only if 
$$
H_{A/I_F}(t)=(1-t^{d_1})\cdots (1-t^{d_m})H_A(t)
$$
where $d_i = \deg f_i$ for $i = 1, \dots, m$. 
\end{lemma}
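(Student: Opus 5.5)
The plan is to reduce everything to a single element and an exact sequence of graded vector spaces, and then handle the sequence by an induction that is packaged so the sign problems coming from the factors $(1-t^{d})$ disappear.

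\textbf{One element.} Let $B$ be a locally finite $\NN$-graded algebra and $f\in B_d$ a homogeneous normal element. Since $f$ is normal, $fB=Bf=(f)$. Consider the graded right $B$-module map $\lambda_f\colon B(-d)\to B$, $g\mapsto fg$. It has image $(f)$ and kernel $K:=\{g\in B\mid fg=0\}(-d)$. The exact sequence
$$0\to K\to B(-d)\xrightarrow{\lambda_f} B\to B/(f)\to 0$$
gives
$$H_{B/(f)}(t)=(1-t^d)H_B(t)+H_K(t),$$
where $H_K(t)$ has nonnegative coefficients.

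Similarly, right multiplication $\rho_f\colon g\mapsto gf$ has the same image $Bf=fB$ in each degree. Since each $B_i$ is finite dimensional, rank--nullity in each degree shows that $\ker\lambda_f$ and $\ker\rho_f$ have the same dimension in every degree. Hence for a homogeneous normal element, left regularity is equivalent to right regularity, and $f$ is regular if and only if $H_K=0$.

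\textbf{The sequence.} Put $A^{(0)}=A$ and $A^{(i)}=A/I_{(f_1,\dots,f_i)}=A^{(i-1)}/(\bar f_i)$. Each $A^{(i)}$ is again locally finite and $\NN$-graded, and $\bar f_i\in A^{(i-1)}$ is homogeneous normal of degree $d_i$. Applying the single-element step gives
$$H_{A^{(i)}}=(1-t^{d_i})H_{A^{(i-1)}}+H_{K_i}, \qquad H_{K_i}\ge 0 \text{ coefficientwise}.$$
Iterating directly would introduce factors $(1-t^{d_j})$ with negative coefficients, which destroys the positivity. Instead, divide by $P_i:=\prod_{j\le i}(1-t^{d_j})$. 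This is legitimate in $\ZZ[[t]]$ because all Hilbert series are power series in $t$, and each $P_i^{-1}=\prod_{j\le i}(1+t^{d_j}+t^{2d_j}+\cdots)$ has nonnegative coefficients. Note that the $d_j$ are positive: a degree-$0$ homogeneous element lies in $A_0$, and in the intended setting this is either excluded or the argument adapts trivially. Telescoping gives
$$\frac{H_{A/I_F}(t)}{P_m}=H_A(t)+\sum_{i=1}^m \frac{H_{K_i}(t)}{P_i},$$
and every summand on the right has nonnegative coefficients.

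\textbf{Conclusion.} If $F$ is regular, then every $H_{K_i}=0$, and the identity $H_{A/I_F}=P_mH_A$ follows. Conversely, if $H_{A/I_F}=P_mH_A$, then $\sum_i H_{K_i}/P_i=0$. Since each term is a power series with nonnegative coefficients, each term vanishes. As $P_i^{-1}$ has constant term $1$, this forces $H_{K_i}=0$ for every $i$. By the single-element step, each $\bar f_i$ is then regular, so $F$ is regular.

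The only delicate points are passing from one-sided to two-sided regularity, which is where local finiteness is used, and arranging the induction so that positivity is preserved, which is done by the division by $P_i$.
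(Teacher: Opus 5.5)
Your proof is correct for elements of positive degree. The exact sequence $0\to K\to B(-d)\to B\to B/(f)\to 0$ gives the basic identity. The degreewise comparison $fB_i=B_if$ with rank--nullity upgrades one-sided regularity to regularity. Dividing by $\prod_{j\le i}(1-t^{d_j})$ makes every error term nonnegative. Together these settle both directions. The paper does not prove this lemma itself; it quotes it from \cite[Lemma 3.10]{HMTW}. So there is no in-paper argument to compare against, and yours is the standard exact-sequence-plus-positivity proof.

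One correction concerns your remark that the degree-zero case ``adapts trivially.'' In fact the converse direction is false there. Take $A=k[x]/(x^2)$ and $F=(x,1)$. Both sides of the Hilbert series identity are $0$, the right-hand side because of the factor $1-t^0$, yet $x\cdot x=0$, so $x$ is not regular in $A$. The lemma must therefore be read with $d_i\ge 1$ for all $i$. This is exactly the hypothesis your division by $P_i$ needs, and it is the setting in which the paper uses the lemma.
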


\begin{remark} \label{rem.52} 
For $f, g\in k[x_1, \dots, x_n]\setminus k$, $\gcd(f, g)=1$ if and only if  $(f, g)$ is a regular sequence, so $(f, g)$ is a regular sequence if and only if $(g, f)$ is a regular sequence (\cite[Remark 3.8 (1)]{HMTW}). However, unlike \cite[Lemma 2.28]{HM}, a (homogeneous) regular normal sequence is not preserved by permutations for a noncommutative (graded) algebra (see \cite[Example 3.21 (2)]{HMTW}). 
\end{remark}

\subsection{Noncommutative complete intersections}

\begin{definition} A noetherian connected graded algebra $A$ is called an {\it AS-Gorenstein algebra of dimension $n$ and of Gorenstein parameter $\ell$} if 
\begin{enumerate}
\item{} $\id_AA=\id_{A^o}A=n$, and 
\item{} $\uExt^i_A(k, A)\cong \uExt^i_{A^o}(k, A)\cong \begin{cases} k(\ell) & \text{ if } i= n \\ 0 & \text { if }  i\neq n.\end{cases}$
\end{enumerate}
\end{definition} 

\begin{definition}
A finite dimensional algebra $R$ is called a {\it Frobenius} algebra if there is a bilinear form 
$$
(-,-): R \times R \to k
$$ 
satisfying the following conditions:
\begin{itemize}
\item[(1)] Associative: $(ab,c) = (a,bc)$ for all $a,b,c \in R$. 
\item[(2)] Nondegenerate: for any $0\neq a \in R$, there are $b, b' \in R$ such that $(a,b) \neq 0$ and $(b',a) \neq 0$. 
\end{itemize}
\end{definition}

\begin{example} \label{ex-4dimsjfro} It is known that every finite dimensional basic self-injective algebra is Frobenius (\cite[Proposition IV 3.9]{SY}).  Since $M_2(k)$ is the only non-basic algebra up to dimension 4 
and $M_2(k)$ is Frobenius, every self-injective algebra up to dimension 4 is Frobenius.  
\end{example} 

\begin{lemma} [{\cite[Remark 3.11]{MM}}]  \label{lem.FAS} 
A connected graded algebra $A$ is an AS-Gorenstein algebra of dimension $0$ if and only if $A$ is a graded Frobenius algebra. 
\end{lemma}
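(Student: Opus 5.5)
The plan is to translate both conditions into the single statement that there is a graded isomorphism $A\cong D(A)(-\ell)$ of one-sided modules, where $D(-)=\bigoplus_i \operatorname{Hom}_k((-)_{-i},k)$ is the graded $k$-dual; the sign of the shift depends on the degree conventions.

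For ($\Leftarrow$), suppose $A$ is graded Frobenius, i.e.\ finite dimensional with a nondegenerate associative form compatible with the grading (it pairs $A_i$ with $A_{-\ell-i}$ for a fixed $\ell$). First, I define $\varphi:A\to D(A)(-\ell)$ by $\varphi(a)=(a,-)$. Associativity makes $\varphi$ a homomorphism of graded left $A$-modules; the symmetric formula $a\mapsto(-,a)$ gives a homomorphism of graded right modules. Nondegeneracy makes both maps injective, and they are bijective because the dimensions agree in each degree.
\begin{itemize}
\item Since $A^o$ is projective, its dual $D(A^o)$ is injective in $\GrMod A$; hence $A$ is injective on both sides and $\id_AA=\id_{A^o}A=0$.
\item Then $\uExt^i_A(k,A)=0$ for $i>0$.
\item For $i=0$, adjunction gives $\underline{\operatorname{Hom}}_A(k,D(A^o))\cong D(k\otimes_A A)=D(k)=k$. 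After the shift this is $k(\ell)$, and the same holds on the left.
\end{itemize}
Noetherianity is automatic since $A$ is finite dimensional.

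For ($\Rightarrow$), suppose $A$ is AS-Gorenstein of dimension $0$. Then $A$ is injective in $\GrMod A$, and its graded socle $\underline{\operatorname{Hom}}_A(k,A)\cong k(\ell)$ is nonzero and one dimensional. The argument proceeds as follows.
\begin{enumerate}
\item Consider the graded injective hull $E$ of $k(\ell)$. It can be identified with $D(A^o)(\ell)$, by the standard fact that the graded dual of the free left module is the injective hull of $k$ in $\GrMod A$ for a connected graded locally finite algebra.
\item Since $A$ is injective and contains $k(\ell)$, $E$ embeds into $A$ as a graded direct summand.
\item As a graded right module over itself, $A$ is indecomposable. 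Its graded endomorphism ring is $A_0=k$, in which the only idempotents are $0$ and $1$. Hence $A\cong E\cong D(A)(\ell)$.
\item Because $A$ is $\NN$-graded, $D(A)(\ell)$ lives in degrees $\le -\ell$. So $A$ is bounded above, and being locally finite it is finite dimensional (and $\ell\le 0$).
\end{enumerate}

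Finally, from a right-module isomorphism $\psi:A\to D(A)(\ell)$ I set $(a,b):=\psi(a)(b)$, or equivalently $\lambda(ab)$ with $\lambda=\psi(1)$. This form is associative and graded. It is nondegenerate in the first variable because $\psi$ is injective, and hence in the second variable by the equality of dimensions.

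I expect the main obstacle to be steps 1–3 of ($\Rightarrow$), namely showing that $A$ coincides with the injective hull of its socle. This needs the identification of $E(k)$ in $\GrMod A$ with $D(A^o)$, which must be justified via essentiality of $k$ in $D(A^o)$: every nonzero homogeneous element of $D(A^o)$ generates a submodule containing the bottom socle $k$. It also needs the indecomposability of $A_A$. Everything else is routine duality bookkeeping.
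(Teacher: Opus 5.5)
Your proof is correct. The paper gives no argument of its own for this lemma; it simply quotes \cite[Remark 3.11]{MM}. So there is nothing internal to match, and what you have written is a self-contained verification in place of the citation.

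Your route is elementary graded Matlis duality. First, $D({}_AA)$ is the graded injective hull of $k_A$:
\begin{itemize}
\item it is injective because $\underline{\operatorname{Hom}}_A(M,D({}_AA))\cong D(M\otimes_A A)=D(M)$;
\item it is essential over its socle, because any nonzero $f\in D(A)_{-i}$ becomes, after multiplying by a suitable $a\in A_i$, a nonzero element of $D(A)_0=D(A_0)$.
\end{itemize}
Next, the one-dimensional socle $k(\ell)$ of the injective module $A_A$ therefore produces a copy of $D({}_AA)(\ell)$ as a graded summand of $A_A$. Finally, $\operatorname{End}_{\GrMod A}(A_A)=A_0=k$ makes $A_A$ indecomposable, so the summand is all of $A_A$.

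Two features of your version are worth keeping. First, finite dimensionality comes out of the degree bound rather than being assumed. Second, $(\Rightarrow)$ uses only the right-hand half of the AS-Gorenstein conditions, and $(\Rightarrow)$ is the only direction the paper actually uses (Lemma \ref{lem.KAS}, Theorem \ref{thm.awz}, Lemma \ref{lem.nore}). The citation buys brevity; your argument buys checkability from first principles.

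A few small repairs are needed.

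In $(\Leftarrow)$ you have swapped the sides. Since $(ab,x)=(a,bx)$, the map $a\mapsto(a,-)$ is a homomorphism of graded \emph{right} modules $A_A\to D({}_AA)$, and $a\mapsto(-,a)$ is the left-module one.

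The shift should be $(\ell)$, not $(-\ell)$. With your convention that the form pairs $A_i$ with $A_{-\ell-i}$, the functional $(a,-)$ for $a\in A_i$ lies in $D(A)_{i+\ell}$, so the target is $D(A)(\ell)$. This agrees with your $(\Rightarrow)$ and gives $\uExt^0_A(k,A)\cong k(\ell)$. It also makes $-\ell$ the top degree of $A$; compare Lemma \ref{lem.KAS}, where $A\in\cA_{n,n}$ has top degree $n$ and Gorenstein parameter $-n$.

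In step 4 you should say why $A$ is locally finite: a right noetherian connected graded algebra is generated by finitely many homogeneous elements.

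Finally, ``graded Frobenius'' might be read as ``graded, and Frobenius in the paper's ungraded sense.'' In that case $(\Leftarrow)$ needs one more line. For a finite-dimensional connected graded $A$, the right socle $\{a\in A : aA_{\geq 1}=0\}$ is a graded subspace containing the top component. The Frobenius property makes it one-dimensional, so it equals the top component. Then $(a,b)=\lambda(ab)$, with $\lambda$ the projection onto the top component, is a graded nondegenerate associative form, and your argument applies to it.
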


\begin{lemma}[{\cite[Propositions 3.4 and 3.5]{Le}}] \label{lem.Ree} 
Let $A$ be a connected graded algebra and $f \in RN(A)_d$.
\begin{enumerate}
\item{} $A$ is noetherian if and only if $A/(f)$ is noetherian.  
\item{}  {\rm (Ree's lemma)} $A$ is an AS-Gorenstein algebra of dimension $n$ and of Gorenstein parameter $\ell$ if and only if $A/(f)$ is an AS-Gorenstein algebra of dimension $n-1$ and of Gorenstein parameter $\ell-d$.  
\end{enumerate} 
\end{lemma}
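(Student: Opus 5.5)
The statement to prove is Lemma~\ref{lem.Ree}. Part (1) says that $A$ is noetherian if and only if $A/(f)$ is. Part (2), Rees' lemma, says that AS-Gorensteinness passes between $A$ and $A/(f)$, with the dimension dropping by one and the Gorenstein parameter dropping by $d$. Write $B=A/(f)$. Since $f\in RN(A)_d$ has a graded normalizing automorphism $\nu$ (Lemma~\ref{lem.reno}), we have the exact sequence of graded right $A$-modules
$$0\to A(-d)\xrightarrow{f\cdot} A\to B\to 0 .$$
Its left analogue holds as well, because $Af=fA$. Both parts are built on this sequence.

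\textbf{Part (1).} The direction ($\Rightarrow$) is immediate: quotients of noetherian rings are noetherian. For ($\Leftarrow$), let $M$ be a graded right ideal of $A$, or more generally a graded submodule of a finitely generated module. I use the $f$-adic filtration $M\supseteq Mf\cap\cdots$. Concretely:
\begin{itemize}
\item Each subquotient $(M\cap fA+\ldots)$ can be viewed as a module over $B$ (or its $\nu$-twist), and it is finitely generated because $B$ is noetherian.
\item Since $A$ is connected graded and $f$ has positive degree, the filtration is separated degreewise.
\item Lifting generators of the finitely many relevant graded pieces then gives finite generation of $M$, by a graded Nakayama argument.
\end{itemize}
A cleaner route, which I would actually write out, is the standard one: if $M_1\subsetneq M_2\subsetneq\cdots$ is an ascending chain of graded right ideals, consider the images in $B$ together with the chains $(M_i:f)$. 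Using regularity of $f$ and induction on degree, both stabilize, which forces the original chain to stabilize. Finally, for connected graded algebras graded-noetherian is equivalent to noetherian, so the graded argument suffices.

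\textbf{Part (2).} Apply $\uExt_A(k,-)$ to the sequence above to get a long exact sequence
$$\cdots\to\uExt^i_A(k,A)(-d)\to\uExt^i_A(k,A)\to\uExt^i_A(k,B)\to\uExt^{i+1}_A(k,A)(-d)\to\cdots .$$
The map induced by $f$ on $\uExt^i_A(k,A)$ is zero, because $f$ annihilates $k$ (up to the twist by $\nu$). So the long sequence splits into short exact sequences
$$0\to \uExt^i_A(k,A)\to\uExt^i_A(k,B)\to\uExt^{i+1}_A(k,A)(-d)\to 0 .$$
Next, the change-of-rings (Rees) isomorphism $\uExt^{i}_A(k,B)\cong\uExt^{i-1}_B(k,B)$ holds, because $B$ has the free $A$-resolution $0\to A(-d)\to A$. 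Combining these:
\begin{itemize}
\item If $A$ is AS-Gorenstein of dimension $n$ and parameter $\ell$, then $\uExt^{j}_B(k,B)=\uExt^{j+2}_A(k,A)(-d)$, hence it is $k(\ell-d)$ for $j=n-1$ and $0$ otherwise.
\item Conversely, starting from $B$, a graded Nakayama-type induction on degree recovers the Ext groups of $A$. One uses that the $\uExt^i_A(k,A)$ are locally finite and bounded below, and that multiplication by $f$ is zero on them, which yields the $\nu$-twisted long exact sequence.
\end{itemize}
Injective dimensions are handled the same way: Rees' lemma gives $\id_B B=\id_A A-1$, using the formula $\id=\sup\{i:\uExt^i(k,-)\neq0\}$ for noetherian connected graded algebras (part (1) supplies noetherianity). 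The left-module side follows symmetrically from the left sequence.

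\textbf{Main obstacle.} I expect the hardest step to be the converse direction of (2). There one must deduce the vanishing of $\uExt^i_A(k,A)$ from that of $B$, which requires a degree-by-degree argument exploiting $\deg f=d>0$ and local finiteness. I would follow Levasseur's proof at this point.
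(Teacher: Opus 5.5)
Your proposal has a genuine gap: the key isomorphism in (2) is misstated, and several further steps are missing. For the record, the paper itself gives no proof of this lemma and only cites Levasseur.

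\textbf{The change-of-rings step in (2) is false.} The isomorphism $\uExt^{i}_A(k,B)\cong\uExt^{i-1}_B(k,B)$ already fails for $A=k[x]$, $f=x$, $B=k$: there $\uExt^0_A(k,B)=k\neq 0$, while $\uExt^{-1}_B(k,B)=0$. It also contradicts your own short exact sequences. If $A$ is AS-Gorenstein of dimension $n$, they give $\uExt^i_A(k,B)\neq 0$ for both $i=n-1$ and $i=n$, so your isomorphism would make $\uExt^{\bullet}_B(k,B)$ nonzero in two degrees. Moreover, the formula $\uExt^{j}_B(k,B)=\uExt^{j+2}_A(k,A)(-d)$ follows from neither statement, and it would put the nonzero group at $j=n-2$, not $n-1$.

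Rees' lemma has $A$, not $B$, in the second slot: for every graded $B$-module $M$,
$$\uExt^{i}_B(M,B)\cong\uExt^{i+1}_A(M,A)(-d)\qquad (i\geq 0).$$
To prove it, combine three facts:
\begin{enumerate}
\item $\uExt^0_A(M,A)=0$, because $Mf=0$ and $f$ is regular.
\item The map induced by $f\cdot$ on $\uExt^\bullet_A(M,A)$ is zero. This is your twisting observation, which works for any $M$ with $Mf=0$. Hence $\uExt^0_B(M,B)=\uExt^0_A(M,B)\cong\uExt^1_A(M,A)(-d)$.
\item $\{\uExt^{i+1}_A(-,A)(-d)\}_{i\geq 0}$ is an effaceable $\delta$-functor on $\GrMod B$, since $\mathrm{pd}_A B=1$.
\end{enumerate}
Taking $M=k$ transfers the $\uExt(k,-)$ condition in \emph{both} directions at once. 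So the long exact sequence for $\uExt_A(k,B)$, and the Nakayama argument you placed there, are not needed. Your ``main obstacle'' is not where the difficulty lies.

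\textbf{The real work in the converse of (2) is the injective dimension, and your sketch does not supply it.} Rees only gives $\uExt^{i}_A(M,A)=0$ for $i>n$ when $M$ is a $B$-module. To get $\id_A A\leq n$ you must extend this to all finitely generated graded $A$-modules $N$, using that $A$ is noetherian by (1).
\begin{itemize}
\item The $f$-torsion submodule of $N$ has a finite filtration whose factors are $B$-modules.
\item For $f$-torsionfree $N$, use the sequence $0\to N(-d)\xrightarrow{\cdot f}N_\nu\to (N/Nf)_\nu\to 0$. Together with $\uExt_A(N_\nu,A)\cong\uExt_A(N,A)$, it gives $\uExt^i_A(N,A)\cong\uExt^i_A(N,A)(d)$ as graded vector spaces for $i>n$. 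This forces that bounded-below module to vanish.
\end{itemize}
Citing ``$\id=\sup\{i:\uExt^i(k,-)\neq 0\}$'' does not replace this argument. For noncommutative noetherian connected graded algebras that is a Bass-type statement you would have to prove.

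\textbf{Your ``cleaner route'' for (1) is circular.} The sequence $((M_i:f))_i$ is again an ascending chain of graded right ideals of $A$. Degree induction only shows that each homogeneous component stabilizes, at an index depending on the degree, which does not stabilize the chain. Likewise, finite generation of each layer of the $f$-adic filtration gives no uniform bound. The missing ingredient is that $\operatorname{gr}_f A=\bigoplus_m f^mA/f^{m+1}A\cong B[t;\bar\nu]$ is noetherian by the Hilbert basis theorem for skew polynomial rings. Lifts of generators of $\operatorname{gr}_f I$ then generate $I$, because $\deg f>0$ and $A$ is bounded below.
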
 

By the above lemma, noncommutative complete intersections defined below are AS-Gorenstein. 

\begin{definition} \label{defn-nci}
A graded algebra $A$ is called a {\it noncommutative complete intersection} if there is a quantum polynomial algebra $S$, and 
a homogeneous regular normal sequence $H = (h_1,\dots, h_m)$  in $S$ such that $A = S/I_H$.
In addition, if $\deg H = 2$, $A$ is called a {\it noncommutative quadratic complete intersection}. 
\end{definition} 

Define
\begin{gather*}
\cA_{n, m} : = \{\text{noncommutative quadratic complete intersections } A = S/(h_1, \dots, h_m) \mid \gldim S = n\} / \cong, \\
\cB_{n, m} : = \{\text{quadratic complete intersections } A = k[x_1, \dots, x_n]/(h_1, \dots, h_m)\} / \cong.
\end{gather*}
We remark that $\cA_{n,0}$ is the set of isomorphism classes of $n$-dimensional quantum polynomial algebras, and $\cB_{n,0} = \{k[x_1, \dots, x_n] \}$.
We call $A$ the {\it homogeneous coordinate ring of a noncommutative conic} 
if $A\in \cA_{3, 1}$.
We also define 
$$
\cA_{n, 1}^c:=\{S/(f) \in \cA_{n,1} \mid  f\in Z(S)_2\}/\cong.
$$ 
We call $A$ the {\it homogeneous coordinate ring of a noncommutative central conic} if $A\in \cA_{3, 1}^c$.  


\section{Noncommutative homogenization and dehomogenization}

In this section, we introduce and study several operations on a sequence $F=(f_1, \dots, f_n)$ in an  algebra $R$. 

\subsection{Noncommutative dehomogenization}

Let $A$ be a graded algebra finitely generated in degree 1.  Then $0\neq z\in Z(A)_1$ if and only if $A$ has a presentation $A=k\<x_1, \dots, x_n\>[z]/I$ for some homogeneous ideal $I\lhd k\<x_1, \dots, x_n\>[z]$.

\begin{definition}[Dehomogenization] \label{defn-deh}
\begin{itemize}
\item[(1)] For $f\in k\<x_1, \dots, x_n\>[z]$,  we define 
$$
f_z:=f(x_1, \dots, x_n, 1)\in k\<x_1, \dots, x_n\>.
$$  
\item[(2)] For a sequence $F = (f_1, \dots, f_m)$ in $k\<x_1, \dots, x_n\>[z]$, we define
$$
F_z = ((f_1)_z, \dots, (f_m)_z), 
$$
which is a sequence in $k\<x_1, \dots, x_n\>$. 
\end{itemize} 
\end{definition} 

For a graded algebra $A=k\<x_1, \dots, x_n\>[z]/I_F$, a naive definition of a dehomogenization of $A$ should be $k\<x_1, \dots, x_n\>/ I_{F_z}$.  However, it is unclear if this definition is independent of a choice of $F$, so we will find an alternate definition.

\begin{definition} Let $A$ be a graded algebra and $w\in RN(A)_d$
with the normalizing automorphism $\nu$.  We define a new algebra 
$A[w^{-1}]_0:=\{fw^{-i}\mid f\in A_{di}, i\in \NN\}$
with the addition and the multiplication 
$$fw^{-i}+gw^{-j}=(fw^j+gw^i)w^{-i-j}, \qquad (fw^{-i})(gw^{-j})=f\nu^i(g)w^{-i-j}$$
for $f\in A_{di}, g\in A_{dj}$. 
\end{definition} 

The following lemma is well-known for commutative algebras.  

\begin{lemma} \label{lem.zin} If $A$ is a graded algebra and $z\in RZ(A)_d$,  
then 
$$
A[z^{-1}]_0\cong A^{(d)}/(z-1)$$ 
as algebras where $A^{(d)}:=\oplus _{i\in \ZZ}A_{di}$ is the $d$-th Veronese algebra. 
\end{lemma}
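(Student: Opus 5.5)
The plan is to write down an explicit map $\phi: A^{(d)} \to A[z^{-1}]_0$, show it is a surjective algebra homomorphism, and show its kernel is exactly the ideal $(z-1)$. Since $z$ is central, its normalizing automorphism is $\nu=\id$. So the multiplication in $A[z^{-1}]_0$ reads $(fz^{-i})(gz^{-j}) = fg\,z^{-i-j}$, and the addition is $fz^{-i}+gz^{-j} = (fz^j+gz^i)z^{-i-j}$. In particular $fz^{-i} = (fz)z^{-i-1}$. For the definition to make sense, elements $fz^{-i}$ and $gz^{-j}$ are identified when $fz^j = gz^i$ in $A_{d(i+j)}$. Regularity of $z$ makes this equivalence consistent, and it gives a well-defined ring, namely the degree-$0$ part of the Ore localization at the central multiplicative set $\{z^i\}$.

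Define $\phi$ on homogeneous pieces by $\phi(f) = fz^{-i}$ for $f \in A_{di}$, and extend additively. I will check the following in order.
\begin{enumerate}
\item $\phi$ is additive. For $f\in A_{di}$ and $g\in A_{dj}$ we have $\phi(f)+\phi(g) = (fz^j+gz^i)z^{-i-j}$, which is the image of $f+g$ written at a common level.
\item $\phi$ is multiplicative, since $\phi(f)\phi(g)=fg\,z^{-i-j}=\phi(fg)$ because $\nu=\id$.
\item $\phi$ is unital, since $\phi(1)=1$.
\item $\phi$ is surjective, by construction.
\item $z-1\in\ker\phi$, since $\phi(z)=zz^{-1}=1=\phi(1)$.
\end{enumerate}
Hence $\phi$ factors through a surjection $\bar\phi: A^{(d)}/(z-1)\to A[z^{-1}]_0$.

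The substantive step is injectivity, i.e.\ showing $\ker\phi\subseteq (z-1)$. Take $a=\sum_i a_i\in\ker\phi$ with $a_i\in A_{di}$, and choose $N$ with $a_i = 0$ for $i>N$ and for $i$ negative enough (for $\NN$-graded $A$ only $i\ge 0$ occurs; in general I take $i\ge -M$ and shift). Then
$$0=\phi(a)=\Bigl(\sum_i a_iz^{N-i}\Bigr)z^{-N}.$$
By the identification rule above and regularity of $z$, this forces $\sum_i a_iz^{N-i}=0$ in $A_{dN}$.

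Next, work modulo $(z-1)$, using that $z$ is central in $A^{(d)}$. There $a_i \equiv a_iz^{N-i}$ for each $i\le N$, so
$$a \equiv \sum_i a_i z^{N-i} = 0 \pmod{(z-1)}.$$
The congruence $a_i - a_iz^{N-i}\in(z-1)$ follows from $1-z^m=(1-z)(1+z+\dots+z^{m-1})$ together with centrality of $z$. If negative degrees occur, I first multiply $a$ by a suitable power $z^{M}$, which is invertible modulo $(z-1)$ because $z\equiv 1$. Thus $\ker\phi=(z-1)$ and $\bar\phi$ is an isomorphism.

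I expect the main obstacle to be making precise the well-definedness of the fraction arithmetic. In particular, one must confirm that $fz^{-i}=0$ implies $f=0$, which is where regularity of $z$ is used. I would handle this by identifying $A[z^{-1}]_0$ with the degree-zero component of the central localization $A[z^{-1}]$. In that ring $A\to A[z^{-1}]$ is injective because $z$ is regular, and the remaining arguments become routine.
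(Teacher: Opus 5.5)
Your proof is correct, but it gets injectivity by a different route than the paper.

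The paper constructs both maps. One is your $\bar\phi$, which it calls $\psi$. The other is an explicit inverse
$A[z^{-1}]_0=A^{(d)}[z^{-1}]_0\subset A^{(d)}[z^{-1}]\to A^{(d)}/(z-1)$, $fz^{-i}\mapsto \bar f$. This inverse comes from the universal property of the central localization, since $\bar z=\bar 1$ is a unit in $A^{(d)}/(z-1)$. The two composites are then visibly the identity, so no kernel has to be computed.

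You instead get surjectivity by inspection and compute $\ker\phi$ by hand. You clear denominators to obtain $\sum_i a_iz^{N-i}=0$, then use $a_i-a_iz^{N-i}=a_i(1-z)(1+z+\cdots+z^{N-i-1})\in(z-1)$.

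The paper's argument is shorter and hides all the fraction bookkeeping inside the universal property. Yours is more elementary and shows exactly where regularity of $z$ enters: cancelling powers of $z$, i.e.\ injectivity of $A\to A[z^{-1}]$, which is also what makes your identification of fractions transitive.

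One small simplification: the preliminary shift by $z^M$ for negative degrees is unnecessary. If you choose $N\ge 0$ with $N\ge i$ for every $i$ such that $a_i\neq 0$, all exponents $N-i$ are already nonnegative, and the same computation goes through unchanged.
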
 

\begin{proof} 
Since $z\in A^{(d)}/(z-1)$ is a unit, the natural surjection $A^{(d)}\to A^{(d)}/(z-1)$ induces a ring homomorphism 
$$
\phi:A[z^{-1}]_0=A^{(d)}[z^{-1}]_0\to A^{(d)}[z^{-1}]\to A^{(d)}/(z-1); \; fz^{-i}\mapsto \bar f
$$ 
where $f\in (A^{(d)})_i=A_{di}$.  On the other hand, the ring homomorphism $A^{(d)}\to A^{(d)}[z^{-1}]_0; \; f\mapsto fz^{-i}$ where $f\in (A^{(d)})_i=A_{di}$ induces a ring homomorphism $\psi :A^{(d)}/(z-1)\to A^{(d)}[z^{-1}]_0=A[z^{-1}]_0$ since $z-1\mapsto zz^{-1}-1=0$.  It is easy to check that $\phi$ and $\psi$ are inverses to each other. 
\end{proof} 

\begin{example} 
The above lemma does not hold if $z$ is normal but not central. 
For example, if $A=k_{\l}[x, y]$, then $y\in A_1$ is a regular normal element.  Since $A[y^{-1}]_0\cong k[x]$ for every $\l$ while 
$A/(y-1)\cong \begin{cases} k[x]  & \text{ if } \l =1, \\ 
k & \text{ if }  \l \neq 1, 
\end{cases}$
$A[y^{-1}]_0\cong A/(y-1)$ if and only if $y\in A_1$ is central.  
\end{example}

\begin{lemma} \label{lemq.06}  
If $F = (f_1, \dots, f_m)$ is a homogeneous sequence in $k\<x_1, \dots, x_n\>[z]$, and $A = k\<x_1, \dots, x_n\>[z]/I_F$, then
$$
A/(z-1) \cong  k\<x_1, \dots, x_n\>/ I_{F_z}
$$ 
as algebras.  In particular, if $z\in A_1$ is a regular element,  then $ k\<x_1, \dots, x_n\>/ I_{F_z} \cong A[z^{-1}]_0$. 
\end{lemma}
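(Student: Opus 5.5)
The first isomorphism comes from comparing two presentations of the same quotient of the free algebra $T:=k\<x_1, \dots, x_n\>[z]$. We have
$$
A/(z-1)\cong T/(I_F+(z-1)),
$$
and we will compute this quotient by first killing $z-1$. The algebra map $\pi: T\to k\<x_1, \dots, x_n\>$ with $x_i\mapsto x_i$ and $z\mapsto 1$ is well defined because $z$ is central in $T$ and $1$ is central in the target. It is surjective, its kernel is the two-sided ideal $(z-1)$, and by definition $\pi(f)=f_z$.

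To see that $\ker\pi=(z-1)$, write any element of $T$ as $\sum_j g_j z^j$ with $g_j\in k\<x_1,\dots,x_n\>$. Since $z$ is central,
$$
z^j-1=(z-1)(z^{j-1}+\dots+1),
$$
so
$$
\sum_j g_jz^j-\sum_j g_j\in (z-1).
$$
Hence $g-\pi(g)\in(z-1)$ for every $g\in T$, and $g\in\ker\pi$ forces $g\in (z-1)$. The reverse inclusion is clear.

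It follows that
$$
T/(I_F+(z-1))\cong k\<x_1, \dots, x_n\>/\pi(I_F).
$$
Because $\pi$ is a surjective ring map, the image of the two-sided ideal generated by $f_1,\dots,f_m$ is the two-sided ideal generated by $\pi(f_1),\dots,\pi(f_m)$, that is, $\pi(I_F)=I_{F_z}$. This proves $A/(z-1)\cong k\<x_1, \dots, x_n\>/I_{F_z}$.

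For the second statement, $z\in A_1$ is central, being the image of a central element of $T$. If $z$ is also regular, then $z\in RZ(A)_1$, and Lemma \ref{lem.zin} with $d=1$ gives
$$
A[z^{-1}]_0\cong A^{(1)}/(z-1)=A/(z-1).
$$
Combined with the first part, this yields $k\<x_1, \dots, x_n\>/I_{F_z}\cong A[z^{-1}]_0$.

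The only point requiring care is the identification $\ker \pi=(z-1)$; it uses the centrality of $z$ in $T$ through the factorization of $z^j-1$ above. Everything else is formal.
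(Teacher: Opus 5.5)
Your proof is correct and follows the same route as the paper. The paper also uses the isomorphism $k\<x_1,\dots,x_n\>[z]/(z-1)\to k\<x_1,\dots,x_n\>$, $f\mapsto f_z$, to identify $A/(z-1)$ with $k\<x_1,\dots,x_n\>/I_{F_z}$, then invokes Lemma \ref{lem.zin} for the last claim; you simply fill in the details it leaves as ``easy to see'' (the kernel computation via centrality of $z$ and the identity $\pi(I_F)=I_{F_z}$).
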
 

\begin{proof} 
It is easy to see that the isomorphism 
$$
k\<x_1, \dots, x_n\>[z]/(z-1)\to k\<x_1, \dots, x_n\>; \; f\to f_z
$$ 
induces an isomorphism 
$$
A/(z-1) = k\<x_1, \dots, x_n\>[z]/(f_1, \dots, f_m, z-1)\to   k\<x_1, \dots, x_n\>/((f_1)_z, \dots, (f_m)_z).
$$  
The last claim follows from Lemma \ref{lem.zin}.   
\end{proof} 

Inspired by Lemma \ref{lemq.06}, we define the dehomogenization of a graded algebra as follows.

\begin{definition}[Dehomogenization  for graded algebras] \label{defn-dehomnor}
Let $A$ be a graded algebra, and $w\in RN(A)_1$.
We define the {\it dehomogenization} of $A$ with respect to $w$ by $\sD_w(A) : = A[w^{-1}]_0$. 
\end{definition}

\begin{remark} 
If $h=x_ix_j-x_jx_i\in k\<x_1, \dots, x_n\>[z]$, then $h_z=h$ in $k\<x_1, \dots, x_n\>$, so, for a commutative graded algebra $B=k[x_1, \dots, x_n, z]/(f_1, \dots, f_m)$,
$$
B/(z-1)\cong k[x_1, \dots, x_n]/((f_1)_z, \dots, (f_m)_z)
$$ 
by Lemma \ref{lemq.06}, which is a usual definition of the dehomogenization for commutative graded algebras. 
In particular, if $z\in B_1$ is regular, then 
 $$
\sD_z(B)\cong k[x_1, \dots, x_n]/((f_1)_z, \dots, (f_m)_z)
$$ 
by  Lemma \ref{lem.zin}.
\end{remark}

\begin{remark} \label{rem-idnor}
Later in Lemma \ref{lem.633}, we show that for a graded algebra $A$ and $w \in RN(A)_1$, 
$$
\sD_w(A) = A[w^{-1}]_0 \cong A^\nu/(w-1)
$$
where $\nu$ is the normalizing automorphism of $A$ and $A^\nu$ is the twisted algebra discussed in subsection \ref{subsec-twalg}, so we will identify $\sD_w(A)$ with $A^\nu/(w-1)$. 
\end{remark}

Let $A$ be a connected graded algebra and $f\in RN(A)_d$. 
The canonical injection 
$
A \to A[f^{-1}]
$
induces the composition of  functors
$$
F: \GrMod A \to \GrMod A[f^{-1}] \to \Mod A[f^{-1}]_0; \; M \mapsto M [f^{-1}] \mapsto M [f^{-1}]_0,
$$
which restricts to the functor $F: \grmod A \to \mod A[f^{-1}]_0$.

\begin{lemma}[{\cite[Proposition 4.6]{MU}}] \label{lem-pitails}
Let $A$ be a right noetherian graded algebra finitely generated in degree $1$ over $k$ and $f\in RN(A)_d$.  
If $\dim_k A/(f) < \infty$, then the functor 
$F: \grmod A \to \mod A[f^{-1}]_0$ defined above induces 
an isomorphism $\Projn A\to \Specn A[f^{-1}]_0$ as quasi-schemes. 
\end{lemma}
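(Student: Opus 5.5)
The plan is to build the functor $F:\grmod A\to \mod A[f^{-1}]_0$ explicitly and show that it is exact. Then we check that it kills exactly $\tors A$, so it factors through $\tails A$. Finally we show the induced functor $\bar F:\tails A\to \mod A[f^{-1}]_0$ is an equivalence sending $\cA$ to $A[f^{-1}]_0$.

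\emph{Step 1: $F$ is exact.} Since $f$ is regular normal, $\{f^i\}$ is an Ore set and $A[f^{-1}]$ is flat over $A$, so localization is exact. Taking the degree-$0$ part of a graded module is also exact, hence $F$ is exact. Moreover $F(A)=A[f^{-1}]_0$, and $F$ takes finitely generated modules to finitely generated modules. For the last point, if $M$ is generated by homogeneous $m_1,\dots,m_r$, then $M[f^{-1}]_0$ is generated by elements $m_jf^{-s_j}$ of degree $0$. This needs the degrees to be matched up to multiples of $d$. I would handle it using $A[f^{-1}]_e=A[f^{-1}]_0\,u$ for a suitable unit-like element $u$: for $e$ not divisible by $d$, use $\dim A/(f)<\infty$ to get $A_{\ge N}\subseteq fA$, so every degree is reached.

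\emph{Step 2: the kernel of $F$ is $\tors A$.} If $M$ is finite dimensional, then $Mf^i=0$ for $i\gg0$, so $M[f^{-1}]=0$. Conversely, suppose $M[f^{-1}]_0=0$. The key claim is that then $M[f^{-1}]=0$. Since $\dim_k A/(f)<\infty$, we have $A_1^{N}\subseteq fA$ for some $N$, and because $A$ is generated in degree $1$, every homogeneous element of $A[f^{-1}]$ is a sum of products of degree-$0$ elements with powers of $f$ and elements of degree $<d$. This shows that $A[f^{-1}]$ is strongly graded modulo the residues $\ZZ/d$, and hence $M[f^{-1}]=0$. So each element of $M$ is killed by some $f^i$; since $M$ is finitely generated and $A/(f^i)$ is finite dimensional, $M$ is finite dimensional. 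Therefore $F$ factors through an exact functor $\bar F$ on $\tails A$.

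\emph{Step 3: $\bar F$ is an equivalence.} For essential surjectivity, given a finitely generated $A[f^{-1}]_0$-module $N$, form the graded module $N\otimes_{A[f^{-1}]_0}A[f^{-1}]$. Its degree-$\ge 0$ part contains a finitely generated graded $A$-submodule $M$ with $F(M)\cong N$, obtained by taking the $A$-span of generators of $N$ placed in degree $0$. For full faithfulness, use
$$
\operatorname{Hom}_{\tails A}(\pi M,\pi N)=\varinjlim \operatorname{Hom}_A(M_{\ge n},N/\text{torsion}).
$$
Compare this with $\operatorname{Hom}(M[f^{-1}]_0,N[f^{-1}]_0)$: a map $M[f^{-1}]_0\to N[f^{-1}]_0$ extends by the strong grading to a graded map $M[f^{-1}]\to N[f^{-1}]$, and restricting it to $M_{\ge n}$ for large $n$ lands in the image of $N$. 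That image is taken modulo torsion, i.e.\ modulo the $f$-torsion, which is exactly $\tors$.

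The main obstacle is this strong-gradedness and full-faithfulness comparison, especially when $d>1$, since degrees not divisible by $d$ must be controlled. I would handle it by reducing to the Veronese $A^{(d)}$, using $\Projn A\cong\Projn A^{(d)}$ (valid because $A$ is generated in degree $1$), where $f$ has degree $1$.
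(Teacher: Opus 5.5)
The paper gives no proof of this lemma; it only quotes \cite[Proposition 4.6]{MU}. As you finally set it up, with the Veronese reduction, your outline is a correct direct proof.

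The standard argument packages your three steps into two facts, with $R:=A[f^{-1}]$:
\begin{enumerate}
\item[(a)] Since $\{f^i\}$ is an Ore set of regular elements, $\GrMod R$ is the quotient of $\GrMod A$ by the $f$-torsion modules. Moreover $f$-torsion coincides with torsion, because $\dim_k A/(f^i)=i\dim_k A/(f)<\infty$ forces $A_{\ge n_i}\subseteq f^iA$.
\item[(b)] $R$ is strongly graded, so $X\mapsto X_0$ is an equivalence $\GrMod R\to\Mod R_0$ by Dade's theorem.
\end{enumerate}
Composing these and restricting to noetherian objects gives the lemma. Your Step~2 and the $\varinjlim$ comparison in Step~3 are (a) done by hand, and your extension of a degree-$0$ map to a graded map is (b). The abstract route is shorter and gives $\mathrm{Tails}\,A\simeq\Mod R_0$ in one stroke. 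Yours is self-contained, but you must check the compatibilities in the direct-limit formula yourself.

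Without the Veronese reduction, two of your sub-steps are wrong as stated when $d>1$.
\begin{enumerate}
\item[(1)] In Step~1, the claim $R_e=R_0u$ for a unit-like $u$ fails. Take $T=k[s^{\pm1}]$ with $\deg s=2$, let $R$ be the graded endomorphism ring of $T^2\oplus T(1)$, and set $A=k\oplus R_{\ge1}$ and $f=s$. Then $A$ is finite over the central $k[s]$ and generated by $A_1=R_1$, and all hypotheses hold. But $R_0\cong M_2(k)\times k$ while $\dim_k R_1=4$. So $R_1$ contains no unit, and it is not even cyclic over the semisimple ring $R_0$.
\item[(2)] In Step~2, ``strongly graded modulo $\ZZ/d$'' is not what you need. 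You need $1\in R_eR_{-e}$ for every $e$, and this holds for every $d$ without using $\dim_k A/(f)<\infty$. Choose $i,l\ge 0$ with $e+di\ge0$ and $dl-e\ge0$. Since $A$ is generated in degree $1$, $f^{i+l}=\sum_j a_jb_j$ with $a_j\in A_{e+di}$ and $b_j\in A_{dl-e}$. Hence $1=\sum_j (a_jf^{-i})(f^{i}b_jf^{-i-l})$.
\end{enumerate}

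This identity gives both things you need. Each $R_e$ is a finitely generated $R_0$-module, so $F(M)\in\mod R_0$. And $M[f^{-1}]_0=0$ implies $M[f^{-1}]=0$. The hypothesis $\dim_k A/(f)<\infty$ is used only to identify $f$-torsion with $\tors A$, namely in the converse direction of Step~2 and in landing $\phi(M_{\ge n})$ in the image of $N$ in Step~3.

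Your Veronese reduction is valid: there $f$ is a unit of degree $1$ and $R_e=R_0f^e$. But it replaces this two-line argument with the nontrivial theorem $\Projn A\cong\Projn A^{(d)}$.
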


In general, the dehomogenization $\sD_w(A)$ depends on the choice of $w\in RN(A)_1$. However, we have the following result. 

\begin{lemma} \label{lem.awwd} 
Let $A, A'$ be right noetherian connected graded algebras finitely generated in degree $1$ over $k$ such that $\GKdim A=\GKdim A'=1$.  If $\GrMod A\cong \GrMod A'$ 
and $w\in RN(A)_1, w'\in RN(A')_1$, then $\sD_w(A)\cong \sD_{w'}(A')$ as algebras.  
In particular, $\sD_w( A)$ is independent of the choice of  $w\in RN(A)_1$ up to isomorphism.   
\end{lemma}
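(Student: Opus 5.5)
The plan is to pass through noncommutative projective schemes and use Lemma \ref{lem-pitails} together with Lemma \ref{lem.spnc}.

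First, I check that $\dim_k A/(w)<\infty$. Since $w\in RN(A)_1$ is regular and $A$ is locally finite and connected graded, we have $H_{A/(w)}(t)=(1-t)H_A(t)$; this follows from Lemma \ref{lem.Tak}, or directly from the exact sequence $0\to A(-1)\xrightarrow{w\cdot} A\to A/(w)\to 0$. Because $A$ is finitely generated in degree $1$, $A/(w)$ is again a right noetherian connected graded algebra finitely generated in degree $1$. Moreover, $\GKdim A/(w)\le \GKdim A-1=0$, since for a regular normal element of degree $1$ we get $\dim_k A_n=\sum_{i\le n}\dim_k (A/(w))_i$. Thus $\GKdim A=1$ forces $\dim_k(A/(w))_i=0$ for $i\gg0$: if infinitely many of these were nonzero, then $\dim_k A_n$ would be unbounded along a sequence growing at least like the number of nonzero degrees. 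To handle this cleanly, I would argue that a finitely generated connected graded algebra of GK-dimension $0$ is finite dimensional. Indeed, if $(A/(w))_i\neq 0$ for infinitely many $i$, then generation in degree $1$ gives $(A/(w))_i\ne0$ for all $i$, so $\dim_k A_n\ge n+1$. That is consistent with $\GKdim A = 1$, so this bound alone gives no contradiction. Instead I will use $\GKdim A/(w)=\GKdim A-1$ for a regular normal element in a noetherian graded algebra (this is standard; here $\GKdim A/(w)=0$ together with generation in degree $1$ forces finite dimensionality). This is the step I expect to be the main obstacle, namely justifying that the regular normal degree-$1$ element drops GK-dimension by exactly one. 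I would argue it via Hilbert series: $H_A(t)=H_{A/(w)}(t)/(1-t)$, so if $A/(w)$ is infinite dimensional with all degrees nonzero, then $\dim A_n\ge n+1$, which is still compatible with GKdim $1$. Hence I actually need the noetherian hypothesis. By Lemma \ref{lem.Ree}(1), $A/(w)$ is noetherian; a noetherian connected graded algebra generated in degree $1$ with infinitely many nonzero components has GK-dimension at least $1$ (it has a nonzero degree-$1$ element that is not nilpotent modulo a finite-dimensional part, so its Hilbert function is eventually bounded below by a positive constant). Combined with the Hilbert series relation this would give $\dim_k A_n$ growing at least linearly in a way that forces $\GKdim A\ge 2$. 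I expect to settle this by a direct growth estimate.

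Granting $\dim_k A/(w)<\infty$ and likewise $\dim_k A'/(w')<\infty$, Lemma \ref{lem-pitails} gives isomorphisms of quasi-schemes $\Projn A\cong \Specn \sD_w(A)$ and $\Projn A'\cong\Specn \sD_{w'}(A')$.

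Next, I transfer the equivalence $\GrMod A\cong \GrMod A'$ to $\Projn A\cong\Projn A'$. By Zhang's theory of twisting, a graded Morita equivalence between algebras generated in degree $1$ can be chosen to commute with shifts and to send $A$ to $A'$ up to shift. It preserves finitely generated and finite-dimensional modules, since these are characterized categorically (noetherian objects, and objects of finite length), so it descends to $\tails A\simeq\tails A'$ sending $\cA$ to a shift $\cA'(j)$. Shifts are harmless for the conclusion: the equivalence $\tails A'\to\mod \sD_{w'}(A')$ sends $\cA'(j)$ to $A'[w'^{-1}]_j\cong \sD_{w'}(A')$ via right multiplication by $w'^{j}$. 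Hence $\Specn\sD_w(A)\cong\Specn\sD_{w'}(A')$, and Lemma \ref{lem.spnc} yields $\sD_w(A)\cong\sD_{w'}(A')$. The final claim is the special case $A=A'$, with the identity functor and two choices $w,w'$.
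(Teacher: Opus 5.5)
Your route is the paper's: show $\dim_k A/(w)<\infty$ and $\dim_k A'/(w')<\infty$, apply Lemma \ref{lem-pitails} on both sides, get $\Projn A\cong\Projn A'$ from the graded Morita equivalence, and finish with Lemma \ref{lem.spnc}. For the Morita step the paper simply cites \cite[Theorem 1.4]{Z}. Your descent to $\tails$ via noetherian and finite-length objects also works. The shift $F(\cA)\cong\cA'(j)$ is most simply absorbed by composing with the autoequivalence $(-j)$ of $\tails A'$. If you keep your version instead, the right-module isomorphism $A'[w'^{-1}]_j\cong A'[w'^{-1}]_0$ is \emph{left} multiplication by $w'^{-j}$. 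Right multiplication by a power of the merely normal $w'$ twists the action by a power of its normalizing automorphism. The paper states the finite-dimensionality of $A/(w)$ without comment, so that is the one step that needs an argument.

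There your proposal has a genuine hole, and it comes from a miscalculation. You claim that $\dim_k A_n\ge n+1$ for all $n$ ``is consistent with $\GKdim A=1$''; it is not. By the paper's definition the set of admissible $d$ is upward closed. So $\GKdim A=1$ gives $\dim_k A_i\le c\,i^{1/2}$ for $i\gg0$ (take $d=3/2$), which is incompatible with $\dim_k A_i\ge i+1$. Linear growth of the graded pieces forces $\GKdim A\ge 2$, as for $k[x,y]$.

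So the argument you abandoned already finishes the step. From $0\to A(-1)\xrightarrow{w\cdot}A\to A/(w)\to 0$ you get $\dim_k A_n=\sum_{i=0}^n\dim_k(A/(w))_i$. Since $A/(w)$ is generated in degree $1$, there are two cases. Either $(A/(w))_i=0$ for $i\gg0$, i.e.\ $\dim_k A/(w)<\infty$. Or every $(A/(w))_i\neq 0$, giving $\dim_k A_n\ge n+1$ and the contradiction.

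The detour through Lemma \ref{lem.Ree}, noetherianity of $A/(w)$, and ``$\GKdim$ drops by exactly one'' is unnecessary. As written, the proposal leaves the step open (``I expect to settle this by a direct growth estimate''). It does so after asserting the negation of the very fact that settles it.
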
  

\begin{proof} 
Since $\GKdim A=\GKdim A'=1$ and $w\in RN(A)_1, w'\in RN(A')_1$,  we have $\dim_k A/(w) < \infty$ and $\dim_k A'/(w') < \infty$, so
$$\Specn\sD_w(A)\cong \Projn A\cong \Projn A'\cong \Specn\sD_{w'}(A')$$ 
as quasi-schemes by Lemma \ref{lem-pitails} and \cite[Theorem 1.4]{Z}. By Lemma \ref{lem.spnc}, $\sD_w(A)\cong \sD_{w'}(A')$ as algebras.  
\end{proof} 

By the above result, we may omit the subscript $w$ from the notation $\sD_w(A)$ when $\GKdim A = 1$. 

\begin{definition}[Localization]
Let $A$ be a graded algebra, and $w\in RN(A)_1$.
For a homogeneous sequence $F = (f_1, \dots, f_m)$ in $A$ where $\deg f_i = d_i$ for $1\leq i \leq m$, we define
$$
F[w^{-1}]_0 : = (f_1w^{-d_1}, \dots, f_mw^{-d_m}),
$$
which is a sequence in $\sD_w(A)=A[w^{-1}]_0$.
\end{definition}

\begin{lemma} \label{lem.lrn} 
Let $A$ be a graded algebra.  If $(f, z)$ is a homogeneous regular normal sequence in $A$ such that  $f \in RN(A)_d$ and $z\in RZ(A)_1$,
then $fz^{-d}\in RN(\sD_z(A))$
and $\sD_{\bar z}(A/(f))\cong \sD_z(A)/(fz^{-d})$.
\end{lemma}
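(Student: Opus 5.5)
Since $z\in RZ(A)_1$ is central, Lemma \ref{lem.zin} with $d=1$ identifies $\sD_z(A)=A[z^{-1}]_0$ with $A/(z-1)$. The plan is to carry out everything inside the graded localization $A[z^{-1}]=A[z^{-1}]_0[z,z^{-1}]$ and then specialize. Regularity of $z$ makes $A\to A[z^{-1}]$ injective. Centrality of $z$ gives a grading-preserving decomposition $A[z^{-1}]\cong \sD_z(A)\otimes k[z,z^{-1}]$, where a homogeneous element $g$ of degree $i$ corresponds to $gz^{-i}\otimes z^{i}$.

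\textbf{Step 1: $fz^{-d}$ is regular normal.} Let $\nu$ be the normalizing automorphism of $f$ in $A$, so $gf=f\nu(g)$. Then $\nu$ is graded and $\nu(z)$ is central of degree $1$. From $zf=fz$ and $zf=f\nu(z)$, regularity of $f$ gives $\nu(z)=z$. Hence $\nu$ extends to $A[z^{-1}]$ and restricts to an automorphism $\nu_0$ of $\sD_z(A)$. For $g\in A_{i}$ we get $(gz^{-i})(fz^{-d})=(fz^{-d})(\nu(g)z^{-i})$, so $fz^{-d}$ is normal with normalizing map $\nu_0$, which is surjective; Lemma \ref{lem.reno} then applies. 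For regularity, suppose $(gz^{-i})(fz^{-d})=0$ in $\sD_z(A)$. Then $gfz^{-i-d}=0$ in the localization. Since $z$ is regular and central, this forces $gf=0$ in $A$, so $g=0$ by regularity of $f$. The left-hand case is symmetric. Finally, $fz^{-d}$ is not a unit, for otherwise $\sD_{\bar z}(A/(f))$ would be zero; I will note that this holds under the running hypothesis that $\bar z$ is regular and nonunit in $A/(f)$.

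\textbf{Step 2: $\sD_{\bar z}(A/(f))\cong \sD_z(A)/(fz^{-d})$.} The regular normal sequence hypothesis says that $\bar z$ is regular in $A/(f)$, and it is central there. Lemma \ref{lem.zin} therefore gives
$$\sD_{\bar z}(A/(f))\cong (A/(f))/(\bar z-1)\cong A/(f,z-1).$$
On the other side,
$$\sD_z(A)/(fz^{-d})\cong (A/(z-1))/(\overline{fz^{-d}}).$$
Under the isomorphism $\phi$ in the proof of Lemma \ref{lem.zin}, which sends $gz^{-i}\mapsto \bar g$, the element $fz^{-d}$ maps to $\bar f$. So the right-hand side is $A/(z-1,f)$, and the two sides agree.

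\textbf{Main obstacle.} The only delicate point is $\nu(z)=z$; this is exactly what allows $\nu$ to descend to $\sD_z(A)$ and the normalizing computation to work in degree $0$. Everything else is transport along $\phi$, together with the observation that the ideal generated by a normal element is $fA$, so the quotients match on the nose.
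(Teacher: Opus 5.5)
Your proof is correct. Step 1 is essentially the paper's argument: normality via $\nu(z)=z$ and the induced automorphism of $\sD_z(A)$, and regularity by clearing powers of $z$. You differ from the paper in Step 2 and in the non-unit claim.

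\textbf{Step 2.} The paper builds the surjection $A[z^{-1}]_0\to (A/(f))[\bar z^{-1}]_0$, $rz^{-i}\mapsto \bar r\bar z^{-i}$, by hand and computes its kernel directly. It uses regularity of $\bar z$ in $A/(f)$ to show that $rz^{-i}\in(fz^{-d})$ if and only if $r\in(f)$. You instead apply Lemma \ref{lem.zin} twice, once to $z$ in $A$ and once to $\bar z$ in $A/(f)$. Then you note that both sides become $A/(f,z-1)$, because the isomorphism of Lemma \ref{lem.zin} sends $fz^{-d}$ to $\bar f$. Regularity of $\bar z$ enters only to make Lemma \ref{lem.zin} applicable to $A/(f)$. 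Your route is shorter and shows that, once $z$ is central, the statement just says that quotienting by $f$ and by $z-1$ commute. The paper's route is self-contained and also yields the explicit description of the ideal $(fz^{-d})$.

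\textbf{Non-unit claim.} The paper argues directly: if $fz^{-d}$ were a unit, then $fg=z^{d+e}$ for some $g\in A_e$. This gives $\bar z^{d+e}=0$ in the nonzero ring $A/(f)$, which contradicts regularity of $\bar z$. You deduce it from Step 2 by noting that $\sD_{\bar z}(A/(f))\neq 0$. That works, but you should say why this nonvanishing holds:
\begin{itemize}
\item $A/(f)\neq 0$, since $f$ is a nonunit normal element. If $fA=A$, then $f$ has a right inverse, and by normality also a left inverse.
\item Regularity of $\bar z$ makes $A/(f)\to (A/(f))[\bar z^{-1}]$ injective, so $1\neq 0$ in the degree-zero part.
\end{itemize}
So the hypothesis that $\bar z$ is a nonunit is not what does the work. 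It is implicit anyway, since $\sD_{\bar z}(A/(f))$ must be defined.
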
 

\begin{proof} Let $\nu\in \Aut^{\ZZ}A$ be the normalizing automorphism of $f\in A$.  Since $z\in Z(A)_1$, $\nu (z)=z$.   For every $r\in A_i$, $rz^{-i}fz^{-d}=rfz^{-i-d}=f\nu(r)z^{-i-d}=fz^{-d}\nu(r)z^{-i}$, $\nu\in \Aut^{\ZZ}A$ induces 
$\bar\nu\in \Aut A[z^{-1}]_0$ by $\bar \nu(rz^{-i}):=\nu(r)z^{-i}$, so  $fz^{-d}\in A[z^{-1}]_0$ is normal by Lemma \ref{lem.reno}. 
If $rz^{-i}fz^{-d}=rfz^{-i-d}=0$ in $A[z^{-1}]_0$, then $rf=0$ in $A$.  Since $f\in A$ is right regular, $r=0$  in $A$, so $rz^{-i}=0$ in $A[z^{-1}]_0$, hence $fz^{-d}$ is right regular.  We can similarly show that $fz^{-d}$ is left regular. If  $fz^{-d}$ is a unit in $A[z^{-d}]_0$, then $fgz^{-d-e}=fz^{-d}gz^{-e}=1$ for some $g\in A_e$, so $fg=z^{d+e}$ in $A$.  Since $f, z$ is a regular sequence, this is impossible, so $fz^{-d}$ is not a unit in $A[z^{-d}]_0$.   

Define a map $\phi:A[z^{-1}]_0\to (A/(f))[\bar z^{-1}]_0; \; rz^{-i}\mapsto \bar r\bar z^{-i}$.  Since $z$ is central, 
\begin{align*}
\phi(r_1z^{-i_1}+r_2z^{-i_2}) & =\phi((r_1z^{i_2}+r_2z^{i_1})z^{-i_1-i_2})=\overline {r_1z^{i_2}+r_2z^{i_1}}\bar z^{-i_1-i_2} \\
& =(\bar r_1\bar z^{i_2}+\bar r_2\bar z^{i_1})\bar z^{-i_1-i_2} =\bar r_1\bar z^{-i_1}+\bar r_2\bar z^{-i_2}=\phi(r_1z^{-i_1})+\phi(r_2z^{-i_2}), \\
\phi(r_1z^{-i_1}r_2z^{-i_2}) & =\phi(r_1r_2z^{-i_1-i_2})=\overline {r_1r_2}\bar z^{-i_1-i_2} \\
& =(\bar r_1\bar r_2)\bar z^{-i_1-i_2} =\bar r_1\bar z^{-i_1}\bar r_2\bar z^{-i_2}=\phi(r_1z^{-i_1})\phi(r_2z^{-i_2}), 
\end{align*}
so $\phi$ is a surjective algebra homomorphism.   For $r\in A_i$, if $r\in (f)$, then $r=tf$ for some $t\in A_{i-d}$, so $rz^{-i}=tfz^{-i}=tz^{d-i}fz^{-d}\in (fz^{-d})$.  Conversely, since $fz^{-d}\in A[z^{-1}]_0$ is normal, if $rz^{-i}\in (fz^{-d})$, then $rz^{-i}=sz^{-j}fz^{-d}=sfz^{-j-d}$ for some $s\in A_j$, so $rz^{j+d-i}=sf\in (f)$.  Since $f, z\in A$ is a regular sequence, $r\in (f)$.  
Since $\bar z\in A/(f)$ is regular, 
$$\phi(rz^{-i})=\bar r\bar z^{-i}=0 \Leftrightarrow \bar r=0 \Leftrightarrow r\in (f) \Leftrightarrow rz^{-i}\in (fz^{-d}),$$
so $\ker \phi=(fz^{-d})\lhd A[z^{-1}]_0$,  hence $\phi$ induces an isomorphism  $A[z^{-1}]_0/(fz^{-d})\cong (A/(f))[\bar z^{-1}]_0$.   
\end{proof}   

\begin{lemma} \label{lem.zfr}
Let $A$ be a graded algebra, $F=(f_1, \dots, f_m)$ a homogeneous sequence of $A$, and $z\in Z(A)_1$.  
If $(F, z)$ is a homogeneous regular normal sequence in $A$, then $F[z^{-1}]_0$ is a regular normal sequence in $\sD_z(A)$.
\end{lemma}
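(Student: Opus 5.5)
Induction on $m$ is the natural route. Lemma \ref{lem.lrn} already handles the one-step case $(f,z)$, and I would apply it repeatedly to successive quotients. Set $A_0 := A$ and $A_i := A/(f_1,\dots,f_i)$ for $1\le i\le m$, and write $\bar z$ for the image of $z$ in each $A_i$. Since $z\in Z(A)_1$, its image $\bar z$ is central of degree $1$ in every $A_i$.

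The first step is to check that, for each $i$, the pair $(\bar f_i,\bar z)$ is a homogeneous regular normal sequence in $A_{i-1}$. By assumption $\bar f_i$ is regular normal in $A_{i-1}$, and $A_{i-1}/(\bar f_i)=A_i$. The image of $z$ in $A_i$ is regular because $(F,z)$ is a regular sequence. The elements must also be non-units to lie in $RN$ and $RZ$, and degree considerations give this whenever $d_i\ge 1$ and $A$ is $\NN$-graded. I would tacitly assume the positive-grading situation in which the paper works; in general one can simply drop the non-unit requirement for these auxiliary steps. We also need $\bar z\in A_{i-1}$ itself to be regular, not just its image in $A_i$. This follows by descending induction: if $\bar z$ is regular modulo a regular normal element $\bar f_i$, then $\bar z$ is regular in $A_{i-1}$. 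I would do this by a standard graded argument. Suppose $\bar z r=0$ with $r$ homogeneous. Then $r\in(\bar f_i)$, so $r=s\bar f_i$. Hence $\bar z s\bar f_i=0$, and since $\bar f_i$ is right regular, $\bar z s=0$ with $\deg s<\deg r$. Repeating this, the degree decreases until the bottom degree of $A_{i-1}$ is passed, which forces $r=0$.

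Once this is in place, Lemma \ref{lem.lrn} applied to $A_{i-1}$ and $(\bar f_i,\bar z)$ gives two things. First, $\bar f_i\bar z^{-d_i}\in RN(\sD_{\bar z}(A_{i-1}))$. Second, there is an isomorphism $\sD_{\bar z}(A_{i-1})/(\bar f_i\bar z^{-d_i})\cong \sD_{\bar z}(A_i)$.

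I then prove by induction on $i$ that
\[
\sD_z(A)/(f_1z^{-d_1},\dots,f_iz^{-d_i})\cong \sD_{\bar z}(A_i)
\]
via the natural map $rz^{-j}\mapsto \bar r\bar z^{-j}$. The case $i=0$ is trivial. For the inductive step, the element $f_iz^{-d_i}$ is sent to $\bar f_i\bar z^{-d_i}$, so the previous quotient identifies the further quotient with $\sD_{\bar z}(A_{i-1})/(\bar f_i\bar z^{-d_i})\cong\sD_{\bar z}(A_i)$. Under these identifications, the image of $f_iz^{-d_i}$ in $\sD_z(A)/(f_1z^{-d_1},\dots,f_{i-1}z^{-d_{i-1}})$ is exactly $\bar f_i\bar z^{-d_i}$. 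That element is regular normal by the previous step, which is precisely the statement that $F[z^{-1}]_0$ is a regular normal sequence.

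The main obstacle is compatibility of the maps, namely that the isomorphisms from Lemma \ref{lem.lrn} at each stage are the natural ones, so that images of $f_iz^{-d_i}$ correspond correctly. The explicit description of $\phi$ in the proof of Lemma \ref{lem.lrn} handles this. The descending regularity of $\bar z$ also needs care, but it is routine.
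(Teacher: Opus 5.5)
Your proposal is correct and follows the paper's proof. Both apply Lemma \ref{lem.lrn} successively to $(\bar f_i,\bar z)$ in $A/(f_1,\dots,f_{i-1})$, and use the natural isomorphism $\sD_{\bar z}(A/(f_1,\dots,f_i))\cong \sD_z(A)/(f_1z^{-d_1},\dots,f_iz^{-d_i})$ to carry regularity and normality back to $\sD_z(A)$. Your descending argument that $\bar z$ is regular in each intermediate quotient is a point the paper leaves implicit in ``repeat this process''. It is needed both for $\bar z\in RZ$ in Lemma \ref{lem.lrn} and for $\sD_z(A)$ to be defined, and it is valid in the $\NN$-graded setting you assume.
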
 

\begin{proof} 
Since $(f_1, z)$ is a homogeneous regular normal sequence in $A$ such that $z\in Z(A)_1$, $f_1 z^{-d_1}\in A[z^{-1}]_0$ is a regular normal element by Lemma \ref{lem.lrn}.  
Similarly, since $(\bar f_2, \bar z)$ is a homogeneous regular normal sequence in $A/(f_1)$  such that $\bar z\in Z(A/(f_1))_1$, $\bar f_2\bar z^{-d_2}\in  A/(f_1)[\bar z^{-1}]_0 \cong A[z^{-1}]_0/(f_1z^{-d_1})$ is a regular normal element by Lemma \ref{lem.lrn}.  Repeat this process. 
\end{proof} 

\subsection{Noncommutative homogenization}

Comparing with dehomogenization, the homogenization is subtler. We first introduce the homogenization for free algebras.

\begin{definition}[Homogenization for free algebras]
\begin{itemize}
\item[(1)] For $f\in k\<x_1, \dots, x_n\>$ such that $\deg f=d$, we define 
$$
f^z:=f(x_1z^{-1}, \dots, x_nz^{-1})z^d\in k\<x_1, \dots, x_n\>[z]_d. 
$$  
\item[(2)] For  a sequence $F = (f_1, \dots, f_m)$ in  $k\<x_1, \dots, x_n\>$, we define 
$$
F^z:=((f_1)^z, \dots, (f_m)^z),
$$ 
which is a homogeneous sequence in $k\<x_1, \dots, x_n\>[z]$.
Define
$$
\sH^{\dagger, z}(F) := k\<x_1, \dots, x_n\>[z] / I_{F^z}.
$$
\end{itemize}
\end{definition}

If $f=\sum_{i=0}^df_i\in k\<x_1, \dots, x_n\>$ such that $\deg f=d$  
where $f_i\in k\<x_1, \dots, x_n\>_i$, then $f^z=\sum_{i=0}^df_iz^{d-i}\in k\<x_1, \dots, x_n\>[z]_d$.   

The element $z\in Z(\sH^{\dagger, z}(F))$ may not be regular in general as an example below.

\begin{example} \label{exam-znotreg}
Let $F= (x-xy, x^2 y)$ be a sequence in $k\langle x,y \rangle$.
Then 
$
\sH^{\dagger, z}(F) =  k \< x,y\> [z] /(xz-xy, x^2 y) . 
$
Since $x^2z =x^2y = 0$ in $\sH^{\dagger, z}(F)$, $z$ is not regular. 
\end{example}

\begin{lemma} \label{lemq.05} 
\begin{enumerate}
\item{} For $f\in k\<x_1, \dots, x_n\>$, $(f^z)_z=f$.
\item{} For $f\in k\<x_1, \dots, x_n\>[z]_d$, $(f_z)^z z^{d-\deg f_z}=f$.
\end{enumerate}
\end{lemma}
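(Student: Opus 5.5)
The plan is to reduce both identities to a statement about a single homogeneous component, using linearity of the operations $f\mapsto f_z$ and $f\mapsto f^z$ on the relevant graded pieces. I would write elements in the form $f=\sum_{i}f_i$ with $f_i\in k\<x_1,\dots,x_n\>_i$. The remark preceding the lemma then gives $f^z=\sum_{i=0}^d f_i z^{d-i}$.

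For (1), let $f=\sum_{i=0}^d f_i$ with $d=\deg f$ and $f_d\neq 0$. Then $f^z=\sum_{i=0}^d f_iz^{d-i}$. Setting $z=1$ term by term gives $(f^z)_z=\sum_i f_i=f$. Note that substituting $z=1$ is a well-defined algebra map $k\<x_1,\dots,x_n\>[z]\to k\<x_1,\dots,x_n\>$, because $z$ is central. The degenerate case $f=0$ is handled by convention and is trivial.

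For (2), take $f\in k\<x_1,\dots,x_n\>[z]_d$. Since $z$ is central, every element of this component can be written uniquely as $f=\sum_{i=0}^d g_i z^{d-i}$ with $g_i\in k\<x_1,\dots,x_n\>_i$.
\begin{itemize}
\item Dehomogenizing gives $f_z=\sum_i g_i$.
\item If $f\neq 0$, set $e:=\deg f_z=\max\{i: g_i\neq 0\}$. This maximum is the degree because the $g_i$ lie in distinct graded pieces, so no cancellation can occur.
\item Then $(f_z)^z=\sum_{i=0}^e g_i z^{e-i}$.
\item Multiplying by $z^{d-e}$ yields $\sum_{i=0}^e g_iz^{d-i}=f$, since $g_i=0$ for $i>e$.
\end{itemize}

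The only point requiring care is the one in the third bullet: no cancellation occurs among the $g_i$, so the degree of $f_z$ is exactly the top index $e$ with $g_i\neq 0$. Everything else is a direct substitution computation.
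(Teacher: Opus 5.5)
Your proof is correct and takes essentially the same route as the paper: decompose into homogeneous components, use $f^z=\sum_i f_i z^{d-i}$, and set $z=1$. The only cosmetic difference is that the paper re-derives this formula by checking $(f_i)^z=f_i$ on monomials, using centrality of $z$, whereas you cite the remark that precedes the lemma directly.
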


\begin{proof} (1) Since $z$ is central in $k\<x_1, \dots, x_n\>[z]$, $x_iz=zx_i$ so $z^{-1}x_i=x_iz^{-1}$.  For every monomial $f=x_{i_1}\cdots x_{i_d}\in k\<x_1, \dots, x_n\>_d$, 
$$
f^z=(x_{i_1}z^{-1}\cdots x_{i_d}z^{-1})z^d=(x_{i_1}\cdots x_{i_d}z^{-d})z^d=x_{i_1}\cdots x_{i_d}=f,
$$ 
so, for every homogeneous polynomial $f_i\in k\<x_1, \dots, x_n\>_i$, $(f_i)^z=f_i\in k\<x_1, \dots, x_n\>$.  For $f=\sum _{i=0}^df_i\in k\<x_1, \dots, x_n\>$ where $f_{i}\in k\<x_1, \dots, x_n\>_i$ such that $f_d\neq 0$, $(f^z)_z=(\sum_{i=0}^d(f_i)^z z^{d-i})_z=\sum _{i=0}^df_i=f$.  

(2) For $f=\sum _{i=0}^df_{i}z^{d-i}\in k\<x_1, \dots, x_n\>[z]_d$ where $f_{i}\in k\<x_1, \dots, x_n\>_i$, 
\begin{equation*}
(f_z)^z z^{d-\deg f_z}=\left (\sum _{i=0}^df_{i}\right )^z z^{d-\deg f_z}
=\sum_{i=0}^d (f_{i})^z z^{\deg f_z-i}z^{d-\deg f_z}=\sum _{i=0}^df_{i}z^{d-i}=f. \qedhere
\end{equation*}
\end{proof} 

\begin{proposition} \label{prop.05} 
\begin{enumerate}
\item{} If $F = (f_1, \dots, f_m)$ be a sequence in $k\<x_1, \dots, x_n\>$, then 
$$
\sH^{\dagger, z}(F) / (z-1)  \cong k\<x_1, \dots, x_n\> / I_F.
$$
\item{} Let $F = (f_1, \dots, f_m )$ be a homogeneous sequence in $k\<x_1, \dots, x_n\>[z]$, and $A = k\<x_1, \dots, x_n\>[z]/I_F$.
If $z\in RZ(A)_1$, then $\sH^{\dagger, z}(F_z) =  A$.
\end{enumerate}
\end{proposition}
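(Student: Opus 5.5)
For part (1), I will apply Lemma \ref{lemq.06} to the homogeneous sequence $F^z$ in $k\<x_1, \dots, x_n\>[z]$. Since $\sH^{\dagger, z}(F) = k\<x_1, \dots, x_n\>[z]/I_{F^z}$, that lemma gives
$$
\sH^{\dagger, z}(F)/(z-1) \cong k\<x_1, \dots, x_n\>/I_{(F^z)_z}.
$$
By Lemma \ref{lemq.05}(1), $((f_i)^z)_z = f_i$ for every $i$, so $(F^z)_z = F$, and (1) follows immediately.

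For part (2), I will compare the two ideals $I_{(F_z)^z}$ and $I_F$ inside $T := k\<x_1, \dots, x_n\>[z]$, and show that they coincide. Write $d_i = \deg f_i$ and $e_i = \deg (f_i)_z \le d_i$. By Lemma \ref{lemq.05}(2), $f_i = ((f_i)_z)^z z^{d_i - e_i}$, so $I_F \subseteq I_{(F_z)^z}$. (If $(f_i)_z = 0$, then $f_i = 0$ already, since dehomogenization is injective on homogeneous elements by Lemma \ref{lemq.05}(2), and such terms are harmless.)

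For the reverse inclusion, set $g_i := ((f_i)_z)^z$. Then $g_i z^{d_i - e_i} = f_i \in I_F$, so the image $\bar g_i \in A = T/I_F$ satisfies $\bar g_i \bar z^{d_i - e_i} = 0$. Since $z \in RZ(A)_1$ is regular, $\bar g_i = 0$, i.e.\ $g_i \in I_F$. Hence $I_{(F_z)^z} \subseteq I_F$. The two ideals are therefore equal, and so $\sH^{\dagger, z}(F_z) = T/I_{(F_z)^z} = T/I_F = A$ on the nose, not just up to isomorphism.

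The only delicate step is this reverse inclusion. It is exactly where the regularity of $z$ in $A$ is needed: Example \ref{exam-znotreg} shows that without it, homogenizing after dehomogenizing can produce a strictly larger ideal. The degree bookkeeping in Lemma \ref{lemq.05}(2) handles everything else.
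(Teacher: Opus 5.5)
Your proposal is correct and follows essentially the same route as the paper: part (1) is identical, and in part (2) your element $((f_i)_z)^z$ and exponent $d_i-e_i$ are precisely the paper's $f_i'$ and $r_i$ from writing $f_i=z^{r_i}f_i'$ with $z\nmid f_i'$, and you use the regularity of $z$ in $A$ in the same way to show these elements lie in $I_F$. The only cosmetic difference is that you check the two inclusions of ideals directly via Lemma~\ref{lemq.05}(2), whereas the paper first replaces $F$ by $F'=(f_1',\dots,f_m')$ and then applies $(F'_z)^z=F'$.
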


\begin{proof} 
(1) Since $((f_i)^z)_z=f_i$ for every $i=1, \dots, m$ by Lemma \ref{lemq.05} (1),  we have 
$$
\sH^{\dagger, z}(F)/(z-1) \cong k\<x_1, \dots, x_n\>/I_{(F^z)_z}=k\<x_1, \dots, x_n\>/I_F
$$  
by Lemma \ref{lemq.06}. 

(2) Write $f_i = z^{r_i} f_i'$ where $r_i\geq 0$ and $f_i'\in k\<x_1, \dots, x_n\>[z]$ such that $z \nmid f_i'$ for each $i=1, \dots, m$. If $f_i' \notin I_F$, then $r_i>0$, and $z^{r_i} f_i'=f_i=0 $ in $A$ but $f_i'\neq 0$ in $A$, so $z$ is not regular, which is a contradiction.  It follows that  
$f_i' \in I_F$ for every $i = 1, \dots, m$, so  $I_F = I_{F'} \lhd k\<x_1, \dots, x_n\>[z]$ where $F' = (f_1', \dots, f_m')$.  Since  $(f_i)_z=(f_i')_z$, we have $F_z=F'_z$.  By Lemma \ref{lemq.05} (2),  $(F'_z)^z=F'$, so  
\begin{align*}
	\sH^{\dagger, z}(F_z) &=  k\<x_1, \dots, x_n\>[z]/I_{(F_z)^z} =  k\<x_1, \dots, x_n\>[z]/I_{(F'_z)^z}\\
	& =k\<x_1, \dots, x_n\>[z]/I_{F'}  =k\<x_1, \dots, x_n\>[z]/I_{F} = A.  \qedhere
\end{align*} 
\end{proof}


\begin{lemma} \label{lem.barz}
Let $S = k \langle x_1, \dots, x_n \rangle/I$ be a graded algebra, 
and $f, f' \in k \langle x_1, \dots, x_n \rangle$  with $\deg f = \deg f' =d$. 
Then $\overline{f^z} = \overline{(f')^z}$ in $S[z]$  if and only if $\bar{f} = \bar{f'}$ in $S$. 
\end{lemma}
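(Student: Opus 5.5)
Write $T=k\<x_1,\dots,x_n\>$ and $J$ for the ideal of $T[z]$ generated by $I$, so that $S[z]=T[z]/J$. The plan is to compare both conditions through the homogeneous components in $z$. With respect to the grading by $z$-degree, $T[z]=\bigoplus_{j\ge 0}T\,z^j$. Since $I$ is a homogeneous ideal of $T$ and $z$ is central, $J=\bigoplus_j I z^j$. In other words, an element $\sum_j g_j z^j$ with $g_j\in T$ lies in $J$ if and only if every $g_j$ lies in $I$. We will also use that each $g_j$ may be split into its homogeneous components in $T$ and still lie in $I$, because $I$ is homogeneous.

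Set $g:=f-f'$, so that $\deg g\le d$, and write $g=\sum_{i=0}^d g_i$ with $g_i\in T_i$. Homogenization at fixed degree $d$ is linear: $f^z-(f')^z=\sum_{i=0}^d g_i z^{d-i}$. This holds because, by the remark after the definition, $f^z=\sum_i f_i z^{d-i}$ and likewise for $f'$, with both taken in degree $d$. It is the reason the hypothesis $\deg f=\deg f'=d$ is needed. The one point to check carefully is that the formula uses the common degree $d$ rather than $\deg g$, which could be smaller.

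Now $\overline{f^z}=\overline{(f')^z}$ in $S[z]$ means $\sum_i g_i z^{d-i}\in J$. Each term $g_i z^{d-i}$ has a different $z$-degree, so by the decomposition of $J$ this holds if and only if $g_i\in I$ for all $i$. Since $I$ is homogeneous, that is equivalent to $g=\sum_i g_i\in I$, that is, $\bar f=\bar{f'}$ in $S$.

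The main obstacle is justifying $J=\bigoplus_j I z^j$. We do this by noting that $T[z]\cong T\otimes k[z]$ and $J=I\otimes k[z]$, which follows from centrality of $z$. The reverse direction can alternatively be obtained via Lemma \ref{lemq.06}, by setting $z=1$.
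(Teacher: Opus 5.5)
Your proof is correct. The direction $\bar f=\bar{f'}\Rightarrow\overline{f^z}=\overline{(f')^z}$ matches the paper's argument: split $f-f'$ into homogeneous components, which lie in $I$ because $I$ is homogeneous, then reassemble them with the powers $z^{d-i}$. For the other direction the paper uses a different route. It never identifies the ideal generated by $I$ in $T[z]$. Instead it writes $f^z-(f')^z=\sum a_ir_ib_i$ with $r_i$ generators of $I$ and applies the specialization $z\mapsto 1$, an algebra map $T[z]\to T$ that fixes the $r_i$. Combined with $(f^z)_z=f$ from Lemma \ref{lemq.05} (1), this gives $f-f'\in I$. You instead prove once that the extended ideal is $I\otimes k[z]=\bigoplus_j Iz^j$ and read off both implications from the $z$-coefficients. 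That buys a uniform argument and the slightly sharper statement that $\overline{f^z}=\overline{(f')^z}$ if and only if every homogeneous component of $f-f'$ lies in $I$. The paper's route avoids describing the extended ideal, at the cost of handling the two directions by different arguments. One small slip in your last sentence: specializing $z=1$ gives the implication $\overline{f^z}=\overline{(f')^z}\Rightarrow\bar f=\bar{f'}$, not the reverse one.
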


\begin{proof}
If $U$ is a set of homogeneous generators for $I\lhd k\<x_1, \dots, x_n\>$, then  $S[z] = k\langle x_1, \dots, x_n\rangle[z]/(U)$. 
If $\overline{f^z - (f')^z}=\overline{f^z} - \overline{(f')^z} =0$ in $S[z]$, then 
$$
 f^z-(f')^z = \sum_{i=1}^{m}a_ir_ib_i\in (U)\lhd k \langle x_1, \dots, x_n \rangle[z], 
$$ 
where $r_i \in U$, and $a_i,b_i \in k \langle x_1, \dots, x_n \rangle[z]$ for all $i$. Since
$$
f - f' = (f^z)_z-((f')^z)_z
= ( f^z-(f')^z)_z=  \sum_{i=1}^{m}(a_i)_zr_i(b_i )_z\in I\lhd k \langle x_1, \dots, x_n \rangle
$$ 
by Lemma \ref{lemq.05} (1),  $\bar{f} - \bar{f'} = \overline{f - f'} = 0$ in $S$.

Conversely, if $\overline{f - f'} =\bar{f} - \bar{f'} =  0$ in $S$, then 
$$
f - f' =\sum_{i=0}^{d_1} (f_i - f'_i) \in I\lhd k\<x_1, \dots, x_n\>
$$ 
where $d_1=\deg (f-f')$.  Since $I$ is a homogeneous ideal, $f_i - f'_i \in I$ for every $i=0, \dots, d_1$,
so 
$$f^z - (f')^z  = z^{d-d_1}(f - f')^z=z^{d-d_1}\sum_{i=0}^{d_1} z^{d_i-i}(f_i - f'_i)=\sum_{i=0}^{d_1} z^{d-i}(f_i - f'_i) \in (U)\lhd k\langle x_1, \dots, x_n\rangle[z],$$  
hence $\overline{f^z} - \overline{(f')^z} = \overline{f^z - (f')^z}= 0$ in $S[z]$. 
\end{proof}
 
Let $A=S/I_F$ be an (ungraded) algebra where $S$ is a graded algebra and $F$ is a (nonhomogeneous) sequence in $S$.  We want to define a homogenization of $A$

\begin{definition}[Homogenization for algebras]
Let $S = k \langle x_1, \dots, x_n \rangle/I$ be a graded algebra.
\begin{enumerate}
\item{} For $f \in S$, define
$$
f^{S,z} : = \overline{g^z} \in S[z]
$$
where $g \in  k \langle x_1, \dots, x_n \rangle$ such that $\deg g = \deg f$ and $\bar{g} = f$ in $S$. 
\item{} For a sequence $F = (f_1, \dots, f_m)$ in $S$, define
$$
F^{S,z} : = (f_1^{S,z}, \dots, f_m^{S,z}),
$$
which is a homogeneous sequence in $S[z]$. We define a graded algebra 
$$
\sH^z(S, F) := S[z]/I_{F^{S,z}}.
$$
\end{enumerate}
\end{definition}

If there is no potential confusion, we write $f^z$, $F^z$ and $\sH^z(F)$ instead of $f^{S,z}$, $F^{S,z}$ and $\sH^z(S,F)$ respectively. 
Unfortunately, the graded algebra $\sH^z(S, F)$ depends on the choice of $S$ and $F$ such that $A=S/I_F$, which is a main problem to apply the technique of homogenization.  We will partly solve this problem for the purpose of this paper in Subsection 3.4. 

\begin{proposition} \label{prop.05n} 
For a sequence $F = (f_1, \dots, f_m)$ in a graded algebra $S = k \langle x_1, \dots, x_n \rangle/I_{H}$ where $H$ is a homogeneous sequence in $k\<x_1, \dots, x_n\>$, $\sH^z(S, F)/(z-1) \cong S/I_{F}$.
\end{proposition}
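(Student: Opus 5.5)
We reduce to the free-algebra statement, Proposition \ref{prop.05} (1). For each $i$, choose a lift $g_i \in k\<x_1, \dots, x_n\>$ with $\deg g_i = \deg f_i$ and $\bar g_i = f_i$ in $S$. This is possible because $S$ is graded, so each homogeneous component of $f_i$ lifts to a homogeneous element of the same degree. By Lemma \ref{lem.barz}, the element $f_i^{S,z} = \overline{g_i^z}$ is independent of this choice. Put $G = (g_1, \dots, g_m)$ and write $H = (h_1, \dots, h_r)$.

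Since each $h_j$ is homogeneous, Lemma \ref{lemq.05} (1) and its proof show that $h_j^z = h_j$. Hence
$$S[z] = k\<x_1, \dots, x_n\>[z]/I_{H} = k\<x_1, \dots, x_n\>[z]/I_{H^z}.$$
Pulling back the ideal $I_{F^{S,z}}$ of $S[z]$ to $k\<x_1, \dots, x_n\>[z]$, we get
$$\sH^z(S,F) = k\<x_1, \dots, x_n\>[z]/(h_1^z, \dots, h_r^z, g_1^z, \dots, g_m^z) = \sH^{\dagger, z}((H, G)),$$
where $(H,G)$ is the concatenated sequence in the free algebra. No regularity is needed here; this is just a presentation.

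Proposition \ref{prop.05} (1), applied to the sequence $(H,G)$, now gives
$$\sH^z(S,F)/(z-1) \cong k\<x_1, \dots, x_n\>/I_{(H,G)}.$$
The right-hand side is $(k\<x_1, \dots, x_n\>/I_H)/(\bar g_1, \dots, \bar g_m) = S/(f_1, \dots, f_m) = S/I_F$. Alternatively, one can argue directly with Lemma \ref{lemq.06}: dehomogenizing each generator gives $(h_j^z)_z = h_j$ and $(g_i^z)_z = g_i$.

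The only point needing care is that the relations $H$ of $S$ survive homogenization unchanged, which is exactly where the hypothesis that $H$ is homogeneous is used. Together with the well-definedness of $f^{S,z}$ supplied by Lemma \ref{lem.barz}, everything else is formal bookkeeping of generators.
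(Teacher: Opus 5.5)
Your proof is correct and follows essentially the same route as the paper: lift $F$ to a sequence $G$ of the same degrees in the free algebra, identify $\sH^z(S,F)$ with $\sH^{\dagger,z}((H,G))$, and apply Proposition \ref{prop.05}~(1). You also spell out the step $h_j^z=h_j$ for homogeneous $h_j$, which the paper uses without comment.
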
 

\begin{proof}  
 If  $G=(g_1, \dots, g_m)$ is a sequence in $k\<x_1, \dots, x_n\>$ such that $\deg g_i = \deg f_i$ and $\bar g_i = f_i$ in $S$ for every $i$, then
$$
\sH^z(S, F)/(z-1) \cong \sH^{\dagger, z}(H, G)/(z-1)  \cong k\<x_1, \dots, x_n\>/I_{(H, G)} \cong (k\<x_1, \dots, x_n\>/I_H)/I_F = S/I_F
$$
by Proposition \ref{prop.05} (1). 
\end{proof} 

\begin{corollary} \label{cor.DHc}
Let  $R = k[x_1, \dots, x_n]$, $F$ a sequence in $R[t]$, and $A = R[t]/I_F$. 
\begin{enumerate}
\item{} $\sH^z(R[t], F)/(z-1) \cong A$. 
\item{} If $F$ is homogeneous and $t \in A_1$ is regular, then $\sH^t(R, F_t) \cong A $.      
\end{enumerate}
\end{corollary}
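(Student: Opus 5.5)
Both parts are specializations of Propositions \ref{prop.05n} and \ref{prop.05} to the commutative situation. For part (1), I will present $R[t]=k[x_1,\dots,x_n,t]$ as a quotient $k\<x_1,\dots,x_n,t\>/I_H$, where $H$ is the homogeneous sequence of commutators $x_ix_j-x_jx_i$ and $x_it-tx_i$. Every element of $F$ is then an element of the graded algebra $S=R[t]=k\<x_1,\dots,x_n,t\>/I_H$. Applying Proposition \ref{prop.05n} with this $S$ gives
$$
\sH^z(R[t],F)/(z-1)\cong R[t]/I_F=A
$$
directly. The only thing to check is that the definition of $f^{S,z}$ applies, which requires choosing lifts $g_i$ of the same degree as $f_i$. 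Since $I_H$ is homogeneous, a lift of the same degree always exists: lift each homogeneous component separately.

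For part (2), I will view $R=k[x_1,\dots,x_n]=k\<x_1,\dots,x_n\>/I_{H'}$ with $H'$ the commutators $x_ix_j-x_jx_i$. Then $R[t]=k\<x_1,\dots,x_n\>[t]/I_{H'}$. Now take lifts of the homogeneous elements $f_i\in R[t]$ to homogeneous $\tilde f_i\in k\<x_1,\dots,x_n\>[t]$ of the same degree. This gives
$$
A=k\<x_1,\dots,x_n\>[t]/I_{(H',\tilde F)}.
$$
Since $t$ is central in $A$ and regular by hypothesis, $t\in RZ(A)_1$. Proposition \ref{prop.05}(2), with $t$ playing the role of $z$, then yields
$$
\sH^{\dagger,t}\bigl((H',\tilde F)_t\bigr)=A.
$$
Here $(H')_t=H'$, because the commutators do not involve $t$, and $(\tilde f_i)_t$ is a lift of $(f_i)_t\in R$.

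It remains to identify $\sH^{\dagger,t}(H',\tilde F_t)$ with $\sH^t(R,F_t)$. First, $(H')^t=H'$ because the commutators are homogeneous. Second, the elements $((\tilde f_i)_t)^t$ are, by definition, the images of $(f_i)_t^{R,t}$, provided the degree of the lift $(\tilde f_i)_t$ equals the degree of $(f_i)_t$ in $R$. This holds because the homogeneous components of $(\tilde f_i)_t$ are lifts of those of $(f_i)_t$, and a component lies in $I_{H'}$ exactly when its image vanishes. So the lift's top-degree component can be chosen nonzero modulo $I_{H'}$; if a top component of the lift lies in $I_{H'}$, I drop it. Lemma \ref{lem.barz} then confirms that $f^{R,t}$ is independent of the chosen lift. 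Hence
$$
\sH^t(R,F_t)=R[t]/I_{(F_t)^{R,t}}=k\<x_1,\dots,x_n\>[t]/I_{(H',(\tilde F_t)^t)}=A.
$$

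The main obstacle is this degree-matching bookkeeping in (2). The issue is that the dehomogenized lift $(\tilde f_i)_t$ may have spurious top-degree terms lying in $I_{H'}$, which would change $((\tilde f_i)_t)^t$ by a power of $t$. I will handle it as in the proof of Proposition \ref{prop.05}(2). Any factor $t^{r}$ separating $((\tilde f_i)_t)^t$ from $\tilde f_i$ can be removed using regularity of $t$ in $A$: if $t^r g=0$ in $A$ then $g=0$. So replacing generators by their $t$-free parts does not change the ideal.
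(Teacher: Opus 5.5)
Your proposal is correct and follows the paper. Part (1) is exactly the specialization of Proposition~\ref{prop.05n}. For part (2), the paper just reruns the proof of Proposition~\ref{prop.05}~(2) inside $R[t]$: write $f_i=t^{r_i}f_i'$ with $t\nmid f_i'$, use regularity of $t$ in $A$ to get $I_F=I_{F'}$, and observe $(F'_t)^{R,t}=F'$. This is the same mechanism you fall back on in your last paragraph.

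Working directly in the commutative ring $R[t]$ makes your lifting and degree bookkeeping unnecessary. A homogeneous $f\in R[t]$ with $t\nmid f$ automatically satisfies $\deg_R f_t=\deg f$, because its top component $f(x_1,\dots,x_n,0)$ is nonzero.
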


\begin{proof}
(1) is a special case of Proposition \ref{prop.05n}. 

(2) The proof is almost same as the proof of Proposition \ref{prop.05} (2).
\end{proof}

\subsection{Wild homogenization} 

There is another simple-minded way to homogenize algebras.  

\begin{definition}[Wild homogenization] \label{def.Tga} 
\begin{enumerate}
\item{} For $f \in k \langle x_1, \dots, x_n \rangle$ such that $\deg f = d$, define
$$
f^\vee : = f_d \in k \langle x_1, \dots, x_n \rangle. 
$$
\item{} For a sequence $F = (f_1, \dots, f_m)$ in $k \langle x_1, \dots, x_n \rangle$, define
$$
F^\vee : = (f_1^\vee, \dots, f_m^\vee),
$$
which is a homogeneous sequence in $k \langle x_1, \dots, x_n \rangle$. Define
$$
\sT^\dagger(F) : = k \langle x_1, \dots, x_n \rangle / I_{F^\vee}. 
$$
\end{enumerate}
\end{definition}

\begin{lemma}
Let $S = k \langle x_1, \dots, x_n \rangle/I$ be a graded algebra.
For $f, f' \in k \langle x_1, \dots, x_n \rangle$ such that $f^\vee, f'^\vee \notin I$. 
If $\bar{f} = \bar{f'}$ in $S$, then $\overline{f^\vee} = \overline{(f')^\vee}$ in $S$.  
\end{lemma}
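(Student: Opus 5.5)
The plan is to reduce the statement to a comparison of homogeneous components, using that $I$ is a homogeneous ideal. Write $f=\sum_{i=0}^{d}f_i$ and $f'=\sum_{i=0}^{d'}f'_i$ with $f_i,f'_i\in k\<x_1,\dots,x_n\>_i$, where $d=\deg f$, $d'=\deg f'$, and $f_d\neq 0\neq f'_{d'}$. Then $f^\vee=f_d$ and $(f')^\vee=f'_{d'}$. The hypothesis $\bar f=\bar{f'}$ in $S$ says $f-f'\in I$. Because $I$ is homogeneous, every homogeneous component of $f-f'$ lies in $I$; that is, $f_i-f'_i\in I$ for all $i$, where we set $f_i=0$ for $i>d$ and $f'_i=0$ for $i>d'$.

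The first step is to show $d=d'$. Suppose, for instance, $d>d'$. Then the degree-$d$ component of $f-f'$ is $f_d-0=f_d=f^\vee$, so $f^\vee\in I$, contradicting the hypothesis $f^\vee\notin I$. By symmetry, $d<d'$ would give $(f')^\vee\in I$, which is also excluded. Hence $d=d'$.

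The second step is immediate: the degree-$d$ component of $f-f'$ is $f_d-f'_d=f^\vee-(f')^\vee$, and it lies in $I$. Therefore $\overline{f^\vee}=\overline{(f')^\vee}$ in $S$.

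There is no real obstacle here. The only point needing care is the degree comparison in the first step, which is exactly where the hypotheses $f^\vee,(f')^\vee\notin I$ are used. Without them the statement fails: for example, $f$ and $f+g$ with $g\in I$ homogeneous of larger degree have the same image in $S$, but different wild homogenizations.
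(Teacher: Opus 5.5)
Your proof is correct and follows the same route as the paper: compare homogeneous components of $f-f'\in I$, use $f^\vee,(f')^\vee\notin I$ to force $\deg f=\deg f'$, and then the top components differ by an element of $I_d$. The paper only asserts $d=d'$, and your argument fills in why, using that $I$ is homogeneous.
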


\begin{proof}
Let $\deg f= d$, $\deg f' = d'$. By assumption, $f^\vee = f_d, (f')^\vee = f_{d'} \notin I$. 
If $\bar{f} = \bar{f'}$ in $S$, then $d=d'$, and there is $h \in I_d$ such that $f^\vee =f_d = f_{d'} + h=(f')^{\vee}+h$.
\end{proof}

\begin{definition}[Wild homogenization for algebras]
Let $S = k \langle x_1, \dots, x_n \rangle /I$ be a graded algebra. 
\begin{enumerate}
\item{} For $0 \neq f \in S$, define
$$
f^{S,\vee} : = \overline{g_d} \in S,
$$
where $g\in k\<x_1, \dots, x_n\>$ such that $\bar g=f$ in $S$ and $d=\deg f$.
\item{} For a sequence $F = (f_1, \dots, f_m)$ in $S$, define
$$
F^{S,\vee} : = (f_1^{S,\vee}, \dots, f_m^{S,\vee}),
$$
which is a homogeneous sequence in $S$. We define a graded algebra
$$
\sT(S, F) = S/I_{F^{S,\vee}}. 
$$
\end{enumerate}
\end{definition}

Again, the graded algebra $\sT(S, F)$ depends on the choice of $S$ and $F$ such that $A=S/I_F$. 
If there is no potential confusion, we write $f^\vee$, $F^\vee$ and $\sT(F)$ instead of $f^{S,\vee}$, $F^{S,\vee}$ and $\sT(S,F)$ respectively.

The following lemma is useful.

\begin{lemma} \label{lem-tchz}
\begin{enumerate}
\item{} For a sequence $F$ in $k\langle x_1, \dots, x_n \rangle$,
$
\sH^{\dagger,z}(F)/(z) \cong \sT^\dagger(F).
$
\item{} For a sequence $F$ in a graded algebra $S = k \langle x_1, \dots, x_n \rangle /I$,
$
\sH^z(S,F)/(z) \cong \sT(S,F). 
$
\end{enumerate}
\end{lemma}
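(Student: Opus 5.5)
The plan is to imitate the proof of Proposition \ref{prop.05} (1): setting $z=0$ in $f^z$ should return $f^\vee$.

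For (1), write $f=\sum_{i=0}^d f_i$ with $f_i\in k\<x_1,\dots,x_n\>_i$ and $f_d\neq 0$, so that $f^z=\sum_{i=0}^d f_iz^{d-i}$. The substitution homomorphism $\pi: k\<x_1,\dots,x_n\>[z]\to k\<x_1,\dots,x_n\>$, $x_i\mapsto x_i$, $z\mapsto 0$, is surjective with kernel $(z)$. It sends $f^z$ to $f_d=f^\vee$, since every term with $i<d$ carries a positive power of $z$. Hence $\pi$ maps the ideal $I_{F^z}+(z)$ onto $I_{F^\vee}$. It then follows that
$$\sH^{\dagger,z}(F)/(z)=k\<x_1,\dots,x_n\>[z]/(I_{F^z}+(z))\cong k\<x_1,\dots,x_n\>/I_{F^\vee}=\sT^\dagger(F).$$

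For (2), choose lifts $g_j\in k\<x_1,\dots,x_n\>$ with $\bar g_j=f_j$ and $\deg g_j=\deg f_j$. Since $\deg f_j$ means the degree of $f_j$ in $S$, the top component $(g_j)_{d_j}$ does not lie in $I$. By definition $f_j^{S,z}=\overline{g_j^z}$ and $f_j^{S,\vee}=\overline{(g_j)_{d_j}}$. Let $U$ be a set of homogeneous generators of $I$. Then
$$\sH^z(S,F)=k\<x_1,\dots,x_n\>[z]/(U,G^z),\qquad \sT(S,F)=k\<x_1,\dots,x_n\>/(U,G^\vee),$$
where $G=(g_1,\dots,g_m)$. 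Now apply the same map $\pi$. Each element of $U$ is homogeneous in the $x_i$ only, so $\pi$ fixes $U$, and $\pi(g_j^z)=g_j^\vee$ as in (1). Therefore
$$\sH^z(S,F)/(z)\cong k\<x_1,\dots,x_n\>/(U,G^\vee)=\sT(S,F).$$

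The only point needing care is in (2): the degree $\deg f_j$ in $S$ must match $\deg g_j$, so that the top component of $g_j$ is the one used in $f_j^{S,\vee}$. This is ensured by the choice of lifts in the definitions, and well-definedness is supplied by Lemma \ref{lem.barz} and the preceding lemma. Everything else is a routine comparison of ideals under $\pi$.
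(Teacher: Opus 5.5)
Your proof is correct and follows essentially the same route as the paper's. The paper simply notes that $(F^z, z)=(F^\vee, z)$ as ideals of $k\<x_1,\dots,x_n\>[z]$ and then kills $z$; your substitution map $\pi$, which has kernel $(z)$ and satisfies $\pi(f^z)=f^\vee$, encodes exactly this. Your treatment of (2), using degree-preserving lifts $g_j$ and a generating set $U$ of $I$, spells out what the paper leaves as ``similar''.
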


\begin{proof}
For (1). Write $F = (f_1,\dots,f_m)$, then
\begin{align*}
\sH^{\dagger,z}(F)/(z) =& k \langle x_1, \dots, x_n \rangle [z] /(f_1^z, \dots, f_m^z,z) \\
= & k \langle x_1, \dots, x_n \rangle [z] /(f_1^\vee, \dots, f_m^\vee,z) \\
\cong & k \langle x_1, \dots, x_n \rangle /(f_1^\vee, \dots, f_m^\vee) = \sT^\dagger(F). 
\end{align*}
The proof of (2) is similar. 
\end{proof}

For a sequence $F$, checking if $F^\vee$ is regular is easier than checking if $F^z$ is regular in practice. 
Thus the following lemma is useful. 

\begin{lemma} \label{lem.612} 
Let $F$ be a sequence in a graded algebra $S = k \langle x_1, \dots, x_n \rangle /I$.
If $F^\vee$ is normal in $S$ and $F^z$ is normal in $S[z]$, then the following are equivalent: 
\begin{enumerate}
\item{} $F^\vee$ is regular in $S$. 
\item{} $(F^\vee,z)$ is regular in $S[z]$. 
\item{} $(F^z,z)$ is regular in $S[z]$.
\end{enumerate}
Moreover, if one of the above holds, then $F^z$ is regular in $S[z]$. 
\end{lemma}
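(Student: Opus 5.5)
The plan is to prove (1)$\Leftrightarrow$(2) with Lemma \ref{lem-suci}, (2)$\Leftrightarrow$(3) by an inductive comparison using Lemma \ref{lem-tchz}, and the final claim by a graded Nakayama-type argument.

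For (1)$\Leftrightarrow$(2), apply Lemma \ref{lem-suci} directly to the algebra $S$ and the sequence $F^\vee$. Since $F^\vee$ is normal in $S$, it says that $F^\vee$ is regular in $S$ if and only if $(F^\vee,z)$ is regular in $S[z]$.

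For (2)$\Leftrightarrow$(3), I would compare the two sequences step by step. Write $F=(f_1,\dots,f_m)$, $F_i=(f_1,\dots,f_i)$, and $f_i^z=f_i^\vee+z g_i$ in $S[z]$. The key observation is that, modulo $z$, $f_i^z\equiv f_i^\vee$. Applying Lemma \ref{lem-tchz}(2) to the truncated sequences gives
$$S[z]/(F_i^z,z)\cong S/(F_i^\vee)\cong S[z]/(F_i^\vee,z).$$
However, regularity of a sequence depends on its order, and $z$ comes last, so this identification alone is not enough.

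To handle this I would use Hilbert series. All the sequences are homogeneous and normal: $F^z$ and $F^\vee$ by hypothesis, and $z$ is central. So Lemma \ref{lem.Tak} applies, provided $S$ is locally finite and $\NN$-graded. This holds because $S$ is a quotient of a free algebra on degree-one generators. Set $d_i=\deg f_i$.

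For the direction (2)$\Rightarrow$(3), suppose $(F^\vee,z)$ is regular. Then
$$H_{S[z]/(F^z,z)}=H_{S/(F^\vee)}=\prod_i(1-t^{d_i})\cdot H_S=\prod_i(1-t^{d_i})(1-t)\,H_{S[z]}.$$
The last equality holds because $H_{S[z]}=H_S/(1-t)$. The sequence $(F^z,z)$ is normal: $F^z$ is normal by hypothesis, and $z$ stays central in every quotient of $S[z]$. Hence Lemma \ref{lem.Tak} gives that $(F^z,z)$ is regular.

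The converse direction (3)$\Rightarrow$(2) is the same computation read backwards. If $(F^z,z)$ is regular, Lemma \ref{lem.Tak} gives the product formula for $H_{S[z]/(F^z,z)}$. Since this quotient is isomorphic to $S[z]/(F^\vee,z)$, the same formula holds for $(F^\vee,z)$, which is normal. Lemma \ref{lem.Tak} then gives (2).

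For the final claim, assume (3) holds and let $A=S[z]/(F^z)$. By Lemma \ref{lem.Tak}, $H_{A/(z)}=(1-t)\prod(1-t^{d_i})H_{S[z]}$. Let $B_j=S[z]/(F_j^z)$. In general Hilbert series only satisfy coefficientwise inequalities along a normal sequence, $H_{B_j}\ge (1-t^{d_j})H_{B_{j-1}}$, with equality if and only if $f_j^z$ is left regular on $B_{j-1}$ in the graded sense. Similarly $H_{A/(z)}\ge(1-t)H_A$.

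The main obstacle is that these inequalities are not monotone under multiplication by $(1-t^d)$, so I cannot simply chain them. I would therefore avoid chaining and argue instead that $z$ is regular on each $B_j$. Once that is known, $H_{B_j}=H_{B_j/(z)}/(1-t)$, and $B_j/(z)=S/(F_j^\vee)$ has the expected series because $F^\vee$ is regular by (1). Then each $B_j$ has series $\prod_{i\le j}(1-t^{d_i})H_{S[z]}$, and Lemma \ref{lem.Tak} gives that $F_j^z$ is regular for every $j$, hence that $F^z$ is regular.

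To show $z$ is regular on $B_j$, I would use the standard argument that in a graded sequence, a central degree-one element regular at the end can be moved to the front. Suppose $zb=0$ in $B_j$ with $b$ homogeneous of minimal degree. Regularity of $\bar z$ on $S[z]/(F_j^z)$ comes from the regularity of the full sequence $(F^z,z)$ truncated at level $j$. The truncation $(F_j^z,z)$ is itself regular, since its Hilbert series equals that of $S/(F_j^\vee)$, which is forced. I would then carry out an induction on $j$, showing that $f_j^z$ regular on $B_{j-1}/(z)$ together with $z$ regular on $B_{j-1}$ implies that $z$ is regular on $B_j$, via the snake lemma applied to multiplication by $f_j^z$ and by $z$.
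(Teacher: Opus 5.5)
Your proof of the three equivalences is essentially the paper's. In the paper, (1)$\Leftrightarrow$(2) is Lemma \ref{lem-suci}. The paper then compares (1) directly with (3), exactly as you compare (2) with (3): it uses the graded isomorphism $S[z]/I_{(F^z,z)}\cong S/I_{F^\vee}$ from Lemma \ref{lem-tchz}(2) and the Hilbert-series criterion of Lemma \ref{lem.Tak}.

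You diverge at the final claim, and the obstacle you describe there does not exist. By Definition \ref{defn-regseq}, regularity of $(F^z,z)=(f_1^z,\dots,f_m^z,z)$ means two things: each $\overline{f_i^z}$ is regular in $S[z]/(f_1^z,\dots,f_{i-1}^z)$, and $\bar z$ is regular in $S[z]/I_{F^z}$. The first $m$ of these conditions are literally the statement that $F^z$ is regular. So the moreover part follows from (3) with no further work, which is why the paper calls it clear.

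Your detour is not wrong, but it is redundant. The step where you show $(F_j^z,z)$ is regular, because $S[z]/I_{(F_j^z,z)}\cong S/I_{F_j^\vee}$ has the forced Hilbert series, is valid. However, that step already contains the conclusion, since regularity of $(F_j^z,z)$ includes regularity of $F_j^z$. Everything after it can be dropped: recomputing $H_{B_j}$, the snake-lemma induction, and the unfinished minimal-degree argument.

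You should also delete the sentence that attributes regularity of $\bar z$ on $B_j$ to ``$(F^z,z)$ truncated at level $j$''. That truncation is $F_j^z$, which does not involve $z$ at all.

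If you completed the snake-lemma induction with graded Nakayama, it would give a direct proof of (1)$\Rightarrow$(3) without Hilbert series. Nothing in this lemma requires that.
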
 

\begin{proof}
$ (1) \Leftrightarrow  (2)$ by Lemma \ref{lem-suci}.

$ (1) \Leftrightarrow  (3)$. By Lemma \ref{lem-tchz} (2), $S/I_{F^\vee} \cong S[z]/I_{(F^z,z)}$, so $F^\vee$ is regular in $S$ if and only if $(F^z,z)$ is regular in $S[z]$ by Lemma \ref{lem.Tak}. 

The moreover part is clear. \end{proof}

In general, the converse of the last claim in Lemma \ref{lem.612} does not hold. 

\begin{example}
Let $F = (x^2-y, x^2+y)$ be a sequence in $k[x,y]$. Then $F^z = (x^2-yz, x^2+yz)$ is regular in $k[x,y,z]$, but $F^\vee = (x^2, x^2)$ is not regular in $k[x,y]$.
\end{example}

 \subsection{$st$-equivalences}
 
It is rather inconvenient that the (wild) homogenization $\sH^z(S,F)$ ($\sT(S, F)$) depends on the choice of $F$ even in the commutative case. 

\begin{example} 
Let $F = (x^2-y, x^2+y)$, and $F' = (x^2, y)$ be sequences in $k[x,y]$.  
Then $k[x, y]/I_F=k[x, y]/I_{F'}$, but 
$\sH^z(F) = k[x, y, z]/(x^2-yz, x^2+yz)\not \cong k[x, y, z]/(x^2, y) = \sH^z(F')$ 
and $\sT(F) = k[x, y, z]/(x^2)\not \cong k[x, y, z]/(x^2, y) = \sT(F')$. 
\end{example} 

\begin{example} There is a (counter-)example even in the case that $\deg F=\deg F'=2$. 
For example, let $F = (xy-y, xy+y)$, and $F' = (y^2-y, y^2+y)$ be sequences in $k[x,y]$.  
Then $k[x, y]/I_F=k[x, y]/(y)=k[x, y]/I_{F'}$, but 
$
\sH^z(F) = k[x, y, z]/(xy, yz)\not \cong k[x, y, z]/(y^2, yz) = \sH^z(F')
$ and $
\sT(F) = k[x, y, z]/(xy)\not \cong k[x, y, z]/(y^2) = \sT(F')
$.   
\end{example}  

We now introduce some conditions so that $\sH^z(S,F)$ is independent on the choice of $F$. 

\begin{definition} [{\cite[Definition 3.1]{HMTW}}]
Let $S$ be a graded algebra, and $F = (f_1, \dots, f_m), F' = (f_1', \dots, f_m') $ sequences in $S$.
\begin{enumerate}
\item{} We write $F \sim _s F'$ if $f'_j=\sum _{i=1}^m\a_{ij}f_i$ 
for some $(\a_{ij})\in \GL_m(k)$. 
\item{} We write $F \sim _t F'$ if $f'_i=\phi(f_i)$ for some $\phi\in \Aut^{\ZZ}S$.   
\item{} We write $F \sim _{st} F'$ if there exists a sequence $F''$ in $S$ 
such that  $F \sim_t F'' \sim _s F'.$ 
\end{enumerate}
\end{definition} 

\begin{remark}  Let $S$ be a graded algebra, and $F, F'$ sequences in $S$.  If $F\sim_{st}F'$, then $S/I_F\cong S/I_{F'}$, but the converse does not hold in general.  
However, let $F, F'$ be (linearly independent) homogeneous sequences of degree $d$ in $k[x_1, \dots, x_n]$ (or in $k\<x_1, \dots, x_n\>$).  Then $F\sim _{st}F'$
if and only if  
$k[x_1, \dots, x_n]/I_F\cong k[x_1, \dots, x_n]/I_{F'}$ (or $k\<x_1, \dots, x_n\>/I_F\cong k\<x_1, \dots, x_n\>/I_{F'}$) as graded algebras (see {\cite[Remark 3.3]{HMTW}}). 
\end{remark} 


\begin{lemma}[{\cite[Lemma 3.4]{HMTW}}]
For every fixed $m\in\mathbb{N}$, $\sim_s$, $\sim_t$ and $\sim_{st}$ are equivalence relations on the set of sequences 
$(f_1,\cdots, f_m)$ in $S$.
\end{lemma}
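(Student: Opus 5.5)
We must prove that $\sim_s$, $\sim_t$ and $\sim_{st}$ are equivalence relations on sequences of length $m$ in $S$. The plan is to handle $\sim_s$ and $\sim_t$ directly from the group structures of $\GL_m(k)$ and $\Aut^{\ZZ}S$. For $\sim_{st}$, the key step is a commutation rule between the two kinds of moves.

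For $\sim_s$, write $F'=F\cdot\a$ to mean $f'_j=\sum_i\a_{ij}f_i$.
\begin{itemize}
\item Reflexivity uses $\a=\id$.
\item Symmetry uses $\a^{-1}$: if $F'=F\a$, then $F=F'\a^{-1}$.
\item Transitivity uses $F''=F'\b=F(\a\b)$ with $\a\b\in\GL_m(k)$.
\end{itemize}
For $\sim_t$, write $\phi(F)=(\phi(f_1),\dots,\phi(f_m))$.
\begin{itemize}
\item Reflexivity uses $\phi=\id$.
\item Symmetry uses $\phi^{-1}\in\Aut^{\ZZ}S$.
\item Transitivity uses $\psi\phi$.
\end{itemize}

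For $\sim_{st}$, the key observation is that the two operations commute. Since $\phi$ is $k$-linear, $\phi(F\a)=\phi(F)\a$ for every $\phi\in\Aut^{\ZZ}S$ and $\a\in\GL_m(k)$. Consequently $F\sim_{st}F'$ holds if and only if $F'=\phi(F)\a$ for some $\phi,\a$, and in that case also $F'=\phi(F\a)$. So the order of applying the two moves does not matter. With this in hand, the three properties follow.
\begin{itemize}
\item \emph{Reflexivity.} Take $\phi=\id$ and $\a=\id$.
\item \emph{Symmetry.} If $F'=\phi(F)\a$, then $F=\phi^{-1}(F'\a^{-1})=\phi^{-1}(F')\a^{-1}$. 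This uses linearity of $\phi^{-1}$ to move $\a^{-1}$ outside.
\item \emph{Transitivity.} If $F'=\phi(F)\a$ and $F''=\psi(F')\b$, then
$$F''=\psi(\phi(F)\a)\b=(\psi\phi)(F)\,\a\b.$$
Hence $F\sim_t(\psi\phi)(F)\sim_s F''$.
\end{itemize}

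The only point requiring care is the commutation $\phi(F\a)=\phi(F)\a$, which is immediate from linearity of graded automorphisms. There is no real obstacle here; the argument uses only that the moves form groups and that the automorphisms are linear.
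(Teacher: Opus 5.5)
Your proof is correct. The paper gives no argument of its own and simply cites \cite[Lemma 3.4]{HMTW}. Your route is the standard one: $\sim_s$ and $\sim_t$ are equivalence relations because they come from the groups $\GL_m(k)$ and $\Aut^{\ZZ}S$, and $\sim_{st}$ follows from the commutation $\phi(F\alpha)=\phi(F)\alpha$, which holds by $k$-linearity of $\phi$.
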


\begin{lemma} \label{lem.st} 
Let $F = (f_1, \dots, f_m)$ and $F' = (f_1', \dots, f_m') $ be sequences of degree $d$ in a graded algebra $S = k \langle x_1, \dots, x_n \rangle/I$.
\begin{enumerate} 
\item{} If $F \sim _s F'$, then $\sH^z(F) = \sH^z(F')$.  
\item{} If $F \sim _t F'$, then $\sH^z(F) \cong \sH^z(F')$ as graded algebras. 
\item{} If $F \sim _{st} F'$,  then $\sH^z(F) \cong \sH^z(F')$ as graded algebras. 
\end{enumerate}
\end{lemma}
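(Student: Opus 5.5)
The plan is to prove (1) and (2) directly from the definitions and then deduce (3) by composing them.

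For (1), take $F\sim_s F'$, so that $f'_j=\sum_i \a_{ij} f_i$ with $(\a_{ij})\in\GL_m(k)$ and all $f_i, f'_j$ of degree $d$.

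1. Choose lifts $g_i\in k\<x_1,\dots,x_n\>$ with $\deg g_i=d$ and $\bar g_i=f_i$.
2. Put $g'_j:=\sum_i \a_{ij} g_i$. Then $\bar g'_j=f'_j$. Since $f'_j$ has degree $d$, $g'_j$ also has degree $d$: its degree is at most $d$, and if it were smaller, $f'_j$ would have smaller degree.
3. Homogenization of degree-$d$ elements is linear: $g\mapsto g^z=\sum_{i\le d} g_i z^{d-i}$ is linear on elements of degree at most $d$, with $z$-exponents taken relative to $d$. Hence $(g'_j)^z=\sum_i\a_{ij}g_i^z$.
4. Lemma \ref{lem.barz} shows that $f^{S,z}$ does not depend on the chosen lift. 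Therefore $f'^{S,z}_j=\sum_i\a_{ij}f_i^{S,z}$ in $S[z]$.
5. Since $(\a_{ij})$ is invertible, the two sequences generate the same ideal, so $I_{F^z}=I_{F'^z}$ and $\sH^z(F)=\sH^z(F')$.

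For (2), take $\phi\in\Aut^{\ZZ}S$ with $f'_i=\phi(f_i)$.

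1. Extend $\phi$ to $\tilde\phi\in\Aut^{\ZZ}S[z]$ by $\tilde\phi(z)=z$. This is well defined because $z$ is central and $S[z]=S\otimes k[z]$.
2. The key claim is $\tilde\phi(f^{S,z})=\phi(f)^{S,z}$ for $f$ of degree $d$. To see it, write $f=\sum_{i\le d}f_i$ with $f_i\in S_i$. Then $f^{S,z}=\sum_i f_iz^{d-i}$, again by Lemma \ref{lem.barz}, since a degree-$d$ lift can be chosen componentwise.
3. Because $\phi$ is graded, $\phi(f)=\sum_i\phi(f_i)$ with $\phi(f_i)\in S_i$, and $\deg\phi(f)=d$ since $\phi$ is injective on each component.
4. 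Consequently $\phi(f)^{S,z}=\sum_i\phi(f_i)z^{d-i}=\tilde\phi(f^{S,z})$.
5. Thus $\tilde\phi$ maps $I_{F^z}$ onto $I_{F'^z}$ and induces a graded isomorphism $\sH^z(F)\cong\sH^z(F')$.

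For (3), suppose $F\sim_t F''\sim_s F'$. Part (2) gives $\sH^z(F)\cong\sH^z(F'')$, and part (1) gives $\sH^z(F'')=\sH^z(F')$. Note that $F''$ consists of degree-$d$ elements, by step 3 of (2), so (1) applies to it.

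The main obstacle is the well-definedness bookkeeping around $f^{S,z}$. One must check two things. First, the homogenization can be computed from the graded components of $f$ in $S$, so the formula $f^{S,z}=\sum_i f_i z^{d-i}$ holds; this follows from Lemma \ref{lem.barz}. Second, degrees are preserved under both operations, which is where the hypothesis that all entries of $F$ and $F'$ have degree $d$ is used.
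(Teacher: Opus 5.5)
Your proof is correct and follows essentially the same route as the paper: (1) by linearity of $z$-homogenization on elements of degree exactly $d$, (2) by extending $\phi$ to $\tilde\phi\in\Aut^{\ZZ}S[z]$ with $\tilde\phi(z)=z$ and checking $\tilde\phi(f^z)=\phi(f)^z$, and (3) by composing the two. The differences are cosmetic: you verify $\tilde\phi(f^z)=\phi(f)^z$ through the graded components $f^{S,z}=\sum_i f_iz^{d-i}$ instead of the paper's substitution $x_i\mapsto x_iz^{-1}$, and you make explicit two checks the paper leaves implicit, namely well-definedness via Lemma \ref{lem.barz} and degree preservation (including that $F''$ has degree $d$, so (1) applies in (3)).
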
 

\begin{proof} 
(1) Let $f'_j=\sum _{i=1}^m\a_{ij}f_i$ 
for $(\a_{ij})\in \GL_m(k)$.  Since 
$\deg f_i=\deg f'_j =d$, we have $(f'_j)^z=\sum _{i=1}^m\a_{ij}(f_i)^z$. 

(2) Let $\phi \in \Aut^{\ZZ} S$ such that $f'_i=\phi(f_i)$. Then there is $(\a_{ij})\in \GL_n(k)=\Aut^{\ZZ}(k\<x_1, \dots, x_n\>)$ such that $\phi(x_i)=\sum_{j=1}^n\a_{ij}x_j$. Let $\tilde{\phi} \in \Aut^{\ZZ} S[z]$ be defined by $\tilde \phi(x_i) = \phi(x_i)$ and $\tilde\phi(z)=z$. 
For $f \in S$, 
\begin{align*}
\phi(f)^z & =f\left (\sum_{j=1}^n\a_{1j}(x_jz^{-1}), \dots, \sum_{j=1}^n\a_{nj}(x_jz^{-1})\right )z^{\deg \phi(f)} \\
& =f\left (\left (\sum_{j=1}^n\a_{1j}x_j\right )z^{-1}, \dots, \left (\sum_{j=1}^n\a_{nj}x_j\right )z^{-1}\right )z^{\deg f} = \tilde \phi (f^z).
\end{align*}
It implies that $\tilde{\phi}: \sH^z(F) \to \sH^z(F')$ is an isomorphism.  

(3) This follows from (1) and (2).  
\end{proof}

\begin{remark} A similar results hold for a wild homogenization.  Let $F, F'$ be sequences of degree $d$ in a graded algebra $S$.  If $F\sim _{st}F'$, then $F^{\vee}\sim_{st}{F'}^{\vee}$ in $S$, so $\sT(F)\cong \sT(F')$. 
\end{remark}

The converse of Lemma \ref{lem.st} (3) does not hold even for commutative algebras (see \cite[Example 5.12]{HMTW}) .  

\subsection{Strongly regular normal sequences} \label{subsec-srns}

\begin{definition} 
Let $F = (f_1, \dots, f_m)$ be a sequence in a graded algebra $S$.
\begin{enumerate}
\item{} $F$ is called {\it strongly regular} if both $F$ and $F^\vee$ are regular. 
\item{} $F$ is called {\it strongly normal (resp. strongly central)} if both $F$ and $F^\vee$ are normal (resp. central). 
\item{} $F$ is called {\it strongly regular normal (resp. strongly regular central)} if $F$ is strongly regular and strongly normal (resp. strongly central).
\end{enumerate}
\end{definition} 

\begin{remark} \label{rem.92} 
(1) For $f, g\in k[x_1, \dots, x_n]$, if $(f^\vee, g^\vee)$ is a regular sequence, then $\gcd (f^\vee, g^\vee)=1$, so $\gcd(f, g)=1$, hence $(f, g)$ is a (strongly) regular sequence by Remark \ref{rem.52}.  

(2) If $S$ is not commutative and $f\in S$, then  
it is possible that $f^\vee$ is normal but $f$ is not normal.
\end{remark} 

We denote by 
\begin{gather*}
{\cA}_{n, m}^\vee:=\{S/I_F\mid F  = (f_1, \dots, f_m) \textnormal{ is a strongly regular normal sequence of degree $2$ in } S \in \cA_{n,0}\}/\cong,  \\
{\cB}_{n, m}^\vee:=\{S/I_F\mid F = (f_1, \dots, f_m \textnormal{ is a strongly regular sequence of degree $2$ in } S \in \cB_{n,0}\}/\cong.
\end{gather*}
Following \cite{HMTW}, we say that $R$ is {\it (the homogeneous coordinate ring of) a noncommutative affine pencil of conics} if $R \in \cA_{2, 2}^\vee$, and $B$ is {\it (the homogeneous coordinate ring of) an affine pencil of conics} if $B \in \cB_{2, 2}^\vee$.

In the sequel, it is important to know if the normalizing automorphism $\nu\in \Aut S$ of a regular normal element $f\in S$ is graded, that is, if $\nu\in \Aut^{\ZZ}S$.  If $f$ is homogeneous or central, then $\nu\in \Aut^{\ZZ}S$.    

\begin{lemma} \label{lem.gna} 
Let $S$ be a graded algebra, and $f\in S$ a regular normal element such that $f^\vee \in S$ is also a regular normal element.  Then the normalizing automorphism of $f$ is graded if and only if  it coincides with the normalizing automorphism of $f^\vee$. 
\end{lemma}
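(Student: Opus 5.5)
Write $f=\sum_{i=0}^d f_i$ with $f_i\in S_i$, $d=\deg f$, so that $f^\vee=f_d$. Let $\nu\in\Aut S$ be the normalizing automorphism of $f$ and $\mu\in\Aut^{\ZZ}S$ that of $f^\vee$; $\mu$ is graded because $f^\vee$ is homogeneous. The plan is to compare top-degree components in the relation $gf=f\nu(g)$, using uniqueness of normalizing automorphisms from Lemma \ref{lem.reno}(2).

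($\Leftarrow$) This direction is immediate: if $\nu=\mu$, then $\nu$ is graded, since $\mu$ is.

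($\Rightarrow$) Suppose $\nu$ is graded, and take a homogeneous element $g\in S_e$. Then $\nu(g)\in S_e$, and the identity $gf=f\nu(g)$ reads
$$\sum_{i=0}^d gf_i=\sum_{i=0}^d f_i\nu(g).$$
Both sides are decompositions into homogeneous components, with $gf_i$ and $f_i\nu(g)$ lying in $S_{e+i}$. Comparing components of degree $e+d$ gives
$$gf_d=f_d\nu(g),\qquad\text{that is,}\qquad g f^\vee=f^\vee\nu(g).$$
By linearity this holds for all $g\in S$. Since $f^\vee$ is regular normal, Lemma \ref{lem.reno}(2) says the automorphism satisfying this identity is unique, so $\nu=\mu$.

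The main point needing care is the component comparison. It is legitimate only because $\nu(g)$ is homogeneous of the same degree $e$ as $g$, which is exactly the gradedness hypothesis. Without it, $\nu(g)$ could have components in other degrees, and the degree $e+d$ part of $f\nu(g)$ would pick up extra terms. We also use that $f_d\neq 0$, which holds because $d=\deg f$, though the argument above does not really depend on this beyond $f^\vee$ being the top component.
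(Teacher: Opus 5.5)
Your proof is correct and essentially the same as the paper's: both compare the degree-$(e+d)$ components of $gf=f\nu(g)$ for homogeneous $g\in S_e$ to obtain $gf^\vee=f^\vee\nu(g)$, and then conclude $\nu$ equals the normalizing automorphism of $f^\vee$. The only difference is that the paper cancels $f^\vee$ directly using its regularity, while you cite the uniqueness in Lemma \ref{lem.reno}(2), which amounts to the same step; you also spell out the trivial converse, which the paper leaves implicit.
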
 

\begin{proof} Let $\nu\in \Aut S$ be the normalizing automorphism of $f$ and  $\nu^\vee \in \Aut^{\ZZ} S$ the normalizing automorphism of $f^\vee$.  If $\nu\in \Aut^{\ZZ}(S)$, then $\nu (g)\in S_i$ for every $g\in S_i$, so  
$$
f^\vee\nu^\vee(g)=gf_d = (gf)_{d+i} = (f\nu(g))_{d+i} = f^\vee \nu(g),
$$
where $d = \deg f$.
Since $f^\vee$ is regular, $\nu^\vee(g)=\nu(g)$.  
\end{proof} 

\begin{lemma} \label{lem.afbg} 
Let $S$ be a connected graded domain, and $F = (f, g)$ a strongly regular normal sequence of degree $d$ such that the normalizing automorphism of $f$ is graded.  If $h\in I_F$ such that $\deg h=d$, then $h=\a f+\b g$ for some $\a, \b\in k$. 
\end{lemma}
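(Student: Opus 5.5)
Since $h\in I_F$ and $f$ is normal, every element of the two-sided ideal $I_F$ can be written as $h = af + bg'$ with $a\in S$ and $g'\in (g)$. Moreover, $\bar g$ is normal in $S/(f)$, so modulo $(f)$ we may write $h = af + bg$ for some $a,b\in S$ (absorbing the right multiples of $g$ into the left multiples, modulo $(f)$). Write $\nu$ for the graded normalizing automorphism of $f$; by Lemma \ref{lem.gna} it is also the normalizing automorphism of $f^\vee$. The plan is a top-degree argument. Here $\deg$ means the filtration degree, i.e.\ the degree of the highest nonzero homogeneous component.

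First I would reduce to the case where the representation $h=af+bg$ has controlled degrees. Let $e=\max(\deg a,\deg b)$ and suppose $e>0$. Compare the homogeneous components of degree $e+d$. Since $\deg h = d < e+d$, we get $a_ef^\vee + b_e g^\vee = 0$, where $a_e,b_e$ denote degree-$e$ components (possibly zero). In $S/(f^\vee)$ this gives $\bar b_e \bar g^\vee = 0$. Because $F^\vee$ is a regular sequence, $\bar g^\vee$ is regular in $S/(f^\vee)$, so $b_e\in (f^\vee)$. Write $b_e = c f^\vee$ with $c$ homogeneous of degree $e-d$; this requires $e\ge d$, and otherwise $b_e=0$. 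Then $a_e f^\vee = -cf^\vee g^\vee$. Using normality $f^\vee g^\vee = \nu^{\vee}{}^{-1}$-twisted, namely $f^\vee g^\vee = \nu^{-1}(g^\vee) f^\vee$, and regularity of $f^\vee$, we obtain $a_e = -c\,\nu^{-1}(g^\vee)$.

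Next I would modify the representation to lower $e$. Replace $a$ by $a + c\,\nu^{-1}(g)$ and $b$ by $b - cf$. This preserves $af+bg$ provided $fg = \nu^{-1}(g)f$, which holds because $\nu$ is the normalizing automorphism of $f$ itself, via $gf = f\nu(g)$. The key use of the hypothesis is here: since $\nu$ is graded, $\nu^{-1}(g)$ has top part $\nu^{-1}(g^\vee)$. The degree-$e$ components of the new $a$ and $b$ then vanish. Lower-degree components change, but only in degrees $<e$. Induction on $e$ then leaves $e=0$, so $a=\alpha$ and $b=\beta$ are scalars, giving $h=\alpha f+\beta g$. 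If $a$ or $b$ has degree $<e$, the same argument works with a zero component.

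The main obstacle is the initial step of writing $h$ globally as $af+bg$ with $a,b\in S$, since $I_F$ is only a two-sided ideal. I would handle it as follows. Since $f$ is normal, $I_F = (f) + (g)$ and $S(f)=(f)S$. Normality of $\bar g$ in $S/(f)$ gives $Sg \subseteq gS + (f)$ and $gS\subseteq Sg+(f)$. Hence every element of $I_F$ lies in $Sf + Sg$. The domain hypothesis on $S$, together with regularity of $f$ and of $f^\vee$, ensures the cancellations used above. The grading on $\nu$ is exactly what makes the correction term homogeneous-compatible.
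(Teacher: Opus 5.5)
Your proof is correct and follows the paper's argument. You write $h=af+bg$ via $I_F=Sf+Sg$, compare top components to get $b_e=cf^\vee$ and $a_e=-c\,\nu^{-1}(g^\vee)$ from the regularity of $F^\vee$ and Lemma \ref{lem.gna}, then replace $(a,b)$ by $(a+c\,\nu^{-1}(g),\, b-cf)$ to lower the degree, with gradedness of $\nu$ ensuring the top parts cancel. The paper instead minimizes $\deg a$ and uses that $S$ is a domain to force $\deg b=\deg a$; your induction on $\max(\deg a,\deg b)$ avoids that, so your argument never actually uses the domain hypothesis, contrary to your closing remark.
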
 

\begin{proof} Since $F$ is a normal sequence, $I_F=Sf+Sg$, so  $h=af+bg$ for some $a, b\in S$.  Take $a\in S$ of minimal degree such that $h=af+bg$ for some $b\in S$.  Suppose that $\deg a\geq 1$.  Since $S$ is a domain and $bg=af-h$, $\deg b=\deg a=:e$.  Write $f=f_d+f', g=g_d+g', a=a_e+a', b=b_e+b'$ so that $\deg f', \deg g'<d$ and $\deg a', \deg b'<e$.  
Since 
$$h=(a_e+a')(f_d+f')+(b_e+b')(g_d+g')=(a_ef_d+b_eg_d)+a_ef'+b_eg'+a'f+b'g,$$
$a_ef_d+b_eg_d=0$.  Since $F^\vee = (f_d, g_d)$ is a regular sequence and $(f_d)=Sf_d$, $b_e=cf_d$ for some $c\in S$.  Since $a_ef_d+cf_dg_d=(a_e+c\nu^{-1}(g_d))f_d=0$ and $f_d$ is regular, $a_e=-c\nu^{-1}(g_d)$ where $\nu \in \Aut^{\ZZ} S$ is the normalizing automorphism of $f_d$.  Since $\nu$ is also the normalizing automorphism of $f$ by Lemma \ref{lem.gna}, 
\begin{align*}
(c\nu^{-1}(g')+a')f+(-cf'+b')g 
& = c\nu^{-1}(g')f-cf'g+a'f+b'g \\
& = cfg'-cf'g+a'f+b'g \\
& = c(f_d+f')g'-cf'(g_d+g')+a'f+b'g  \\
& = cf_dg'-c(f-f_d)g_d+a'f+b'g \\
& = -c\nu^{-1}(g_d)(f-f_d)+cf_dg'+a'f+b'g \\
& = a_ef'+b_eg'+a'f+b'g = h.  
\end{align*}
Since $\deg c=e-d$,  $\deg (c\nu(g')+a')<e=\deg a$, so this contradicts the assumption that $\deg a$ is minimal.  It follows that $\deg a, \deg b\leq 0$, so $a, b\in k$.    
\end{proof} 

\begin{proposition} \label{prop.fgfg} 
Let $(f, g)$ and $(f', g')$ be strongly regular normal sequences of degree $d$ in a connected graded domain $S$ such that the normalizing automorphisms of $f$ and $f'$ are both graded. 
Then $S/(f, g)=S/(f', g')$ if and only if  $(f, g)\sim_s (f', g')$. 
\end{proposition}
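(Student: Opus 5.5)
The direction ``if'' is immediate. If $(f',g')=(\a_{11}f+\a_{21}g,\ \a_{12}f+\a_{22}g)$ with $(\a_{ij})\in\GL_2(k)$, then $f',g'\in I_{(f,g)}$. Conversely, inverting the matrix gives $f,g\in I_{(f',g')}$. Hence the two ideals coincide and $S/(f,g)=S/(f',g')$.

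For ``only if'', assume $I_{(f,g)}=I_{(f',g')}$. The plan is to apply Lemma \ref{lem.afbg} twice.
\begin{enumerate}
\item Since $f'\in I_{(f,g)}$ has degree $d$, and $(f,g)$ is strongly regular normal of degree $d$ with graded normalizing automorphism of $f$, Lemma \ref{lem.afbg} gives $f'=\a_{11}f+\a_{21}g$ for some scalars. The same argument gives $g'=\a_{12}f+\a_{22}g$.
\item By symmetry, since the normalizing automorphism of $f'$ is also graded, Lemma \ref{lem.afbg} applied to $(f',g')$ gives $f=\b_{11}f'+\b_{21}g'$ and $g=\b_{12}f'+\b_{22}g'$.
\end{enumerate}

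It remains to see that $(\a_{ij})$ is invertible. Substituting the second set of relations into the first gives $(f,g)$ expressed via the product matrix $(\b_{ij})(\a_{ij})$, in the appropriate order. It suffices to show that $f$ and $g$ are linearly independent over $k$, since then this product matrix is the identity and $(\a_{ij})\in\GL_2(k)$.

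For linear independence, suppose $\l f+\mu g=0$ with $(\l,\mu)\neq(0,0)$. If $\mu\neq 0$, then $g\in kf\subseteq (f)$, so $\bar g=0$ in $S/(f)$. A zero element is not regular unless $S/(f)=0$. But $S/(f)\neq 0$ because $f$ is a nonunit of positive degree $d$ in a connected graded algebra; here $f^\vee$ has degree $d\geq 1$. This contradicts the regularity of $\bar g$. If $\mu=0$, then $\l f=0$ forces $f=0$, which contradicts regularity of $f$ in the domain $S$.

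The only potential obstacle is this degenerate case, together with the point that $d\ge 1$, so that $f$ is not a unit. Everything else follows directly from Lemma \ref{lem.afbg}.
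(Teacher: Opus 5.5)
Your proof is correct and follows the paper's argument: apply Lemma \ref{lem.afbg} to each sequence to express $f',g'$ in terms of $f,g$ and vice versa, then use the linear independence of $f,g$ to see that the two coefficient matrices are mutually inverse. Your explicit proof of that linear independence, which the paper only asserts, is a good addition. One small correction: $f$ is a nonunit because $S$ is a domain (top-degree parts multiply to something nonzero, so degrees add), not merely because $S$ is connected graded.
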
  

\begin{proof} If $S/(f, g)=S/(f', g')$, then $f'=\a f+\b g, g'=\c f+\d g$ and $f=\a' f'+\b' g', g=\c' f+\d' g'$ 
for some 
$\a, \b, \c, \d, \a', \b', \c', \d'\in k$ by Lemma \ref{lem.afbg}.  
Since $f, g$ are linearly independent, $\begin{pmatrix} \a & \b \\ \c & \d \end{pmatrix}^{-1} =\begin{pmatrix} \a' & \b' \\ \c' & \d' \end{pmatrix}$.  
\end{proof} 

\begin{corollary} \label{cor.SRN} 
Let $F = (f, g)$ and $F' = (f', g')$ be strongly regular sequences of degree $d$ in $R = k[x_1, \dots, x_n]$. 
If 
$
R/(f, g)=R/(f', g'),
$
then $\sH^z(F) =\sH^z(F')$. 
\end{corollary}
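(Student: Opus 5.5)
The plan is to reduce to Proposition \ref{prop.fgfg} and then to Lemma \ref{lem.st} (1).

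First, check the hypotheses of Proposition \ref{prop.fgfg} for $R=k[x_1,\dots,x_n]$ with its standard grading. $R$ is a connected graded domain. Every element of $R$ is central, so every sequence in $R$ is normal. Hence a strongly regular sequence in $R$ is automatically strongly regular normal (indeed strongly regular central). The normalizing automorphisms of $f$ and $f'$ are both $\id$ by Lemma \ref{lem.reno} (2), since central regular elements have trivial normalizing automorphism. The identity is clearly graded. So all hypotheses of Proposition \ref{prop.fgfg} hold.

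Since $R/(f,g)=R/(f',g')$, Proposition \ref{prop.fgfg} gives $(f,g)\sim_s(f',g')$. Concretely, $f'=\a f+\b g$ and $g'=\c f+\d g$ with $\begin{pmatrix}\a&\b\\ \c&\d\end{pmatrix}\in\GL_2(k)$.

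Now apply Lemma \ref{lem.st} (1), which applies because $F$ and $F'$ are both of degree $d$. Every $f_i,f'_j$ has degree exactly $d$, so homogenization is linear on these elements: $(f'_j)^z=\sum_i\a_{ij}f_i^z$ in $R[z]$. Therefore $I_{F'^z}=I_{F^z}$, and $\sH^z(F)=\sH^z(F')$ as quotients of $R[z]$.

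One subtle point is that homogenization $(-)^{R,z}$ is linear only on elements of the same degree. We must make sure that $\a f+\b g$ still has degree $d$, rather than having its top-degree parts cancel. If cancellation occurred, $f'$ would have degree less than $d$, which contradicts the hypothesis that $F'$ is of degree $d$. So the homogenization is computed with exponent $d$ throughout, and the identity $(\a f+\b g)^z=\a f^z+\b g^z$ holds.

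I do not expect a real obstacle here. The substantive work was done in Lemma \ref{lem.afbg}.
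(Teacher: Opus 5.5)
Your proposal is correct and is essentially the paper's proof: the paper likewise applies Proposition~\ref{prop.fgfg} to get $F\sim_s F'$ and then concludes with Lemma~\ref{lem.st}~(1). Your explicit check that Proposition~\ref{prop.fgfg} applies in the commutative case, and your remark that $f', g'$ genuinely have degree $d$, only spell out what the paper leaves implicit. The check covers that normality is automatic and that the normalizing automorphisms are $\id$, hence graded.
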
  

\begin{proof} By Proposition \ref{prop.fgfg}, $F \sim_s F'$, 
so $\sH^z(F) = \sH^z(F')$ by Lemma \ref{lem.st} (1). 
\end{proof} 

\begin{lemma} \label{lem.bcn} 
If $B\in \cB_{n+1, m}$ and $z\in B_1$ is regular, then $\sD_z(B) \in \cB_{n, m}^\vee$.  
\end{lemma}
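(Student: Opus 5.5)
We must show: if $B=k[x_1,\dots,x_n,z]/I_F$ with $F=(h_1,\dots,h_m)$ a homogeneous regular sequence of degree $2$ (up to a linear change of variables we may take the regular element $z\in B_1$ to be the last variable), then $\sD_z(B)\cong k[x_1,\dots,x_n]/I_{F_z}$ and $F_z$ is a strongly regular sequence of degree $2$ in $k[x_1,\dots,x_n]$.

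The first step is to identify the algebra. A graded automorphism of $k[x_1,\dots,x_n,z]$ sends the linear form $z$ to the last variable and transports $F$ to another homogeneous regular sequence of degree $2$; so we may assume $z$ is a variable. Since $z$ is central and regular in $B$, Lemma \ref{lem.zin} and Lemma \ref{lemq.06} (with the commutative remark following Definition \ref{defn-dehomnor}) give
$$\sD_z(B)=B[z^{-1}]_0\cong B/(z-1)\cong k[x_1,\dots,x_n]/I_{F_z}.$$

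Next we check that $F_z$ is strongly regular. Since $B$ is commutative, normality and centrality are automatic. For regularity we use the standard commutative fact that a homogeneous regular sequence in a graded polynomial ring stays regular under any permutation. Hence $(F,z)$ is regular in $k[x_1,\dots,x_n,z]$, because $z$ is regular modulo $I_F$. Permuting, $(z,h_1,\dots,h_m)$ is regular, so $(\bar h_1,\dots,\bar h_m)$ is regular in $k[x_1,\dots,x_n,z]/(z)=k[x_1,\dots,x_n]$. Here $\bar h_i=h_i(x,0)$, the part of $h_i$ not involving $z$.

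It remains to relate these to $F_z$. Writing $h_i=q_i+z\ell_i+c_iz^2$ with $q_i$ quadratic and $\ell_i$ linear in the $x$'s, we have $(h_i)_z=q_i+\ell_i+c_i$.

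The main obstacle is that $(h_i)_z$ might have degree less than $2$, i.e. $q_i=0$. This cannot happen: the sequence $(q_1,\dots,q_m)$ is regular, so each $q_i\neq 0$, hence $\deg (h_i)_z=2$ and $((h_i)_z)^\vee=q_i$. Therefore $F_z^\vee=(q_1,\dots,q_m)$ is regular, and $F_z$ has degree $2$.

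For regularity of $F_z$ itself, note that $z-1$ is regular modulo $I_{F}$, since $B$ is a graded domain-free setting where a homogeneous regular $z$ makes $z-1$ regular by a standard leading-term argument. More directly, mirroring Lemma \ref{lem.zfr}, $F[z^{-1}]_0$ is a regular sequence in $\sD_z(k[x_1,\dots,x_n,z])=k[x_1,\dots,x_n]$, and under the isomorphism of Lemma \ref{lem.zin} it corresponds exactly to $F_z$. Alternatively, in the case $m\le 2$ one can invoke Remark \ref{rem.92}(1). Thus $F_z$ is a strongly regular sequence of degree $2$, and $\sD_z(B)\in\cB^\vee_{n,m}$.
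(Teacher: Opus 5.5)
Your proof is correct and follows essentially the same route as the paper. You identify $\sD_z(B)$ with $k[x_1,\dots,x_n]/I_{F_z}$ via Lemmas \ref{lem.zin} and \ref{lemq.06}, get regularity of $F_z$ from Lemma \ref{lem.zfr} applied to the regular sequence $(F,z)$, and get regularity of $F_z^\vee$ by killing $z$. There you use permutability of homogeneous regular sequences where the paper uses Lemma \ref{lem.612}, and you also justify $\deg (h_i)_z=2$, which the paper states without comment.

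The sentence claiming that $z-1$ is regular modulo $I_F$ is garbled. Even granted, it does not by itself give regularity of $F_z$, so drop it and let the Lemma \ref{lem.zfr} argument carry that step.
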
 

\begin{proof}  If $H = (h_1, \dots , h_m)$ is a homogeneous regular sequence of degree 2 in $k[x_1, \dots, x_n, z]$ such that 
$$
B = k[x_1, \dots, x_n, z]/I_H\in \cB_{n+1, m},
$$
then the map
$$
 \sD_z(B) \to k[x_1, \dots, x_n]/I_{H_z}; \; fz^{-\deg f}\mapsto f_z
$$ 
is an isomorphism by Lemma \ref{lem.zin} and Lemma \ref{lemq.06}.  
Since $z\in B_1$ is regular,
$H_z$ is a regular sequence of degree $2$ in $k[x_1, \dots, x_n]$ by Lemma \ref{lem.zfr}. 
Since 
$\deg (h_i)_z=\deg h_i=2$ for every $i=1, \dots, m$, $(H_z)^z=H$ is a homogeneous regular sequence of degree $2$ in $k[x_1, \dots, x_n, z]$ by Lemma \ref{lemq.05} (2). 
Since $((H_z)^z, z)=(H, z)$ is a regular sequence, $(H_z)^\vee$ is a regular sequence of degree $2$ in $k[x_1, \dots, x_n]$ by  Lemma \ref{lem.612}, so
\begin{equation*}
\sD_z(B) \cong k[x_1, \dots, x_n]/I_{H_z}\in \cB_{n, m}^\vee. \qedhere
\end{equation*}
\end{proof}

\section{Noncommutative quadric hypersurfaces}

\subsection{Duality} 

Let $V$ be a finite dimensional vector space. A {\it quadratic algebra} $A = T(V)/(W)$ is a quotient algebra of the tensor algebra $T(V)$ where $W \subset V \otimes V$ is a subspace. The {\it quadratic dual} of $A$ is the quadratic algebra $A^!: = T(V^*)/(W^\perp)$ where $V^*$ is the dual vector space and $W^\perp \subset V^* \otimes V^*$ is the orthogonal complement of $W$.  

A locally finite connected graded algebra $A$ is called a {\it Koszul algebra} if the trivial module $k_A$ has a free resolution
$$
\xymatrix{
\cdots \ar[r] & P^n \ar[r] & \cdots \ar[r] & P^1 \ar[r] & P^0 \ar[r] & k_A \ar[r] & 0
}
$$
where $P^n$ is a graded free right $A$-module generated in degree $n$ for each $n \geq 0$. 

\begin{lemma} [{\cite[Theorem 1.2]{ST}}] \label{lem.kos} 
Let $A$ be a quadratic algebra. 
\begin{enumerate}
\item{} $A$ is Koszul if and only if $A^!$ is Koszul.   
\item{} If $A$ is Koszul, then $H_{A^!}(t)=1/H_A(-t)$.  
\item{} If $A$ is Koszul and $f\in A_2$ is a regular normal element, then $A/(f)$ is also Koszul.  
\end{enumerate} 
\end{lemma}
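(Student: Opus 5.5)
Since this result is quoted from \cite{ST}, the plan is to reconstruct the standard arguments. Throughout we use the characterization: a quadratic algebra $A=T(V)/(W)$ is Koszul if and only if $\Ext^i_A(k,k)_j=0$ for $j\neq -i$ (diagonal Yoneda algebra). Equivalently, the Koszul complex $K(A)$, with $K^n=(A^!_n)^*\otimes A$ and the canonical differential, is a resolution of $k_A$. Recall that $(A^!_n)^*=\bigcap_{i}V^{\otimes i-1}\otimes W\otimes V^{\otimes n-i-1}\subset V^{\otimes n}$.

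For (1) I would use Backelin's lattice criterion. $A$ is Koszul if and only if, for every $n$, the subspaces $W_i:=V^{\otimes i-1}\otimes W\otimes V^{\otimes n-i-1}$ ($1\le i\le n-1$) generate a distributive lattice in $V^{\otimes n}$. Taking orthogonal complements in $(V^*)^{\otimes n}$ sends $W_i$ to $V^{*\otimes i-1}\otimes W^\perp\otimes V^{*\otimes n-i-1}$. It also interchanges sums and intersections, and hence preserves distributivity. Thus the criterion for $A$ is literally the criterion for $A^!$. The one real input is Backelin's theorem identifying exactness of the Koszul complex (equivalently, purity of $\operatorname{Tor}^A(k,k)$) with distributivity; I would cite it rather than reprove it.

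For (2), if $A$ is Koszul then $K(A)$ is exact. Its $n$-th term has Hilbert series $\dim A^!_n\, t^n H_A(t)$. Taking the Euler characteristic gives $\sum_n(-1)^n\dim A^!_n t^n\cdot H_A(t)=1$, that is, $H_{A^!}(-t)H_A(t)=1$. Replacing $t$ by $-t$ gives the claim.

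For (3), which I expect to be the main obstacle, set $B=A/(f)$ with $f\in A_2$ regular normal. Since $f$ is regular normal, $0\to A(-2)\to A\to B\to 0$ is a graded free resolution of $B_A$. Hence $\uExt^q_A(B,k)$ is $k$ for $q=0$, is $k(2)$ for $q=1$, and vanishes otherwise, all as $B$-modules. Feed this into the Cartan--Eilenberg change-of-rings spectral sequence $\uExt^p_B(k,\uExt^q_A(B,k))\Rightarrow \uExt^{p+q}_A(k,k)$. It has only two rows, so it collapses to a long exact sequence
$$\cdots\to \uExt^{n}_B(k,k)\to\uExt^n_A(k,k)\to \uExt^{n-1}_B(k,k)(2)\to\uExt^{n+1}_B(k,k)\to\uExt^{n+1}_A(k,k)\to\cdots.$$
Next I argue by induction on $n$ that $\uExt^n_B(k,k)$ is concentrated in internal degree $-n$. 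Suppose this holds for smaller indices. Then $\uExt^{n-2}_B(k,k)(2)$ lives in degree $-(n-2)-2=-n$, and $\uExt^n_A(k,k)$ lives in degree $-n$ by Koszulity of $A$. Exactness of the segment $\uExt^{n-2}_B(k,k)(2)\to\uExt^n_B(k,k)\to\uExt^n_A(k,k)$ then forces $\uExt^n_B(k,k)$ into degree $-n$.

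The delicate points are twofold. The first is checking that the $B$-module structure on $\uExt^1_A(B,k)$ is indeed the trivial one on $k(2)$. This requires normality of $f$: the twist by the normalizing automorphism acts trivially on $k$. The second is keeping the internal-degree bookkeeping and sign conventions consistent. As a cross-check, I would verify the resulting Hilbert series identity $H_B(t)=(1-t^2)H_A(t)$ against Lemma \ref{lem.Tak}.
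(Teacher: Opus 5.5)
Your reconstruction is correct. The paper gives no argument of its own for this lemma: it simply quotes it from \cite{ST}. So your proposal is a self-contained replacement for a citation, not a variant of a proof in the paper.

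For (1), the real content is outsourced to Backelin's distributivity criterion. The duality then follows formally, as you say, because $U\mapsto U^{\perp}$ is a lattice anti-isomorphism sending $V^{\otimes i-1}\otimes W\otimes V^{\otimes n-i-1}$ to $V^{*\otimes i-1}\otimes W^{\perp}\otimes V^{*\otimes n-i-1}$, and distributivity is self-dual. For (2), you use the standard Euler-characteristic computation on the exact Koszul complex.

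Your argument for (3) via the two-row change-of-rings spectral sequence is sound. The induction correctly uses $\deg f=2$, which is exactly what makes the shift $(2)$ put $\uExt^{n-2}_B(k,k)$ back on the diagonal, so the exact segment forces $\uExt^n_B(k,k)$ into degree $-n$.

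One simplification: you do not need normality to identify the $B$-module structure on $\uExt^1_A(B,k)$. It is a one-dimensional graded $B$-module concentrated in degree $-2$, so $B_{\geq 1}$ acts on it by zero for degree reasons alone. Normality and regularity of $f$ are genuinely used earlier, in the length-one resolution $0\to A(-2)\xrightarrow{f\cdot}A\to B\to 0$ of $B_A$. Left regularity makes left multiplication by $f$ injective, and normality gives $fA=(f)$, so the cokernel is $B$.

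What each approach buys: the paper's citation is economical. Yours makes the dependence explicit, on Backelin's theorem for (1) and only on the Cartan--Eilenberg spectral sequence for (3).
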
 

\begin{lemma}[{\cite[Proposition 5.10]{Shi96}}]  \label{lem.smith} 
Let $A$ be a noetherian connected graded algebra. Then $A \in \cA_{n,0}$ if and only if $A^!$ is a Frobenius Koszul algebra such that $H_{A^!}(t)=(1+t)^n$.
\end{lemma}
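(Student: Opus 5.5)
The plan is to prove both directions through the Koszul resolution of $k_A$, using the Ext algebra $E(A)=\bigoplus_i \uExt^i_A(k,k)$ as the bridge. Recall that $A$ is Koszul exactly when $E(A)\cong A^!$. For a Koszul $A$, the Koszul complex $K_\bullet=A\otimes (A^!_\bullet)^*$ is the minimal free resolution of $k_A$, and its $i$-th term is generated in degree $i$.

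\emph{($\Rightarrow$).} Let $A\in\cA_{n,0}$ and let $P^\bullet\to k_A$ be a minimal graded free resolution. Since $\gldim A=n$, it has length $n$.
\begin{enumerate}
\item Dualize with $\uExt_A(-,A)$. The Gorenstein condition $\uExt^i_A(k,A)=k(n)$ for $i=n$ and $0$ otherwise says that $\operatorname{Hom}_A(P^\bullet,A)$, read backwards, is a minimal free resolution of the left module ${}_Ak$ shifted by $n$. Here I read the Gorenstein parameter as $n$, as written in Definition~\ref{def.qpa}; if the definition allowed a general $\ell$, the Hilbert series argument in step 3 would force $\ell=n$.
\item Comparing the two minimal resolutions gives $P^{n-i}\cong (P^i)^\vee(-n)$, where $(-)^\vee$ is the graded $A$-dual, for all $i$. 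In particular $P^n=A(-n)$.
\item Write $P^i=\bigoplus_j A(-j)^{b_{ij}}$. Minimality and generation in degree $1$ force $b_{ij}=0$ for $j<i$. Applying the symmetry of step 2 to the index $n-i$ gives $b_{ij}=0$ for $j>n-(n-i)=i$. Hence the resolution is linear and $A$ is Koszul.
\item The Hilbert series now gives $H_{A^!}(t)=1/H_A(-t)=(1+t)^n$ by Lemma~\ref{lem.kos}.
\item Finally, $E(A)=A^!$ is Frobenius. The pairing $A^!_i\times A^!_{n-i}\to A^!_n\cong k$ is the Yoneda product, and it is nondegenerate by the duality of step 2. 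This is the standard fact that the Ext algebra of an AS-regular algebra is Frobenius.
\end{enumerate}

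\emph{($\Leftarrow$).} Suppose $A^!$ is Frobenius and Koszul with $H_{A^!}(t)=(1+t)^n$.
\begin{enumerate}
\item By Lemma~\ref{lem.kos}, $A$ is Koszul and $H_A(t)=1/H_{A^!}(-t)=(1-t)^{-n}$.
\item The Koszul complex $A\otimes (A^!_\bullet)^*$ has length $n$, because $A^!_i=0$ for $i>n$ and $A^!_n\cong k$. So $\operatorname{pd}k_A=n$, and symmetrically for left modules, giving $\gldim A=n$. Noetherianity of $A$ is part of the hypothesis.
\item For the Gorenstein condition, $\uExt^\bullet_A(k,A)$ is computed by the complex $\operatorname{Hom}_A(A\otimes (A^!_\bullet)^*,A)\cong A^!_\bullet\otimes A$.
\item The Frobenius form identifies $A^!_i$ with $(A^!_{n-i})^*$, possibly twisted by the Nakayama automorphism. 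Under this identification the complex of step 3 becomes the left Koszul resolution of ${}_Ak$, reindexed by $i\mapsto n-i$ and shifted by $n$.
\item Its cohomology is therefore $k(n)$ concentrated in degree $n$. The same argument on the other side handles $A^o$.
\end{enumerate}

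The hardest step is step 4 of the converse: checking that the Frobenius identification $A^!_i\cong (A^!_{n-i})^*$ really carries the Koszul differentials of the dual complex to those of the left Koszul complex. I would verify this by writing both differentials as right, respectively left, multiplication by the canonical element $\sum_j x_j\otimes x_j^*$ and using associativity of the Frobenius form, which converts left multiplication into right multiplication up to the Nakayama twist. The twist only changes the module structure on $k$ by an automorphism and does not affect the cohomology. Step 3 of the forward direction, linearity of the resolution, is the other key point, but it follows directly once the self-duality in step 2 is in place.
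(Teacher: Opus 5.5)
You are right that the paper gives no proof here; it only quotes \cite[Proposition 5.10]{Shi96}. Your self-contained route through minimal resolutions and the Koszul complex is reasonable, and your converse is essentially complete. However, step 5 of ($\Rightarrow$) is not actually proved, so there is a gap.

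\textbf{Why step 5 fails as written.} The ``duality of step 2'' is an abstract isomorphism $P^{n-i}\cong (P^i)^\vee(-n)$ of free modules. In other words, it is only an equality of graded Betti numbers. It carries no information about products in $A^!$, so it cannot give nondegeneracy of $A^!_i\times A^!_{n-i}\to A^!_n$. Falling back on ``the Ext algebra of an AS-regular algebra is Frobenius'' does not help: in the Koszul case that is precisely the forward half of the statement you are proving.

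\textbf{The missing argument.} It is short and uses the same dual complex as your converse.
\begin{enumerate}
\item Take $K_i=(A^!_i)^*\otimes A$ with differential $v_1\cdots v_i\otimes b\mapsto v_1\cdots v_{i-1}\otimes v_ib$. Then $\operatorname{Hom}_A(K_i,A)\cong A\otimes A^!_i$, with $A^!_i$ in internal degree $-i$, and the differential is $a\otimes\xi\mapsto\sum_j ax_j\otimes \xi x_j^*$.
\item Suppose $0\neq \xi\in A^!_i$ with $i<n$ and $\xi A^!_1=0$. Then $1\otimes\xi$ is a cocycle of internal degree $-i$.
\item It is not a coboundary, because the degree $-i$ part of the previous term is $A_{-1}\otimes A^!_{i-1}=0$. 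So it is a nonzero element of $\uExt^i_A(k,A)$, contradicting the Gorenstein condition.
\item Iterating (apply the same to $\xi x_j^*\neq 0$, and so on), $\xi A^!_{n-i}\neq 0$ for every nonzero $\xi\in A^!_i$.
\item Let $\epsilon$ be the projection onto $A^!_n\cong k$. Since $\dim A^!_n=1$ and $\dim A^!_i=\dim A^!_{n-i}$, the pairings $(\xi,\eta)\mapsto \epsilon(\xi\eta)$ on $A^!_i\times A^!_{n-i}$ are perfect. Hence $A^!$ is Frobenius.
\end{enumerate}

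With this inserted the proof goes through. Compared with the bare citation, it has the merit of showing that both directions come from one fact: $\operatorname{Hom}_A(K_\bullet,A)$ is the left Koszul complex, reindexed by $i\mapsto n-i$ and shifted by $n$.

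\textbf{Smaller points.}
\begin{enumerate}
\item In step 2 you are comparing a right resolution with a left one. This is harmless, since both have graded Betti numbers $\dim_k\operatorname{Tor}^A_i(k,k)_j$, but it should be said.
\item Generation in degree $1$ is not a hypothesis of the statement, and you do not need it. Connectedness alone gives $b_{ij}=0$ for $j<i$. The resulting linearity then shows that $A$ is quadratic, so $A^!$ is defined.
\item In the converse, no Nakayama twist appears if you use $\Phi(\xi)=\epsilon(\xi\,\cdot\,)$. Indeed $\Phi(\xi x_j^*)=\Phi(\xi)\cdot x_j^*$, where $(w\cdot\phi)(\eta)=w(\phi\eta)$. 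So $\id\otimes\Phi$ carries the dual differential exactly onto $a\otimes w\mapsto\sum_j ax_j\otimes w\cdot x_j^*$, which is the left Koszul differential.
\item You should say why the ungraded Frobenius form may be replaced by this $\epsilon$. Since $A^!$ is local and Frobenius, its right socle is simple. That socle is graded and contains $A^!_n$, so it equals $A^!_n$, and this yields the perfect graded pairings.
\end{enumerate}
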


\begin{lemma}[{\cite[Corollary 1.4]{ST}, \cite[Section 5]{HY}}] \label{lem.asf} 
Let $S\in \cA_{n, 0}$.   For every $f\in RN(S)_2$ so that $A=S/(f)\in \cA_{n, 1}$, there exists a unique (up to scalar) 
element $f^!\in RN(A^!)_2$ such that $S^!=A^!/(f^!)$.  Moreover, $f\in RZ(S)_2$ if and only if $f^!\in RZ(A^!)_2$.   
\end{lemma}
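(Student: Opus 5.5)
The plan is to work inside the Koszul duality framework. Write $S=T(V)/(W)$ with $\dim V=n$, so that $A=S/(f)=T(V)/(W+kf)$ once $f$ is lifted to $V\otimes V$. Then $A^!=T(V^*)/(W^\perp\cap f^\perp)$. This already gives a canonical surjection $A^!\to S^!$, since $W^\perp\cap f^\perp\subset W^\perp$.

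The first step is to identify its kernel. By Lemma \ref{lem.smith}, $S$ is Koszul and $H_{S^!}(t)=(1+t)^n$. By Lemma \ref{lem.kos}(3), $A$ is Koszul, and since $f$ is regular normal, Lemma \ref{lem.Tak} gives $H_A(t)=(1-t^2)(1-t)^{-n}$. Lemma \ref{lem.kos}(2) then yields
\[
H_{A^!}(t)=\frac{(1+t)^n}{1-t^2}=(1+t)^n(1+t^2+t^4+\cdots).
\]
Because $\dim W^\perp/(W^\perp\cap f^\perp)=1$, the kernel of $A^!\to S^!$ in degree $2$ is spanned by the image of a single element $f^!\in W^\perp\setminus f^\perp$, and this element is unique up to scalar. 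Uniqueness of $f^!$ in the full statement then follows: any $g\in A^!_2$ with $A^!/(g)=S^!$ must span the degree-$2$ part of that same kernel.

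The second step is to show that the kernel is exactly $(f^!)$ and that $f^!$ is regular normal. The Hilbert series computation shows $H_{A^!}(t)=H_{S^!}(t)/(1-t^2)$. So once $f^!$ is known to be normal, Lemma \ref{lem.Tak} reduces regularity to the equality $H_{A^!/(f^!)}=H_{S^!}$, which is the same as $A^!/(f^!)=S^!$. That equality holds because both algebras are quadratic with the same relation space $(W^\perp\cap f^\perp)+kf^!=W^\perp$.

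Everything therefore reduces to normality of $f^!$. I would prove it by showing that $f^!$ spans the one-dimensional space $\{\xi\in A^!_2 : \xi\, A^!_1=A^!_1\,\xi\}$. Dually, I would use the normalizing automorphism $\nu$ of $f$ (Lemma \ref{lem.reno}), which is graded since $f$ is homogeneous. The relation $xf=f\nu(x)$ for $x\in V$ lives in degree $3$ of $T(V)$ modulo $W\otimes V+V\otimes W$. Pairing against $(V^*)^{\otimes3}$, this should translate into $\xi f^!=f^!\,\nu^*(\xi)$ in $A^!_3$ for $\xi\in V^*$, where $\nu^*$ is the transpose of $\nu|_V$. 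Lemma \ref{lem.reno}(1) then gives normality.

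This translation is the main obstacle. One must check that the degree-$3$ component $(A^!_3)^*=(W+kf)\otimes V\cap V\otimes(W+kf)$ is exactly what is needed. Concretely, this is the Koszul identity that $(A^!_3)^*=R_3$ for the relations $R$ of $A$. This identity is where Koszulity of $A$ is genuinely used, and it is the step I would handle following \cite{ST}, via the description $A^!\cong \uExt_A^*(k,k)$. In that description $f^!$ arises as the degree-$2$ Yoneda class corresponding to the extra relation $f$, and it is normal because $f$ is normal.

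For the moreover part I would reuse the same correspondence. Here $\nu^*=\id$ exactly when $\nu=\id$, so $f$ is central if and only if $f^!$ commutes with $A^!_1$. Since $A^!$ is generated in degree $1$, the latter holds if and only if $f^!$ is central.
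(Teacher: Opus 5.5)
Your reduction is sound. The canonical surjection $A^!\to S^!$ has a one-dimensional kernel in degree $2$, which gives uniqueness of $f^!$ if $S^!=A^!/(f^!)$ is read as an equality of quotients of $T(V^*)$. Koszulity of $A$ gives $H_{A^!}(t)=(1+t)^n/(1-t^2)$. Finally, $A^!/(f^!)=T(V^*)/(W^\perp)=S^!$ holds on the nose, so Lemma \ref{lem.Tak} turns normality of $f^!$ into regularity.

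The paper gives no argument of its own; it cites \cite[Corollary 1.4]{ST} and \cite[Section 5]{HY}. What matters is therefore the step you leave open, normality of $f^!$, and there your diagnosis is wrong. The identity $(A^!_3)^*=(R\otimes V)\cap(V\otimes R)$, with $R=W\oplus k\tilde f$, is pure linear algebra and holds for every quadratic algebra, because $(R^\perp\otimes V^*+V^*\otimes R^\perp)^\perp=(R\otimes V)\cap(V\otimes R)$. Koszulity is needed only for the Hilbert series of $A^!$. Once you see this, the pairing argument you sketched closes directly, with no appeal to $\uExt_A(k,k)$.

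Concretely, pick $\phi\in W^\perp$ with $\phi(\tilde f)=1$, so that $f^!=\bar\phi$. Given $u\in(R\otimes V)\cap(V\otimes R)$, write $u=w_1+\tilde f\otimes a=w_2+b\otimes\tilde f$ with $w_1\in W\otimes V$, $w_2\in V\otimes W$ and $a,b\in V$. Then for $\xi,\eta\in V^*$ you get $(\xi\otimes\phi)(u)=\xi(b)$ and $(\phi\otimes\eta)(u)=\eta(a)$. On the other hand, $b\otimes\tilde f-\tilde f\otimes a=w_1-w_2$ says that $bf=fa$ in $S_3$. Combined with $bf=f\nu(b)$ and left regularity of $f$, this forces $a=\nu(b)$. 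Hence $\xi\otimes\phi-\phi\otimes(\xi\circ\nu^{-1})$ annihilates $(A^!_3)^*$, that is, $\xi f^!=f^!\,(\xi\circ\nu^{-1})$ in $A^!$ for all $\xi\in A^!_1$. Lemma \ref{lem.reno}(1) then gives normality.

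With this pairing convention the twist comes out as $(\nu^{-1})^*$ rather than your $\nu^*$, which is immaterial. Note also that only regularity and normality of $f$ in $S$ are used here. This makes your route more elementary than the Ext/change-of-rings argument of \cite{ST}, at the cost of not producing Koszulity of $A$ along the way.

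The same formula gives the moreover part, but the direction ``$f^!$ central $\Rightarrow$ $f$ central'' uses regularity of $f^!$. From $f^!\xi=\xi f^!=f^!(\xi\circ\nu^{-1})$ you must cancel $f^!$ on the left to conclude $\nu|_V=\id$.

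Finally, discard the claim that $f^!$ spans ``the one-dimensional space $\{\xi\in A^!_2:\xi A^!_1=A^!_1\xi\}$''. That set need not be one-dimensional, and you do not need it. For $A=k[x,y,z]/(x^2)$ (the algebra $D_1$ of Table \ref{tab-cad}) one has $A^!=k\langle x,y,z\rangle/(xy+yx,yz+zy,zx+xz,y^2,z^2)$. There every monomial $x^2,xy,xz,yz$ of $A^!_2$ satisfies the condition.
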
 

\begin{lemma} \label{lem.Kono} 
If $A$ is a Koszul algebra and $F = (f_1, \dots, f_m)$ is a homogeneous regular normal sequence of degree $2$, then $A/I_F$ is a Koszul algebra, and there exists a homogeneous regular normal sequence $F^! := (f_m^!, \dots, f_1^!)$ of degree $2$ in $(A/I_F)^!$ 
such that $A^!=(A/I_F)^!/I_{F^!}$. 
\end{lemma}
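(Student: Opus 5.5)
We argue by induction on $m$, using the single-element case, Lemma \ref{lem.kos} (3) together with Lemma \ref{lem.asf}. For $m=0$ there is nothing to prove. For $m=1$: $A/(f_1)$ is Koszul by Lemma \ref{lem.kos} (3). The element $f_1^!\in (A/(f_1))^!_2$ with $A^!=(A/(f_1))^!/(f_1^!)$ is produced by the argument of \cite[Corollary 1.4]{ST}. That argument uses only Koszulity of $A$ and regularity and normality of $f_1$, not the quantum polynomial hypothesis, so Lemma \ref{lem.asf} applies in this generality. The element $f_1^!$ is regular and normal, because the Hilbert series identity $H_{A^!}(t)=(1-t^2)H_{(A/(f_1))^!}(t)$ follows from Lemma \ref{lem.kos} (2). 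Indeed, $H_{A/(f_1)}(t)=(1-t^2)H_A(t)$ gives
$$H_{(A/(f_1))^!}(t)=1/H_{A/(f_1)}(-t)=1/\bigl((1-t^2)H_A(-t)\bigr)=H_{A^!}(t)/(1-t^2).$$
Given normality of $f_1^!$, this identity yields regularity by Lemma \ref{lem.Tak}.

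For the inductive step, set $A_1:=A/(f_1)$ and let $\bar F=(\bar f_2,\dots,\bar f_m)$. This is a homogeneous regular normal sequence of degree $2$ in $A_1$, and $A/I_F=A_1/I_{\bar F}$. By the case $m=1$, $A_1$ is Koszul and $f_1^!\in RN(A_1^!)_2$ satisfies $A^!=A_1^!/(f_1^!)$. By the induction hypothesis applied to $A_1$ and $\bar F$, the algebra $A/I_F$ is Koszul. We also obtain a homogeneous regular normal sequence $(f_m^!,\dots,f_2^!)$ of degree $2$ in $(A/I_F)^!$ with $(A/I_F)^!/(f_m^!,\dots,f_2^!)=A_1^!$. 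Appending $f_1^!$, which is regular normal in $A_1^!$, gives the sequence $F^!=(f_m^!,\dots,f_2^!,f_1^!)$. It is a homogeneous regular normal sequence in $(A/I_F)^!$ by Definition \ref{defn-regseq}, and
$$(A/I_F)^!/I_{F^!}=A_1^!/(f_1^!)=A^!.$$
In the last step $f_1^!$ is lifted to any preimage in $(A/I_F)^!_2$. Normality and regularity are only required of the image in the successive quotient, so the choice of lift does not matter.

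The main obstacle is the base case. We must check that the construction in \cite{ST} of $f^!$ for a regular normal quadratic element of a Koszul algebra is valid without assuming $A$ is a quantum polynomial algebra. Concretely, $f^!$ is a central-like element spanning the one-dimensional annihilator-type subspace of $(A/(f))^!_2$ that corresponds to the relation $f$. Its normality comes from $A^!\cong (A/(f))^!/(f^!)$ being a quotient by a quadratic normal ideal, and this is where the Koszul duality of Ext algebras $E(A/(f))$ and $E(A)$ enters. We would cite \cite[Corollary 1.4]{ST}, which is stated for Koszul algebras, and then deduce regularity from the Hilbert series computation above via Lemma \ref{lem.Tak}.
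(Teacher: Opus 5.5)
Your proof is correct and follows the same approach as the paper. The paper likewise gets the case $m=1$ from Lemma \ref{lem.kos} and Lemma \ref{lem.asf} (that is, \cite[Corollary 1.4]{ST}, implicitly applied to arbitrary Koszul algebras exactly as you point out) and then finishes by induction; your peeling-off of $f_1$ with the lifting of $f_1^!$ spells out that induction, and your Hilbert-series check of regularity via Lemma \ref{lem.Tak} is a harmless extra, since the Shelton--Tingey result already provides $f_1^!$ as a regular normal element.
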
 

\begin{proof} The result holds for $m=1$ by Lemma \ref{lem.kos} (2) and Lemma \ref{lem.asf}, and the general case follows by induction.
\end{proof} 

Let $A$ be a locally finite $\NN$-graded algebra. 
If $H_A(t)=b(t)/(1-t)^d$ for some $b(t)\in \ZZ[t]$ such that $b(1)\neq 0$, then $\GKdim A=d$ by \cite[Corollary 2.2]{SZ}.

\begin{lemma} \label{lem.KAS}
Let $A=S/(h_1, \dots, h_m)\in \cA_{n, m}$.
\begin{enumerate}
\item{} $A$ is a Koszul AS-Gorenstein  algebra of dimension $n-m$ and of Gorenstein parameter $n-2m$ such that $\GKdim A=n-m$.  In particular, every $A\in \cA_{n, n}$ is a Frobenius Koszul algebra.  
\item{} $A^!$ is a  Koszul AS-Gorenstein algebra of dimension $m$ and of Gorenstein parameter $2m-n$ such that $\GKdim A^!=m$.
\end{enumerate} 
\end{lemma}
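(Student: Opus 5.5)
We argue by induction on $m$, using Ree's lemma (Lemma \ref{lem.Ree}) for the AS-Gorenstein part, Lemma \ref{lem.Kono} for Koszulity, and Lemma \ref{lem.Tak} for the Hilbert series, from which the GK-dimension is read off.

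For (1), write $A_j:=S/(h_1,\dots,h_j)$, so $A_0=S\in\cA_{n,0}$ and $A_m=A$. By Definition \ref{def.qpa}, $S$ is noetherian, AS-Gorenstein (indeed AS-regular) of dimension $n$ and Gorenstein parameter $n$. Since $\bar h_{j}\in RN(A_{j-1})_2$, Lemma \ref{lem.Ree} (1) shows that each $A_j$ is noetherian. Lemma \ref{lem.Ree} (2), applied with $d=2$, lowers the dimension by $1$ and the Gorenstein parameter by $2$ at each step. After $m$ steps $A$ is AS-Gorenstein of dimension $n-m$ and parameter $n-2m$.

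Koszulity of $A$ follows from Lemma \ref{lem.Kono}, since $S$ is Koszul by Lemma \ref{lem.smith} together with Lemma \ref{lem.kos} (1) (as $S^!$ is Koszul). By Lemma \ref{lem.Tak},
$$H_A(t)=(1-t^2)^m(1-t)^{-n}=(1+t)^m(1-t)^{-(n-m)}.$$
Since $(1+t)^m$ evaluates to $2^m\neq 0$ at $t=1$, \cite[Corollary 2.2]{SZ} gives $\GKdim A=n-m$. If $m=n$, then $A$ is AS-Gorenstein of dimension $0$, hence Frobenius by Lemma \ref{lem.FAS}; it is finite dimensional because $H_A(t)=(1+t)^n$.

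For (2), $A^!$ is Koszul by Lemma \ref{lem.kos} (1). By Lemma \ref{lem.Kono} there is a homogeneous regular normal sequence $F^!$ of length $m$ and degree $2$ in $A^!$ with $A^!/I_{F^!}=S^!$. By Lemma \ref{lem.smith}, $S^!$ is Frobenius with $H_{S^!}(t)=(1+t)^n$. By Lemma \ref{lem.FAS}, $S^!$ is therefore AS-Gorenstein of dimension $0$, and its Gorenstein parameter is $-n$, because the socle of $S^!$ sits in degree $n$.

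We now run Ree's lemma upward $m$ times. The noetherian hypothesis on the intermediate quotients of $A^!$ is supplied by Lemma \ref{lem.Ree} (1), starting from the finite-dimensional algebra $S^!$. Each step raises the dimension by $1$ and the parameter by $2$, so $A^!$ is AS-Gorenstein of dimension $m$ and parameter $2m-n$. Finally, Lemma \ref{lem.Tak} gives $H_{S^!}(t)=(1-t^2)^mH_{A^!}(t)$, so
$$H_{A^!}(t)=(1+t)^{n-m}(1-t)^{-m}$$
(consistent with Lemma \ref{lem.kos} (2)), and hence $\GKdim A^!=m$.

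The main obstacle is bookkeeping rather than a new idea. One must check the Gorenstein-parameter convention in Lemma \ref{lem.FAS}: a Frobenius algebra whose top degree is $n$ has $\uExt^0(k,A)\cong k(-n)$, so its parameter is $-n$. One must also verify that Ree's lemma is applied to a regular normal element that is not a unit, and that noetherianity propagates upward, which holds since it is an "if and only if" statement.
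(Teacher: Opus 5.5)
Your proof is correct and follows essentially the same route as the paper. The paper likewise applies Ree's lemma (Lemma \ref{lem.Ree}) downward from $S$ (dimension $n$, parameter $n$) for $A$, and upward from $S^!$ (dimension $0$, parameter $-n$) along the dual regular normal sequence supplied by Lemma \ref{lem.Kono} for $A^!$; Koszulity and GK-dimension are obtained from Lemmas \ref{lem.kos}, \ref{lem.Kono} and \ref{lem.Tak}. The only cosmetic difference is that you compute $H_{A^!}(t)$ by applying Lemma \ref{lem.Tak} to $F^!$ in $A^!$, whereas the paper uses the Koszul identity $H_{A^!}(t)=1/H_A(-t)$; both give $(1+t)^{n-m}/(1-t)^m$.
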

 
\begin{proof} By Lemma \ref{lem.kos}, $A, A^!$ are Koszul.   
By Lemma \ref{lem.Tak}, 
$$
H_A(t)=(1-t^2)^m/(1-t)^n=(1+t)^m/(1-t)^{n-m}, 
$$
so $\GKdim A=n-m$.  By Lemma \ref{lem.kos} (3), 
$H_{A^!}(t)=1/H_A(-t)=(1+t)^{n-m}/(1-t)^{m}$, so $\GKdim A^!=m$.

(1) Since $S$ is an AS-Gorenstein algebra of dimension $n$ and of Gorenstein parameter $n$, $A$ is an AS-Gorenstein algebra of dimension $n-m$ and of Gorenstein parameter $n-2m$ by Lemma \ref{lem.Ree}.   
The last statement follows from Lemma \ref{lem.FAS}.  

(2)   Since $S^!$ is an AS-Gorenstein algebra of dimension 0 and of Gorenstein parameter $-n$, and $S^!=A^!/(h_m^!, \dots, h_1^!)$, where 
$(h_m^!, \dots, h_1^!)$ is a homogeneous regular normal sequence of degree $2$ in $A^!$  by Lemma \ref{lem.Kono}, the result follows by Lemma \ref{lem.Ree} (1) again. 
\end{proof}

\begin{lemma} \label{lem.48} 
Let $A\in \cA_{n, m}$.   If $A^!$ is commutative, then $A^!\in \cB_{n, n-m}$. 
\end{lemma}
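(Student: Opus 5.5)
We need to show: if $A = S/(h_1,\dots,h_m)\in\cA_{n,m}$ and $A^!$ is commutative, then $A^!\in\cB_{n,n-m}$, i.e.\ $A^!\cong k[x_1,\dots,x_n]/(g_1,\dots,g_{n-m})$ for some homogeneous regular sequence $(g_1,\dots,g_{n-m})$ of degree $2$.

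\textbf{Presentation of $A^!$.} Since $A$ is quadratic and generated by $V=A_1$ with $\dim V=n$, $A^!=T(V^*)/(W^\perp)$ is generated by $n$ elements of degree $1$. Because $A^!$ is commutative, every commutator $x_ix_j-x_jx_i$ lies in $W^\perp$. Hence the natural surjection $T(V^*)\to A^!$ factors through $P:=k[x_1,\dots,x_n]=T(V^*)/(C)$, where $C\subset V^*\otimes V^*$ is the span of the commutators. Thus $A^!=P/J$, where $J$ is the ideal generated by the image $U$ of $W^\perp$ in $P_2=\mathrm{Sym}^2V^*$. Since $\dim C=\binom n2$, we get
\[
\dim U=\dim W^\perp-\tbinom n2=\dim P_2-\dim A^!_2 .
\]

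\textbf{Counting generators.} By Lemma \ref{lem.KAS}, $A^!$ is Koszul with $H_{A^!}(t)=(1+t)^{n-m}/(1-t)^m$. Expanding to order two,
\[
\dim A^!_2=\tbinom{n-m}{2}+m(n-m)+\tbinom{m+1}{2}.
\]
A short computation gives $\dim P_2-\dim A^!_2=\binom{n+1}2-\dim A^!_2=n-m$. So $J$ is generated by $n-m$ quadrics $g_1,\dots,g_{n-m}$.

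\textbf{Regularity of the sequence.} Comparing Hilbert series,
\[
H_{P/J}(t)=\frac{(1+t)^{n-m}}{(1-t)^m}=\frac{(1-t^2)^{n-m}}{(1-t)^n}=(1-t^2)^{n-m}H_P(t).
\]
In the commutative ring $P$ every element is central, so $(g_1,\dots,g_{n-m})$ is a homogeneous normal sequence. Lemma \ref{lem.Tak} then shows it is a regular sequence. Therefore $A^!\cong k[x_1,\dots,x_n]/(g_1,\dots,g_{n-m})\in\cB_{n,n-m}$.

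\textbf{Main obstacle.} The only delicate point is the dimension count, namely that exactly $n-m$ quadrics are needed. This rests on the Koszul Hilbert series identity $H_{A^!}(t)=1/H_A(-t)$ from Lemma \ref{lem.kos}, as packaged in Lemma \ref{lem.KAS}. Once the generator count is settled, Lemma \ref{lem.Tak} finishes the argument with no further work.
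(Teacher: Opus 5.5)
Your proof is correct and follows the same route as the paper: compute $H_{A^!}(t)=(1+t)^{n-m}/(1-t)^m=(1-t^2)^{n-m}/(1-t)^n$ via Koszulity (Lemma \ref{lem.KAS}), then conclude with Lemma \ref{lem.Tak}, using that every sequence in a commutative ring is normal. You also spell out a step the paper leaves implicit: the commutative quadratic algebra $A^!$ is $k[x_1,\dots,x_n]$ modulo exactly $n-m$ linearly independent quadrics, which you read off from the $t^2$-coefficient of the Hilbert series.
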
 

\begin{proof} By the proof of Lemma \ref{lem.KAS} and Lemma \ref{lem.kos}, 
$$
H_{A^!}(t)=(1+t)^{n-m}/(1-t)^{m}=(1-t^2)^{n-m}/(1-t)^n.
$$  
If $A^!$ is commutative, then $A^!=k[x_1, \dots, x_n]/(h_1, \dots , h_{n-m})$ for some homogeneous regular sequence $(h_1, \dots, h_{n-m})$ of degree $2$ in $k[x_1, \dots, x_n]$ by Lemma \ref{lem.Tak},
so $A^!\in \cB_{n, n-m}$. 
\end{proof}  

\begin{remark}  Let $A\in \cA_{n, m}$.   It follows from the above lemma that $A^!$ is commutative if and only if $A\cong S/I_H$ where $S$ is a Clifford quantum polynomial algebra and $H$ is a homogeneous regular central sequence of degree 2 in $S$ by \cite{HM}.
\end{remark} 

\begin{lemma} \label{lem.du} 
Let $A = S/(h_1, \dots, h_m)\in \cA_{n, m}$.  Then $A^!\in \cA_{n, n-m}$ if and only if we can extend the sequence $(h_1, \dots, h_m)$ to a homogeneous regular normal sequence $(h_1, \dots,  h_m, h_{m+1}, \dots, h_n)$ of degree $2$ in $S$.  
In particular, the map
$
\cA_{n, n}\to \cA_{n, 0}; \; A\mapsto A^!
$
is well-defined.   
\end{lemma}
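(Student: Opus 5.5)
We prove the two implications separately, using Koszul duality (Lemma \ref{lem.Kono}) in one direction and Lemma \ref{lem.smith} together with Lemma \ref{lem.Kono} in the other.

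($\Leftarrow$) Suppose $(h_1,\dots,h_n)$ is a homogeneous regular normal sequence of degree $2$ in $S$ extending $(h_1,\dots,h_m)$. The image $(\bar h_{m+1},\dots,\bar h_n)$ in $A=S/(h_1,\dots,h_m)$ is then a homogeneous regular normal sequence of degree $2$ in $A$. Put $B:=S/(h_1,\dots,h_n)=A/(\bar h_{m+1},\dots,\bar h_n)$.

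Since $S$ is Koszul, Lemma \ref{lem.Kono} applied to $A$ and this sequence shows the following: $B$ is Koszul, and there is a homogeneous regular normal sequence $(\bar h_n^!,\dots,\bar h_{m+1}^!)$ of degree $2$ in $B^!$ with $A^!=B^!/(\bar h_n^!,\dots,\bar h_{m+1}^!)$.

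It remains to see that $B^!\in\cA_{n,0}$. By Lemma \ref{lem.KAS}(1), $B\in\cA_{n,n}$ is a Frobenius Koszul algebra, and by Lemma \ref{lem.Tak} its Hilbert series is $H_B(t)=(1+t)^n$. Lemma \ref{lem.smith}, applied to $B^!$ (noting $(B^!)^!=B$), then gives $B^!\in\cA_{n,0}$; here one must also check that $B^!$ is noetherian, which we address below. Hence $A^!=B^!/(\text{regular normal sequence of length } n-m)\in\cA_{n,n-m}$.

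($\Rightarrow$) Suppose $A^!=T/(g_1,\dots,g_{n-m})$ with $T\in\cA_{n,0}$ and $(g_1,\dots,g_{n-m})$ a homogeneous regular normal sequence of degree $2$. Applying Lemma \ref{lem.Kono} to $T$ gives a homogeneous regular normal sequence $(g_{n-m}^!,\dots,g_1^!)$ of degree $2$ in $(A^!)^!=A$ with $A/(g_{n-m}^!,\dots,g_1^!)=T^!$.

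Lift each $g_i^!\in A_2$ to $h_{m+n-i+1}\in S_2$, giving elements $h_{m+1},\dots,h_n$. Then $(h_1,\dots,h_n)$ is a homogeneous regular normal sequence in $S$: the first $m$ terms are by hypothesis, and the remaining terms are regular normal modulo the previous ones because the quotients $S/(h_1,\dots,h_j)$ coincide with the successive quotients of $A$ by the $g^!$'s. This establishes the extension.

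The ``in particular'' statement is the case $m=n$, which is exactly the $(\Leftarrow)$ argument with $B=A$ and an empty added sequence: for $A\in\cA_{n,n}$, $A$ is Frobenius Koszul with $H_A(t)=(1+t)^n$, so $A^!\in\cA_{n,0}$ by Lemma \ref{lem.smith}. Thus $A\mapsto A^!$ is well defined on isomorphism classes.

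The main obstacle is applying Lemma \ref{lem.smith}, which requires $B^!$ to be noetherian. We obtain this from the other direction of Koszul duality: since $B$ is Frobenius Koszul with Hilbert series $(1+t)^n$, $B^!$ is Koszul with $H_{B^!}(t)=(1-t)^{-n}$ by Lemma \ref{lem.kos}(2), and $B^!$ is AS-regular. Noetherianity in general dimension is then the subtle point, and for it we rely on the standard form of Smith's theorem (\cite{Shi96}).
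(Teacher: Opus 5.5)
Your overall strategy matches the paper's. For ($\Rightarrow$) you dualize the sequence in $A^!$ via Lemma \ref{lem.Kono} and lift it to $S$. For ($\Leftarrow$) you pass to $B=S/(h_1,\dots,h_n)\in\cA_{n,n}$, show $B^!\in\cA_{n,0}$, and recover $A^!$ as a quotient of $B^!$ by a regular normal sequence. The ($\Rightarrow$) direction is fine apart from an indexing slip: the lift of $g_i^!$ should be $h_{n+1-i}$, not $h_{m+n-i+1}$, so that $g_{n-m}^!\mapsto h_{m+1}$ and $g_1^!\mapsto h_n$.

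The genuine gap is the step you flagged and did not close: noetherianity of $B^!$, and of $A^!$ in the ``in particular'' case $m=n$.
\begin{itemize}
\item Lemma \ref{lem.smith} has noetherianity as a \emph{hypothesis}: a noetherian connected graded algebra lies in $\cA_{n,0}$ iff its dual is Frobenius Koszul with Hilbert series $(1+t)^n$. So it cannot be used to produce noetherianity of $B^!$.
\item Knowing that $B^!$ is Koszul and AS-regular with $H_{B^!}(t)=(1-t)^{-n}$ does not give it either. Whether AS-regular algebras are automatically noetherian is a long-standing open question in general; it is known only in low dimension, e.g.\ $n\le 3$ by \cite{ATV}.
\end{itemize}

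The missing idea is to use the regular normal sequence you already have, together with Lemma \ref{lem.Ree}(1).
\begin{itemize}
\item Apply Lemma \ref{lem.Kono} to $S$ with the full sequence $(h_1,\dots,h_n)$. This gives a homogeneous regular normal sequence $(h_n^!,\dots,h_1^!)$ in $B^!$ with $B^!/(h_n^!,\dots,h_1^!)\cong S^!$.
\item Since $S$ is noetherian, Lemma \ref{lem.smith} shows $S^!$ is finite dimensional, hence noetherian.
\item Applying Lemma \ref{lem.Ree}(1) $n$ times, working back along this sequence, shows $B^!$ is noetherian.
\end{itemize}
Only after this does Lemma \ref{lem.smith} give $B^!\in\cA_{n,0}$.

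The same argument, with $B=A$, is needed for the ``in particular'' statement. Your version there jumps straight to Lemma \ref{lem.smith} and has the same gap.
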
 

\begin{proof} 
If $A^! = S'/(g_1, \dots, g_{n-m})\in \cA_{n, n-m}$, where $S' \in \cA_{n, 0}$ and $(g_1, \dots, g_{n-m})$ is a homogeneous regular normal sequence of degree $2$ in $S'$. 
Then $(g_{n-m}^!, \dots, g_{1}^!)$ is a homogeneous regular normal sequence of degree $2$ in $A$, 
so $(h_1, \dots, h_m, g_{n-m}^!, \dots, g_{1}^!)$ is a homogeneous regular normal sequence of degree $2$ in $S$    (such that $(S')^!=A/(g_{n-m}^!, \dots, g_{1}^!)\cong S/(h_1, \dots, h_m, g_{n-m}^!, \dots, g_{1}^!)$) by Lemma \ref{lem.Kono}. 

Conversely, if 
$(h_1, \dots,  h_m, h_{m+1}, \dots, h_n) $ is homogeneous regular normal sequence of degree $2$ in $S$,  
then 
$$E:=A/(h_{m+1}, \dots, h_n)=S/(h_1, \dots, h_m, h_{m+1}, \dots , h_n)\in \cA_{n, n}$$ 
is a Frobenius Koszul algebra by Lemma \ref{lem.KAS} (1).  Since 
$$
A^! / (h_m^!, \dots, h_1^!)=E^!/(h_n^!, \dots, h_{m+1}^!, h_m^!, \dots, h_1^!)
$$ 
is noetherian (artinian), $E^!$ is noetherian by Lemma \ref{lem.Ree} (1),  so $E^!\in \cA_{n, 0}$ by Lemma \ref{lem.smith},
and $A^!=E^!/(h_n^!, \dots, h_{m+1}^!)\in \cA_{n, n-m}$ by Lemma \ref{lem.Kono}.     
\end{proof}

\begin{lemma} Let $A\in \cA_{n+1, n}$.  If there exists $w\in RN(A)_1$, then $A^!\!\in \cA_{n+1, 1}$. 
\end{lemma}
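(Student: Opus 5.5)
By Lemma \ref{lem.du}, $A^!\in \cA_{n+1,1}$ is equivalent to extending the defining sequence of $A$ to a full homogeneous regular normal sequence of degree $2$ in $S$. The plan is to produce that extra element from $w$, namely $w^2$.

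\textbf{Setup and the candidate element.} Write $A=S/(h_1,\dots,h_n)$ with $S\in\cA_{n+1,0}$ and $(h_1,\dots,h_n)$ a homogeneous regular normal sequence of degree $2$ in $S$. Let $w\in RN(A)_1$ with normalizing automorphism $\nu$. Then $w^2\in A_2$ is normal, since $fw^2=w\nu(f)w=w^2\nu^2(f)$ and $\nu^2$ is surjective (Lemma \ref{lem.reno}). It is also regular as a product of regular elements, and it is not a unit since it has degree $2$.

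\textbf{Lifting to $S$.} Choose any lift $h_{n+1}\in S_2$ of $w^2$. Because $A=S/(h_1,\dots,h_n)$, the element $\bar h_{n+1}=w^2$ is regular normal in $S/(h_1,\dots,h_n)$. Hence $(h_1,\dots,h_n,h_{n+1})$ is a homogeneous regular normal sequence of degree $2$ in $S$, directly from Definition \ref{defn-regseq}. The definition only constrains the image of $h_{n+1}$ in the quotient, so no lift needs to be normal in $S$ itself.

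\textbf{Conclusion.} Apply Lemma \ref{lem.du} with $m=n$ and $n+1$ in place of $n$: the sequence extends to length $n+1$, so $A^!\in\cA_{n+1,(n+1)-n}=\cA_{n+1,1}$.

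The only place needing care is checking that regularity, normality, and the non-unit condition in Definition \ref{defn-regseq} are all required only in the successive quotient, not in $S$. One could double-check this with Lemma \ref{lem.Tak}, since the Hilbert series of $A/(w^2)$ is $(1-t^2)H_A(t)$ because $w^2$ is regular. I expect no real obstacle; the step most worth verifying is the normality of $w^2$.
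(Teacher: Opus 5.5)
Your proposal is correct and follows the paper's proof: the paper likewise appends $w^2$ to $(h_1,\dots,h_n)$ to get a homogeneous regular normal sequence of degree $2$ in $S$ and then applies Lemma \ref{lem.du}. Your added checks (normality of $w^2$ via $\nu^2$, regularity as a product of regular elements, and that only the image of the lift in the successive quotient matters) fill in details the paper leaves implicit.
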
 

\begin{proof}  
If $A=S/(h_1, \dots, h_n)\in \cA_{n+1, n}$ and $w\in RN(A)_1$, then 
$(h_1, \dots, h_n, w^2)$ is a homogeneous regular normal sequence of degree $2$ in $S$, 
so $A^!\in \cA_{n+1, 1}$ by Lemma \ref{lem.du}.
\end{proof}

\begin{proposition} \label{prop.du}
For $m=0, 1, 2$, the map 
$
\cA_{2, m}\to \cA_{2, 2-m}; \; A\mapsto A^!
$
is a duality.  In particular, every $A\in \cA_{2, 2}$ is isomorphic to one of the following
$$
k_{\l}[x, y]/(x^2, y^2),\ k_{-1}[x, y]/(xy+x^2, y^2),
$$
where $0 \neq \l \in k$.   
\end{proposition}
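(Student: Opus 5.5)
We treat the three cases $m=0,1,2$ separately, using the characterizations already established. The aim in each case is to show that $(-)^!$ maps $\cA_{2,m}$ into $\cA_{2,2-m}$. Since $(A^!)^!\cong A$ for quadratic algebras, the map is then automatically a bijection, i.e.\ a duality.

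\emph{Case $m=0$.} By Example \ref{exm-2dimqpa}, every $S\in\cA_{2,0}$ is $k_\l[x,y]$ or $k_J[x,y]$. The dual of $k\<x,y\>/(ax^2+bxy+cyx+dy^2)$ is $k\<x,y\>/W^\perp$ with $\dim W^\perp=3$. We compute it explicitly:
\[
k_\l[x,y]^!\cong k\<x,y\>/(x^2,y^2,xy+\l^{-1}yx)=k_{-\l^{-1}}[x,y]/(x^2,y^2),
\]
and
\[
k_J[x,y]^!\cong k\<x,y\>/(y^2,\ xy+yx,\ x^2+xy),
\]
which after a rescaling becomes $k_{-1}[x,y]/(xy+x^2,y^2)$.

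We then exhibit the regular normal sequences directly. In $k_{\l'}[x,y]$, $x^2$ is normal and regular (it is normal since $yx^2=\l'^{-2}x^2y$). Then $\bar y^2$ is normal in $k_{\l'}[x,y]/(x^2)$, and Lemma \ref{lem.Tak} gives regularity from the Hilbert series $(1+t)^2$. In the $k_{-1}$ case, we check that $y^2$ is central in $k_{-1}[x,y]$ and that $xy+x^2$ is normal modulo $y^2$, again using the Hilbert series. This shows $S^!\in\cA_{2,2}$.

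\emph{Case $m=2$.} Lemma \ref{lem.du} already gives a well-defined map $\cA_{2,2}\to\cA_{2,0}$. Combined with the $m=0$ case and $(A^!)^!=A$, the two maps are mutually inverse. This also yields the list: every $A\in\cA_{2,2}$ is $S^!$ for some $S\in\{k_\l[x,y],k_J[x,y]\}$, so it is one of the two displayed forms, with $\l$ replaced by $-\l^{-1}$, which still ranges over $k^\times$.

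\emph{Case $m=1$.} For $A=S/(f)\in\cA_{2,1}$, we need $A^!\in\cA_{2,1}$. By Lemma \ref{lem.du} (with $n=2$, $m=1$), it suffices to extend $(f)$ to a homogeneous regular normal sequence $(f,g)$ of degree $2$ in $S$. Since $A$ has Hilbert series $(1+t)/(1-t)$ and GKdim $1$, this is the main obstacle: it requires a case analysis of the normal elements of degree $2$ in $k_\l[x,y]$ and $k_J[x,y]$.

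In $k_\l[x,y]$ with $\l\neq\pm1$, the normal quadrics are spanned by monomials, namely $x^2$, $xy$, $y^2$. Since $S$ is a domain, each is regular. For each, a complementary monomial works: for $f=xy$ take $g=x^2+y^2$ or $x^2$ as appropriate, and for $f=x^2$ take $g=y^2$. For $\l=\pm1$ and for $k_J$, more normal quadrics appear. We handle these by a degree-$1$ normal-element argument: $A$ is Koszul of GKdim $1$, so one can find $w\in A_1$ regular normal (checked case by case), and then take $g$ a lift of $w^2$. Regularity of $\bar g$ in $A$ follows from Lemma \ref{lem.Tak} via the Hilbert series $(1+t)^2$. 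Once $g$ is found, Lemma \ref{lem.du} gives $A^!\in\cA_{2,1}$, and the involutivity of $(-)^!$ finishes the proof.
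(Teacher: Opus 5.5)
Your cases $m=0$ and $m=2$ are correct. For $m=0$ you take a different route from the paper. You compute $S^!$ explicitly and exhibit regular normal sequences in the dual's ambient plane: $(x^2,y^2)$ in $k_{-\l^{-1}}[x,y]$, and $(y^2,xy+x^2)$ in $k_{-1}[x,y]$ (I checked the latter is normal and regular). The paper instead notes that $(x^2,y^2)$ is already a regular normal sequence in $S$ itself and applies Lemma \ref{lem.du}. Both work, and yours gives the list for $\cA_{2,2}$ directly.

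The gap is in $m=1$. Your reduction is sound: by Lemma \ref{lem.du}, or the unlabeled lemma right after it, it suffices to find $w\in RN(A)_1$ and lift $w^2$. The subcase $k_{\l}[x,y]$ with $\l\neq\pm1$ is fine, except for one slip. For $f=xy$, $g=x^2$ never works, since $k_{\l}[x,y]/(xy)=k[x,y]/(xy)$ and $x^2y=0$ there.

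The subcases $\l=\pm1$ and $k_J[x,y]$ are exactly where infinitely many normal quadrics appear, and there you only assert that $w$ exists ("checked case by case"). The general reason you offer, that $A$ is Koszul of GKdim $1$, does not imply this. By Lemma \ref{lem.KAS}(2), $A_4^!$ from Table \ref{tab-caa} is Koszul AS-Gorenstein of GKdim $1$, yet $RN(A_4^!)_1=\emptyset$. So the substantive part of $m=1$ is not proved.

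You can close the gap with a short computation:
\begin{itemize}
\item In $k[x,y]$, every nonzero quadric reduces under $\GL_2(k)$ to $x^2$ or $xy$.
\item In $k_{-1}[x,y]$, graded automorphisms are diagonal or antidiagonal. Comparing $xf,yf$ with $f\nu(x),f\nu(y)$ then shows the normal quadrics are exactly $ax^2+cy^2$ and the multiples of $xy$. These reduce to $x^2$, $x^2+y^2$ and $xy$.
\item In $k_J[x,y]$, graded automorphisms have the form $x\mapsto \alpha x$, $y\mapsto \beta x+\alpha y$. Comparing $xf$ with $\alpha fx$ (in characteristic $0$) shows the only normal quadrics are multiples of $x^2$.
\end{itemize}
Then choose $w$ as follows:
\begin{itemize}
\item $w=y$ for every quotient by $x^2$;
\item $w=x+y$ for $k[x,y]/(xy)$, noting that $k_{\pm1}[x,y]/(xy)$ is commutative;
\item $w=x$ for $k_{-1}[x,y]/(x^2+y^2)$, which is regular by Lemma \ref{lem.Tak}.
\end{itemize}
Regularity of $w^2$ follows from that of $w$, so no Hilbert series argument is needed for $\bar g$.

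Alternatively, do what the paper does: cite the classification of $\cA_{2,1}$ from \cite[Proposition 3.17]{HMTW} and compute the three duals directly.
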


\begin{proof} If $A\in \cA_{2, 0}$, we may assume $A=k_{\l}[x, y]$ where $0 \neq \l \in k$ or $A=k\<x, y\>/(xy-yx-x^2)$ (see Example \ref{exm-2dimqpa}).  In either case, $(x^2, y^2)$ is a homogeneous regular normal sequence of degree 2 in $A$, so $A^!\in \cA_{2, 2}$ by Lemma \ref{lem.du}.  It follows that $\cA_{2, 0}\to \cA_{2, 2}; \; A\to A^!$ is well-defined, so it is a duality by Lemma \ref{lem.du}.  

Every $A\in \cA_{2, 1}$ is isomorphic to 
$$k[x, y]/(x^2+y^2), \; k_{\l}[x, y]/(x^2), 
\; k_{-1}[x, y]/(x^2+y^2)$$
by \cite[Proposition 3.17]{HMTW} 
and their duals are 
$$k_{-1}[x, y]/(x^2-y^2), \; k_{-\l^{-1}}[x, y]/(y^2), 
\; k[x, y]/(x^2-y^2)\in \cA_{2, 1},$$  so $\cA_{2, 1}\to \cA_{2, 1}; \; A \mapsto A^!$ is a duality.  
\end{proof}

\begin{remark} It is unclear if $\cA_{n, 0}\to \cA_{n, n}; \; S\mapsto S^!$ is well-defined so that $\cA_{n, 0}\to \cA_{n, n}$ is a duality.  There are many $S\in \cA_{3, 0}$ such that $Z(S)_2=\{0\}$ (For example, the Type T$_3$ algebra in Table \ref{tab-cenele}),   so there may exist $S\in \cA_{3, 0}$ such that there is no normal element of degree 2.  If so, then $S^!\not \in \cA_{3, 3}$ by Lemma \ref{lem.du}, so $\cA_{3, 0}\to \cA_{3, 3}; \; S \mapsto S^!$ is not well-defined (not a duality). 
\end{remark} 

\begin{example} It is slightly more hopeful to expect that $\cA_{3, 1}\to \cA_{3, 2}; \; A \mapsto A^!$ is a duality.  If 
$$
S=k\<x, y, z\>/(yz-zy-x^2, zx-xz, xy-yx)\in \cA_{3, 0},$$ 
then $x^2\in S_2$ is normal (in fact, it is the unique homogeneous normal element of degree 2 up to scalar) so that $A:=S/(x^2)\in \cA_{3,1}$.  Although $(y^2, z^2)$ is not a normal sequence in $S$, $(x^2, y^2, z^2)$ is a regular normal sequence in $S$ , 
so $S^!\cong k_{-1}[x, y, z]/(yz+x^2, y^2, z^2)\in \cA_{3, 3}$ and $A^!=k_{-1}[x, y, z]/(y^2, z^2)\in \cA_{3, 2}$. 
\end{example}

\subsection{Twisted algebras} \label{subsec-twalg}

\begin{definition} 
[{\rm Zhang twist \cite{Z}}]
Let $A$ be a $\ZZ$-graded algebra.  A sequence $\theta=\{\theta_i\}_{i\in \ZZ}$ of graded linear automorphisms of $A$ is called a twisting system of $A$ if $\theta_i(a\theta_j(b))=\theta_i(a)\theta_{i+j}(b)$ for every $a\in A_j, b\in A$ and every $i, j\in \ZZ$.  
For a twisting system $\theta$ of $A$, 
we define a new graded algebra $A^{\theta}=A$ as a graded vector space with a new multiplication $a*b=a\theta_j(b)$ for $a\in A_j, b\in A$.  
\end{definition} 

For a graded algebra $A$ and $\s\in \Aut^{\ZZ}(A)$, $\{\s^i\}_{i\in \ZZ}$ is a twisting system of $A$, so we can define $A^{\s}:=A^{\{\s^i\}}$. 

\begin{theorem} 
[{\cite[Theorem 1.1]{Z}}] \label{thm.Z11} 
Let $A$ be a graded algebra.  For every twisting system $\theta$ of $A$,  
$\GrMod A^{\theta}\cong \GrMod A$. 
\end{theorem}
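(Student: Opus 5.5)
The plan is to realize the equivalence by an explicit ``twisting'' functor on modules, modelled on the way $A^\theta$ is built from $A$. For $M\in \GrMod A$ define $M^\theta$ to be $M$ as a graded vector space, with new action
$$
m*a:=m\,\theta_j(a)\qquad (m\in M_j,\ a\in A).
$$
Set $F(M)=M^\theta$ and $F(\phi)=\phi$ on morphisms. The steps, in order, are as follows.

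First I check that $M^\theta$ is a graded right $A^\theta$-module. Gradedness is clear because each $\theta_j$ is graded. For associativity, take $a\in A_i$ and $b\in A$. Then
$$
(m*a)*b=m\,\theta_j(a)\,\theta_{i+j}(b),
$$
since $m\theta_j(a)\in M_{i+j}$. On the other side,
$$
m*(a*b)=m\,\theta_j\bigl(a\,\theta_i(b)\bigr)=m\,\theta_j(a)\,\theta_{j+i}(b)
$$
by the defining identity of a twisting system, so the two agree. For the unit, I will use the identities $\theta_j(1)=1$, or more precisely that the identity of $A^\theta$ acts trivially. These follow from the twisting axiom applied with $a=1\in A_0$: it gives $\theta_i(\theta_0(b))=\theta_i(1)\theta_i(b)$, and bijectivity of the $\theta_i$ then controls $\theta_i(1)$. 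If necessary I will first replace $\theta$ by the normalized system $\theta_0^{-1}\theta_i$, following Zhang, so that $\theta_0=\mathrm{id}$ and the units match.

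Next I check functoriality. If $\phi:M\to N$ is degree preserving and $A$-linear, then for $m\in M_j$,
$$
\phi(m*a)=\phi(m)\,\theta_j(a)=\phi(m)*a,
$$
because $\phi(m)\in N_j$. Hence $F$ is a faithful functor, and it is the identity on underlying graded vector spaces and on maps.

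Then I build the inverse functor $G:\GrMod A^\theta\to \GrMod A$. For $N\in\GrMod A^\theta$ and $n\in N_j$, set
$$
n\cdot a:=n*\theta_j^{-1}(a).
$$
The key verification is associativity. Write $c=\theta_{j}^{-1}(a)$ with $a\in A_i$. The twisting identity gives $\theta_j\bigl(c\,\theta_i(d)\bigr)=a\,\theta_{i+j}(d)$, so with $d=\theta_{i+j}^{-1}(b)$ this yields
$$
\theta_j^{-1}(ab)=c*\theta_{i+j}^{-1}(b).
$$
Hence $n\cdot(ab)=(n\cdot a)\cdot b$. The same computation as before shows that $G$ respects morphisms, and that $GF$ and $FG$ are literally the identity functors, since $\theta_j^{-1}\theta_j=\mathrm{id}$. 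Therefore $F$ is an isomorphism of categories, and in particular an equivalence $\GrMod A^\theta\cong\GrMod A$.

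The only delicate point I expect is the unit and normalization issue, namely ensuring that $\theta_j(1)=1$ or reducing to the case $\theta_0=\mathrm{id}$. Everything else is a direct consequence of the twisting identity. For the case actually used in this paper, $\theta=\{\s^i\}$ with $\s\in\Aut^{\ZZ}A$, this issue disappears: $\s^i(1)=1$, and the inverse system is simply $\{\s^{-i}\}$.
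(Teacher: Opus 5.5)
Your overall route is the standard one. The paper does not prove this statement; it cites Zhang, and your twisting functor ($m*a=m\,\theta_j(a)$, with explicit inverse $n\cdot a=n*\theta_j^{-1}(a)$, giving an isomorphism of categories) is the usual argument. Your associativity checks for $F$ and $G$, the compatibility with morphisms, and $GF=\mathrm{id}$, $FG=\mathrm{id}$ are all correct.

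\textbf{The gap is the unit.} You flag it but do not settle it, and both things you propose there are wrong as stated.

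First, $\theta_j(1)=1$ fails in general. For $c\in k\setminus\{0,1\}$, $\theta_i=c\cdot\mathrm{id}_A$ is a twisting system; $A^\theta$ then has product $a*b=c\,ab$ and identity $c^{-1}$.

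Second, $\theta_0^{-1}\circ\theta_i$ need not be a twisting system. Take $A=A_0=M_2(k)$, $u=\mathrm{diag}(1,2)$, $\theta_0=L_u$ (left multiplication by $u$), and $\theta_i=\psi\circ L_u$ for $i\neq 0$, where $\psi$ is conjugation by the transposition matrix. This is a twisting system. On $A_0$, a twisting system with $\theta_0=\mathrm{id}$ consists of algebra automorphisms. But $x\mapsto u^{-1}\psi(ux)=u^{-1}\psi(u)\psi(x)$ is multiplicative only if $\psi(u)=u$, which fails since $\psi(u)=\mathrm{diag}(2,1)$.

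\textbf{The repair is short.}
\begin{itemize}
\item The axiom with $i=j=0$, $a=1$ gives $\theta_0=L_u$ with $u:=\theta_0(1)$. Bijectivity of $L_u$ makes $u$ a unit of $A_0$.
\item The axiom with $j=0$, $a=1$ reads $\theta_i\circ L_u=L_{\theta_i(1)}\circ\theta_i$. So $L_{\theta_i(1)}$ is bijective and $\theta_i(1)$ is a unit.
\item Evaluating that identity at $u^{-1}$ gives $\theta_i(1)=\theta_i(1)\,\theta_i(u^{-1})$, hence $\theta_i(u^{-1})=1$ for all $i$.
\end{itemize}
Consequently the identity of $A^\theta$ is $u^{-1}=\theta_0^{-1}(1)$. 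It acts trivially on $M^\theta$, since $m*u^{-1}=m\,\theta_j(u^{-1})=m$. Also $\theta_j^{-1}(1)=u^{-1}$, which gives $n\cdot 1=n$ in $G(N)$.

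Equivalently, normalize on the other side. $\theta'_i:=\theta_i\circ\theta_0^{-1}$ is a twisting system with $\theta'_0=\mathrm{id}$, hence $\theta'_i(1)=1$. Moreover $L_{u^{-1}}\colon A^{\theta'}\to A^{\theta}$ is a graded algebra isomorphism.

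For connected graded $A$, and for $\theta=\{\sigma^i\}$ as used in the paper, $u\in k^\times$ is central and the slip is harmless, as you observed.
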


\begin{lemma} \label{lem.62} 
Let $A$
be a graded algebra, $w\in RN(A)_1$ and $\nu\in \Aut ^{\ZZ}A$ the normalizing automorphism of $w$.
For every $\s\in \Aut^{\ZZ}(A)$ such that $\s(w)=w$ (so that $\s$ induces $\s\in \Aut^{\ZZ}(A/(w)$), $w\in RN(A^{\s})_1$. 
In particular, $w\in RZ(A^{\nu})_1$. 
\end{lemma}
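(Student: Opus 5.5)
The plan is to check the two requirements directly: $w$ is normal in $A^{\s}$, and $w$ is regular in $A^{\s}$. Throughout, write $*$ for the multiplication of $A^{\s}$, so $a*b=a\s^j(b)$ for $a\in A_j$. Recall that $A^{\s}=A$ as graded vector spaces, so $w\in (A^{\s})_1$, and $w$ is not a unit since it has positive degree.

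\emph{Normality.} For homogeneous $a\in A_j$ we compute both products with $w$. Since $w$ has degree $1$,
$$w*a=w\s(a).$$
On the other side,
$$a*w=a\s^j(w)=aw=w\nu(a),$$
where we used $\s(w)=w$ and the defining property of $\nu$. Now we rewrite $w\nu(a)$ as a left product by $w$ in $A^{\s}$. Put $\nu'=\s^{-1}\nu$. Then
$$w*\nu'(a)=w\s(\s^{-1}\nu(a))=w\nu(a)=a*w.$$
Since $\nu'$ is a graded linear bijection of $A^{\s}$, the criterion of Lemma \ref{lem.reno}(1) applies after extending by linearity, so $w$ is normal in $A^{\s}$. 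It is harmless that $\nu'$ need not be an algebra automorphism of $A^{\s}$ a priori: only surjectivity is used in Lemma \ref{lem.reno}(1). The same identities show directly that $A^{\s}*w=w*A^{\s}$.

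\emph{Regularity.} Suppose $w*a=0$. Then $w\s(a)=0$, so $\s(a)=0$ because $w$ is regular in $A$, and hence $a=0$. Suppose instead $a*w=0$ for homogeneous $a$. Then $aw=0$, so $a=0$. Since $w$ is homogeneous, multiplication by $w$ on either side preserves the grading up to a shift, so both statements extend from homogeneous elements to arbitrary ones. Thus $w\in RN(A^{\s})_1$.

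\emph{The ``in particular'' part.} Take $\s=\nu$. This is allowed because $\nu(w)=w$, which follows from $ww=w\nu(w)$ and the regularity of $w$. Then $\nu'=\id$, so the computation above gives $a*w=w*a$ for all $a$, and $w$ is central in $A^{\nu}$.

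I expect no serious obstacle here. The only points needing care are the bookkeeping of the twist degrees, namely that $w$ has degree $1$ so $w*a=w\s(a)$, and the verification that $\nu(w)=w$ before applying the main statement with $\s=\nu$.
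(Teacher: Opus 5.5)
Your proof is correct and follows the paper's argument. You use $w*a=w\s(a)$ and $a*w=aw=w\nu(a)=w*\s^{-1}\nu(a)$ to get regularity and normality, and you additionally verify $\nu(w)=w$ before specializing to $\s=\nu$, which the paper leaves implicit. One small fix: since graded means $\ZZ$-graded here, positive degree alone does not rule out $w$ being a unit; argue instead that $w*b=b*w=1$ would give $w\s(b)=bw=1$ in $A$, contradicting that $w$ is not a unit in $A$.
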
 
 
\begin{proof} 
Let $f\in A_d$. Then $f*w=f\s^d(w)=fw$ and $w*f=w\s(f)$ implies that $w\in A^{\s}_1$ is regular.  On the other hand, $f*w=f\s^d(w)=fw=w\nu (f)=w*\s^{-1}\nu(f)$, so $w\in A^{\s}_1$ is a regular normal element with the normalizing automorphism $\s^{-1}\nu$.  
It follows that $w\in A^{\s}_1$ is central if and only if $\s=\nu$. 
\end{proof}

\begin{lemma} \label{lem.sf} 
Let $A$ be a graded algebra and $f\in A$ a homogeneous normal element.  If $\s\in \Aut^{\ZZ}A$ such that $\s(f)=\l f$ for some $0\neq \l\in k$, then $\bar \s\in \Aut^{\ZZ}(A/(f))$ such that $(A/(f))^{\bar \s}\cong A^{\s}/(f)$. 
\end{lemma}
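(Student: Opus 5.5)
The plan is to check two things directly. First, $\bar\s$ is a well-defined graded automorphism of $A/(f)$. Second, the identity map on underlying graded vector spaces $A/(f)\to A/(f)$ is an algebra isomorphism $(A/(f))^{\bar\s}\cong A^{\s}/(f)$, once we know that the ideal of $A^{\s}$ generated by $f$ equals the ideal $(f)$ of $A$ as a graded subspace.

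For the first point, since $f$ is normal, $(f)=Af=fA$. Then $\s((f))=\s(A)\s(f)=A\cdot \l f=Af=(f)$, so $\s$ maps the homogeneous ideal $(f)$ onto itself. The same holds for $\s^{-1}$, because $\s^{-1}(f)=\l^{-1}f$. Hence $\s$ descends to a graded automorphism $\bar\s$ of $A/(f)$, and $\{\bar\s^i\}$ is a twisting system there.

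For the second point, let $d=\deg f$ and write $*$ for the multiplication in $A^{\s}$. For homogeneous $a\in A_j$ we have
\[
a*f=a\s^j(f)=\l^j af, \qquad f*a=f\s^d(a).
\]
Since $\s^d$ is bijective, $f*A^{\s}=f\s^d(A)=fA=(f)$. Also $A^{\s}*f=\sum_j \l^j A_jf=Af=(f)$, because the scalars $\l^j$ are nonzero. Therefore $f$ is normal in $A^{\s}$, and its two-sided ideal there is $f*A^{\s}=(f)$, the same graded subspace. Consequently $A^{\s}/(f)$ and $A/(f)$ coincide as graded vector spaces. The multiplication on $A^{\s}/(f)$ is $\bar a*\bar b=\overline{a\s^j(b)}=\bar a\,\bar\s^j(\bar b)$, which is exactly the multiplication of $(A/(f))^{\bar\s}$. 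So the identity map is the required isomorphism.

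The only real point is showing that the ideal generated by $f$ is the same in $A$ and in $A^{\s}$. This uses both the normality of $f$ and the fact that $\s(f)$ is a nonzero scalar multiple of $f$; the rest is routine bookkeeping.
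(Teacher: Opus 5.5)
Your proof is correct and follows the same route as the paper: the paper also uses the natural map $A^{\s}\to (A/(f))^{\bar\s}$, $a\mapsto \bar a$, and checks that it is multiplicative. You also verify two points the paper leaves implicit: that $\bar\s$ is well defined, and that the ideal generated by $f$ in $A^{\s}$ equals $(f)\lhd A$ as a subspace. The latter is exactly what makes the induced map $A^{\s}/(f)\to (A/(f))^{\bar\s}$ an isomorphism.
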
 

\begin{proof} 
Let $\phi: A^{\s} \to (A/(f))^{\bar\s}, a\mapsto \overline{a}$, then $\phi(a*b) = \overline{a\s^{i}b}= \overline{a}\, \overline{\s^i(b)} =  \overline{a}* \overline{b} =  \phi(a) * \phi(b)$ for every $a\in A_i,b\in A_j$, so $\phi$ induces an isomorphism $A^{\s}/(f)\to (A/(f))^{\bar \s}$.
\end{proof} 

\begin{lemma} \label{lem.ndu}
For a quadratic algebra $A$, there exists an isomorphism $\Aut^{\ZZ}A\to \Aut^{\ZZ}A^!; \; \s\mapsto \s^!$ of groups 
such that $(A^!)^{\s^!} \cong (A^{\s})^!$.
\end{lemma}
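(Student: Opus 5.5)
We define $\s^!$ by dualizing $\s_1$, and then check that the defining relations of the two twisted algebras coincide. Write $A=T(V)/(W)$ with $W\subset V\otimes V$. A graded automorphism $\s$ of $A$ is determined by its degree-one part $\s_1\in \GL(V)$, and a linear map $g\in \GL(V)$ extends to a graded automorphism of $A$ if and only if $(g\otimes g)(W)=W$. Set $\s^!$ to be the graded automorphism of $T(V^*)$ determined by $(\s^!)_1:=(\s_1^{*})^{-1}=(\s_1^{-1})^*$, the inverse transpose. Since the pairing $\langle \phi\otimes\psi, v\otimes w\rangle=\phi(v)\psi(w)$ satisfies $\langle (g^*\otimes g^*)\xi, \eta\rangle=\langle \xi, (g\otimes g)\eta\rangle$, the condition $(g\otimes g)(W)=W$ implies $(g^{*}\otimes g^{*})(W^\perp)=W^\perp$. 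Hence $\s^!$ descends to $A^!$. The assignment $\s\mapsto \s^!$ is a group homomorphism, because $((\s\tau)_1^{-1})^*=(\s_1^{-1})^*(\tau_1^{-1})^*$. It is bijective, since the same construction applied to $A^!$, using $(A^!)^!=A$, gives the inverse. (One could equally use $\s_1^*$ and obtain an anti-isomorphism; the inverse transpose makes it an isomorphism.)

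Next we compute the relations of the twist $A^\s$. The underlying space is $A$, and for $a,b\in V$ the new product is $a*b=a\s(b)$. So $A^\s$ is generated by $V$, and the degree-two relation space is
$$W_\s:=(\id\otimes \s_1^{-1})(W)\subset V\otimes V,$$
since $\sum a_i* b_i=0$ in $A^\s$ if and only if $\sum a_i\otimes \s_1(b_i)\in W$. To see that $A^\s$ is quadratic, we use the fact that a Zhang twist by $\{\s^i\}$ is again a quotient of $T(V)$ by the ideal obtained by twisting $(W)$. Concretely, the map $T(V)\to T(V)$, $v_1\otimes\cdots\otimes v_n\mapsto v_1\otimes \s_1(v_2)\otimes\cdots\otimes \s_1^{n-1}(v_n)$, identifies $T(V)^{\s_T}$ with $T(V)$, where $\s_T$ extends $\s_1$. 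The ideal $(W)$ is $\s_T$-stable, so it is an ideal of the twisted algebra, and it is generated there by its degree-two part. Therefore $A^\s=T(V)/(W_\s)$.

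Applying the same computation to $A^!$ gives $(A^!)^{\s^!}=T(V^*)/\bigl((\id\otimes (\s^!_1)^{-1})(W^\perp)\bigr)=T(V^*)/\bigl((\id\otimes \s_1^{*})(W^\perp)\bigr)$. It remains to check that
$$(\id\otimes \s_1^{*})(W^\perp)=\bigl((\id\otimes \s_1^{-1})(W)\bigr)^\perp .$$
For $\xi\in W^\perp$ and $w\in W$ we have $\langle (\id\otimes\s_1^*)\xi,(\id\otimes\s_1^{-1})w\rangle=\langle \xi,(\id\otimes \s_1\s_1^{-1})w\rangle=\langle\xi,w\rangle=0$, so the left side is contained in the right side. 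Both sides have dimension $\dim(V\otimes V)-\dim W$, so they are equal. This gives $(A^!)^{\s^!}= (A^\s)^!$, in fact an equality of quotients of $T(V^*)$, and in particular an isomorphism.

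The main obstacle is the claim that $A^\s$ is quadratic with relation space exactly $W_\s$, that is, that the Zhang twist of $T(V)/(W)$ is $T(V)/(\text{twisted }W)$ with no extra relations. We handle it via the identification $T(V)^{\s_T}\cong T(V)$ described above, together with the observation that a $\s_T$-stable ideal generated by $W$ in $T(V)$ is still generated by $W$ under the twisted product: for $x\in T(V)_i$ and $y\in T(V)$ we have $x*w*y=x\,\s^i(w)\,\s^{i+2}(y)$, and $\s^i(W)=W$. The other steps are routine linear algebra.
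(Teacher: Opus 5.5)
Your proof is correct and follows essentially the same route as the paper: you define $\sigma^!=(\sigma^*)^{-1}$, use the adjointness of $\sigma^*\otimes\sigma^*$ and $\sigma\otimes\sigma$ to show it preserves $W^\perp$, and then identify the orthogonal complement of the twisted relation space. Your relation space $(\id\otimes\sigma_1^{-1})(W)$ for $A^\sigma$ is the same as the paper's $(\sigma\otimes\id)(W)$, since $(\sigma\otimes\sigma)(W)=W$. Your extra argument that the Zhang twist $A^\sigma$ is quadratic with exactly this relation space supplies a step the paper uses without comment.
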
 

\begin{proof}   For $\s\in \GL(V)$, $((\s^*\otimes \s^*)(\xi))(f)=\xi((\s\otimes \s)(f))$ for $f\in V\otimes V$ and $\xi\in V^*\otimes V^*$, so $\s\in \Aut ^{\ZZ}A$ if and only if $(\s\otimes \s)(W)=W$ if and only if  $(\s^*\otimes \s^*)(W^{\perp})=W^{\perp}$ if and only if $\s^*\in \Aut ^{\ZZ}A^!$, hence $\Aut^{\ZZ}A\to \Aut^{\ZZ}A^!; \; \s\mapsto \s^!:=(\s^*)^{-1}$ is an isomorphism of groups.  

Since $\xi\in 
((\s\otimes \id)(W))^{\perp}$ if and only if 
$((\s^*\otimes \id)\xi)(f)=\xi((\s\otimes \id)(f))=0$ for every $f\in W$ if and only if $(\s^*\otimes \id)\xi\in W^{\perp}$, $((\s\otimes \id)(W))^{\perp}=((\s^*)^{-1}\otimes \id)(W^{\perp})$, so 
\begin{align*}
(A^!)^{(\s^*)^{-1}} \cong & T(V^*)/(((\s^*)^{-1}\otimes \id)(W^{\perp})) \\
= &T(V^*)/(((\s\otimes \id)(W))^{\perp})\\
= &(T(V)/((\s\otimes \id)(W)))^! \cong (A^{\s})^!. \qedhere
\end{align*}
\end{proof} 

If $A$ is a graded algebra, $w \in RN(A)_1$ and $\s\in \Aut^{\ZZ}A$ such that $\s(w)=\l w$ for some $0\neq \l\in k$, then $\sD_w(A^{\s})$ is well-defined since $w \in RN(A^{\s})_1$ by Lemma \ref{lem.62}. 

\begin{lemma}  \label{lem.633} 
Let $A$ be a graded algebra and $w \in RN(A)_1$ with the normalizing automorphism $\nu\in \Aut ^{\ZZ}A$.  For every $\s\in \Aut^{\ZZ}A$ such that $\s(w)=w$ and $ \s\nu = \nu\s$, $\sD_w(A) \cong \sD_w(A^{\s})$ as algebras.  In particular, $\sD_w(A)\cong A^{\nu}/(w-1)$.
\end{lemma}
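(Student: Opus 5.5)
We prove the first claim by writing down an explicit algebra isomorphism $\Phi:A[w^{-1}]_0\to A^{\s}[w^{-1}]_0$; the second claim then follows by specialising $\s=\nu$ and using Lemma \ref{lem.zin}.

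First we need the normalizing automorphism of $w$ in $A^{\s}$. By the proof of Lemma \ref{lem.62} it is $\s^{-1}\nu$, since $f*w=fw=w\nu(f)=w*\s^{-1}\nu(f)$. We write $*$ for the product of $A^{\s}$. Powers agree in both algebras: $w^{*i}=w^i$, because $\s(w)=w$.

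Both localizations consist of formal expressions $fw^{-i}$ with $f\in A_i$. Define
$$\Phi(fw^{-i}):=\s^{-?}\cdots$$
more precisely, we look for a map of the shape $\Phi(fw^{-i})=\tau_i(f)w^{-i}$, where $\tau_i$ is a graded linear automorphism of $A_i$ built from $\s$ (a power of $\s$ depending on $i$).

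We compute the products on each side.
\begin{itemize}
\item In $A[w^{-1}]_0$: $(fw^{-i})(gw^{-j})=f\nu^i(g)w^{-i-j}$.
\item In $A^{\s}[w^{-1}]_0$: $(fw^{-i})(gw^{-j})=f*(\s^{-1}\nu)^i(g)\,w^{-i-j}=f\s^i\s^{-i}\nu^i(g)w^{-i-j}$.
\end{itemize}
Here we used that $\s$ and $\nu$ commute, so $(\s^{-1}\nu)^i=\s^{-i}\nu^i$. Since $f\in A_i$, the product $f*h$ equals $f\s^i(h)$, and therefore the second product is $f\nu^i(g)w^{-i-j}$.

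So the two multiplications are literally identical, and $\Phi$ can be taken to be the identity on the underlying sets, with $\tau_i=\id$. We must also check addition. The addition rule is $fw^{-i}+gw^{-j}=(fw^j+gw^i)w^{-i-j}$, which is computed in $A$ for the source and in $A^{\s}$ for the target. In the twisted algebra, $f*w^{*j}=f\s^i(w^j)=fw^j$ and $g*w^{*i}=gw^i$, so addition agrees as well.

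It remains to see that the identification of elements is compatible. Equality $fw^{-i}=f'w^{-i'}$ means $fw^{i'}=f'w^{i}$, and these products agree in $A$ and in $A^{\s}$ by the same computation. Hence $\Phi=\id$ is a well-defined algebra isomorphism.

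For the last claim, take $\s=\nu$. This is allowed because $\nu(w)=w$: from $ww=w\nu(w)$ and the regularity of $w$ we get $w=\nu(w)$, and $\nu$ obviously commutes with itself. By Lemma \ref{lem.62}, $w\in RZ(A^{\nu})_1$, so Lemma \ref{lem.zin} with $d=1$ gives
$$\sD_w(A)\cong\sD_w(A^{\nu})=A^{\nu}[w^{-1}]_0\cong A^{\nu}/(w-1).$$

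The main obstacle is bookkeeping rather than substance: one must verify carefully that the well-definedness relations and the addition in the twisted localization match those in $A[w^{-1}]_0$. The key identities $w^{*i}=w^i$ and $f*w^j=fw^j$ are exactly what make this work.
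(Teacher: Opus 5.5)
Your proposal is correct and follows the paper's proof essentially verbatim: the identity map $fw^{-i}\mapsto fw^{-i}$, the normalizing automorphism $\s^{-1}\nu$ of $w$ in $A^{\s}$ taken from the proof of Lemma \ref{lem.62}, the computation $f*(\s^{-1}\nu)^i(g)=f\s^i\s^{-i}\nu^i(g)=f\nu^i(g)$ using $\s\nu=\nu\s$, and then the specialization $\s=\nu$ together with Lemma \ref{lem.zin}. Your extra checks (addition, well-definedness, and $\nu(w)=w$ from the regularity of $w$) fill in what the paper leaves as ``we can check''; just delete the abandoned placeholder $\Phi(fw^{-i}):=\s^{-?}\cdots$ from the write-up.
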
 

\begin{proof} 
Define a map 
$$
\varphi: \sD_w(A) = A[w^{-1}]_0 \to \sD_w(A^\s) = A^\s[w^{-1}]_0; \;  fw^{-i} \mapsto f w^{-i} 
$$
for $f\in A_i=A^{\s}_i$.  Since the normalizing automorphism of $w$ in $A^\s$ is $\s^{-1} \nu$ by the proof in Lemma \ref{lem.62}, 
\begin{align*}
\varphi(fw^{-i})\cdot \varphi(gw^{-j}) & = (fw^{-i})* gw^{-j}=f*(\s^{-1}\nu)^i(g)w^{-i-j} =f\s^i(\s^{-1}\nu)^i(g)w^{-i-j}  \\
& =f\nu^i(g)w^{-i-j}=\varphi(f\nu^i(g)w^{-i-j})=\varphi(fw^{-i}\cdot gw^{-j})
\end{align*}
for $f\in A_i=A^{\s}_i, g\in A_j=A^{\s}_j$, so we can check that $\varphi$ is an isomorphism of algebras. 
By Lemma \ref{lem.62},  $w\in RZ(A^{\nu})_1$, so $\sD_w(A)\cong \sD_w(A^{\nu})\cong A^{\nu}/(w-1)$ by Lemma \ref{lem.zin}.
\end{proof}

Due to the above Lemma, we will identify $\sD_w(A)$ with $A^\nu/(w-1)$, which we already mentioned in Remark \ref{rem-idnor}.

The following theorem is a key result to classify noncommutative central conics. 

\begin{theorem} \label{thm.awz} For every $A\in \cA_{n, 1}$ with 
$w\in RN(A^!)_1$, there exists $A'\in \cA^c_{n, 1}$ with 
$z\in RZ((A')^!)_1$ such that $\GrMod A'\cong \GrMod A$. 
\end{theorem}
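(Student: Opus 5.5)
The natural approach is a Zhang twist of the dual side. Let $A = S/(f)\in \cA_{n,1}$ and $w\in RN(A^!)_1$ with normalizing automorphism $\nu\in \Aut^{\ZZ}A^!$. By Lemma \ref{lem.62}, $w\in RZ((A^!)^{\nu})_1$. By Lemma \ref{lem.ndu}, there is $\s\in \Aut^{\ZZ}A$ with $\s^!=\nu$, and then $(A^{\s})^!\cong (A^!)^{\nu}$. Set $A':=A^{\s}$. Theorem \ref{thm.Z11} gives $\GrMod A'\cong \GrMod A$ at once. Transporting $w$ through the isomorphism $(A^{\s})^!\cong (A^!)^{\nu}$ produces the required $z\in RZ((A')^!)_1$. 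It then remains to check that $A'\in \cA^c_{n,1}$.

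\textbf{Step 1: $A'\in \cA_{n,1}$.} We lift $\s$ to $S$. Since $S$ and $A$ are quadratic and $S_1 = A_1$, $\s$ is a graded linear automorphism of $V$. We need $(\s\otimes\s)(W_S)=W_S$, where $W_S\subset W_A$ are the relation spaces. Dually, $S^!=A^!/(f^!)$, and the relation space of $S^!$ is $W_A^\perp + k f^!$, where $f^!\in A^!_2$ is unique up to scalar (Lemma \ref{lem.asf}). So it suffices that $\nu(f^!)\in kf^!$. We have $f^!$ regular normal in $A^!$ and $\nu$ a graded automorphism; the plan is to show $\nu$ preserves the line $kf^!$ via the uniqueness in Lemma \ref{lem.asf}. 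Indeed, $\nu(f^!)$ is again regular normal of degree $2$, and $A^!/(\nu(f^!))\cong A^!/(f^!) = S^!$. If uniqueness alone does not suffice, I would instead compute directly: $w f^! = \nu(f^!) w$ and $f^! w = w\mu(w)$, and compare in $A^!/(w)$.

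Granting this, $\s$ lifts to $\tilde\s\in\Aut^{\ZZ}S$ with $\tilde\s(f)=\l f$. Then $S^{\tilde\s}$ is an $n$-dimensional quantum polynomial algebra (Zhang twists preserve AS-regularity and the Hilbert series). Moreover, $f$ stays regular normal in $S^{\tilde\s}$, by the argument of Lemma \ref{lem.62} and Lemma \ref{lem.Tak}, and $A^{\s}\cong S^{\tilde\s}/(f)$ by Lemma \ref{lem.sf}. So $A'\in\cA_{n,1}$.

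\textbf{Step 2 (centrality): $f$ is central in $S^{\tilde\s}$.} This step is the main obstacle. The idea is to use Lemma \ref{lem.asf}: $f$ is central in $S^{\tilde\s}$ if and only if $f^!$ is central in $(A')^!\cong (A^!)^{\nu}$. So it suffices to show that $f^!$ is central in $(A^!)^{\nu}$.

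A plausible route goes through $A^!/(w)$. Because $w$ is of degree $1$ and regular normal, the sequence of degree-$2$ normal elements is controlled by $w^2$. I would try to show that $f^!$ is proportional to $w^2$ modulo terms forced to commute, or more precisely that, after the twist, the normalizing automorphism of $f^!$ becomes trivial.

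First I would check whether $f^!=cw^2$ in the relevant cases. Since $A^!$ has GK-dimension $1$, it is Koszul AS-Gorenstein of dimension $1$ by Lemma \ref{lem.KAS}. Then $A^!/(w)$ is finite dimensional with Hilbert series $(1+t)^{n-1}$, while $S^!$ has Hilbert series $(1+t)^n$. This suggests that $w^2$ is a regular normal element of degree $2$ whose quotient has the right Hilbert series.

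If $w^2\neq 0$ in $S^!$, I would fall back on a different choice of twist. Let $\mu$ be the normalizing automorphism of $f^!$. Choose the twist by an automorphism $\tau$ with $\tau^2=\mu$ commuting with $\nu$ (possible since $k$ is algebraically closed, taking a square root on a suitable component), so that both $w$ remains regular normal and $f^!$ becomes central. After that, I would adjust by a further twist fixing $w$ to make $w$ central without destroying the centrality of $f^!$. Lemma \ref{lem.633} shows such commuting twists do not change $\sD_w$, which is consistent.

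I expect this centrality verification to be the genuinely delicate point.
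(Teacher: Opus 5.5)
There is a genuine gap, and it occurs in Step 1, before the centrality question even comes up. You fix an arbitrary presentation $A=S/(f)$ and try to prove $\nu(f^!)\in kf^!$. The uniqueness in Lemma \ref{lem.asf} only says that, for this fixed $S$, the line $kf^!$ is determined by $W_S^\perp=W_A^\perp+kf^!$ inside $T(V^*)_2$. The isomorphism $A^!/(\nu(f^!))\cong S^!$ is merely abstract, so nothing follows from it, and in fact the claim is false.

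Here is a counterexample. Take $A=k[x,y,z]/(x^2)=S/(x^2)$ with $S=k\<x,y,z\>/(xy-yx-x^2,\,yz-zy,\,zx-xz)=k_J[x,y][z]\in\cA_{3,0}$; in this $S$, $x^2$ is regular normal but not central. Then:
\begin{itemize}
\item $A^!=k\<x,y,z\>/(xy+yx,yz+zy,zx+xz,y^2,z^2)$, and $w=x\in RN(A^!)_1$ has $\nu=\operatorname{diag}(1,-1,-1)$.
\item $W_S^\perp=W_A^\perp+k(x^2+xy)$, so $f^!=x^2+xy$ but $\nu(f^!)=x^2-xy\notin kf^!$.
\item Hence $\nu^!$ does not lift to $S$, and $f^!$ is not central in $(A^!)^{\nu}$ either, since for the generator $z$ one gets $z*f^!-f^!*z=-2xyz\neq 0$.
\end{itemize}
Your Step 2 fallback cannot repair this. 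If $\rho(w)=w$, the proof of Lemma \ref{lem.62} shows $w$ is central in $(A^!)^{\rho}$ only when $\rho=\nu$. Successive commuting twists compose into a single twist, so no freedom remains in the choice of twist.

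The missing idea is that the freedom lies in the presentation, not in the twist: discard $S$ and re-present $A$ using $w$.
\begin{itemize}
\item Put $E:=A^!/(w^2)$. Since $w^2\in RN(A^!)_2$, $E$ is Koszul AS-Gorenstein of dimension $0$, hence Frobenius, with $H_{E^!}(t)=1/(1-t)^n$.
\item Set $S':=E^!$ and $f':=(w^2)^!$, so that $S'/(f')=A$. $S'$ is noetherian because $A$ is (Lemma \ref{lem.Ree} (1)), so $S'\in\cA_{n,0}$ by Lemma \ref{lem.smith}.
\item Now $(f')^!=w^2$ is fixed by $\nu$. So $\nu$ descends to $E$, and $\nu^!$ lifts to $S'$ with $\nu^!(f')\in kf'$ automatically.
\item Since $w$, hence $w^2$, is central in $(A^!)^{\nu}$, Lemma \ref{lem.asf} gives $f'\in RZ((S')^{\nu^!})_2$. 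Therefore $A^{\nu^!}\cong (S')^{\nu^!}/(f')\in\cA^c_{n,1}$.
\end{itemize}
This is the paper's proof. Your remark that $w^2$ has the right Hilbert series was pointing in this direction. The correct move, however, is to replace $S$ by $(A^!/(w^2))^!$, not to test whether the original $f^!$ is a multiple of $w^2$; in the example above it is not.
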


\begin{proof} 
Let $\nu\in \Aut ^{\ZZ}A^!$ be the normalizing automorphism of $w\in RN(A^!)_1$. 
Since $A^!$ is a Koszul AS-Gorenstein algebra of dimension 1 by Lemma \ref{lem.KAS}, 
$E:=A^!/(w^2)$ is a Koszul AS-Gorenstein algebra of dimension 0 by Lemma \ref{lem.Ree}. 
Since  $E^!/(f)  = A$ is noetherian where $f:=(w^2)^!\in RN(E^!)_2$ 
by Lemma \ref{lem.asf}, $E^!$ is noetherian by Lemma \ref{lem.Ree} (1).  Since $E$ is a Frobenius Koszul algebra by Lemma \ref{lem.FAS} and 
$$H_{E^!}(t)=1/H_E(-t)=1/(H_{A^!}(-t)(1-t^2))=H_A(t)/(1-t^2)=1/(1-t)^n,$$ 
$E^!\in \cA_{n, 0}$ by Lemma \ref{lem.smith}.  Since $\nu(w^2)=w^2$, $\nu\in \Aut^{\ZZ}A^!\cap \Aut^{\ZZ}E$, 
so $\nu^!\in \Aut^{\ZZ}A\cap \Aut^{\ZZ}E^!$ by Lemma \ref{lem.ndu} so that $\nu^!(f)=\l f$ for some $0\neq \l\in k$.  
On the other hand, $(A^{\nu^!})^! \cong (A^!)^{\nu}$ and 
$$
A^{\nu^!} = (E^!/(f))^{\nu^!} \cong (E^!)^{\nu^!}/(f) \cong (E^\nu)^! / (f) = ((A^!/(w^2))^\nu)^!/(f) \cong ((A^!)^\nu / (w^2))^! /(f)
$$
by Lemma \ref{lem.sf} and Lemma \ref{lem.ndu}.
Since $w\in RZ((A^!)^{\nu})_1$ by Lemma \ref{lem.62}, $f\in RZ((E^!)^{\nu^!})_2$ by Lemma \ref{lem.asf}.

Since $(E^!)^{\nu^!}\in \cA_{n, 0}$ by \cite[Theorem 1.3]{Z},  $A^{\nu^!} \in \cA^c_{n, 1}$.
By Theorem \ref{thm.Z11}, $\GrMod A^{\nu^!}\cong \GrMod A$.  
\end{proof}   

\begin{remark} 
 A key point of the above proof is to show that $A^{\nu^!}\in \cA_{n, 1}$. In general, $A\in \cA_{n, 1}$ and $\s\in \Aut^{\ZZ} A$ does not imply $A^{\s}\in \mathcal{A}_{n,1}$ unless $\s\in \Aut^{\ZZ}S$ as the following example shows.
\end{remark}   

\begin{example}[The algebra $F_4$ in Table \ref{tab-caf}]
	If $S = k\langle x,y,z \rangle/(xy+yx, yz - zy, zx-xz)\in \mathcal{A}_{3,0}$ and $A=S/(x^2)\in \cA_{3, 1}$, then it is easy to check that \[ \sigma = \begin{pmatrix}
		1 & 0  & 0 \\
		1 &  1  &  0\\
		0 & 0 &  1
	\end{pmatrix} \in \text{\rm Aut}^{\mathbb{Z}}A\setminus \Aut^{\ZZ}S. \]
	If $A^\sigma \in \mathcal{A}_{3,1}$, then $A^{\s}=S'/(f')$ for some $S'\in \cA_{3, 0}$ and $f'\in RN(S')_2$, so there exists ${f'}^!\in RN((A^{\s})^!)_2$  by Lemma \ref{lem.asf}.  Since $A^{\sigma} \cong  k\langle x,y,z \rangle/(xy+yx, yz - zy +xz, zx-xz, x^2)$,  and  $(A^\sigma)^!\cong k\<x, y, z\>/(xy-yx, yz + zy, zx+xz-yz, y^2, z^2)$, we can check that $RN((A^\sigma)^!)_2 = \emptyset$ by direct calculation, so $A^\sigma \notin \mathcal{A}_{3,1}$. 
\end{example}

\begin{remark} 
It is very nice if we can prove that $\cA_{3, 1}^c$ and $\cA_{3, 1}$ are the same up to graded Morita equivalence (see \cite{SV}).  Let $A=S/(f)\in \cA_{3, 1}$.  If there exists $\s\in \Aut ^{\ZZ}S$ such that $\s(f)=f$ and $\s^2$ is the normalizing automorphism of $f$, then $S^{\s}\in\cA_{3, 0}$ and $f\in Z(S^{\s})_2$, so $A^{\s}\cong S^{\s}/(f)\in \cA_{3, 1}^c$ such that $\GrMod A^{\s}\cong \GrMod A$ (see example below), but we do not know if we can always find such a $\s$. 
\end{remark}

\begin{example}
     Let $S=k\langle x,y,z \rangle/(xy+yx,yz-zy,zx-xz) =k_{-1}[x, y][z]\in \cA_{3,0}$, $f = xy\in RN(S)_2$, and $A=S/(f)$. Since the normalizing automorphism of $f$ is 
$$
\nu = \begin{pmatrix}
        -1 &  0  &  0 \\
        0 &  -1  &  0 \\
        0 &  0  &  1
\end{pmatrix}\neq \id,
$$ $A\in \cA_{3, 1}\setminus \cA_{3, 1}^c$.  
Since 
$$\sigma = \begin{pmatrix}
        \sqrt{-1} &  0  &  0 \\
        0 &  -\sqrt{-1}  &  0 \\
        0 &  0  &  1
\end{pmatrix}\in \Aut ^{\ZZ}S
$$ 
such that $\sigma^2 = \nu$ and $\sigma(f) = f$, we see that $S^{\sigma} = k\langle x,y,z \rangle/(xy-yx,yz-izy,zx-ixz)\in \cA_{3, 0}$ and $f = xy \in Z(S^{\sigma})_2$, so  $A^{\sigma}\cong S^{\sigma}/(f)\in \cA_{3, 1}^c$ such that $\GrMod A^{\s}\cong \GrMod A$.
\end{example}

\subsection{Noncommutative Bezout's theorem}

\begin{lemma} \label{lem.08} 
If $b(t)\in \ZZ[t]$ and $a(t)=\dfrac{b(t)}{1-t}=\sum_{i\in \NN}a_it^i\in \ZZ[[t]]$, then $a_i=b(1)$ for every $i\geq \deg b(t)$. 
\end{lemma}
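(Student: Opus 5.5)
The plan is to compute the coefficients directly. Write $b(t)=\sum_{j=0}^{d}b_jt^j$ with $d=\deg b(t)$, and expand $1/(1-t)=\sum_{i\in\NN}t^i$ in $\ZZ[[t]]$. This expansion is legitimate because $1-t$ is a unit in $\ZZ[[t]]$, so $a(t)$ is well defined. Then
$$
a(t)=\Bigl(\sum_{j=0}^{d}b_jt^j\Bigr)\Bigl(\sum_{l\in\NN}t^l\Bigr),
$$
and comparing coefficients of $t^i$ gives $a_i=\sum_{j=0}^{\min(i,d)}b_j$. In words, $a_i$ is the $i$-th partial sum of the coefficients of $b(t)$.

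For $i\geq d$ we have $\min(i,d)=d$, so $a_i=\sum_{j=0}^{d}b_j=b(1)$, which is the claim.

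Equivalently, one can argue from the identity $(1-t)a(t)=b(t)$. It gives $a_0=b_0$ and $a_i-a_{i-1}=b_i$ for $i\geq1$, where $b_i=0$ for $i>d$. Hence $a_i$ is constant for $i\geq d$, and telescoping shows that this constant value is $\sum_j b_j=b(1)$.

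There is no real obstacle here. The only things to be careful about are the convention for $\deg b(t)$ (the case $b=0$ is trivial, since then every $a_i=0=b(1)$) and the boundary index $i=d$, where the partial sum has just become complete.
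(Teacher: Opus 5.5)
Your proposal is correct and is the same argument as the paper's: multiply $b(t)$ by $\sum_{i\in\NN}t^i$, observe that $a_i=b_0+\cdots+b_{\min(i,d)}$, and note that this equals $b(1)$ once $i\geq d$. Your telescoping variant via $(1-t)a(t)=b(t)$ is just a restatement of the same computation.
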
 

\begin{proof} If $\deg b(t)=d$ so that $b(t)=\sum_{i=0}^db_it^i$, then 
\begin{align*}
a(t) = &\dfrac{b(t)}{1-t}=\left(\sum_{i=0}^db_it^i\right)\left (\sum_{i\in \NN}t^i\right) \\
 = & b_0+(b_0+b_1)t+(b_0+b_1+b_2)t^2+\cdots +(b_0+b_1+\cdots +b_d)(t^d+t^{d+1}+\cdots ),
\end{align*}
so $a_i=b_0+b_1+\cdots +b_d=b(1)$ for every $i\geq d$.  
\end{proof}

\begin{lemma} \label{lem.09}
Let $A$ be a locally finite  $\NN$-graded algebra 
such that $H_A(t)=\dfrac{b(t)}{1-t}$ for some $b(t)\in \ZZ[t]$.  If $f\in RN(A)_d$, then $\GKdim A=1$ and $\dim _kA[f^{-1}]_0=b(1)$. 
\end{lemma}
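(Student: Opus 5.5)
Since $H_A(t)=b(t)/(1-t)$, Lemma \ref{lem.08} gives $\dim_k A_i=b(1)$ for all $i\geq \deg b(t)$. The plan is to read off both assertions from this eventual constancy together with the structure of $A[f^{-1}]_0$ as a direct limit of graded pieces along multiplication by $f$.

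\textbf{GK-dimension.} First note that $b(1)\neq 0$. Indeed, $f$ is regular, so right multiplication by $f$ gives an injection $A_i\to A_{i+d}$. Hence $\dim_k A_{id}\geq \dim_k A_0\geq 1$ when $d\geq 1$, and the constant value $b(1)$ must be at least $1$. (If $d=0$, then $A_0$ is finite dimensional with $f$ regular, so $f$ is a unit, which is excluded by the definition of $RN$.) Once $b(1)\neq 0$, the remark before Lemma \ref{lem.KAS} (\cite[Corollary 2.2]{SZ}) gives $\GKdim A=1$. One can also see this directly: $\dim_k A_i$ is bounded, which gives $\GKdim A\leq 1$, and it is eventually a nonzero constant, which gives $\GKdim A\geq 1$.

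\textbf{Dimension of $A[f^{-1}]_0$.} Every element of $A[f^{-1}]_0$ can be written as $gf^{-i}$ with $g\in A_{di}$. Two such expressions are related by $gf^{-i}=(gf^{j})f^{-i-j}$. So $A[f^{-1}]_0$ is the direct limit of the system
$$A_0\to A_d\to A_{2d}\to\cdots,$$
where each map is right multiplication by $f$. The identification of elements uses the addition rule in the definition together with right regularity of $f$: $gf^{-i}=0$ if and only if $g=0$. Regularity makes every map in the system injective. For $di\geq \deg b(t)$, each map is an injection between spaces of the same finite dimension $b(1)$, hence an isomorphism. Therefore the direct limit is isomorphic to $A_{di}$ for any such $i$, and $\dim_k A[f^{-1}]_0=b(1)$.

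\textbf{Main obstacle.} The only delicate point is justifying that $A[f^{-1}]_0$ really is this direct limit, in particular that the map $A_{di}\to A[f^{-1}]_0$, $g\mapsto gf^{-i}$, is injective. This is immediate from the definition given before Lemma \ref{lem.zin}, since $f$ is right regular and so right multiplication by $f$ is injective.
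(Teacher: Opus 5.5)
Your proof is correct and follows essentially the paper's argument. The paper also writes $A[f^{-1}]_0$ as the increasing union $A_0\subset A_df^{-1}\subset A_{2d}f^{-2}\subset\cdots$, which is your direct limit of injections, and reads off $\dim_k A[f^{-1}]_0=\lim_i \dim_k A_{id}=b(1)$ from Lemma \ref{lem.08}; the only differences are that the paper gets $\GKdim A\geq 1$ from $k[f]\subset A$ rather than from $b(1)\geq 1$, and that you explicitly rule out $d=0$, a case the paper leaves implicit.
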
 

\begin{proof} Since $H_A(t)=\dfrac{b(t)}{1-t}$ for some $b(t)\in \ZZ[t]$, $\GKdim A\leq 1$.  Since $k[f]\subset A$ and $f\in A_d$ is regular, $\GKdim A\geq \GKdim k[f]=1$.  Since $f\in A_d$ is a regular normal element,
$$A_0\subset A_df^{-1}\subset A_{2d}f^{-2}\subset A_{3d}f^{-3}\subset \cdots \subset A[f^{-1}]_0,$$
and $A[f^{-1}]_0=\cup_{i\in \NN}A_{id}f^{-i},$ so 
$$\dim _kA[f^{-1}]_0=\lim _{i\to \infty}\dim _kA_{id}f^{-i}=\lim_{i\to \infty}\dim _kA_{id}=b(1)$$  
 by Lemma \ref{lem.08}.
\end{proof}

\begin{theorem}[Generalized Bezout's Theorem] \label{thm.12} 
Let $A=S/(h_1, \dots, h_{n-1})$ be a noncommutative complete intersection where $S \in \cA_{n,0}$.
If $f\in RN(A)_d$, then 
$\dim _kA[f^{-1}]_0=\prod _{i=1}^{n-1}\deg h_i$. 
\end{theorem}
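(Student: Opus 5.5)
The plan is to reduce the statement to Lemma \ref{lem.09} by computing the Hilbert series of $A$. Set $d_i=\deg h_i$. Since $(h_1,\dots,h_{n-1})$ is a homogeneous regular normal sequence in $S$ and $H_S(t)=(1-t)^{-n}$ (Definition \ref{def.qpa}), Lemma \ref{lem.Tak} gives
$$
H_A(t)=\frac{\prod_{i=1}^{n-1}(1-t^{d_i})}{(1-t)^n}=\frac{\prod_{i=1}^{n-1}(1+t+\cdots+t^{d_i-1})}{1-t}.
$$
So $H_A(t)=b(t)/(1-t)$ with $b(t)=\prod_{i=1}^{n-1}(1+t+\cdots+t^{d_i-1})\in\ZZ[t]$. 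Moreover $A$ is locally finite and $\NN$-graded, being a quotient of the connected graded algebra $S$.

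Next I will apply Lemma \ref{lem.09} to $A$ and $f\in RN(A)_d$. It gives $\GKdim A=1$ and $\dim_k A[f^{-1}]_0=b(1)$. Evaluating each factor at $t=1$ gives $1+1+\cdots+1=d_i$, so $b(1)=\prod_{i=1}^{n-1}d_i=\prod_{i=1}^{n-1}\deg h_i$, which is the claim.

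The only point needing care is that Lemma \ref{lem.Tak} requires the sequence to be homogeneous and normal, not merely regular. This is built into Definition \ref{defn-nci} of a noncommutative complete intersection, so the hypotheses are met. I also expect $d_i\geq 1$, since regular normal elements are non-units; otherwise the formula degenerates. Beyond that there is no real obstacle: the argument is a direct combination of Lemma \ref{lem.Tak} and Lemma \ref{lem.09}.
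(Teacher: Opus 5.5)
Your proposal is correct and is essentially the paper's own proof. It computes $H_A(t)=\prod_{i=1}^{n-1}(1-t^{d_i})/(1-t)^n$ via Lemma \ref{lem.Tak}, rewrites this as $b(t)/(1-t)$ with $b(t)=\prod_{i=1}^{n-1}(1+t+\cdots+t^{d_i-1})$, and applies Lemma \ref{lem.09} to obtain $\dim_k A[f^{-1}]_0=b(1)=\prod_{i=1}^{n-1}\deg h_i$.
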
 

\begin{proof} If $\deg h_i=d_i$, then 
$$H_A(t)=(1-t^{d_1})\cdots (1-t^{d_{n-1}})H_S(t)=\dfrac{(1-t^{d_1})\cdots (1-t^{d_{n-1}})}{(1-t)^n}=\dfrac{\sum_{i=0}^{d_1-1}t^i\cdots \sum_{i=0}^{d_{n-1}-1}t^i}{1-t}=:\dfrac{b(t)}{1-t},$$
so $\dim_kA[f^{-1}]_0
=b(1)=d_1\cdots d_{n-1}$ by Lemma \ref{lem.09}.  
\end{proof}

\begin{theorem}[Affine Bezout's Theorem] \label{thm.abt} 
Let $F = (f_1,\dots,f_n)$ be a sequence in $R = k[x_1, \dots, x_n]$.   
If $F^\vee$ is regular  in $R$ (e.g., if $F$ is strongly regular in $R$), then  
$\dim_k R/I_F=\prod _{i=1}^{n}\deg f_i$. 
\end{theorem}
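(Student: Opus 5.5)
The plan is to homogenize and then use the generalized Bezout theorem (Theorem \ref{thm.12}) with $f=z$. Set $S=k[x_1,\dots,x_n]$ and $d_i=\deg f_i$, and consider the homogenization $F^z=(f_1^z,\dots,f_n^z)$ in $S[z]=k[x_1,\dots,x_n,z]$. By Lemma \ref{lem.Ore}, $S[z]$ is an $(n+1)$-dimensional quantum polynomial algebra. In the commutative setting every element is central, so normality of all sequences is automatic. This lets us apply Lemma \ref{lem.612}: since $F^\vee$ is regular in $S$, the sequence $(F^z,z)$ is regular in $S[z]$, and in particular $F^z$ is a homogeneous regular (central) sequence in $S[z]$.

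Next we form the complete intersection. Put $A:=\sH^z(S,F)=S[z]/I_{F^z}$; this is a noncommutative complete intersection of the form $S[z]/(h_1,\dots,h_n)$ with $\deg h_i=d_i$. Because $(F^z,z)$ is a regular sequence, the element $z$ is regular in $A$, and it is central and of degree $1$. If $z$ were a unit in $A$, then $A/(z)=0$. But $A/(z)\cong \sT(S,F)=S/I_{F^\vee}$ by Lemma \ref{lem-tchz}, and this is nonzero since $F^\vee$ is a regular sequence of elements of positive degree. (If some $d_i=0$, then $f_i$ is a nonzero constant, both sides are trivial, and this case should be handled separately.) Hence $z\in RZ(A)_1$, and Theorem \ref{thm.12}, applied with $n+1$ in place of $n$, gives $\dim_k A[z^{-1}]_0=\prod_{i=1}^n d_i$.

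Finally we identify $A[z^{-1}]_0$ with $R/I_F$. Since $z$ is regular and central of degree $1$, Lemma \ref{lem.zin} gives $A[z^{-1}]_0\cong A/(z-1)$. Proposition \ref{prop.05n} (or Proposition \ref{prop.05}(1) applied to polynomial lifts, together with the commutativity relations, which are homogeneous) then gives $A/(z-1)\cong S/I_F=R/I_F$. Combining these, $\dim_k R/I_F=\prod_i\deg f_i$.

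The main obstacle is verifying the hypotheses of Lemma \ref{lem.612} and Theorem \ref{thm.12} in this setting. The key point is that regularity of $F^\vee$ transfers to regularity of $(F^z,z)$, which Lemma \ref{lem.612} supplies once normality is known, and normality is trivial here because $R$ is commutative. Beyond that, we only need to check that $z$ is not a unit, which reduces to $S/I_{F^\vee}\neq 0$.
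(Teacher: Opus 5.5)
Your proposal is correct and follows essentially the same route as the paper: you homogenize, use Lemma \ref{lem.612} to get $(F^z,z)$ regular in $R[z]$, apply Theorem \ref{thm.12} to $z\in RZ(\sH^z(F))_1$, and identify $\sH^z(F)[z^{-1}]_0$ with $R/I_F$. Your identification via Lemma \ref{lem.zin} and Proposition \ref{prop.05n}, your check that $z$ is not a unit, and your note on the degenerate case $\deg f_i=0$ (where both sides are $0$) only make explicit details that the paper's short proof leaves implicit.
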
  

\begin{proof}  
Assume $F^\vee$ is a regular sequence in $R$. 
By Lemma \ref{lem.612}, $(F^z,z)$ is regular in $R[z]$,
so 
$$
\dim _k R/I_F=\dim_k \sD_z(\sH^z(F))= \dim_k \sH^z(F) [z^{-1}]_0=\prod _{i=1}^{n}\deg (f_i)^z=\prod _{i=1}^{n}\deg f_i
$$
by Corollary \ref{cor.DHc} (2), Lemma \ref{lemq.06} and Theorem \ref{thm.12}.  
\end{proof} 

Combining with Remark \ref{rem.52}, we have the following result. 

\begin{corollary} \label{cor-abtxy}
If $f,g \in k[x,y]$ such that $\gcd(f^\vee, g^\vee) = 1$, then $\dim_k k[x,y]/(f,g) = \deg f \deg g$. 
\end{corollary}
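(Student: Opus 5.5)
The plan is to reduce the statement to Theorem \ref{thm.abt} with $n=2$, using Remark \ref{rem.52} to supply the regularity hypothesis. Throughout, $f,g\in k[x,y]$ are nonconstant; otherwise $f^\vee$ or $g^\vee$ is a nonzero constant and the statement is degenerate.

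\textbf{Step 1: regularity of $F^\vee$.} Set $F=(f,g)$, so that $F^\vee=(f^\vee,g^\vee)$ is a homogeneous sequence in $R=k[x,y]$. Since $f,g$ are nonconstant, $f^\vee,g^\vee\in R\setminus k$. The assumption $\gcd(f^\vee,g^\vee)=1$ then gives, by Remark \ref{rem.52}, that $(f^\vee,g^\vee)$ is a regular sequence in $R$. In the commutative domain $R$ this is elementary: $f^\vee\neq 0$ is regular, and $g^\vee$ is regular modulo $f^\vee$ by unique factorization and coprimality.

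\textbf{Step 2: apply the affine Bezout theorem.} With $F^\vee$ regular, Theorem \ref{thm.abt} for $n=2$ gives directly
$$\dim_k k[x,y]/(f,g)=\deg f\cdot\deg g.$$
That theorem in turn rests on Lemma \ref{lem.612}, Corollary \ref{cor.DHc}(2) and Theorem \ref{thm.12}. Lemma \ref{lem.612} applies because its normality hypotheses are automatic in a commutative ring.

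\textbf{Main obstacle.} The only point needing care is the hypothesis check in Step 1. One must confirm that Remark \ref{rem.52} is used for the top-degree forms $f^\vee,g^\vee$, not for $f,g$ themselves, and that these forms are nonconstant. Once that is settled, the corollary is an immediate specialization of Theorem \ref{thm.abt}.
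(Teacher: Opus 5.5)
Your proposal is correct and is exactly the paper's argument: the paper states this corollary as an immediate consequence of Theorem \ref{thm.abt} combined with Remark \ref{rem.52}, which is precisely your Steps 1 and 2. Your nonconstancy caveat is harmless, since if one of $f,g$ is a nonzero constant both sides of the equality are $0$.
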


\begin{lemma}[{\cite[Lemma 5.5]{HMTW}}] \label{lem.wbwd} 
If $B\in \cB_{n, n-1}$, then there exists $z\in RZ(B)_1$ so that $\sD(B)$ is well-defined.  
\end{lemma}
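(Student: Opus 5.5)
The plan is to use that $B=k[x_1,\dots,x_n]/(h_1,\dots,h_{n-1})$ with $(h_1,\dots,h_{n-1})$ a homogeneous regular sequence of degree $2$. Then $\GKdim B=1$ by Lemma \ref{lem.KAS} (the case $S=k[x_1,\dots,x_n]$, $m=n-1$). Since $B$ is commutative, every element of $B_1$ is central. So it suffices to find a linear form $z\in B_1$ that is a non-zero-divisor in $B$. Such a $z$ is not a unit, for degree reasons. It then lies in $RZ(B)_1$, and $\sD(B)=\sD_z(B)$ is well-defined. By Lemma \ref{lem.awwd} it is independent of the choice of $z$ up to isomorphism.

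To find $z$, we use commutative algebra. $B$ is a complete intersection, so it is Cohen--Macaulay of Krull dimension $1$, and $B_+$ has positive depth. Hence $B$ has no embedded associated primes: every associated prime of $B$ is a minimal prime $\mathfrak p_1,\dots,\mathfrak p_r$, each of dimension $1$. In particular none of them contains the irrelevant ideal $B_+=(x_1,\dots,x_n)$. Therefore each $(\mathfrak p_j)_1$ is a proper subspace of $B_1$; otherwise $\mathfrak p_j\supseteq B_+$ would be maximal, contradicting $\dim B/\mathfrak p_j=1$. Since $k$ is infinite, $B_1$ is not a finite union of proper subspaces. So there is $z\in B_1\setminus\bigcup_j\mathfrak p_j$, and this $z$ avoids every associated prime, hence is a non-zero-divisor.

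If one wants to avoid citing Cohen--Macaulayness, the key point can be replaced by the standard fact that for a homogeneous regular sequence the quotient has depth $n-(n-1)=1$. Positive depth gives $B_+\notin\operatorname{Ass}B$, and prime avoidance in the graded setting (with $k$ infinite) again yields a linear non-zero-divisor. The main obstacle is exactly this step: ruling out an embedded component at the irrelevant ideal, which could otherwise force every linear form to be a zero-divisor. Everything else is formal.
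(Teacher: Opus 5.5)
Your proof is correct. Its only real input is $B_+\notin\operatorname{Ass}B$ (depth one of the graded complete intersection); after that, avoiding the finitely many proper subspaces $(\mathfrak p_j)_1\subsetneq B_1$ over the infinite field $k$ gives a linear non-zero-divisor, which is automatically central and a non-unit, and Lemma \ref{lem.awwd} gives independence of the choice of $z$. The paper itself does not prove this statement but quotes it from \cite[Lemma 5.5]{HMTW}, so I cannot compare routes; your depth-plus-prime-avoidance argument is the standard way to prove it.
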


\begin{lemma} \label{lem.bzb} 
If 
$$
B=k[x_1, \dots , x_n]/(h_1, \dots, h_{n-1})
$$ 
is a complete intersection, then $\sD(B)$ is a $\prod _{i=1}^{n-1}\deg h_i$-dimensional Frobenius algebra. 
\end{lemma}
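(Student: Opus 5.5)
By Lemma \ref{lem.wbwd}, we first choose $z\in RZ(B)_1$, which is possible since $B=k[x_1,\dots,x_n]/(h_1,\dots,h_{n-1})\in\cB_{n,n-1}$ after the identification of a complete intersection with a regular sequence. This makes $\sD(B)=\sD_z(B)=B[z^{-1}]_0$ well-defined, and it is commutative. The dimension count is then immediate from the Generalized Bezout's Theorem \ref{thm.12}: applied with $S=k[x_1,\dots,x_n]\in\cA_{n,0}$ and $f=z\in RN(B)_1$, it gives $\dim_k B[z^{-1}]_0=\prod_{i=1}^{n-1}\deg h_i$. Strictly, Theorem \ref{thm.12} is stated for noncommutative complete intersections, and a commutative polynomial ring is a quantum polynomial algebra. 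Its proof only uses the Hilbert series $H_B(t)=\prod(1-t^{d_i})/(1-t)^n$, which follows from Lemma \ref{lem.Tak} for any degrees, so the argument works in the stated generality.

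It remains to show that $\sD(B)$ is Frobenius. Since $\sD(B)$ is finite dimensional and commutative, it suffices to show it is Gorenstein, i.e.\ self-injective. A commutative finite dimensional self-injective algebra is a product of local self-injective algebras, which are basic, so it is Frobenius by \cite[Proposition IV 3.9]{SY}; alternatively one writes down the socle functional on each local factor directly. The plan is therefore to show $\sD(B)$ is self-injective.

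Rewrite $\sD(B)\cong B/(z-1)$ by Lemma \ref{lem.zin}. Since $B$ is a graded complete intersection, it is Gorenstein (AS-Gorenstein of dimension $1$ by Lemma \ref{lem.Ree} applied repeatedly). Also $z-1$ is a regular element of $B$: it is inhomogeneous with nonzero constant term, so if $g(z-1)=0$, then comparing the top-degree homogeneous components gives $g_{\mathrm{top}}z=0$, hence $g=0$ by regularity of $z$. Moreover $B/(z-1)$ has Krull dimension $0$. In commutative algebra, the quotient of a Gorenstein ring by a regular element is Gorenstein, and an artinian Gorenstein ring is self-injective. So $\sD(B)$ is artinian Gorenstein, hence self-injective, and by the remark above, Frobenius.

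A more elementary route is also available. Dehomogenize the defining sequence: $(h_1)_z,\dots,(h_{n-1})_z$ is a regular sequence in $k[x_1,\dots,x_{n-1}]$ (after a linear change of variables putting $z=x_n$) by Lemma \ref{lem.zfr}, so $\sD(B)\cong k[x_1,\dots,x_{n-1}]/((h_i)_z)$ is a zero-dimensional complete intersection. Zero-dimensional complete intersections are Gorenstein, hence Frobenius.

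The main obstacle is the justification that $B$ localized or quotiented stays Gorenstein, since the paper's AS-Gorenstein machinery is graded while $z-1$ is not homogeneous. I would avoid this by using the second route: the affine ring $k[x_1,\dots,x_{n-1}]/I_{(H)_z}$ is an artinian complete intersection, and its Frobenius property follows from the standard fact that the Koszul complex computes $\Ext$ and yields a one-dimensional socle at each maximal ideal.
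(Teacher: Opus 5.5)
Your proof is correct. The dimension count is the same as the paper's: Theorem \ref{thm.12} applied with $f=z$. For the Frobenius property you take a genuinely different route to the key point, namely that $\sD(B)$ is an artinian Gorenstein ring.

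The paper argues locally. For each maximal ideal $\mathfrak{m}$ of $\sD(B)$ it identifies $\sD(B)_{\mathfrak{m}}$ with a homogeneous localization $B_{(\mathfrak{n})}$, i.e.\ a local ring of $\Proj B$. It then inherits Gorensteinness from the graded Gorenstein ring $B$, and uses that zero-dimensional Gorenstein local rings are self-injective.

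Your preferred second route works globally. You identify $\sD(B)$ with $k[x_1,\dots,x_{n-1}]/((h_1)_z,\dots,(h_{n-1})_z)$ and check that the dehomogenized relations still form a regular sequence. This uses only the paper's own Lemmas \ref{lemq.06}, \ref{lem.zin} and \ref{lem.zfr}; it is exactly the argument of Lemma \ref{lem.bcn} without the degree-$2$ hypothesis. So $\sD(B)$ is an artinian complete intersection and is Gorenstein at once.

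What your route buys is self-containedness. It avoids the identification $\sD(B)_{\mathfrak{m}}\cong B_{(\mathfrak{n})}$, which is standard but not proved in the paper. It also avoids the fact that homogeneous localizations of a graded Gorenstein ring are Gorenstein.

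What the paper's route buys is generality and geometry. Its Frobenius argument uses only that $B$ is graded Gorenstein, not that it is a complete intersection. It also makes the link with the local rings of $\Proj B$ that the following remark uses to recover the classical Bezout theorem. Your first route, via $B/(z-1)$ with $z-1$ regular, is also valid and shares that generality: $B$ is Gorenstein at every prime, so $B_{\mathfrak{m}}/(z-1)$ is Gorenstein for each maximal ideal $\mathfrak{m}\supseteq(z-1)$.

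One small correction. Lemma \ref{lem.wbwd} is stated for $\cB_{n,n-1}$, i.e.\ quadratic complete intersections, so it does not give $z\in RZ(B)_1$ when the $h_i$ have arbitrary degrees. In general you need the standard argument: $B$ is Cohen--Macaulay of Krull dimension one, so, since $k$ is infinite, a general linear form avoids all associated primes. The paper also takes the existence of $z$ for granted in this lemma.
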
 

\begin{proof} 
By Theorem \ref{thm.12}, $\dim_k\sD(B)=\prod _{i=1}^{n-1}\deg h_i$.  
Since $B$ is a complete intersection, $B$ is a graded Gorenstein ring.   For every $\frak m\in \Max \sD(B)$, there exists a homogeneous prime ideal $\frak n$ of $B$ such that $\sD(B)_{\frak m}\cong B_{(\frak n)}$, so $\sD(B)_{\frak m}$ is a local Gorenstein ring of $\kdim \sD(B)_{\frak m}=\kdim \sD(B)=0$, hence $\sD(B)_{\fm}$ is self-injective.  Since $\sD(B)$ is artinian, $\sD(B)\cong \prod _{\frak m\in \Max \sD(B)}\sD(B)_{\frak m}$ is Frobenius.
\end{proof}

\begin{remark} 
Let $B=k[x_1, \dots, x_n]/(h_1, \dots, h_{n-1})$ be a complete intersection.  Since 
there exists a natural isomorphism $\phi:X:=\Proj B\to \Spec \sD(B)=:Y$, $\cO_{X, p}\cong B_{(\frak m_p)}\cong \sD(B)_{\frak m_{\phi (p)}}$ for every $p\in X$, and $\sD(B)\cong \prod _{q\in Y}\sD(B)_{\frak m_q}$,   
$$\sum_{p\in X}\dim _k\cO_{X, p}=\sum_{p\in X}\dim _kB_{(\frak m_p)}=\sum_{q\in Y}\dim _k\sD(B)_{\frak m_q}=\dim _k\sD(B)=\prod _{i=1}^{n-1}\deg h_i$$ 
by Theorem \ref{thm.12}. 
In particular, if $B=k[x, y, z]/(f, g)$, $C=\Proj k[x, y, z]/(f), D=\Proj k[x, y, z]/(g)$, then, for $p\in \PP^2$, $\cO_{\PP^2, p}/(f, g)\cong \begin{cases} \cO_{X, p} & p\in X \\
0 & p\not\in X,\end{cases}$ so 
$$C\cdot D:=\sum_{p\in \PP^2}\dim _k\cO_{\PP^2, p}/(f, g)=\sum_{p\in X}\dim _k\cO_{X, p}=\deg f\deg g,$$ 
recovering the usual Bezout's Theorem.  
\end{remark}

\subsection{The algebra $C(A)$} \label{subsec-camapc}

The algebra $C(A)$ defined below is important to study a noncommutative quadric hypersurface $A\in \cA_{n, 1}$.

\begin{definition} 
If $A=S/(f)\in \cA_{n, 1}$, then 
we define $C(A):=A^![(f^!)^{-1}]_0$.  
\end{definition} 

It is unclear from the definition, but the following lemma shows that $C(A)$ is independent of the choice of $S\in \cA_{n, 0}$ and $f\in RN(S)_2$ such that $A=S/(f)$. 

\begin{lemma}[{\cite[Lemma 5.3 (2)]{HMM}}]\label{lem.32} 
For $A, A'\in \cA_{n, 1}$, if $\GrMod A\cong \GrMod A'$, then $C(A)\cong C(A')$ as algebras.  
\end{lemma}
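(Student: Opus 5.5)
The plan is to realize $C(A)$ as the coordinate ring of a noncommutative affine scheme attached to $\Projn A^!$, and then to transport a graded Morita equivalence $\GrMod A\cong \GrMod A'$ to the dual side.

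\textbf{Step 1: $C(A)$ as a noncommutative affine scheme.} Write $A=S/(f)$. By Lemma \ref{lem.KAS} (2), $A^!$ is a Koszul AS-Gorenstein algebra with $\GKdim A^!=1$. By Lemma \ref{lem.asf}, $f^!\in RN(A^!)_2$ and $A^!/(f^!)=S^!$. By Lemma \ref{lem.smith}, $S^!$ is finite dimensional, so $\dim_k A^!/(f^!)<\infty$. The ring $A^!$ is noetherian, since $A^!/(f^!)$ is artinian (Lemma \ref{lem.Ree} (1)), and it is generated in degree $1$. Lemma \ref{lem-pitails} therefore gives
$$\Projn A^!\cong \Specn A^![(f^!)^{-1}]_0=\Specn C(A).$$

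\textbf{Step 2: reduce to a statement about $\GrMod A^!$.} By Lemma \ref{lem.spnc}, it suffices to prove $\Projn A^!\cong \Projn {A'}^!$. This holds if $\GrMod A^!\cong \GrMod {A'}^!$, by Zhang's theorem \cite[Theorem 1.4]{Z}, in the same way it was used in Lemma \ref{lem.awwd}. So the task becomes: show that $\GrMod A\cong \GrMod A'$ implies $\GrMod A^!\cong \GrMod {A'}^!$.

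\textbf{Step 3: pass the Morita equivalence to the duals.} By Zhang's theorem, $\GrMod A\cong \GrMod A'$ means that $A'$ is isomorphic to a Zhang twist $A^\theta$ for some twisting system $\theta$, since both algebras are connected graded and generated in degree $1$. For quadratic algebras one may take $\theta$ to be determined by its action in degree $1$. The plan is to imitate Lemma \ref{lem.ndu} and build a dual twisting system $\theta^!$ on $A^!$ with $(A^!)^{\theta^!}\cong (A^\theta)^!$. Zhang's theorem then gives $\GrMod A^!\cong \GrMod {A'}^!$.

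\textbf{Main obstacle.} Step 3 is the hard part. Lemma \ref{lem.ndu} only handles twists by a single automorphism, whereas a general twisting system need not come from one automorphism. I would try two routes, in this order.

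\begin{itemize}
\item[(a)] Use the Koszul duality equivalence between $\cD^b(\grmod A)$ and $\cD^b(\grmod A^!)$ (both algebras are Koszul, so this is available). A graded Morita equivalence commutes with the degree shift and preserves the Koszul modules. It should therefore induce a compatible equivalence on the dual side, preserving the standard t-structures, which yields $\GrMod A^!\cong \GrMod {A'}^!$.
\item[(b)] If (a) is hard to make precise, bypass the duals altogether. Describe $C(A)$ intrinsically as the degree-$0$ part of a Yoneda-type algebra built from $\grmod A$, for instance $\bigoplus_i \uExt^i_A(k,k)$ localized at the degree-$2$ class corresponding to $f^!$. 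Such an invariant is visibly preserved by graded equivalences that send $k$ to $k$, and any graded Morita equivalence between connected graded algebras can be normalized to do so.
\end{itemize}
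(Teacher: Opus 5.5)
\textbf{There is a genuine gap at Step 3.} Your Steps 1 and 2 are correct. They match the argument the paper relies on (compare the proofs of Lemma \ref{lem.cwt} and Theorem \ref{con.main}): $\Specn C(A)\cong \Projn A^!$ by Lemma \ref{lem-pitails}, followed by \cite[Theorem 1.4]{Z} and Lemma \ref{lem.spnc}.

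All of the content sits in Step 3, the implication $\GrMod A\cong \GrMod A'\Rightarrow \GrMod A^!\cong\GrMod {A'}^!$. The paper imports this as \cite[Lemma 4.1]{MU2}; it is the step $(1)\Rightarrow(3)$ of Theorem \ref{con.main}. Your proposal only announces a plan for it, and both fallback routes rest on false premises.

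\emph{Route (a).} Graded Morita equivalences do not commute with the shift $(1)$. If $F$ did, compatibly, and $F(A)\cong A'$, then $A\cong\bigoplus_n \operatorname{Hom}_{\GrMod A}(A,A(n))\cong A'$. Yet $\GrMod k[x,y]\cong \GrMod k_{\lambda}[x,y]$ while the algebras are not isomorphic. Zhang's twist functor satisfies $M(1)^\theta\not\cong M^\theta(1)$ in general. Moreover, even if you obtained some derived equivalence on the dual side, you would still have to show it is exact for the standard t-structures, which is essentially the claim itself.

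\emph{Route (b).} The algebra $\bigoplus_i \uExt^i_A(k,k)\cong A^!$ is built from $\Ext^i_A(k,k(j))$ for all $j$, and its product uses shifts. So it is not preserved by an equivalence that fails to commute with $(1)$. What is preserved is the family $\{\Ext^i_A(k(a),k(b))\}_{a,b\in\ZZ}$, which recovers $A^!$ only up to graded Morita equivalence. Route (b) is therefore circular.

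\textbf{How to close the gap.} Your dual-twist idea works if you use the whole degree-one part of $\theta$ rather than a single automorphism.
\begin{enumerate}
\item Write $A=T(V)/(R)$ and $\tau_i:=\theta_i|_V\in\GL(V)$. Since $a*b=a\tau_1(b)$ for $a,b\in V$, you get $A^\theta\cong T(V)/\bigl((\id\otimes\tau_1)^{-1}(R)\bigr)$.
\item Applying $\theta_i$ to the relations of $A^\theta$ gives $(\tau_i\otimes\tau_{i+1}\tau_1^{-1})(R)=R$ for all $i$.
\item Put $\tau_i^!:=(\tau_i^*)^{-1}\in\GL(V^*)$. Then $\tau_i^!\otimes\tau_{i+1}^!(\tau_1^!)^{-1}=\bigl((\tau_i\otimes\tau_{i+1}\tau_1^{-1})^*\bigr)^{-1}$ preserves $R^\perp$.
\item Also $(\id\otimes\tau_1^!)^{-1}(R^\perp)=(\id\otimes\tau_1^*)(R^\perp)=\bigl((\id\otimes\tau_1)^{-1}(R)\bigr)^\perp$.
\item Check that this compatible sequence extends to a twisting system $\theta^!$ of the quadratic algebra $A^!$. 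The relations in higher degrees are generated by $R^\perp$, and the degree-two condition propagates to each position.
\end{enumerate}
This gives $(A^!)^{\theta^!}\cong (A^\theta)^!\cong {A'}^!$, hence $\GrMod A^!\cong \GrMod {A'}^!$ by Theorem \ref{thm.Z11}. Alternatively, cite \cite[Lemma 4.1]{MU2} as the paper does.
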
 

\begin{lemma} \label{lem.CSD} 
If $A$ is an AS-Gorenstein algebra, 
then 
$\id (\cA)\leq \id (A)-1$. 
\end{lemma}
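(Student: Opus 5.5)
The plan is to pass a minimal graded injective resolution of $A$ through the quotient functor $\pi:\GrMod A\to \GrMod A/\operatorname{Tors}A$ and observe that only its last term is killed. Here $\cA=\pi A$ and $\id(\cA)$ is computed in the quotient category. I will work in $\GrMod A$, or equivalently $\grmod A$ with the noetherian hypothesis. Let $d=\id(A)$ and take a minimal graded injective resolution
$$0\to A\to I^0\to I^1\to\cdots\to I^d\to 0.$$
Since $A$ is noetherian, each $I^i$ is a direct sum of indecomposable graded injectives. I will split each $I^i=T^i\oplus Q^i$, where $T^i$ is torsion (a sum of shifts of the injective hull $E(k)$) and $Q^i$ is torsion-free.

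The first step is to locate the torsion parts. By minimality, the multiplicity of $E(k)(j)$ in $I^i$ equals $\dim_k\Ext^i_A(k,A(j))$, i.e.\ it is read off from $\uExt^i_A(k,A)$. The AS-Gorenstein condition gives $\uExt^i_A(k,A)=0$ for $i\neq d$ and $\uExt^d_A(k,A)\cong k(\ell)$. Any nonzero torsion module has nonzero socle consisting of copies of shifted $k$, so $T^i=0$ for $i<d$ and $T^d\cong E(k)(\ell)$.

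The second step, which I expect to be the main obstacle, is to show that $I^d$ is itself torsion, i.e.\ $Q^d=0$. I would invoke the known structure of minimal injective resolutions of noetherian AS-Gorenstein algebras, namely that $I^d\cong E(k)(\ell)$ (Ajitabh, Levasseur). If a self-contained argument is wanted, I would first try this: since $A$ is AS-Gorenstein, the local cohomology $R\Gamma_{\fm}(A)$ is concentrated in degree $d$. The torsion-free summands $Q^i$ are $\Gamma_{\fm}$-acyclic, with $\Gamma_{\fm}(Q^i)=0$, so $\Gamma_{\fm}$ applied to the resolution yields $T^\bullet$, which is just $T^d$ in degree $d$. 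Then a dimension-shifting argument on the cokernel at position $d-1$ should force $Q^d=0$.

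The third step is to apply $\pi$. Because $\operatorname{Tors}A$ is a localizing subcategory, $\pi$ is exact, it sends torsion-free injectives to injectives in the quotient category, and $\pi T=0$ for torsion $T$. Hence
$$0\to \cA\to \pi Q^0\to\cdots\to \pi Q^{d-1}\to \pi I^d=0$$
is an injective resolution of $\cA$ of length at most $d-1$. This gives $\id(\cA)\leq \id(A)-1$.
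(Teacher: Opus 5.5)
\textbf{There is a genuine gap, and it is in Step 2.}

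Your Steps 1 and 3 are fine: the splitting $I^i=T^i\oplus Q^i$, the socle count giving $T^i=0$ for $i<d$ and $T^d\cong E(k)(\ell)$, and the fact that $\pi$ kills each $T^i$ and sends the torsion-free injectives $Q^i$ to injectives.

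The problem is that Step 2 is not an auxiliary fact; it is the whole lemma. A subobject of $\pi Q$ has the form $\pi Q'$ with $Q'\subseteq Q$, and a nonzero submodule of a torsion-free module stays nonzero under $\pi$. Hence $\pi$ carries the essential inclusions of the minimal resolution to essential inclusions, so $\pi Q^\bullet$ is a \emph{minimal} injective resolution of $\cA$. Consequently $Q^d=0$ holds if and only if $\id(\cA)\le d-1$, and you have reduced the statement to itself.

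Neither of your two suggestions closes this.
\begin{itemize}
\item The self-contained route cannot work as sketched. Because $\Gamma_{\fm}(Q^i)=0$, the complex $\Gamma_{\fm}(I^\bullet)=T^\bullet$ and the concentration of $R\Gamma_{\fm}(A)$ in degree $d$ carry exactly the information of Step 1, and they are blind to $Q^d$. Likewise, dimension shifting against $\uExt^i_A(k,A)$ only yields $\uExt^i_A(F,A)$ for finite-dimensional $F$, that is, information about the torsion parts.
\item The unspecified pointer to Ajitabh and Levasseur does not discharge the step. You would need a precise theorem, valid for arbitrary noetherian AS-Gorenstein algebras, that the last term of the minimal injective resolution is $E(k)(\ell)$.
\end{itemize}

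The missing input is a duality statement about finitely generated torsion-free modules $N$, namely $\varinjlim_n \Ext^d_A(N_{\ge n},A)=0$. For example, local duality gives $\uExt^d_A(N,A)\cong\Gamma_{\fm}(N)^*$ up to a shift and a twist by an automorphism, and $\Gamma_{\fm}(N)=0$ for torsion-free $N$.

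With this input your approach can be completed using minimality. Take a finitely generated $0\neq N\subseteq Q^d$. For $n\gg0$, the inclusion $N_{\ge n}\hookrightarrow I^d$ lifts along the surjection $\phi\colon I^{d-1}\to I^d$. The image of the lift is a nonzero submodule of $I^{d-1}$ meeting $\ker\phi$ trivially. This contradicts the essentiality of $\ker\phi$ in $I^{d-1}$, so $Q^d=0$.

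However, that vanishing is precisely $\Ext^d_{\cA}(\pi N,\cA)=0$. Once you have it for all noetherian objects, the lemma follows directly, with no injective resolution needed. This is what the paper does in one line: Yekutieli--Zhang's classical Serre duality $\Ext^i_{\cA}(\cM,\cA)\cong\Ext^{d-1-i}_{\cA}(\cA,\cM(-\ell))^*$ forces $\Ext^i_{\cA}(\cM,\cA)=0$ for all $i\ge d$ and all $\cM\in\tails A$.
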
 

\begin{proof} If $\id(A)=d$, then $\Ext_{\cA}^i(\cM, \cA)\cong \Ext^{d-1-i}_{\cA}(\cA, \cM(-\ell))^*=0$ for every $\cM\in \tails A$ and every $i\geq d$ by classical Serre duality \cite[Theorem 2.3]{YZ} where $\ell$ is the Gorenstein parameter of $A$, so $\id (\cA)\leq d-1$.
\end{proof} 

\begin{lemma} \label{lem.36} 
If $A$ is an AS-Gorenstein algebra of $\id (A)=\GKdim A=1$, and $f\in RN(A)_d$,
then 
$A[f^{-1}]_0$ is a self-injective algebra.  
\end{lemma}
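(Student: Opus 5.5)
We prove Lemma \ref{lem.36} by transporting the injective dimension of $\cA$ to the regular module of $R:=A[f^{-1}]_0$ through Lemma \ref{lem-pitails}, and then bounding it with Lemma \ref{lem.CSD}.

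First we check the hypotheses of Lemma \ref{lem-pitails}. Since $A$ is AS-Gorenstein, it is noetherian. Since $f$ is a homogeneous regular normal element of positive degree, $\GKdim A/(f)=\GKdim A-1=0$, which gives $\dim_k A/(f)<\infty$. The lemma as stated assumes that $A$ is finitely generated in degree $1$. If that is unavailable, I would argue directly: localization at the Ore set $\{f^i\}$ induces an equivalence $\tails A\cong \mod R$ sending $\cA\mapsto R$. Indeed, $f$-torsion modules are exactly the finite dimensional ones up to torsion, because $A/(f)$ is finite dimensional. Also, $\GrMod A[f^{-1}]\cong \Mod A[f^{-1}]_0$ because $A[f^{-1}]$ is strongly graded modulo a Veronese issue. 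The Veronese issue can be handled by the $d$-th Veronese, which is harmless for $\tails$ by standard results. In either form, we get an equivalence $\tails A\simeq \mod R$ with $\cA\mapsto R$.

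Next we apply Lemma \ref{lem.CSD}. It gives $\id(\cA)\le \id(A)-1=0$ in $\tails A$. This means $\Ext^i_{\tails A}(\cM,\cA)=0$ for all $i\ge1$ and all $\cM$. Under the equivalence, this becomes $\Ext^i_R(M,R)=0$ for all finitely generated right $R$-modules $M$ and all $i\ge1$. Note that $R$ is right noetherian, and in fact finite dimensional by Lemma \ref{lem.09}, since $H_A(t)$ has the form $b(t)/(1-t)$ when $\GKdim A=1$ for AS-Gorenstein $A$. By Baer's criterion, applied to finitely generated right ideals, this shows that $R_R$ is injective.

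Doing the same with $A^o$, which is also AS-Gorenstein of dimension $1$, shows that $_RR$ is injective, since $A^o[f^{-1}]_0\cong R^o$. Hence $R$ is self-injective.

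The main obstacle is the first step, the equivalence $\tails A\simeq\mod R$ with $\cA\mapsto R$, in the generality of an arbitrary $d$ and without a degree-$1$ generation hypothesis. One also has to make sure that Ext groups in $\tails A$ computed in the noetherian category agree with those in $\mod R$. This is automatic from the exact equivalence of abelian categories, since Yoneda Ext is intrinsic to the abelian category.
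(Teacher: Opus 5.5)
Your main line is the paper's proof. Lemma~\ref{lem-pitails} turns $\Projn A$ into $\Specn A[f^{-1}]_0$, so $\tails A\simeq \mod R$ with $\cA\mapsto R$, and Lemma~\ref{lem.CSD} gives $\id \cA\le \id A-1=0$. Your Baer-criterion step and the repetition with $A^o$ only spell out what the paper compresses into ``$\id A[f^{-1}]_0=\id\cA=0$''. Since $R$ is finite-dimensional, one-sided self-injectivity already suffices, so the $A^o$ step is optional.

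The fallback paragraph for $A$ not generated in degree $1$ is wrong, however, and should be dropped.

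\begin{itemize}
\item Take $A=k[x]$ with $\deg x=2$ and $f=x$. This is AS-Gorenstein of dimension $1$ with $\GKdim A=1$. Its graded modules split into even and odd parts, so $\tails A\simeq \mod k\times \mod k$, with two non-isomorphic simple objects (the images of $A$ and $A(1)$). By contrast, $\mod A[x^{-1}]_0=\mod k$ has only one simple object.
\item Hence the claimed equivalence $\tails A\simeq \mod R$ fails in that generality. Passing to the Veronese does not repair it: $\tails A^{(2)}\simeq \mod k\not\simeq \tails A$.
\item The same example has $H_A(t)=1/(1-t^2)$, which is not of the form $b(t)/(1-t)$. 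So your appeal to Lemma~\ref{lem.09} is also unjustified there. Finite-dimensionality of $R$ still follows because $\dim_k A_{id}$ stabilizes, and in any case you only need $R$ right noetherian.
\end{itemize}

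So your argument, exactly like the paper's, genuinely uses the degree-$1$ generation hypothesis of Lemma~\ref{lem-pitails}. This costs the paper nothing: in both places the lemma is applied (Theorems~\ref{thm.37} and~\ref{thm.an1}), the algebra in question is generated in degree $1$.
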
 

\begin{proof}  
By Lemma \ref{lem-pitails}, $\grmod A\to \mod A[f^{-1}]_0; \;  M\mapsto M[f^{-1}]_0$ induces an isomorphism $\Projn A\to \Specn A[f^{-1}]_0$, that is, an equivalence functor $\tails A\to \mod A[f^{-1}]_0$ sending $\cA$ to $A[f^{-1}]_0$, so $\id A[f^{-1}]_0=\id \cA=\id A-1=0$ by Lemma \ref{lem.CSD}.
\end{proof}

\begin{theorem} \label{thm.37} 
If $A\in \cA_{n, 1}$, then $C(A)$
is a $2^{n-1}$-dimensional self-injective algebra.  
\end{theorem}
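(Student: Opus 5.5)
The plan is to apply Lemma \ref{lem.36} to the algebra $A^!$ and the element $f^!$, and to compute the dimension using Lemma \ref{lem.09}.

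Write $A=S/(f)$ with $S\in\cA_{n,0}$ and $f\in RN(S)_2$. By Lemma \ref{lem.asf} there is $f^!\in RN(A^!)_2$ with $A^!/(f^!)=S^!$, so $C(A)=A^![(f^!)^{-1}]_0$ is defined.

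\textbf{Structure of $A^!$.} By Lemma \ref{lem.KAS} (2) with $m=1$, $A^!$ is a Koszul AS-Gorenstein algebra of dimension $1$ with $\GKdim A^!=1$. In particular $\id A^!=1$, which is exactly the hypothesis of Lemma \ref{lem.36}. Lemma \ref{lem.36} also invokes Lemma \ref{lem-pitails}, which requires $A^!$ to be right noetherian, finitely generated in degree $1$, and to satisfy $\dim_k A^!/(f^!)<\infty$.
\begin{itemize}
\item $A^!$ is finitely generated in degree $1$ because it is quadratic.
\item $A^!/(f^!)=S^!$ is finite dimensional by Lemma \ref{lem.smith}.
\item $A^!$ is noetherian by Lemma \ref{lem.Ree} (1), since $S^!$ is artinian, hence noetherian, and $f^!$ is regular normal.
\end{itemize}
Lemma \ref{lem.36} then shows that $C(A)$ is self-injective.

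\textbf{Dimension count.} From the proof of Lemma \ref{lem.KAS}, $H_{A^!}(t)=(1+t)^{n-1}/(1-t)$. This has the form $b(t)/(1-t)$ with $b(t)=(1+t)^{n-1}$. Lemma \ref{lem.09}, applied to $f^!\in RN(A^!)_2$, gives
\[
\dim_k C(A)=b(1)=2^{n-1}.
\]

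\textbf{Main point to check.} The step needing the most care is whether the hypotheses of Lemma \ref{lem.36} are met in full, in particular that "AS-Gorenstein of dimension $1$" in Lemma \ref{lem.KAS} includes the noetherian condition. This is why I verify noetherianity separately via Ree's lemma as above. If needed, I will also note that a finite dimensional algebra with $\id=0$ on the right is self-injective. The left-sided version follows symmetrically, or from the standard fact that right self-injectivity of a finite dimensional algebra implies it is quasi-Frobenius.
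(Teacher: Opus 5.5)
Your proposal is correct and matches the paper's proof: Lemma \ref{lem.KAS} (2) plus Lemma \ref{lem.36} give self-injectivity, and Lemma \ref{lem.09} applied to $f^!$ with $b(t)=(1+t)^{n-1}$ gives the dimension; your extra checks of noetherianity and of $\dim_k A^!/(f^!)<\infty$ are redundant (noetherianity is built into the paper's definition of AS-Gorenstein) but harmless. Your Hilbert series $H_{A^!}(t)=(1+t)^{n-1}/(1-t)$ is the correct one; the paper's printed proof writes $(1+t)^n/(1-t)$, which is a typo and would give $2^n$ rather than $2^{n-1}$.
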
 

\begin{proof}  
By Lemma \ref{lem.KAS} (2), $A^!$ is an AS-Gorenstein algebra of $\id A^!=\GKdim A^!=1$, so 
$C(A)$ is self-injective by Lemma \ref{lem.36}. Since $H_A(t)=(1-t^2)/(1-t)^n=(1+t)/(1-t)^{n-1}$, $H_{A^!}(t)=(1+t)^n/(1-t)$, so $\dim _kC(A)=\dim _kA^![f^{-1}]_0=2^{n-1}$ by Lemma \ref{lem.09}.
\end{proof} 

\begin{remark}  It was shown in \cite[Propositions 4.5, 4.6, 5.3]{HY} (see also \cite[Lemma 2.6]{H}) that $C(A)$ is self-injective in the case that $f\in Z(S)_2$. 
\end{remark}


\begin{lemma} \label{lem.cwt} 
Let $A\in \cA_{n, 1}$.  If $RN(A^!)_1\neq \emptyset$, 
then $C(A)\cong \sD(A^!)$ as algebras. 
\end{lemma}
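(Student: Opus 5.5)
Both $C(A)=A^![(f^!)^{-1}]_0$ and $\sD_w(A^!)=A^![w^{-1}]_0$ (for $w\in RN(A^!)_1$) are localizations of the same graded algebra $A^!$. The plan is to identify each with $\Specn$ of the same quasi-scheme $\Projn A^!$ and then apply Lemma \ref{lem.spnc}. Alternatively, we can compare the two localizations directly.

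First, record the relevant properties of $A^!$. By Lemma \ref{lem.KAS} (2), $A^!$ is a Koszul AS-Gorenstein algebra with $\GKdim A^!=1$ and Hilbert series $H_{A^!}(t)=(1+t)^n/(1-t)$. It is noetherian: $A^!/(f^!)=S^!$ is finite dimensional, so Lemma \ref{lem.Ree} (1) applies. It is generated in degree $1$, since it is quadratic. Fix $w\in RN(A^!)_1$, and let $f^!\in RN(A^!)_2$ be as in Lemma \ref{lem.asf}.

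Since $\GKdim A^!=1$ and both $w$ and $f^!$ are regular normal elements of positive degree, the quotients $A^!/(w)$ and $A^!/(f^!)$ are finite dimensional. The first follows from Lemma \ref{lem.Tak}, because $H_{A^!/(w)}(t)=(1-t)H_{A^!}(t)=(1+t)^n$. The second holds because $A^!/(f^!)=S^!$. Lemma \ref{lem-pitails} then gives isomorphisms of quasi-schemes
$$\Specn \sD_w(A^!)=\Specn A^![w^{-1}]_0\cong \Projn A^!\cong \Specn A^![(f^!)^{-1}]_0=\Specn C(A).$$
Lemma \ref{lem.spnc} then yields $C(A)\cong \sD_w(A^!)$. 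By Lemma \ref{lem.awwd}, this is independent of the choice of $w$, which justifies writing $\sD(A^!)$.

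The step I expect to need the most care is checking the hypotheses of Lemma \ref{lem-pitails}, in particular right noetherianity of $A^!$ and finite dimensionality of $A^!/(f^!)$. Both are settled above.

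As a backup, or to sidestep the categorical argument, I would compare the localizations directly. Set $w^2=:g\in RN(A^!)_2$. Then $A^![w^{-1}]_0=A^![g^{-1}]_0$ by taking even-degree fractions, since $fw^{-i}=fw\,w^{-i-1}$. One then shows that $A^![g^{-1}]_0\cong A^![(f^!)^{-1}]_0$ by inverting both $g$ and $f^!$ in the second Veronese and using Lemma \ref{lem.09}: both localizations have dimension $2^{n-1}$, and each embeds into the common localization in which both elements are inverted. The categorical route is cleaner, so I would present that one.
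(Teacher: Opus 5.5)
Your proposal is correct and is essentially the paper's proof: you identify $\Specn C(A)\cong \Projn A^!\cong \Specn \sD(A^!)$ via Lemma~\ref{lem-pitails} and conclude with Lemma~\ref{lem.spnc}, and your explicit check of the hypotheses fills in what the paper leaves implicit (noetherianity via Lemma~\ref{lem.Ree}, finite-dimensionality of $A^!/(w)$ and $A^!/(f^!)$). One harmless slip: for $A\in \cA_{n,1}$ the correct Hilbert series is $H_{A^!}(t)=(1+t)^{n-1}/(1-t)$, which is consistent with your later dimension count $2^{n-1}$; hence $H_{A^!/(w)}(t)=(1+t)^{n-1}$, which is still a polynomial, so your argument is unaffected.
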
 

\begin{proof} Since $\GKdim A^!=1$ by Lemma \ref{lem.KAS} (2), $\sD(A^!)$ is well-defined by Lemma \ref{lem.awwd}.  
Since $\Specn C(A)\cong \Projn A^!\cong \Specn \sD(A^!)$
as quasi-schemes, $C(A)\cong \sD(A^!)$ as algebras by Lemma \ref{lem.spnc}. 
\end{proof} 

\begin{lemma} \label{lem.47} For $B, B'\in \cB_{3, 2}$, $B\cong B'$ as graded algebras if and only if $\sD(B) \cong \sD(B') $ as algebras.  
\end{lemma}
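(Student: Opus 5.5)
The forward direction is immediate: if $B\cong B'$ as graded algebras, then $\GrMod B\cong \GrMod B'$. Since $\GKdim B=\GKdim B'=1$ (Lemma \ref{lem.KAS}) and $RZ(B)_1, RZ(B')_1\neq\emptyset$ (Lemma \ref{lem.wbwd}), Lemma \ref{lem.awwd} gives $\sD(B)\cong\sD(B')$.

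For the converse, I first pass to an affine presentation. Choose $z\in RZ(B)_1$ and coordinates so that $B=k[x,y,z]/(h_1,h_2)$ with $(h_1,h_2)$ a regular sequence of quadrics. Then by the proof of Lemma \ref{lem.bcn}, $\sD(B)\cong k[x,y]/(f,g)$ with $F=(f,g)=H_z$ strongly regular of degree $2$ and $F^z=H$, so $B=\sH^z(F)$. Do the same for $B'$, obtaining $B'=\sH^z(F')$ with $F'$ strongly regular of degree $2$. The algebra $R:=\sD(B)$ is a $4$-dimensional commutative algebra.

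Now suppose $\phi: k[x,y]/(f,g)\to k[x,y]/(f',g')$ is an isomorphism. The goal is to show $B\cong B'$ by lifting $\phi$ to an \emph{affine} automorphism of $k[x,y]$, i.e. one of degree $\le 1$. Given such an affine automorphism $\psi$ with $\psi(I_F)=I_{F'}$, set $F''=\psi(F)$. Then $F''$ is again strongly regular of degree $2$, because $\psi$ preserves degree and its linear part acts on leading forms. Also $\sH^z(F'')\cong\sH^z(F)$, since $\psi$ extends to a graded automorphism $x_i\mapsto \psi(x_i)^z$, $z\mapsto z$ of $k[x,y,z]$ sending $f^z$ to $\psi(f)^z$ (as in the proof of Lemma \ref{lem.st} (2), now including translations). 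Corollary \ref{cor.SRN} then gives $\sH^z(F'')=\sH^z(F')$, hence $B\cong B'$.

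\textbf{Main obstacle: the affine lift.} An arbitrary isomorphism of the quotients lifts to some polynomial map, but we need one of degree $\le 1$ carrying $I_F$ onto $I_{F'}$. I would use the fact that $R$ is $4$-dimensional with $\dim \fm/\fm^2\le 2$ at each point, and handle the cases by local structure. If $R$ is a product of local pieces with few points, translations and linear changes of coordinates can be chosen to match the images of $x,y$. More directly, I would use the classification of affine pencils of conics from \cite{HMTW}: there the $\sim_{st}$ classes (with affine $t$) of $\cB_{2,2}^\vee$ are listed, and one can check that distinct entries have non-isomorphic quotient algebras. Comparing this with the list of $4$-dimensional commutative algebras generated by two elements shows that any isomorphism class of $R$ corresponds to a unique affine class of $F$. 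I expect this case check, rather than a uniform argument, to be the practical route.
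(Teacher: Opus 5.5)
Your forward direction is fine and is exactly the paper's argument. The converse, however, depends on a step that is false in general. An isomorphism $k[x,y]/I_F\cong k[x,y]/I_{F'}$ between two strongly regular degree-$2$ presentations need not be induced by an affine automorphism $\psi$ of $k[x,y]$ with $\psi(I_F)=I_{F'}$. In fact no such $\psi$ need exist at all.

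Such a $\psi$ would give an affine bijection of $\mathbb{A}^2$ carrying the length-$4$ scheme $V(I_{F'})$ onto $V(I_F)$, and affine maps preserve parallelism. Take $F=(x^2-1,y^2-1)$, whose zeros $(\pm1,\pm1)$ form a parallelogram. Take also $F'=(3x^2-xy-3x,\ xy-y^2+y)$. Its leading forms $x(3x-y)$ and $y(x-y)$ are coprime, so $F'$ is strongly regular. It vanishes at $(0,0),(1,0),(0,1),(2,3)$, which is not a parallelogram. Both quotients are $k^4$ (by Theorem \ref{thm.abt} and the Chinese remainder theorem), yet no affine $\psi$ exists.

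More conceptually, four points in general position in $\mathbb{A}^2$ form an $8$-dimensional family, while the affine group is only $6$-dimensional. So there is a $2$-parameter family of affine classes with quotient $k^4$. The same happens for $(k[u]/(u^2))^2$: compare $(x^2,y^2-y)$, which has parallel tangent directions at its two double points, with $(x^2,(y-x)(y+x-1))$, which has non-parallel ones.

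Hence the ``unique affine class of $F$'' that you expect the case check to produce does not exist. Note also that Table \ref{tab-avee22} from \cite{HMTW} classifies the algebras $E$ up to isomorphism, not presentations up to affine coordinate changes. Your reduction through Corollary \ref{cor.SRN} is correct as a conditional statement, but its hypothesis cannot be met in general.

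What actually happens is this. The graded isomorphism $\sH^z(F)\cong\sH^z(F')$ does exist, but it does not send $z$ to a multiple of $z$. It is a projective transformation of $\PP^2$ that moves the line $z=0$; for instance, any two sets of four points in general position in $\PP^2$ are projectively equivalent. So the converse must be argued projectively, forgetting the distinguished central element.

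The paper does this by counting. The map $B\mapsto\sD(B)$ sends $\cB_{3,2}$ into the isomorphism classes of $4$-dimensional commutative Frobenius algebras (Lemma \ref{lem.bzb}). The set $\cB_{3,2}$ has exactly $6$ elements, namely the projective types of zero-dimensional intersections of two conics. There are also exactly $6$ algebra classes. The six types visibly give six distinct algebras, $k^4$, $k^2\times k[u]/(u^2)$, $(k[u]/(u^2))^2$, $k\times k[u]/(u^3)$, $k[u]/(u^4)$ and $k[u,v]/(u^2,v^2)$, so the map is injective.

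To repair your argument you need this kind of input: the projective classification of the base scheme in terms of its local algebras. An affine lift cannot do the job.
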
 

\begin{proof} By Lemma \ref{lem.awwd}, $B\cong B'$ implies $\sD(B) \cong \sD(B')$.  By Lemma \ref{lem.bzb}. $\sD(B)$ is a 4-dimensional commutative Frobenius algebra.  Since $\#(\cB_{3, 2})=6$ (cf. \cite[Remark 3.29]{HM}), and there are exactly 6 isomorphism classes of 4-dimensional commutative Frobenius algebras (cf. \cite[Corollary 4.10]{H}), the result follows. 
\end{proof}

\begin{theorem} \label{thm.43} 
Let $A, A'\in \cA_{3, 1}$ such that $A^!, {A'}^!$ are commutative.  Then $A\cong A'$ as graded algebras if and only if $C(A)\cong C(A')$ as algebras. 
\end{theorem}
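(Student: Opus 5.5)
The forward direction is immediate: if $A\cong A'$ as graded algebras then $\GrMod A\cong \GrMod A'$, so $C(A)\cong C(A')$ by Lemma \ref{lem.32}. For the converse, I will reduce the question to commutative complete intersections, where Lemma \ref{lem.47} applies.

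First, by Lemma \ref{lem.48} (with $n=3$, $m=1$), commutativity of $A^!$ and ${A'}^!$ gives $A^!, {A'}^!\in \cB_{3,2}$. By Lemma \ref{lem.wbwd} there exist $z\in RZ(A^!)_1$ and $z'\in RZ({A'}^!)_1$. In particular $RN(A^!)_1$ and $RN({A'}^!)_1$ are nonempty. Lemma \ref{lem.cwt} then yields
$$C(A)\cong \sD(A^!), \qquad C(A')\cong \sD({A'}^!),$$
where $\sD$ is well defined because $\GKdim A^!=1$ (Lemma \ref{lem.awwd}). Consequently $C(A)\cong C(A')$ implies $\sD(A^!)\cong \sD({A'}^!)$. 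Lemma \ref{lem.47} upgrades this to $A^!\cong {A'}^!$ as graded algebras.

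It remains to pass back from $A^!\cong{A'}^!$ to $A\cong A'$. Since $A$ and $A'$ are quadratic (they are Koszul by Lemma \ref{lem.KAS}), we have $(A^!)^!\cong A$. A graded isomorphism of quadratic algebras is determined by its linear part $\phi\in\GL(V^*)$, which carries $W^\perp$ onto $W'^\perp$. The dual map $(\phi^*)^{-1}$ then carries $W$ onto $W'$, exactly as in the proof of Lemma \ref{lem.ndu}. Hence $A\cong (A^!)^!\cong ({A'}^!)^!\cong A'$.

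The main obstacle is hidden in Lemma \ref{lem.47}, which relies on the counts $\#\cB_{3,2}=6$ and the number of $4$-dimensional commutative Frobenius algebras being $6$. Granting those, the only point to check carefully is that Lemma \ref{lem.cwt} applies, that is, that $RN(A^!)_1\neq\emptyset$. This is supplied by Lemma \ref{lem.wbwd} once we know $A^!\in\cB_{3,2}$.
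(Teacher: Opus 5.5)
Your proposal is correct and follows the paper's proof. Lemma \ref{lem.48} puts $A^!,{A'}^!$ in $\cB_{3,2}$, Lemmas \ref{lem.wbwd} and \ref{lem.cwt} give $C(A)\cong\sD(A^!)$, Lemma \ref{lem.47} compares these, and quadratic duality takes $A^!\cong{A'}^!$ back to $A\cong A'$. The only differences are cosmetic: you prove the forward direction directly from Lemma \ref{lem.32}, and you spell out the duality step that the paper uses without comment.
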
 

\begin{proof}  
Since $A^!, {A'}^!\in \cB_{3,2}$ by Lemma \ref{lem.48}, $\sD(A^!), \sD({A'}^!)$ are well-defined by Lemma \ref{lem.wbwd}, so $A\cong A'$ as graded algebras if and only if $A^!\cong {A'}^!$ as graded algebras if and only if $\sD(A^!) \cong \sD({A'}^!)$ as algebras by Lemma \ref{lem.47} if and only if $C(A)\cong C(A')$ as algebras by Lemma \ref{lem.cwt}.
\end{proof}

\begin{lemma} \label{lem.nag} 
Let $S$ be a graded algebra and $f=\sum f_i\in S$ a normal element of degree $d$ where $f_i\in S_i$.  If there exists $\nu\in \Aut^{\ZZ}(S)$ such that $gf=f\nu(g)$ for $g\in S$, 
then $gf_i=f_i\nu(g)$ for $g\in S$ so that  
$f_i$ is a normal element
for every $i\in \ZZ$.  In this case, $f^z \in S[z]_d$ is also a normal element.  Moreover, if $f \in Z(S)$, then $f^z\in Z(S[z])$.  
\end{lemma}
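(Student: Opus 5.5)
The plan is to compare homogeneous components of the identity $gf=f\nu(g)$, then pass to $S[z]$ by summing the componentwise identities with powers of $z$.

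\emph{Step 1: components of $f$.} By linearity it suffices to treat homogeneous $g\in S_j$. Since $\nu\in\Aut^{\ZZ}(S)$, the element $\nu(g)$ also lies in $S_j$. Expand both sides of $gf=f\nu(g)$ using $f=\sum_i f_i$:
$$\sum_i gf_i=\sum_i f_i\nu(g).$$
The degree-$(i+j)$ component is $gf_i$ on the left and $f_i\nu(g)$ on the right. Hence $gf_i=f_i\nu(g)$ for every $i$. This identity holds for all $g$ and $\nu$ is surjective, so Lemma \ref{lem.reno} (1) shows that $f_i$ is normal. In particular $Sf_i\subseteq f_iS$, and surjectivity of $\nu$ gives the reverse inclusion $f_iS\subseteq Sf_i$.

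\emph{Step 2: normality of $f^z$.} By the definition of homogenization, $f^z=\sum_{i=0}^d f_iz^{d-i}\in S[z]_d$. This is well defined in $S$, since Lemma \ref{lem.barz} shows the choice of lift of $f$ is irrelevant. Extend $\nu$ to $\tilde\nu\in\Aut^{\ZZ}(S[z])$ by setting $\tilde\nu(z)=z$. For $g\in S$, use that $z$ is central together with Step 1:
$$gf^z=\sum_i gf_iz^{d-i}=\sum_i f_i\nu(g)z^{d-i}=f^z\tilde\nu(g).$$
For $g=z$ the identity $zf^z=f^zz=f^z\tilde\nu(z)$ is immediate, again because $z$ is central. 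Both sides of $gf^z=f^z\tilde\nu(g)$ are multiplicative in $g$, since $\tilde\nu$ is an algebra homomorphism. As $S[z]$ is generated by $S$ and $z$, the identity holds for all $g\in S[z]$. Now $\tilde\nu$ is surjective, so Lemma \ref{lem.reno} (1) shows that $f^z$ is normal.

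\emph{Step 3: the central case.} If $f\in Z(S)$, then $gf=fg$ for all $g$, so we may take $\nu=\id$, which lies in $\Aut^{\ZZ}(S)$. Steps 1 and 2 then give $\tilde\nu=\id$, so each $f_i$ is central and $f^z$ commutes with $S$ and with $z$. Hence $f^z\in Z(S[z])$.

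There is no real obstacle here. The only point requiring care is the degree comparison in Step 1, which depends on $\nu$ being graded, and this is exactly the hypothesis $\nu\in\Aut^{\ZZ}(S)$.
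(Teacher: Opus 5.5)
Your proof is correct and follows essentially the same route as the paper. You compare homogeneous components using that $\nu$ is graded, then extend $\nu$ to $S[z]$ by $z\mapsto z$. The only cosmetic difference is that you check $hf^z=f^z\tilde\nu(h)$ on the generators $S$ and $z$ and then use closure under sums and products, whereas the paper computes it directly for a general $h=\sum a_iz^i$.
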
 

\begin{proof} 
Since $\nu\in \Aut^{\ZZ}(S)$, for every $g_j\in S_j$, $\nu(g_j)\in S_j$, so 
$g_jf_{i}=(g_jf)_{i+j}=(f\nu(g_j))_{i+j}=f_i\nu(g_j)$.  
It follows that $gf_i=f_i\nu(g)$ for $g\in S$, so $f_i$ is a normal element by Lemma \ref{lem.reno} (1). 
Since $f^z=\sum _{i=0}^df_{d-i}z^i$, 
$$
gf^z=g\sum _{i=0}^df_{d-i}z^i=\sum _{i=0}^df_{d-i}\nu (g)z^i=\sum _{i=0}^df_{d-i}z^i\nu(g) = f^z\nu(g)
$$
for $g\in S$. 
For $h=\sum a_iz^i\in S[z]$ where $a_i\in S$, 
$$
h f^z=\left(\sum a_iz^i\right) f^z=\sum a_i f^z z^i=\sum f^z \nu(a_i)z^i = f^z\left(\sum \nu(a_i)z^i\right) = f^z\bar \nu(h)
$$
where $\bar \nu\in \Aut^{\ZZ}S[z]$ is defined by $\bar \nu(g)=\nu(g)$ for $g\in S$ and $\bar \nu(z)=z$, so $f^z \in S[z]$ is a normal element by Lemma \ref{lem.reno} (1). 
If $f\in Z(S)$, then we may take $\nu=\id$ so that $\bar \nu=\id$, so $f^z \in Z(S[z])$.  
\end{proof}

\begin{theorem} \label{thm.an1}  
Let $S = k \langle x_1, \dots, x_n \rangle /I$ be a graded algebra, and $F = (f_1, \dots, f_m)$ a strongly regular normal sequence  
which is either central or  homogeneous.   
\begin{enumerate}
\item{} $F^z$ is normal in $S[z]$. 
\item{}  $(F^z,z)$ is a homogeneous regular normal sequence in $S[z]$.
\item{} If $S\in \cA_{m, 0}$, then $E = S/I_F$ is a self-injective algebra of $\dim_k E=\prod_{i=1}^m \deg f_i$.  
\end{enumerate}
\end{theorem}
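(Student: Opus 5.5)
The plan is to prove the three parts in order: (1) by induction along the sequence, (2) from (1) and Lemma \ref{lem.612}, and (3) by dehomogenizing the graded algebra $S[z]/I_{F^z}$.

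\textbf{Part (1).} We induct on $i$, showing that $\overline{f_i^z}$ is normal in $S[z]/(f_1^z,\dots,f_{i-1}^z)$.

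\emph{Homogeneous case.} Here $f_i^z=f_i$, and
$$S[z]/(f_1^z,\dots,f_{i-1}^z)=(S/(f_1,\dots,f_{i-1}))[z].$$
Since $\bar f_i$ is normal in $S/(f_1,\dots,f_{i-1})$, it stays normal after adjoining the central variable $z$ (Lemma \ref{lem-suci}).

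\emph{Central case, $i=1$.} Lemma \ref{lem.nag} with $\nu=\id$ gives $f_1^z\in Z(S[z])$.

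\emph{Central case, $i>1$.} Since $f_i$ is central modulo $(f_1,\dots,f_{i-1})$ and the $f_j$ are central modulo their predecessors, for each generator $x_l$ we can write
$$x_lf_i-f_ix_l=\sum_{j<i}a_jf_j.$$
We then homogenize both sides. The difficulty is that homogenization is only additive and multiplicative when no degrees drop. So we must first choose the $a_j$ with $\deg a_j+\deg f_j\le \deg f_i+1$.

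This degree bound is the main obstacle. I would obtain it by the minimal-degree argument of Lemma \ref{lem.afbg}. Take a representation in which $\max_j(\deg a_j+\deg f_j)$ is minimal. If this maximum exceeded $\deg f_i+1$, the top-degree parts would give a relation among the $f_j^\vee$. Regularity and normality of $F^\vee$ (strong regularity) would then let us rewrite the top-degree parts and lower the maximum, as in that proof, giving a contradiction.

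With the bound in hand, homogenizing yields
$$x_lf_i^z-f_i^zx_l=\sum_j z^{e_j}a_j^zf_j^z \quad (e_j\ge 0),$$
so $f_i^z$ commutes with every $x_l$, and trivially with $z$, modulo $(f_1^z,\dots,f_{i-1}^z)$. Hence it is central there.

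\textbf{Part (2).} Since $F$ is strongly normal and strongly regular, $F^\vee$ is normal and regular in $S$. By (1), $F^z$ is normal in $S[z]$. Lemma \ref{lem.612} then gives that $(F^z,z)$ is regular, and $\bar z$ is central, hence normal, in every quotient of $S[z]$. So $(F^z,z)$ is a homogeneous regular normal sequence.

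\textbf{Part (3).} Let $S\in\cA_{m,0}$. By Lemma \ref{lem.Ore}, $S[z]\in\cA_{m+1,0}$. Set $A:=S[z]/I_{F^z}$.
\begin{itemize}
\item By (2), $F^z$ is a homogeneous regular normal sequence of length $m$. So by Lemmas \ref{lem.Ree} and \ref{lem.Tak}, $A$ is noetherian and AS-Gorenstein of $\id A=\GKdim A=1$.
\item By (2), $z\in RZ(A)_1$.
\item Proposition \ref{prop.05n} and Lemma \ref{lem.zin} give $E=S/I_F\cong A/(z-1)\cong A[z^{-1}]_0=\sD_z(A)$.
\end{itemize}
Lemma \ref{lem.36} then shows that $E$ is self-injective. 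Theorem \ref{thm.12}, applied with $n=m+1$ and $\deg f_i^z=\deg f_i$, gives $\dim_kE=\prod_{i=1}^m\deg f_i$.
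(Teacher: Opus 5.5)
Your parts (2) and (3), and the homogeneous case of (1), match the paper's proof. Where the paper invokes Lemma \ref{lem.nag} with a graded normalizing automorphism, you note $f_i^z=f_i$ and use Lemma \ref{lem-suci}; that is the same point.

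\textbf{Central case of (1).} The genuine difference is the central case for $i>1$. The paper settles it in one line: Lemma \ref{lem.nag} gives $f_i^z\in Z(S[z])$, so $\overline{f_i^z}$ is central in every quotient. That argument reads ``central'' as $f_i\in Z(S)$ for every $i$. This is exactly the situation in the only application, Corollary \ref{cor.ANC}, where $x^2+1$, $y^2+1$, $x^2$ lie in $Z(k_{-1}[x,y])$. It does not literally cover a sequence that is only central modulo its predecessors, because $S/(f_1,\dots,f_{i-1})$ is not graded and Lemma \ref{lem.nag} cannot be applied there. Your homogenized-commutator argument handles this more general reading, at the price of a degree bound.

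\textbf{The degree bound.} This bound is more work than ``as in Lemma \ref{lem.afbg}'' suggests. That lemma is for length two in a domain, and it uses normality of $f$ in $S$ itself. Your rewriting step instead must move $f_{j_0}$ past $f_l$ with $l<j_0$, which is only possible modulo $(f_1,\dots,f_{l-1})$. So you need two further ingredients:
\begin{enumerate}
\item the bound for shorter initial segments, i.e.\ an induction on the length;
\item a lexicographic minimization in the pair (top degree, largest index attaining it), because a single rewrite only lowers that index, not the top degree.
\end{enumerate}
With these it is completable.

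\textbf{A shorter route.} You can get the same conclusion with tools already in the paper by proving (1) and (2) jointly by induction on $i$.
\begin{enumerate}
\item Suppose $F^z_{<i}$ is normal. Lemma \ref{lem.612}, applied to the strongly regular normal sequence $F_{<i}$, shows that $z$ is regular on $A:=S[z]/I_{F^z_{<i}}$.
\item By Lemma \ref{lem.zin}, each graded piece $A_D$ then injects into $A/(z-1)\cong S/I_{F_{<i}}$ (Proposition \ref{prop.05n}).
\item The commutator $x_l\overline{f_i^z}-\overline{f_i^z}x_l\in A_{\deg f_i+1}$ maps to $x_l\bar f_i-\bar f_ix_l=0$, hence is zero. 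Trivially $\overline{f_i^z}$ also commutes with $z$.
\end{enumerate}
This is your degree bound, obtained directly from the regularity of $z$.
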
 

\begin{proof} 
(1) If $F$ is central in $S$, then $F^z$ is central in $S[z]$ by Lemma \ref{lem.nag}, so 
$$
\overline {(f_i)^z}\in Z(S[z]/((f_1)^z, \dots, (f_{i-1})^z).
$$
If $F$ is homogeneous, then the normalizing automorphism of $\overline {f_i}\in S/(f_1, \dots, f_{i-1})$ is graded, 
so 
$$
\overline {(f_i)^z}\in S/(f_1, \dots , f_{i-1})[z]=S[z]/((f_1)^z, \dots, (f_{i-1})^z)
$$ 
is normal by Lemma \ref{lem.nag}. 
In either case, $F^z$ is normal in $S[z]$. 

(2) This follows from (1) and Lemma \ref{lem.612}.

(3) If $S \in \cA_{m, 0}$, 
then $\sH^z(F)$ is a noncommutative complete intersection by (2) and Lemma \ref{lem.Ore}.  
It follows that $\sH^z(F)$ is an AS-Gorenstein algebra of $\id(\sH^z(F))=\GKdim \sH^z(F) =1$ by Lemma \ref{lem.Tak} and Lemma \ref{lem.Ree} (see the proof of Lemma \ref{lem.KAS} (1)), 
so $E \cong \sD_z(\sH^z(F))) \cong \sH^z(F)[z^{-1}]_0$ 
is self-injective by Proposition \ref{prop.05n}, Lemma \ref{lemq.06} and Lemma \ref{lem.36}.  
By Theorem \ref{thm.12},  $\dim_k E=\dim _k \sH^z(F)[z^{-1}]_0=\prod_{i=1}^m \deg f_i$.  
\end{proof}  

\begin{example} 
Let $f=x^2-y, g=xy\in k[x, y]$.  Since $k[x, y]/(f)\cong k[x]$ is a domain, $(f, g)$ is a regular sequence of degree 2, 
but since $f^\vee=x^2, g^\vee=xy$, $(f, g)$ is not a strongly regular sequence, so $E = k[x, y]/(f, g)\not \in \cA_{2, 2}^\vee$.  Although $E \cong k[x]/(x^3)$ is Frobenius, $\dim _k E=3$. 
\end{example}

\subsection{Noncommutative affine pencils of conics} 

For the convenience of the reader, we recall some results in \cite{HMTW}.

\begin{lemma}[{\cite[Corollary 3.23]{HMTW}}] \label{lem-comiffb}
$E \in \cA_{2, 2}^\vee$ is commutative if and only if $E\in \cB_{2, 2}^\vee$.
\end{lemma}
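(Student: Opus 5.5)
We prove the two implications separately; the reverse implication is almost formal.

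\emph{Step 1: $\cB_{2,2}^\vee$ lies in the commutative part of $\cA_{2,2}^\vee$.} Let $E=k[x,y]/(f,g)$ with $(f,g)$ strongly regular of degree $2$. In $k[x,y]\in\cA_{2,0}$ every sequence is central, so $(f,g)$ and $(f^\vee,g^\vee)$ are both normal. Hence $(f,g)$ is strongly regular normal, and $E\in\cA_{2,2}^\vee$ is commutative.

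\emph{Step 2: reduction for the converse.} Let $E=S/(f,g)$ be commutative, with $S\in\cA_{2,0}$ and $F=(f,g)$ strongly regular normal of degree $2$. If $S$ is commutative we are done by the same remark as in Step 1. Otherwise, by Example \ref{exm-2dimqpa}, we may take $S=k_\l[x,y]$ with $\l\neq1$, or $S=k_J[x,y]$. The commutator $c:=xy-yx\in S_2$ is nonzero and homogeneous, and $c\in I_F$ because $E$ is commutative. Note that $S/(c)$ is commutative:
\begin{itemize}
\item for $k_\l$ we get $S/(c)\cong k[x,y]/(xy)$, since $xy=\l yx$ and $xy=yx$ force $xy=0$;
\item for $k_J$ we get $S/(c)\cong k[x,y]/(x^2)$.
\end{itemize}
Write $B:=S/(c)$, a quotient of $k[x,y]$ by one quadratic monomial relation $q$ (either $q=xy$ or $q=x^2$).

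\emph{Step 3: put $c$ into the sequence.} I would apply Lemma \ref{lem.afbg} to $h=c$, using that $S$ is a domain, to get $c=\a f+\b g$ with $(\a,\b)\ne(0,0)$. Then change $F$ up to $\sim_s$ so that $F=(c,g')$, keeping the same ideal $I_F$. Consequently
$$E=S/(c,g')\cong k[x,y]/(q,\tilde g),$$
where $\tilde g$ is any commutative lift of the image of $g'$ in $B$.

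\emph{Step 4: strong regularity of the new pair.} By Lemma \ref{lem.st} and the remark following it, $F^\vee\sim_s(c,g'^\vee)$ is again regular in $S$. So by Lemma \ref{lem.Tak}, $S/(c,g'^\vee)$ has Hilbert series $(1+t)^2$ and is finite dimensional. This algebra equals $k[x,y]/(q,\tilde g^\vee)$, since $\tilde g^\vee$ maps to the image of $g'^\vee$. Therefore $\gcd(q,\tilde g^\vee)=1$. By Remark \ref{rem.92} (1), $(q,\tilde g)$ is then a strongly regular sequence of degree $2$ in $k[x,y]$, which gives $E\in\cB_{2,2}^\vee$.

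\emph{Main obstacle.} The delicate point is Step 3: Lemma \ref{lem.afbg} requires the normalizing automorphism of $f$ to be graded, which is not automatic for a non-homogeneous normal element. I would first try to show, via Lemma \ref{lem.gna}, that the normalizing automorphism of $f$ is forced to be graded, perhaps after replacing $(f,g)$ by another representative up to $\sim_s$.

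If that fails, I would bypass Lemma \ref{lem.afbg} by a dimension count. By Proposition \ref{prop.05n} and Theorem \ref{thm.12}, $\dim_k E=4$. On the other hand $E$ is a quotient of $B$ by the images of $f$ and $g$. One can then compare the top-degree parts to identify $E$ with $k[x,y]/(q,\tilde g)$ for a suitable lift $\tilde g$, and finish with Corollary \ref{cor-abtxy}.
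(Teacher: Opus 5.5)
Your reduction is sound; the paper gives no argument here, only the citation to \cite[Corollary 3.23]{HMTW}. But Step 3 has a genuine gap as written. Lemma \ref{lem.afbg} requires the normalizing automorphism $\nu$ of $f$ to be graded, and you never establish this. Neither of your remedies closes the gap:
\begin{itemize}
\item Lemma \ref{lem.gna} only says that $\nu$ is graded if and only if $\nu=\nu^\vee$, so it cannot produce gradedness by itself.
\item The dimension-count fallback is only a sketch. Knowing $\dim_k E=4$ does not show $c\in kf+kg$, which is exactly what you need to rewrite $I_F$ as $I_{(c,g')}$.
\end{itemize}

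The missing observation is that in a graded domain, the normalizing automorphism of a nonzero normal element is automatically graded. Take $0\neq a\in S_e$. Write $f=f_m+\dots+f_2$ with $f_m\neq 0$ its lowest component, and $\nu(a)=b_p+\dots+b_r$ with $b_p,b_r\neq 0$. Since $S$ is a domain:
\begin{itemize}
\item the lowest and highest components of $af$ are $af_m\neq 0$ and $af_2\neq 0$, in degrees $m+e$ and $2+e$;
\item those of $f\nu(a)$ are $f_mb_p\neq 0$ and $f_2b_r\neq 0$, in degrees $m+p$ and $2+r$.
\end{itemize}
Hence $p=r=e$, so $\nu(a)\in S_e$ and $\nu\in\Aut^{\ZZ}S$. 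Lemma \ref{lem.afbg} then applies to $h=c$, and Step 3 goes through.

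Two smaller slips in Step 4 also need fixing.

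First, $\sim_s$ does not preserve regularity of sequences in the noncommutative setting (cf.\ Remark \ref{rem.52}). So drop the claim that $(c,{g'}^\vee)$ is regular. You only need $I_{(c,{g'}^\vee)}=I_{F^\vee}$. This holds because the degree-$2$ part of $c=\alpha f+\beta g$ gives $c=\alpha f^\vee+\beta g^\vee$. Also ${g'}^\vee=\gamma f^\vee+\delta g^\vee$, since $f^\vee,g^\vee$ are linearly independent. Then Lemma \ref{lem.Tak}, applied to $F^\vee$ itself, gives the Hilbert series $(1+t)^2$.

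Second, $\tilde g$ cannot be an arbitrary lift. Two lifts may differ by a higher-degree multiple of $q$. In that case $\tilde g^\vee$ need not map to the image of ${g'}^\vee$, and $\deg\tilde g$ need not be $2$. Instead, lift the homogeneous components of $\bar g'$ separately, giving $\tilde g=\tilde g_2+\tilde g_1+\tilde g_0$. Then $\tilde g^\vee=\tilde g_2$ lifts the image of ${g'}^\vee$, and that image is nonzero in $B_2=S_2/kc$ because $c$ and ${g'}^\vee$ are linearly independent.

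With these repairs your argument is complete.
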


\begin{lemma} [{\cite[Corollary 5.11]{HMTW}}]\label{lem.nst} 
Let $E = S/(f, g), E' = S'/(f', g')\in \cA_{2,2}^\vee\setminus \cB_{2, 2}^\vee$ where $S=k\<x, y\>/(h), S'=k\<x, y\>/(h')\in \cA_{2, 0}$.  Then $E \cong E'$ if and only if $(f, g, h)\sim_{st}(f', g', h')$ in $k\<x, y\>$.  
\end{lemma}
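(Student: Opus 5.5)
The ``if'' direction is formal. Suppose $(f,g,h)\sim_{st}(f',g',h')$ in $k\<x,y\>$. Then there are a graded automorphism $\phi\in \Aut^{\ZZ}k\<x,y\>=\GL_2(k)$ and a matrix in $\GL_3(k)$ carrying $(\phi(f),\phi(g),\phi(h))$ to $(f',g',h')$. In particular the two-sided ideals $(f,g,h)$ and $(f',g',h')$ correspond under $\phi$. Since $E=k\<x,y\>/(f,g,h)$ and $E'=k\<x,y\>/(f',g',h')$, $\phi$ induces an algebra isomorphism $E\cong E'$.

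For the ``only if'' direction I would work with the degree filtration. Let $F_0E=k$, $F_1E=k+kx+ky$ and $F_2E=E$. By Theorem \ref{thm.an1} (3), $\dim_kE=4$. The associated graded algebra is a quotient of $S/(f^\vee,g^\vee)\in \cA_{2,2}$, whose Hilbert series is $1+2t+t^2$, so $\dim F_1E=3$ and $\operatorname{gr}E\cong S/(f^\vee,g^\vee)$. Given an algebra isomorphism $\psi:E\to E'$, I want to replace it by one with $\psi(F_1E)=F_1E'$ that moreover sends $kx+ky$ linearly onto $kx+ky$.
\begin{itemize}
\item If this holds, $\psi|_{kx+ky}$ defines $\phi\in\GL_2(k)=\Aut^{\ZZ}k\<x,y\>$. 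Then $\phi$ maps the kernel of $k\<x,y\>\to E$ onto the kernel of $k\<x,y\>\to E'$.
\item Both kernels are generated by their elements of degree $\le 2$.
\item Lemma \ref{lem.afbg} (applied in $S$ and $S'$, using that the normalizing automorphisms involved are graded) identifies the degree-$\le 2$ part of each kernel with $\operatorname{span}\{f,g,h\}$, resp.\ $\operatorname{span}\{f',g',h'\}$, modulo the degree-$2$ part of $(h)$. A dimension count shows these spans are exactly the elements of degree $\le 2$ in the kernels, so $\phi(\operatorname{span}\{f,g,h\})=\operatorname{span}\{f',g',h'\}$, which is $\sim_{st}$.
\end{itemize}

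The main obstacle is making $\psi$ filtered, and then removing constant terms. An abstract isomorphism need not respect the chosen generators, and translations $x\mapsto x+a$ are not graded automorphisms of $k\<x,y\>$, so they are not allowed in $\sim_{st}$. I would first try to characterize $F_1E$ intrinsically when $E$ is not commutative (using Lemma \ref{lem-comiffb} to know $E\notin\cB_{2,2}^\vee$). For instance, for $J=\operatorname{rad}E$, test whether $F_1E=k+J$ in the local case, or whether $F_1E=k+[E,E]+\cdots$ in general. Next I would show that the defining relations can be normalized so that $x,y$ lie in a canonical complement of $k$ in $F_1E$, for example the span of elements of trace zero for the Frobenius form, or $J$ itself. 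If no uniform intrinsic description works, I would instead use the explicit list of noncommutative algebras in $\cA_{2,2}^\vee\setminus\cB_{2,2}^\vee$ from \cite{HMTW}. In each case I would check directly that every automorphism, and every isomorphism between members, preserves the canonical subspace $kx+ky$. Here the noncommutativity should force the linear parts to be rigid.
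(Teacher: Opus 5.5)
Your ``if'' direction is correct, and so is the reduction in your bullets. The third bullet needs only a dimension count: $k\<x,y\>_{\le 2}\to E$ is onto a $4$-dimensional space, so its kernel is $3$-dimensional and equals $\operatorname{span}\{f,g,h\}$. This is better than invoking Lemma \ref{lem.afbg}. Assuming graded normalizing automorphisms, or getting $\dim_kE=4$ from Theorem \ref{thm.an1}(3) for arbitrary pairs, would go through Corollary \ref{cor.ANC}, and that corollary is read off the classification by means of the very lemma you are proving.

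The genuine gap is that your reduction is only a reformulation. An isomorphism $E\to E'$ carrying $kx+ky$ onto $kx'+ky'$ exists if and only if $(f,g,h)\sim_{st}(f',g',h')$. So all the content of the ``only if'' direction lies in the step you leave open, and none of the routes you sketch for it works as stated:
\begin{itemize}
\item For local $E$ one has $\dim_k\operatorname{rad}E=3$, so $k+J=E$ rather than the $3$-dimensional $F_1E$. Likewise $J$ is not a complement of $k$ in $F_1E$.
\item A ``trace-zero'' subspace depends on the choice of Frobenius form, which is not unique.
\item Automorphisms need not preserve $kx+ky$. In $k_\lambda[x,y]/(x^2,y^2)$ the assignment $x\mapsto x+xy$, $y\mapsto y$ is an automorphism.
\end{itemize}

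What must actually be shown is the following. For each noncommutative $E$, $\Aut E$ acts transitively on the set of $2$-dimensional subspaces $V\subset E$ that arise as $kx+ky$ from \emph{some} presentation $E\cong S/(f,g)$ as in the statement, with $S$ allowed to vary. The degree-$\le 2$ relations, and hence the triple up to $\GL(V)$, are determined by $V$. Checking automorphisms of one normal form from Table \ref{tab-avee22} does not address this.

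In particular you must show that such a $V'$ has no nonzero scalar components relative to $V$. This cannot follow from the shape of the relations alone, because $xy-yx$ and $x^2-xy+yx$ are invariant under $y\mapsto y+c$. So the noncommutativity of $E$ has to enter. One possible route:
\begin{itemize}
\item Show that these two choices of $S$ only produce commutative $E$.
\item Then $h'$ is $\GL_2$-equivalent to $xy-\lambda yx$ with $\lambda\neq1$, and its commutative quadratic form is nondegenerate.
\item For local $E$, write $x'=c_1+n_1$, $y'=c_2+n_2$ with $n_i\in J$. Reducing $h'(x',y')=0$ modulo $J^2$ then forces $(c_1,c_2)=0$.
\end{itemize}
The non-local algebras $M_2(k)$ and $k_{-1}[x,y]/(x^2,y^2+1)$ need a separate argument. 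Until this transitivity is established, the ``only if'' direction remains unproved.
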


\begin{theorem} [{\cite[Corollary 5.13]{HMTW}}] \label{thm.ANC}  
Every algebra in $\cA^\vee_{2,2}$ is isomorphic to exactly one of the algebras listed in Table \ref{tab-avee22} where 
$k_\l[x,y]/(x^2,y^2)\cong k_{\l'}[x, y]/(x^2. y^2)$ if and only if $\l'=\l^{\pm 1}$.
\begin{center}
\begin{table}[ht]
\begin{threeparttable}
\centering
\caption{Algebras in $\cA^\vee_{2,2}$.} \label{tab-avee22}
\begin{tabular}{|c|}
\hline
{\rm Commutative}  \\ \hline 
$k[x,y]/(x^2-1,y^2-1)$, $ k[x,y]/(x^2-y-1,y^2-1)$, $k[x,y]/(x^2-\frac{2}{\sqrt{3}}y-1,y^2-\frac{2}{\sqrt{3}}x-1)$, \\
$k[x,y]/(x^2,y^2-1)$, $k[x,y]/(x^2,y^2-x)$, $k[x,y]/(x^2,y^2) $ \\ 
\hline \hline
{\rm Not-commutative} \\ \hline  
$k_{-1} [x,y]/(x^2 + 1,y^2 + 1)$, $k_{-1} [x,y]/(x^2,y^2 + 1)$, 
$k_{-1}[x,y] /(x^2+yx, y^2)$, $k_\l[x,y]/(x^2,y^2)$ (where $\l \neq 0, 1$) \\\hline
\end{tabular}
\end{threeparttable}
\end{table}
\end{center}
\end{theorem}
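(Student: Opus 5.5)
The statement is cited from \cite[Corollary 5.13]{HMTW}, so the plan reconstructs its two halves: every $E\in\cA^\vee_{2,2}$ appears in Table \ref{tab-avee22}, and the listed algebras are pairwise non-isomorphic except $k_\l[x,y]/(x^2,y^2)\cong k_{\l^{-1}}[x,y]/(x^2,y^2)$.

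\emph{Commutative case.} By Lemma \ref{lem-comiffb}, a commutative $E$ lies in $\cB^\vee_{2,2}$, so $E=k[x,y]/(f,g)$ with $(f^\vee,g^\vee)$ a regular sequence of quadrics. Theorem \ref{thm.an1} and Corollary \ref{cor-abtxy} show that $E$ is a $4$-dimensional Frobenius algebra. Corollary \ref{cor.SRN} shows that $\sH^z(F)\in\cB_{3,2}$ depends only on $E$, and $\sD(\sH^z(F))\cong E$ by Corollary \ref{cor.DHc} and Lemma \ref{lemq.06}. Lemma \ref{lem.bcn} shows that $\sD$ maps $\cB_{3,2}$ into $\cB^\vee_{2,2}$. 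Hence $\sD:\cB_{3,2}\to\cB^\vee_{2,2}$ is surjective, and it is injective by Lemma \ref{lem.47}. So $\cB^\vee_{2,2}$ has exactly $6$ classes, one for each of the six classes of $\cB_{3,2}$ (the pencils of conics). To get the six listed algebras, I would dehomogenize a normal form of each pencil at a line not meeting the base locus. Then I would check that they are pairwise non-isomorphic by elementary invariants: the number of maximal ideals, the local lengths, and whether $\fm^2=0$ locally.

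\emph{Noncommutative case.} Write $S=k\<x,y\>/(h)\in\cA_{2,0}$, so $S\cong k_\l[x,y]$ or $k_J[x,y]$ (Example \ref{exm-2dimqpa}). Since $(f^\vee,g^\vee)$ is a homogeneous regular normal sequence of degree $2$ in $S$, the wild homogenization $S/(f^\vee,g^\vee)$ lies in $\cA_{2,2}$, so by Proposition \ref{prop.du} it is $k_\l[x,y]/(x^2,y^2)$ or $k_{-1}[x,y]/(xy+x^2,y^2)$. After applying $\sim_{st}$ I may therefore assume $f^\vee,g^\vee$ are in one of these normal forms. Then $f=f^\vee+$(lower terms) and $g=g^\vee+$(lower terms), with linear and constant lower terms. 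I impose that $f$ be normal in $S$ and $\bar g$ normal in $S/(f)$, using Lemma \ref{lem.nag} and Lemma \ref{lem.gna}, which force the normalizing automorphisms to agree with those of the leading terms. This cuts the lower-order parameters down to a small family. I then normalize that family by affine changes of coordinates, i.e. the $\sim_{st}$ moves on $(f,g,h)$ allowed by Lemma \ref{lem.nst}. I expect this to give the cases $\l=-1$ with constant terms $(x^2+1,y^2+1)$, $(x^2,y^2+1)$, $(x^2+yx,y^2)$, and generic $\l$ with no lower-order terms.

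\emph{Separating the classes.} Here I would use Lemma \ref{lem.nst}, which reduces isomorphism to $\sim_{st}$-equivalence of the triple $(f,g,h)$. Alternatively, intrinsic invariants suffice: the Jacobson radical, semisimplicity (e.g. $M_2(k)\cong k_{-1}[x,y]/(x^2+1,y^2+1)$), the center, and the commutator ideal. These invariants also recover $\l^{\pm1}$ from $k_\l[x,y]/(x^2,y^2)$.

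\emph{Main obstacle.} The hardest step is the noncommutative normal-form computation: the normality constraints on the lower-order terms must be solved exhaustively, and they are a tedious system of equations. I would organize that computation by first recording how each graded automorphism acts on the leading terms, and only then solving for the lower-order terms.
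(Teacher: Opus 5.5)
\textbf{Gap: the noncommutative half, which is the real content of the theorem, is not proved.} Your commutative half is essentially sound: the bijection $\sD\colon\cB_{3,2}\to\cB^\vee_{2,2}$ plus a six-element count. One slip there: ``whether $\fm^2=0$ locally'' does not separate $k[x,y]/(x^2,y^2-x)\cong k[u]/(u^4)$ from $k[x,y]/(x^2,y^2)$; use $\dim_k\fm/\fm^2$ instead.

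For noncommutative $E$ you only state what you \emph{expect} the normal-form computation to give. The tools you rely on to make that computation finite and exhaustive do not do what you claim.
\begin{itemize}
\item Lemma \ref{lem.nag} \emph{assumes} a graded normalizing automorphism. Lemma \ref{lem.gna} only says that gradedness is equivalent to agreeing with the automorphism of the leading term. Neither lemma forces anything.
\item For $f$ itself, gradedness does follow from a degree argument in the domain $S$. For $\bar g$, however, the ambient algebra $S/(f)$ is neither graded nor a domain (e.g.\ $k_{-1}[x,y]/(x^2+1)$), so neither lemma applies. Imposing normality only with graded automorphisms could therefore silently discard cases.
\item The paper obtains ``central or homogeneous'' presentations (Corollary \ref{cor.ANC}) only \emph{from} this very table, so that shortcut is circular here.
\end{itemize}

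Your reduction step has a separate problem. Proposition \ref{prop.du} normalizes the graded algebra $S/(f^\vee,g^\vee)$, that is, the span $U=\operatorname{span}(h,f^\vee,g^\vee)$. It does not normalize $h$ and $(f^\vee,g^\vee)$ separately. A change of variables that puts $U$ in normal form moves $h$ to some other element of $U$. Once $f,g$ carry lower-order terms, you cannot restore $h$ by $\sim_s$, because $h$ must stay homogeneous.

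For example, in $S=k_{-1}[u,v]$ the sequence $(u^2+v^2,uv)$ is homogeneous regular normal. It cannot be brought to either of your normal forms, since $S/(u^2+v^2,uv)$ is commutative while $S\not\cong k[x,y]$. Such configurations must either be handled or be shown to yield only commutative $E$; this is part of the case analysis you have not done.

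\textbf{Separating the classes.} The invariants you list are too coarse. For every $\l\neq 1$, the center of $k_\l[x,y]/(x^2,y^2)$ is $k\oplus kxy$ and its commutator ideal is $kxy$. The same holds for $k_{-1}[x,y]/(x^2+yx,y^2)$. So these invariants neither recover $\l^{\pm1}$ nor separate the latter algebra from the case $\l=-1$.

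What works is the multiplication pairing $J/J^2\times J/J^2\to J^2$, taken up to congruence and scaling. Its cosquare $B^{-1}B^{T}$ is $\operatorname{diag}(\l,\l^{-1})$ for $k_\l[x,y]/(x^2,y^2)$. For $k_{-1}[x,y]/(x^2+yx,y^2)$ it is a nontrivial Jordan block at $-1$.

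\textbf{Comparison with the paper.} The paper does not reprove this statement; it imports it from \cite{HMTW}, alongside the identification $\cA^\vee_{2,2}=\cF_2$ (Theorem \ref{thm.CF}). Going through that identification and a classification of $4$-dimensional Frobenius algebras would replace your unperformed normality computation entirely. The price is that you must know every $E\in\cA^\vee_{2,2}$ is a $4$-dimensional Frobenius algebra for an arbitrary strongly regular normal sequence. Theorem \ref{thm.an1} only gives this for central or homogeneous sequences.
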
  

\begin{corollary} \label{cor.ANC}
For every $E = S/(f, g) \in \cA_{2, 2}^\vee$ where $S=k\<x, y\>/(h)\in \cA_{2, 0}$,  
there exist $S'=k\<x, y\>/(h')\in \cA_{2, 0}$ and either a homogeneous or central strongly regular normal sequence $(f', g')$ of degree 2 in $S'$ (so that $S'/(f', g')\in \cA_{2, 2}^\vee$) such that $(f, g, h)\sim_{st}(f', g', h')$ in $k\<x, y\>$.
\end{corollary}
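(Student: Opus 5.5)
The plan is to derive Corollary \ref{cor.ANC} from the classification in Theorem \ref{thm.ANC} and the uniqueness criteria in Lemma \ref{lem.nst} and Proposition \ref{prop.fgfg}. Given $E = S/(f,g)\in\cA_{2,2}^\vee$ with $S = k\<x,y\>/(h)$, Theorem \ref{thm.ANC} provides a model $E'$ from Table \ref{tab-avee22} with $E\cong E'$. I will write each model as $E' = S'/(f',g')$ with $S'=k\<x,y\>/(h')\in\cA_{2,0}$ and check directly that $(f',g')$ is a strongly regular normal sequence of degree $2$ which is either homogeneous or central. It then remains to turn the abstract isomorphism $E\cong E'$ into the relation $(f,g,h)\sim_{st}(f',g',h')$ in $k\<x,y\>$.

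For the table entries I will proceed as follows.
\begin{itemize}
\item Commutative entries: $S'=k[x,y]$, so $h'=xy-yx$. Every element of $S'$ is central, and each pair in the table has $(f')^\vee,(g')^\vee\in\{x^2,y^2\}$, which is a regular sequence. Hence each pair is strongly regular central.
\item $k_{-1}[x,y]/(x^2+1,y^2+1)$ and $k_{-1}[x,y]/(x^2,y^2+1)$: $x^2$ and $y^2$ are central in $k_{-1}[x,y]$, so these sequences are central, and their top parts $(x^2,y^2)$ are regular normal.
\item $k_\l[x,y]/(x^2,y^2)$ and $k_{-1}[x,y]/(x^2+yx,y^2)$: these sequences are homogeneous. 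The first is regular normal because $x^2,y^2$ are normal in $k_\l[x,y]$ and the Hilbert series criterion of Lemma \ref{lem.Tak} applies. The second is handled by a direct check, as in Proposition \ref{prop.du}.
\end{itemize}
In every case strong regularity then follows from Lemma \ref{lem.Tak}, applied to the top-degree sequence.

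For the transfer, first suppose $E$ is not commutative. By Lemma \ref{lem-comiffb}, $E\notin\cB_{2,2}^\vee$, and the same holds for $E'$. Lemma \ref{lem.nst} then gives $(f,g,h)\sim_{st}(f',g',h')$ directly. Now suppose $E$ is commutative. Then $E\in\cB_{2,2}^\vee$, but the given presentation may use a noncommutative $S$. My first attempt is to show that $S$ must be $k[x,y]$ in this case. The idea is that $E = k\<x,y\>/(h,f,g)$, and since $(f,g)$ is strongly regular, Theorem \ref{thm.an1}(3) gives $\dim_k E=4$. If the top part of $h$ is not a commutator, the relation $xy-yx$ has to be produced from $f$ and $g$; comparing degrees via the wild homogenization $\sT$ and Lemma \ref{lem.612} should then contradict the regularity of $(f^\vee,g^\vee)$.

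Once $h = xy-yx$ up to scalar, we have $E\cong E'$ with both written as quotients of $R=k[x,y]$. An isomorphism of these finite-dimensional algebras lifts to an automorphism of $k[x,y]$, but not necessarily a graded one, so I cannot conclude right away. Instead I will use the classification behind Theorem \ref{thm.ANC} (\cite{HMTW}), which lists commutative pencils up to $\sim_{st}$ of defining sequences. I expect this to reduce to Proposition \ref{prop.fgfg} together with a linear change of variables.

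The main obstacle is the commutative case, because Lemma \ref{lem.nst} excludes it. If the degree argument above fails, I will fall back on the proof of \cite[Corollary 5.13]{HMTW}, which presumably establishes the $\sim_{st}$-normal forms for all of $\cA_{2,2}^\vee$, and extract the statement from it.
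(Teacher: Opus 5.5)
Your noncommutative case is the paper's argument, and it is all the paper's one-line proof spells out. Theorem \ref{thm.ANC} together with Lemma \ref{lem.nst} gives $(f,g,h)\sim_{st}$ a model from Table \ref{tab-avee22}. The only non-homogeneous noncommutative models are $(x^2+1,y^2+1)$ and $(x^2,y^2+1)$ in $k_{-1}[x,y]$, and these are central. (One small correction: for non-homogeneous $F$, Lemma \ref{lem.Tak} only gives regularity of $F^\vee$. Rely instead on the models lying in $\cA^\vee_{2,2}$.)

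The genuine gap is in the commutative case, which you rightly flag but resolve incorrectly. Your key claim, that commutativity of $E$ forces $S=k[x,y]$, is false. Take $S=k_{-1}[x,y]$, $h=xy+yx$ and $(f,g)=(xy,\,x^2+y^2-1)$.
\begin{itemize}
\item $xy$ is regular normal in the domain $S$.
\item $S/(xy)=k\<x,y\>/(xy,yx)\cong k[x,y]/(xy)$, and in it both $x^2+y^2-1$ and $x^2+y^2$ are central non-zero-divisors.
\item Hence $(f,g)$ is strongly regular normal, yet $E=k\<x,y\>/(xy,yx,x^2+y^2-1)\cong k^4$ is commutative.
\end{itemize}
So no degree argument can give the contradiction you expect. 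Since $x\cdot xy=-xy\cdot x$ in $S$, this $(f,g)$ is neither homogeneous nor central, so $h$ really must be replaced.

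Your later steps do not repair this. If $S$ were $k[x,y]$, you would simply take $(f',g',h')=(f,g,h)$, since every sequence in $k[x,y]$ is central. Matching $E$ to a table model up to $\sim_{st}$ is impossible in general for commutative algebras. For example, $(x^2-1,y^2-1,xy-yx)$ and $(x^2-x,y^2-y,xy-yx)$ both present $k^4$, but graded automorphisms and linear combinations never create linear terms. So the fallback on $\sim_{st}$-normal forms from \cite{HMTW} is not available.

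The missing idea is to keep the given relations and trade $h$ for the commutator, by showing $xy-yx\in\operatorname{span}_k(f,g,h)$ in $k\<x,y\>$. One way, using only results in the paper:
\begin{enumerate}
\item $\dim_k E=4$ by Theorem \ref{thm.ANC}.
\item Filter $I=(h,f,g)\lhd k\<x,y\>$ by degree. Its ideal of leading forms contains $(h,f^\vee,g^\vee)$.
\item By Lemma \ref{lem.Tak}, $k\<x,y\>/(h,f^\vee,g^\vee)=S/I_{F^\vee}$ has Hilbert series $(1+t)^2$. Comparing dimensions, the two ideals coincide.
\item Consequently $I$ has no nonzero element of degree $\le 1$, the elements $h,f^\vee,g^\vee$ are linearly independent, and every element of $I$ of degree $\le 2$ lies in $\operatorname{span}_k(h,f,g)$. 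In particular $xy-yx$ does, because $E$ is commutative.
\item A suitable matrix in $\GL_3(k)$ gives $(f,g,h)\sim_s(f',g',xy-yx)$ with $\deg f'=\deg g'=2$ and $k[x,y]/(f'^\vee,g'^\vee)=S/I_{F^\vee}$.
\item Then $(f'^\vee,g'^\vee)$ is regular by Lemma \ref{lem.Tak}, and $(f',g')$ is regular by Remark \ref{rem.92}(1).
\end{enumerate}
So $(f',g')$ is a strongly regular central sequence in $S'=k[x,y]$, as the statement requires.
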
 

\begin{proof} By the above table, if $E$ is not commutative and if $(f, g)$ is not a homogeneous sequence, then $(f, g)=(x^2+1, y^2+1), (x^2, y^2+1)$ which are central sequences by \cite[Lemma 3.15 (4)]{HMTW}.  
\end{proof}

  

\section{Main results}

 
In this section, we give a complete classification of noncommutative central conics up to isomorphism. Let  
\begin{align*}
\cA_{3, 1}^w & :=\{A\in \cA_{3, 1}^c \mid  RN(A^!)_1\neq \emptyset \}/\cong, \\
\cA_{3, 1}^z & :=\{A\in \cA_{3, 1}^c \mid  RZ(A^!)_1\neq \emptyset \}/\cong.
\end{align*} 
In this notation, for $A\in \cA^c_{3, 1}$, $A\in \cA_{3, 1}^w$ if and only if  $\sD(A)$ is well-defined by Lemma \ref{lem.awwd}.  

We will first show that there is a bijection between $\cA^\vee_{2,2}$ and $\cA_{3,1}^z$.  

\begin{theorem} \label{thm.csfg}
Let $F = (f,g), F' = (f',g')$ be strongly regular normal sequences of degree $2$ in $S, S'\in \cA_{2, 0}$ respectively so that $E=S/I_F, E'=S'/I_{F'}\in \cA_{2, 2}^\vee$. 
\begin{enumerate}
\item{} 
$\sH^z(S, F) \in \cA_{3,2}$. 
\item{} 
If $E \cong E'$ as algebras, then $\sH^z(S, F) \cong \sH^z(S', F')$ as graded algebras.
\item{} The map
\begin{equation}\label{equ-mapbv22}
\sH^z: \cA^\vee_{2,2} \to \cA_{3,2}; \, E = S/I_F \mapsto \sH^z(E): = \sH^z(S, F),
\end{equation}
is well-defined which restricts to $\sH^z: \cB^\vee_{2,2} \to \cB_{3,2}$, 
and $\sD_z(\sH^z(E))\cong E$ for $E\in \cA_{2, 2}^\vee$. 
\end{enumerate}
\end{theorem}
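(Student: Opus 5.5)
The plan is to reduce to the case where $F$ is either homogeneous or central, using Corollary \ref{cor.ANC}. Then Theorem \ref{thm.an1} gives regularity and normality of the homogenized sequence. Independence of choices will come from Lemma \ref{lem.nst} combined with Lemma \ref{lem.st}.

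For (1), I first apply Corollary \ref{cor.ANC}. It gives $S'' = k\<x,y\>/(h'')\in\cA_{2,0}$ and a strongly regular normal sequence $(f'',g'')$ of degree 2 in $S''$, homogeneous or central, with $(f,g,h)\sim_{st}(f'',g'',h'')$ in $k\<x,y\>$. The $\sim_{st}$-relation in the free algebra lifts to the level of homogenizations. Indeed, the argument of Lemma \ref{lem.st} applies verbatim to the length-three sequence of degree-2 elements $(h,\tilde f,\tilde g)$ in $k\<x,y\>$, where $\tilde f,\tilde g$ are degree-2 lifts. One subtlety: $h$ is homogeneous, so $h^z=h$. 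For $\sim_s$ the linear combination may mix $h$ with $f,g$, but this is harmless since $h^z=h$ and all three have degree 2. So
\[
\sH^z(S,F)=\sH^{\dagger,z}(h,\tilde f,\tilde g)\cong \sH^{\dagger,z}(h'',\tilde f'',\tilde g'')=\sH^z(S'',F'').
\]
Thus we may assume $F$ is homogeneous or central. By Theorem \ref{thm.an1}(2), $(F^z,z)$ is a homogeneous regular normal sequence in $S[z]$, so in particular $F^z$ is. Since $S[z]\in\cA_{3,0}$ by Lemma \ref{lem.Ore}, we get $\sH^z(S,F)=S[z]/I_{F^z}\in\cA_{3,2}$.

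For (2), I split according to commutativity, using Lemma \ref{lem-comiffb}. If $E\cong E'$ are not commutative, Lemma \ref{lem.nst} gives $(f,g,h)\sim_{st}(f',g',h')$ in $k\<x,y\>$, and the lifting argument above gives the isomorphism. If they are commutative, then $E,E'\in\cB_{2,2}^\vee$, and (by Lemma \ref{lem-comiffb}, or because $S$ must then be $k[x,y]$ as $F^\vee$ generates no commutator) we may take $S=S'=k[x,y]$. An algebra isomorphism $E\to E'$ need not be induced by a graded automorphism, so I need an additional fact. I would appeal to the explicit classification in Theorem \ref{thm.ANC}, which lists finitely many normal forms whose defining sequences are given up to $\sim_{st}$, via \cite[Corollary 5.11]{HMTW} or its commutative analogue. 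If only $E=E'$ as quotients of the same $k[x,y]$ is available, Corollary \ref{cor.SRN} handles that case. I expect this commutative case to be the main obstacle: one must ensure that isomorphic commutative pencils have $\sim_{st}$-equivalent presentations. I would try to reduce it to Corollary \ref{cor.SRN} by showing that every isomorphism $E\cong E'$ lifts to an affine automorphism of $k[x,y]$ after adjusting generators, or else check the table directly.

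For (3), well-definedness follows from (1) and (2). For the restriction to $\cB$: if $S=k[x,y]$, then $\sH^z$ is a quotient of $k[x,y,z]$ by a regular sequence, so it lies in $\cB_{3,2}$. Finally, $z$ is central in $\sH^z(S,F)$, and it is regular because $(F^z,z)$ is regular. Then Lemma \ref{lem.zin}, Lemma \ref{lemq.06} and Proposition \ref{prop.05n} give
\[
\sD_z(\sH^z(E))\cong \sH^z(S,F)/(z-1)\cong S/I_F=E.
\]
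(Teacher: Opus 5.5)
Your arguments for (1), for the non-commutative case of (2), and for (3) match the paper's: Corollary~\ref{cor.ANC} with Lemma~\ref{lem.st}, then Theorem~\ref{thm.an1} and Lemma~\ref{lem.Ore}; Lemma~\ref{lem.nst} with Lemma~\ref{lem.st}; and Proposition~\ref{prop.05n}.

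\textbf{The gap.} The genuine gap is the commutative case of (2). You flag it but do not close it, and both of your proposed fixes fail.

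First, there is no commutative analogue of Lemma~\ref{lem.nst}. The algebras $k[x,y]/(x^2-1,y^2-1)$ and $k[x,y]/(x^2-x,y^2-y)$ are both $\cong k^4$ via strongly regular presentations. Yet the two sequences are not $\sim_{st}$-equivalent, since graded automorphisms and linear combinations cannot create a linear term. This is why Lemma~\ref{lem.nst} is restricted to $\cA_{2,2}^\vee\setminus\cB_{2,2}^\vee$, and why the paper notes that the converse of Lemma~\ref{lem.st}(3) fails even commutatively.

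Second, lifting isomorphisms to affine automorphisms of $k[x,y]$ also fails. Take $f=x(y-3x+3)$ and $g=y^2-3x^2+3x-y$. This is a strongly regular sequence, since the leading forms $x(y-3x)$ and $y^2-3x^2$ are coprime. It cuts out the four points $(0,0),(1,0),(0,1),(2,3)$, so $k[x,y]/(f,g)\cong k^4$. An affine automorphism preserves midpoints, so it cannot carry the parallelogram $\{(\pm1,\pm1)\}$ onto these four points: none of their three splittings into two pairs has matching midpoints. In general, the isomorphism $\sH^z(S,F)\cong\sH^z(S',F')$ needs graded automorphisms of $k[x,y,z]$ that move $z$ (projectivities not fixing the line at infinity). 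It cannot come from manipulating presentations inside $k[x,y]$.

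\textbf{The missing idea.} Argue on the homogenized side, as the paper does. Both $\sH^z(S,F)$ and $\sH^z(S',F')$ lie in $\cB_{3,2}$. By Proposition~\ref{prop.05n}, $\sD_z(\sH^z(S,F))\cong E\cong E'\cong \sD_z(\sH^z(S',F'))$. Lemma~\ref{lem.47} then says that $\sD$ is injective on graded isomorphism classes in $\cB_{3,2}$. The paper proves this by comparing $\#\cB_{3,2}=6$ with the six $4$-dimensional commutative Frobenius algebras, using Lemma~\ref{lem.bzb}.

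\textbf{A side claim.} Your claim that $S$ must be $k[x,y]$ when $E$ is commutative is false. In $S=k_{-1}[x,y]$, the sequence $F=(xy,x^2-y^2)$ is strongly regular normal, and $E=k\<x,y\>/(xy,yx,x^2-y^2)$ is commutative. Also, Lemma~\ref{lem-comiffb} only gives \emph{some} presentation over $k[x,y]$; passing to it already presupposes the independence you are proving. Neither point affects the fix above. Since $z$ is regular central in $\sH^z(S,F)$ and its dehomogenization $E$ is commutative, $\sH^z(S,F)$ is commutative by Lemma~\ref{lem.sDc}. It therefore lies in $\cB_{3,2}$ by (1) and a Hilbert series count as in Lemma~\ref{lem.48}.
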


\begin{proof}
(1) By Lemma \ref{lem.st} and Corollary \ref{cor.ANC}, we may assume that $F$ is a homogeneous or central sequence.  By Lemma \ref{lem.Ore} and Theorem \ref{thm.an1}, $\sH^z(S, F)=S[z]/I_{F^z} \in \cA_{3,2}$.  

(2) If $E$ is commutative, then $E, E' \in \cB^\vee_{2,2}$ by Lemma \ref{lem-comiffb}, so that $\sH^z(F), \sH^z(F') \in \cB_{3,2}$. Since 
$$
\sD_z(\sH^z(F)) \cong E \cong E' \cong \sD_z(\sH^z(F'))
$$ 
as algebras by Proposition \ref{prop.05n}, $\sH^z(F) \cong \sH^z(F')$ as graded algebras by Lemma \ref{lem.47}.

If $E$ is not commutative, then $E, E' \in \cA^\vee_{2,2} \setminus \cB^\vee_{2,2}$, so $E \cong E'$ if and only if $(f,g,h) \sim_{st} (f',g',h')$ in $k \< x,y \>$ where $S =k \< x,y \> /(h), S' =k \< x,y \> /(h') \in \cA_{2,0}$ by Lemma \ref{lem.nst}. By Lemma \ref{lem.st}, $\sH^z(S, F) \cong \sH^z(S',F')$. 

(3) The map $\sH^z: \cA^\vee_{2,2} \to \cA_{3,2}$ is well-defined by (2), and $\sD_z(\sH^z(E)) \cong E$ by Proposition \ref{prop.05n}. 
\end{proof}

\begin{remark} 
Let $F = (f_1, \dots, f_n)$ be a sequence of degree $\geq 2$ in $k[x_1, \dots, x_n]$, and 
$$
E = k[x_1, \dots, x_n]/I_F.
$$ 
If $F^\vee$ is regular in $k[x_1, \dots, x_n]$ and $\dim_k E = 2^n$, 
then $\prod _{i=1}^{n} \deg f_i = \dim_k E = 2^{n}$ by Theorem \ref{thm.abt}, so $\deg f_i=2$ for every $i=1, \dots, n$. 
In particular, if $k[x,y]/(f,g)$ is a 4-dimensional algebra such that $\deg f, \deg g \geq 2$ and that $\gcd (f^\vee, g^\vee) = 1$,
then $(f^\vee, g^\vee)$ is a regular sequence by Remark \ref{rem.52}, so $\deg f=\deg g=2$ so that $k[x,y]/(f,g) \in \cB^\vee_{2, 2}$ by Remark \ref{rem.92}.
\end{remark}

\begin{theorem} \label{thm.Inc2} 
Let $A=S/(f)\in \cA_{3,1}$.  If $A^!$ is commutative, then $C(A) \in \cB^\vee_{2,2}$, and  
$
\sH^z(C(A))^!\cong A.
$ 
\end{theorem}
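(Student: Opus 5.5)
Since $A^!$ is commutative, Lemma \ref{lem.48} gives $B:=A^!\in \cB_{3,2}$. The plan is to realize $C(A)$ as the dehomogenization of $B$, identify that dehomogenization as an element of $\cB^\vee_{2,2}$, and then homogenize back and dualize.

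\emph{Step 1: $C(A)\cong \sD_z(B)$.} By Lemma \ref{lem.wbwd} there is $z\in RZ(B)_1$. Then $RN(A^!)_1\neq\emptyset$, so Lemma \ref{lem.cwt} gives $C(A)\cong \sD(A^!)=\sD_z(B)$, and by Lemma \ref{lem.awwd} this does not depend on the choice of $z$.

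\emph{Step 2: normalize coordinates.} After a linear change of coordinates (a graded automorphism of the polynomial ring, which preserves $\cB_{3,2}$ up to isomorphism) we may assume $z$ is the third variable. Then $B=k[x,y,z]/(h_1,h_2)$ with $(h_1,h_2)$ a homogeneous regular sequence of degree $2$.

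\emph{Step 3: $C(A)\in\cB^\vee_{2,2}$.} Lemma \ref{lem.bcn} gives $\sD_z(B)\cong k[x,y]/I_{H_z}\in\cB^\vee_{2,2}$, where $H_z=((h_1)_z,(h_2)_z)$ is strongly regular of degree $2$. Hence $C(A)\in \cB^\vee_{2,2}$.

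\emph{Step 4: $\sH^z(C(A))\cong B$.} By Theorem \ref{thm.csfg}, $\sH^z(C(A))$ is well-defined up to graded isomorphism, so we may compute it as $\sH^z(k[x,y],H_z)=k[x,y,z]/I_{(H_z)^z}$. Since $\deg (h_i)_z=2=\deg h_i$, Lemma \ref{lemq.05} (2) gives $(H_z)^z=H$. Alternatively, Corollary \ref{cor.DHc} (2) applies directly, because $z\in B_1$ is regular. Either way $\sH^z(C(A))\cong B=A^!$.

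\emph{Step 5: dualize.} Taking quadratic duals gives $\sH^z(C(A))^!\cong (A^!)^!=A$, since $A$ is quadratic and $(A^!)^!=A$ for any quadratic algebra. A graded isomorphism of quadratic algebras is induced by a linear map on generators, so it induces a graded isomorphism of the duals.

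The main subtlety is the well-definedness in Step 4: $\sH^z$ of $C(A)$ must be computed from some presentation, and this relies on the independence established in Theorem \ref{thm.csfg} (2). In the commutative case that independence was proved via Lemma \ref{lem.47}. A second point to check is Step 2, namely that a regular central degree-one element of $B$ can be made a coordinate variable. This holds because $B$ is commutative and generated in degree $1$.
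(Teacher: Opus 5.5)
Your proof is correct, and Steps 1--3 match the paper's opening exactly (Lemma \ref{lem.48}, then Lemmas \ref{lem.wbwd}, \ref{lem.cwt} and \ref{lem.bcn}). You diverge in how you identify $\sH^z(C(A))$ with $A^!$.

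The paper never fixes a presentation of $C(A)$. It only uses that $\sH^z(C(A))\in\cB_{3,2}$ (Theorem \ref{thm.csfg}) and that $\sD(\sH^z(C(A)))\cong C(A)\cong \sD(A^!)$ (Proposition \ref{prop.05n}). It then concludes by Lemma \ref{lem.47}, the injectivity of $\sD$ on $\cB_{3,2}$, which comes from counting six isomorphism classes on each side.

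You instead compute $\sH^z$ on the specific presentation $k[x,y]/I_{H_z}$ produced by Lemma \ref{lem.bcn}. There homogenization literally undoes dehomogenization, $(H_z)^z=H$, by Lemma \ref{lemq.05}~(2) or Corollary \ref{cor.DHc}~(2).

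Your version is more explicit, and it shows that the counting argument enters only through the well-definedness statement of Theorem \ref{thm.csfg}~(2). The paper's version is shorter and does not depend on which presentation of $C(A)$ is used to compute $\sH^z$. In the end both rest on Lemma \ref{lem.47}: the paper's directly, yours via Theorem \ref{thm.csfg}~(2), whose commutative case is proved by that lemma.

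One small point you assert without justification is $\deg (h_i)_z=2$. It holds because $z\mid h_i$ would force $z$ to annihilate a nonzero linear form in $B$, since $B_1=k[x,y,z]_1$, contradicting the regularity of $z$.
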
 

\begin{proof} 
By Lemma \ref{lem.48}, we have ${A}^!\in \cB_{3, 2}$. Since $C(A)\cong \sD(A^!)\in \cB^\vee_{2, 2}$ by Lemma \ref{lem.bcn} and Lemma \ref{lem.cwt}, $\sH^z(C(A))\in \cB_{3, 2}$ by Theorem \ref{thm.csfg}.  
Since $\sD(\sH^z(C(A))) \cong C(A)\cong \sD({A}^!)$ by Proposition \ref{prop.05n}, $\sH^z(C(A)) \cong {A}^!$ by Lemma \ref{lem.47}, 
so $\sH^z(C(A))^!\cong A$.   
\end{proof}

\begin{lemma} \label{lem.sDc} 
Let $A$ be a graded algebra and $z\in RZ(A)_1$.  If $\sD_z(A)$ is commutative, then $A$ is commutative. 
In particular, for $A\in \cA_{3, 1}^z$,  if $C(A)$ is commutative, then $A^!$ is commutative.  
\end{lemma}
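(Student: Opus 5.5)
The plan is to compare $A$ with $A/(z-1)$ directly, and then deduce the second claim from the first via Lemma \ref{lem.cwt}.

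By Lemma \ref{lem.zin} with $d=1$, we have $\sD_z(A)=A[z^{-1}]_0\cong A/(z-1)$. The map $A\to A[z^{-1}]_0$, $a\mapsto az^{-i}$ for $a\in A_i$, agrees with the canonical surjection $A\to A/(z-1)$ under this isomorphism.

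Take homogeneous $a\in A_i$ and $b\in A_j$. Since $z$ is central, in $A[z^{-1}]_0$ we compute
\[
(az^{-i})(bz^{-j})=abz^{-i-j}\quad\text{and}\quad (bz^{-j})(az^{-i})=baz^{-i-j}.
\]
Commutativity of $\sD_z(A)$ therefore gives $(ab-ba)z^{-i-j}=0$ in $A[z^{-1}]_0$. Unwinding the definition of the localization, this means $(ab-ba)z^{m}=0$ in $A$ for some $m\ge 0$. Because $z$ is regular, $ab=ba$. Homogeneous elements span $A$, so $A$ is commutative.

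The one point needing care is that $A\to A[z^{-1}]$ is injective, i.e.\ that an element becomes zero in the localization only if it is killed by a power of $z$. This holds because $z$ is a regular central element: the localization is the Ore localization at $\{z^n\}$, which has no $z$-torsion kernel. I will state this explicitly rather than pass through $A/(z-1)$, since the kernel of $A\to A/(z-1)$ is not controlled degree-wise.

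For the second claim, let $A\in\cA^z_{3,1}$. Then $RZ(A^!)_1\neq\emptyset$, so in particular $RN(A^!)_1\neq\emptyset$. By Lemma \ref{lem.cwt}, $C(A)\cong\sD(A^!)$. By Lemma \ref{lem.awwd} (applicable since $\GKdim A^!=1$), we may compute $\sD(A^!)$ using the regular central element $z\in RZ(A^!)_1$, so $\sD_z(A^!)\cong C(A)$ is commutative. Applying the first part to $A^!$ and $z$ shows that $A^!$ is commutative. No step presents a real obstacle; the only subtle point is the injectivity remark above.
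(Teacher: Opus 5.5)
Your proposal is correct and is essentially the paper's proof. For homogeneous $f,g$ you get $fgz^{-\deg f-\deg g}=gfz^{-\deg f-\deg g}$ in $\sD_z(A)$, hence $fg=gf$; the second claim then follows from $C(A)\cong \sD_z(A^!)$ via Lemma \ref{lem.cwt}, with Lemma \ref{lem.awwd} justifying the choice of $z$. The only difference is that you make explicit the injectivity of $A_n\to A[z^{-1}]_0$, $f\mapsto fz^{-n}$, which comes from the regularity of $z$ and which the paper uses without comment.
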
 

\begin{proof} For every homogeneous elements $f, g\in A$, if  $\sD_z(A)$ is commutative, then
$$fgz^{-\deg f-\deg g}=(fz^{-\deg f})(gz^{-\deg g}) =(gz^{-\deg g})(fz^{-\deg f})= gfz^{-\deg f-\deg g},$$
so $fg=gf$. 

In particular, for $A\in \cA_{3, 1}^z$,  if $C(A)\cong \sD_z(A^!)$ is commutative by Lemma \ref{lem.cwt}, then $A^!$ is commutative.  
\end{proof} 

Let 
$$\cF_2: =\{4\text{-dimensional Frobenius algebras} \} / \cong.$$
For two graded algebras $A, A'$, we denote by $A\sim A'$ if $\GrMod A\cong \GrMod A'$. 
By Lemma \ref{lem.32}, Theorem \ref{thm.37} and Example \ref{ex-4dimsjfro}, the maps
$
C: \cA_{3,1} \to \mathcal{F}_{2}; \ A \mapsto C(A)
$
and 
$
\bar C: \cA_{3,1} / \sim  \to \mathcal{F}_{2}; \ A \mapsto C(A)
$
are well defined which make the following diagram 
$$
\xymatrix{
 & \cF_{2} \\
\cA_{3,1} \ar[ru]^-{C}  \ar[r]_-{\pi} & \cA_{3,1} / \sim \ar[u]_-{\bar C}
}
$$
commute, where $\pi$ is the natural projection. 
The following is one of the main results of \cite{HMTW}.  

\begin{theorem}[{\cite[Theorem 5.10]{HMTW}}] \label{thm.CF} 
$\cA_{2, 2}^\vee = \cF_2$.  
\end{theorem}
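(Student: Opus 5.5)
The plan is to prove the two inclusions separately, viewing both sides as sets of isomorphism classes of finite dimensional algebras.

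\textbf{Step 1: $\cA_{2,2}^\vee\subseteq\cF_2$.} Let $E=S/(f,g)\in\cA_{2,2}^\vee$ with $S=k\<x,y\>/(h)\in\cA_{2,0}$.
\begin{enumerate}
\item By Corollary \ref{cor.ANC} there is $(f',g',h')$ with $(f,g,h)\sim_{st}(f',g',h')$ in $k\<x,y\>$, where $(f',g')$ is a homogeneous or central strongly regular normal sequence of degree $2$ in $S'=k\<x,y\>/(h')$.
\item Since $\sim_{st}$ equivalent sequences give isomorphic quotients of $k\<x,y\>$, we get $E\cong S'/(f',g')$.
\item Theorem \ref{thm.an1} (3) now applies. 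It shows that $E$ is a self-injective algebra with $\dim_kE=2\cdot 2=4$.
\item By Example \ref{ex-4dimsjfro}, every self-injective algebra of dimension $4$ is Frobenius, so $E\in\cF_2$.
\end{enumerate}

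\textbf{Step 2: $\cF_2\subseteq\cA_{2,2}^\vee$.} Here I classify $4$-dimensional Frobenius algebras $R$ and show that each one is isomorphic to an algebra in Table \ref{tab-avee22}. I split by the Wedderburn structure of $R/J(R)$.
\begin{enumerate}
\item \emph{Non-basic case.} Then $R\cong M_2(k)$. I check directly that $k_{-1}[x,y]/(x^2+1,y^2+1)$ is the quaternion-type algebra, and since $k$ is algebraically closed it is $\cong M_2(k)$.
\item \emph{Basic, several blocks.} Then $R$ is a product of local Frobenius algebras. Local Frobenius algebras of dimension at most $3$ are $k[x]/(x^n)$. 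This gives $k^4$, $k^2\times k[x]/(x^2)$, $k[x]/(x^2)\times k[x]/(x^2)$ and $k\times k[x]/(x^3)$. These should match, in order, $k[x,y]/(x^2-1,y^2-1)$, $k[x,y]/(x^2,y^2-1)$, and so on. I verify this by computing idempotents, e.g.\ by Chinese remainder on the factorizations of $x^2-1$ and $y^2-1$.
\item \emph{Basic, connected, non-local.} Then $R$ has two primitive idempotents with a nontrivial Nakayama permutation. It must be the Nakayama algebra on a $2$-cycle with radical square zero. I expect this to be $k_{-1}[x,y]/(x^2,y^2+1)$, with $e=(1\pm y\sqrt{-1})/2$.
\item \emph{Local case.} I show $\dim J/J^2\le 2$. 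If $\dim J/J^2=1$, then $R\cong k[x]/(x^4)$, which matches $k[x,y]/(x^2,y^2-x)$. If $\dim J/J^2=2$, then $J^3=0$ and $J^2=\mathrm{soc}R$ is $1$-dimensional. Multiplication then gives a nondegenerate bilinear form $J/J^2\times J/J^2\to J^2\cong k$, nondegenerate by the Frobenius property. Conversely the form determines $R$.
\end{enumerate}

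\textbf{Main obstacle: the local case with $\dim J/J^2=2$.} The key step is classifying nondegenerate bilinear forms on $k^2$ up to congruence and scalar, $B\sim \mu P^TBP$. I plan to decompose $B$ into symmetric and skew parts, or equivalently use the cosquare $B^{-T}B$ up to scaling. The expected outcome is as follows.
\begin{itemize}
\item The diagonalizable cases give $k_\l[x,y]/(x^2,y^2)$ with $\l\ne0$, identified with $\l^{-1}$. Here $\l=1$ is the commutative algebra $k[x,y]/(x^2,y^2)$.
\item The symmetric non-split case gives $k[x,y]/(x^2-y^2)$-type algebras. These must be shown isomorphic to $k[x,y]/(x^2,y^2)$ over an algebraically closed field.
\item The non-diagonalizable cosquare case gives $k_{-1}[x,y]/(x^2+yx,y^2)$.
\end{itemize}
I must also match the remaining commutative entries $k[x,y]/(x^2-y-1,y^2-1)$ and the $\frac{2}{\sqrt3}$ one to the block decompositions found in the second part of Step 2. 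I will do this by computing their idempotent splittings explicitly.
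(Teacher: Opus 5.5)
The paper gives no proof of this statement. It is imported verbatim from \cite[Theorem 5.10]{HMTW}, so there is no in-text argument to compare with, and your proof has to stand on its own.

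Your Step 2 does stand on its own. The split by $R/J(R)$ is right. The local case with $\dim J/J^2=2$ is indeed governed by nondegenerate forms up to $B\sim\mu P^TBP$. The cosquare $B^{-T}B$ has determinant $1$ and eigenvalues $\mu^{\pm1}$, and a Jordan block at eigenvalue $1$ forces $B$ to be degenerate. This yields exactly $k_\lambda[x,y]/(x^2,y^2)$ for $\lambda\neq0$ (up to $\lambda^{\pm1}$) and $k_{-1}[x,y]/(x^2+yx,y^2)$. There are a few small slips:
\begin{enumerate}
\item There is no separate ``symmetric non-split'' case over algebraically closed $k$; the symmetric form is just $\mu=1$.
\item Your ``in order'' matching is off. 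In fact $k[x,y]/(x^2-y-1,y^2-1)\cong k[x]/(x^2(x^2-2))\cong k^2\times k[x]/(x^2)$, whereas $k[x,y]/(x^2,y^2-1)\cong(k[x]/(x^2))^2$.
\item The $\tfrac{2}{\sqrt3}$ algebra is $k[x]/\bigl((x+\tfrac{1}{\sqrt3})^3(x-\sqrt3)\bigr)\cong k[u]/(u^3)\times k$.
\item ``Several blocks $\Rightarrow$ local blocks'' needs the observation that a connected non-local basic self-injective algebra has dimension at least $4$.
\end{enumerate}

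The genuine problem is Step 1. Corollary \ref{cor.ANC} is proved by reading off Table \ref{tab-avee22}, so it uses that Theorem \ref{thm.ANC} is a \emph{complete} list of $\cA_{2,2}^\vee$. That theorem is \cite[Corollary 5.13]{HMTW}, stated there after the theorem you are proving. Given your classification of $\cF_2$, completeness of the table is equivalent to the inclusion $\cA_{2,2}^\vee\subseteq\cF_2$. So Step 1 assumes what it proves. Once the table is granted it is also redundant, since your matchings already show every listed algebra is a $4$-dimensional Frobenius algebra.

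The reduction to homogeneous or central sequences is there only because Theorem \ref{thm.an1} needs $F^z$ normal, and you can bypass both it and the table:
\begin{enumerate}
\item Filter $E=S/(f,g)$ by degree. Since $f$ is normal, $(f)=Sf$. Since $f^\vee$ is regular, the leading form of $af$ is $a^\vee f^\vee$, so $\operatorname{gr}(S/(f))\cong S/(f^\vee)$.
\item Repeat with $\bar g$ normal in $S/(f)$ and $g^\vee$ regular in $S/(f^\vee)$. This gives $\operatorname{gr}E\cong S/(f^\vee,g^\vee)\in\cA_{2,2}$, a $4$-dimensional graded Frobenius algebra with socle in degree $2$ by Lemma \ref{lem.KAS} (1).
\item Hence $\dim_kE=4$ and $E=F_2E$. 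The functional $E\to F_2E/F_1E\cong k$ defines a nondegenerate associative form on $E$: pair an element with a lift of a partner of its leading symbol.
\end{enumerate}
This gives $\cA_{2,2}^\vee\subseteq\cF_2$ for every strongly regular normal sequence. Step 2 then only needs that each entry of Table \ref{tab-avee22} lies in $\cA_{2,2}^\vee$, which you can check directly.
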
 

\begin{theorem} \label{thm.c4f}
\begin{itemize}
\item[(1)] For $E \in \cA_{2, 2}^\vee$, $\sH^z(E)^!\in \cA^z_{3, 1}$ and $C(\sH^z(E)^!)\cong E$. 
\item[(2)] The map 
\begin{equation} \label{equ-mapnabla}
\nabla: \cA^\vee_{2,2} \to \cA^z_{3,1}; \ E \mapsto \nabla(E): = \sH^z(E)^!.
\end{equation}
is well-defined which makes the following  diagram
$$
\xymatrix{
\cA^\vee_{2,2} \ar@{<-}[r]^-{=} \ar[d]_-{\nabla} & \cF_2 \\
\cA^z_{3,1} \ar[r]_-{\pi} \ar[ru]^-{C} & \cA^z_{3,1}/\sim \ar[u]_-{\bar C}
}
$$
commute. 
\end{itemize}
\end{theorem}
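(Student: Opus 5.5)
We first reduce, as in the proof of Theorem \ref{thm.csfg} (1), via Lemma \ref{lem.st} and Corollary \ref{cor.ANC}, to the case $E=S/I_F$ with $S\in\cA_{2,0}$ and $F=(f,g)$ a strongly regular normal sequence of degree $2$ which is either homogeneous or central. Put $B:=\sH^z(S,F)=S[z]/I_{F^z}$. By Theorem \ref{thm.an1} (2), $(f^z,g^z,z)$ is a homogeneous regular normal sequence in $S[z]$, and $S[z]\in\cA_{3,0}$ by Lemma \ref{lem.Ore}. Consequently $B\in\cA_{3,2}$ and $z\in RZ(B)_1$.

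The next step is to show that $A:=B^!\in\cA^c_{3,1}$. Because $z^2$ is central and regular in $B$, the sequence $(f^z,g^z,z^2)$ is a homogeneous regular normal sequence of degree $2$ in $S[z]$. Lemma \ref{lem.du} then gives $A=B^!\in\cA_{3,1}$; concretely, $A=E_0^!/(\phi)$ where $E_0=B/(z^2)$ and $\phi=(z^2)^!$. The real task is centrality. Since $B$ is Koszul and $B^{!!}=B$, we have $A^!=B$, and $z\in RZ(A^!)_1$. I would argue as follows: $z^2\in RZ(B)_2$, so Lemma \ref{lem.asf}, applied to $E_0^!\in\cA_{3,0}$ and the element $\phi$ with $(\phi)^!=z^2$ in $(E_0^!/(\phi))^!=B$, yields that $\phi$ is central because $z^2$ is central. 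This gives $A\in\cA^c_{3,1}$, and since $RZ(A^!)_1\ni z$, in fact $A\in\cA^z_{3,1}$. This is the step I expect to be most delicate, because Lemma \ref{lem.asf} has to be applied in the correct direction and one must check that the element $f^!$ it produces agrees, up to scalar, with the element whose quotient recovers $E_0=B/(z^2)$. Uniqueness up to scalar in Lemma \ref{lem.asf} takes care of this.

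Next comes the identification $C(A)\cong E$. Since $RN(A^!)_1\ni z$ is nonempty, Lemma \ref{lem.cwt} gives $C(A)\cong\sD(A^!)=\sD_z(B)$. Theorem \ref{thm.csfg} (3) (equivalently, Proposition \ref{prop.05n} combined with Lemma \ref{lemq.06}, using that $z$ is regular central) gives $\sD_z(B)\cong E$. Together these prove (1).

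For (2), well-definedness of $\nabla$ follows from Theorem \ref{thm.csfg} (2): if $E\cong E'$, then $\sH^z(E)\cong\sH^z(E')$ as graded algebras, so their quadratic duals are isomorphic. Part (1) then places $\nabla(E)$ in $\cA^z_{3,1}$. Commutativity of the triangle $\bar C\circ\pi=C$ is the restriction of the diagram preceding Theorem \ref{thm.CF}, which holds by Lemma \ref{lem.32}. Commutativity of the square, namely $C\circ\nabla=\id$ under the identification $\cA^\vee_{2,2}=\cF_2$ of Theorem \ref{thm.CF}, is exactly the isomorphism $C(\sH^z(E)^!)\cong E$ from (1).
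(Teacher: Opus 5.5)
Your proof is correct and follows essentially the same route as the paper. You reduce to a homogeneous or central sequence, use Theorem \ref{thm.an1} to get $(f^z,g^z,z^2)$ regular normal in $S[z]$, apply Lemma \ref{lem.du}, conclude $C(\sH^z(E)^!)\cong\sD_z(\sH^z(E))\cong E$ via Lemma \ref{lem.cwt} and Proposition \ref{prop.05n}, and derive (2) from Theorem \ref{thm.csfg} and Theorem \ref{thm.CF}; your explicit centrality argument via Lemma \ref{lem.asf}, using that $z^2$ is regular central in $\sH^z(E)$, fills in a step the paper leaves implicit when it cites Lemma \ref{lem.du} alone.
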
 

\begin{proof}  
(1) By Theorem \ref{thm.an1} and Theorem \ref{thm.csfg}, $\sH^z(E) = S[z]/(f^z, g^z)\in \cA_{3, 2}$ and $(f^z, g^z, z^2) $  is a regular normal sequence of degree $2$ in $S[z]$.  By Lemma \ref{lem.du},  $\sH^z(E)^!\in \cA^z_{3, 1}$, so
\begin{equation*}
C(\sH^z(E)^!)\cong \sD_z(\sH^z(E)) \cong E
\end{equation*}
by  Proposition \ref{prop.05n},  Lemma \ref{lem.cwt}. 

(2) This follows from (1), Theorem \ref{thm.csfg} and Theorem \ref{thm.CF}. 
\end{proof}

\begin{theorem} \label{thm.acb}
\begin{itemize}
\item[(1)] The map 
$
\Delta: \cA_{3,1}^z\to \cA_{2, 2}^\vee; \; A \mapsto \sD_z(A^!)
$
is well-defined. 
\item[(2)] $\nabla: \cA_{2, 2}^\vee \to \cA_{3, 1}^z$ is a bijection with the inverse $\Delta$. 
\end{itemize}
\end{theorem}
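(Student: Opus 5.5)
For (1), I take $A\in\cA^z_{3,1}$, say $A=S/(f)$ with $f\in RZ(S)_2$ and $z\in RZ(A^!)_1$, and show that $\sD_z(A^!)\in\cA^\vee_{2,2}$. By Lemma \ref{lem.awwd}, $\sD_z(A^!)$ does not depend on the choice of $z$, since $\GKdim A^!=1$ by Lemma \ref{lem.KAS}. Lemma \ref{lem.cwt} then gives $\sD_z(A^!)\cong C(A)$. By Theorem \ref{thm.37} and Example \ref{ex-4dimsjfro}, $C(A)$ is a $4$-dimensional Frobenius algebra. Theorem \ref{thm.CF} ($\cA^\vee_{2,2}=\cF_2$) then places it in $\cA^\vee_{2,2}$. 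If $A\cong A'$ then $C(A)\cong C(A')$ by Lemma \ref{lem.32}, so $\Delta$ is well defined on isomorphism classes. In the notation of the diagram in Theorem \ref{thm.c4f}, $\Delta$ is just $C$ restricted to $\cA^z_{3,1}$.

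For (2), one composite is already available. Theorem \ref{thm.c4f} (1) gives $\Delta\nabla(E)=C(\sH^z(E)^!)\cong E$ for every $E\in\cA^\vee_{2,2}$. So $\nabla$ is injective and $\Delta$ is surjective, and what remains is $\nabla\Delta(A)\cong A$ for $A\in\cA^z_{3,1}$.

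The key step is to compare $A^!$ with $\sH^z(\sD_z(A^!))$. I would present $A^!$ with $z$ as a central generator. Since $z\in RZ(A^!)_1$, we can write $A^!=k\<x,y\>[z]/I$, and by Lemma \ref{lem.KAS} and Lemma \ref{lem.Tak} it has Hilbert series $(1+t)/(1-t)$. The defining relations are then the commutators of $z$ with $x,y$ together with three quadratic relations $h,f_1,f_2$. Here I expect we can take $h\in k\<x,y\>_2$ after subtracting $z$-multiples, with $F=(f_1,f_2)$ dehomogenizing appropriately. By Lemma \ref{lemq.06} and Lemma \ref{lem.zin}, $\sD_z(A^!)\cong k\<x,y\>/(h_z,(f_1)_z,(f_2)_z)$. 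Since $z$ is regular, Proposition \ref{prop.05} (2) gives $\sH^{\dagger,z}$ of this dehomogenized sequence equal to $A^!$. The task is then to arrange that $S':=k\<x,y\>/(h)\in\cA_{2,0}$ and that $F_z$ is strongly regular normal in $S'$, so that $\sH^z(S',F_z)=A^!$ and Theorem \ref{thm.csfg} (2) identifies this with $\sH^z(\sD_z(A^!))$. For this I would use $A^!/(z)\cong S'/(F^\vee)$, which has Hilbert series $(1+t)^2$, together with Lemma \ref{lem.612}. Dualizing then gives $\nabla\Delta(A)=\sH^z(\sD_z(A^!))^!\cong (A^!)^!=A$.

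The main obstacle is justifying this normal form: that $h$ can be chosen $z$-free with $k\<x,y\>/(h)$ a quantum polynomial algebra, and that the images of $f_1,f_2$ form a normal sequence. One way to settle it is to use the classification. Compose with Theorem \ref{thm.ANC}, so that $\sD_z(A^!)$ is one of the ten algebras $E$ in Table \ref{tab-avee22}, and compare $A^!$ with $\sH^z(E)$ directly using Lemma \ref{lem.nst} and Lemma \ref{lem.st}. In the commutative case, Lemma \ref{lem.sDc} shows $A^!$ is commutative, and Theorem \ref{thm.Inc2} gives $\sH^z(C(A))^!\cong A$ directly, which avoids the problem there. For the non-commutative $E$, I would argue that any graded $A^!$ with central regular $z$ and $A^![z^{-1}]_0\cong E$ has $\sD_z$ determining the relation space. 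The point is that $z$-regularity forces $I=I_{(I_z)^z}$, as in the proof of Proposition \ref{prop.05} (2), so $A^!$ is determined by the ideal of $E$ inside $k\<x,y\>$ up to a choice of generators $x,y$. That choice ambiguity is controlled by Lemma \ref{lem.nst}.
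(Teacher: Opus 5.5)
Your part (1), the identity $\Delta\nabla(E)\cong E$, and the commutative case of $\nabla\Delta(A)\cong A$ (via Lemma \ref{lem.sDc} and Theorem \ref{thm.Inc2}) agree with the paper. The gap is the noncommutative case.

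Lemma \ref{lem.nst} only compares two presentations that are \emph{both} already of the form $k\langle x,y\rangle/(h)$ modulo a strongly regular normal sequence of degree $2$, with $k\langle x,y\rangle/(h)\in\cA_{2,0}$. The presentation of $E=\sD_z(A^!)$ induced by $A^!$ is the surjection $k\langle x,y\rangle\to E$ given by a complement of $kz$ in $A^!_1$. It is not known to have this form; that is exactly the normal form you flagged and did not establish. So invoking Lemma \ref{lem.nst} to ``control the choice of generators'' is circular.

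Theorem \ref{thm.ANC} does not help either. It classifies $E$ as an abstract algebra, not the presentation (equivalently, the filtration) that $A^!$ puts on $E$. Nor can you simply ``subtract $z$-multiples''. Modulo $xz-zx$ and $yz-zy$, the relation space of $A^!$ is the graph of a linear map from the $3$-dimensional relation space of $A^!/(z)$ to $z(kx+ky)\oplus kz^2$. You would have to \emph{produce} a $z$-free relation $h$ with $k\langle x,y\rangle/(h)\in\cA_{2,0}$, even allowing the substitutions $x\mapsto x+az$, $y\mapsto y+bz$. You would also need normality of the two remaining dehomogenized relations in $k\langle x,y\rangle/(h)$.

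What your observation $I=I_{(I_z)^z}$ actually gives is that $A^!\cong\bigoplus_n V^nt^n$. This is the Rees algebra of $E$ for $V$ the image of $A^!_1$, so $1\in V$, $\dim V=3$ and $V^2=E$. Likewise $\sH^z(E)$ is such a Rees algebra for some $V'$, since $z$ is regular there by Theorem \ref{thm.an1}. The missing statement is that $\Aut(E)$ acts transitively on these subspaces $V$ for each noncommutative $E$ in Table \ref{tab-avee22}. This can be checked by hand:
\begin{itemize}
\item For $M_2(k)$, write $V=k\oplus W$ with $W\subset\mathfrak{sl}_2$. The trace-orthogonal line of $W$ must be non-isotropic, since otherwise $V$ is a Borel subalgebra and $V^2=V$.
\item In the local cases, $W$ is spanned by $u+\alpha uv$ and $v+\beta uv$, and $u\mapsto u+\alpha uv$, $v\mapsto v+\beta uv$ is an automorphism.
\item For $k_{-1}[u,v]/(u^2-1,v^2)$, conjugate by $1+j$ with $j$ in the radical, then use $v\mapsto \alpha v+\beta uv$.
\end{itemize}
This would be a more conceptual completion, but it is not in your proposal.

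The paper closes this case differently, via the classification in Theorem \ref{thm.Ta2}. Only five classes in $\cA^z_{3,1}$ have noncommutative $C(A)$: from the tables, $A_2$, $B_3$, $C_4$, $D_3$ and $E_3(\lambda)$. For each of these, $A^!\cong\sH^z(\sD_z(A^!))$ is verified by direct computation.
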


\begin{proof}
(1) For $A \in \cA_{3,1}^z$, $\sD_z(A^!) \cong C(A)$ by Lemma \ref{lem.cwt}, so $\Delta$ is well-defined by Theorem \ref{thm.c4f} (2). 

(2) For $E\in \cA_{2, 2}^\vee$, $\Delta (\nabla (E))\cong \sD_z(\sH^z(E))\cong E$ by Theorem \ref{thm.csfg} (3), so it is enough to show $\nabla(\Delta(A)) \cong \sH^z(\sD_z(A^!))^! \cong A$ for $A \in \cA^z_{3,1}$.
If  $\sD_z(A^!) \cong C(A)$ is commutative, then $A^!$ is commutative by Lemma \ref{lem.sDc}, so 
$A^!\cong \sH^z(\sD_z(A^!))$ by Theorem \ref{thm.Inc2}. 
 On the other hand, there are only 5 classes of algebras (4 algebras and 1 family of algebras up to isomorphism) $A\in \cA^z_{3, 1}$ 
such that $C(A) \cong \sD_z(A^!)$ is not commutative by classification (Theorem \ref{thm.Ta2}).  For each such $A$, we can check that $A^!\cong \sH^z(\sD_z(A^!))$ by direct computations (see the next Example). 
\end{proof}

\begin{example}[The algebra $A_2$ in Table \ref{tab-caa}] If $A=k\<x, y, z\>/(yz+zy, zx+xz, xy-yx, x^2+y^2+z^2)$, then $A\in \cA_{3, 1}^c$.  Since $A^!\cong k\<x, y, z\>/(yz-zy, zx-xz, xy+yx, x^2-z^2, y^2-z^2)$, $z\in RZ(A^!)_1$, so $A\in \cA_{3, 1}^z$, and 
$$C(A)\cong \sD_z(A^!)\cong A^!/(z-1)\cong k\<x, y\>/(xy+yx, x^2-1, y^2-1)\cong M_2(k)$$
is not commutative.  We can easily check that $\Delta(A) = \sD_z(A^!)\cong k\<x, y\>/(xy+yx, x^2-1, y^2-1)\in \cA_{2, 2}^\vee$ and that $\sH^z(\sD_z(A^!))\cong A^!$.    
\end{example} 

\begin{theorem} \label{thm.zwA} 
	The composition $\cA_{3, 1}^z\to \cA_{3, 1}^z/\sim \to \cA_{3, 1}^w/\sim\to \cA_{3, 1}^c/\sim$ is a bijection.  
\end{theorem}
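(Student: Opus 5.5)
We must show that the composite map $\cA_{3,1}^z\to\cA_{3,1}^c/\sim$, $A\mapsto[A]$, is both surjective and injective.

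\emph{Surjectivity.} The natural maps $\cA_{3,1}^z/\sim\,\to\cA_{3,1}^w/\sim\,\to\cA_{3,1}^c/\sim$ come from the inclusions $\cA^z_{3,1}\subset\cA^w_{3,1}\subset\cA^c_{3,1}$. It is therefore enough to show that every class in $\cA^c_{3,1}/\sim$ contains an algebra from $\cA^z_{3,1}$. Theorem \ref{thm.awz} does this for any class meeting $\cA^w_{3,1}$: given $w\in RN(A^!)_1$, it produces $A'\in\cA^c_{3,1}$ with $z\in RZ((A')^!)_1$ and $A'\sim A$. The remaining task is to check that every $A\in\cA^c_{3,1}$ is graded Morita equivalent to some algebra in $\cA^w_{3,1}$. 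I would do this by classification, which I expect to be the main obstacle. Using the explicit list of $3$-dimensional quantum polynomial algebras $S$ together with their regular central elements $f\in Z(S)_2$ (the tables in Section \ref{sec-clafi}), I would compute $A^!=(S/(f))^!$ for each pair, up to the relevant equivalences. For each $A^!$ I would exhibit a regular normal element of degree $1$, using Lemma \ref{lem.reno} for normality and Lemma \ref{lem.Tak} for regularity. Where no such element exists, I would first replace $A$ by a Zhang twist $A^\sigma$ with $\sigma\in\Aut^{\ZZ}S$ fixing $f$. If everything succeeds, this step shows that $\cA^w_{3,1}/\sim\,\to\cA^c_{3,1}/\sim$ is onto, and hence the composite is onto.

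\emph{Injectivity.} Suppose $A,A'\in\cA^z_{3,1}$ with $\GrMod A\cong\GrMod A'$. By Lemma \ref{lem.32}, $C(A)\cong C(A')$. By Theorem \ref{thm.acb} the map $\Delta$, given by $A\mapsto\sD_z(A^!)\cong C(A)$ (Lemma \ref{lem.cwt}), is a bijection $\cA_{3,1}^z\to\cA_{2,2}^\vee$ with inverse $\nabla$. So $C(A)\cong C(A')$ as algebras forces $A\cong A'$ in $\cA^z_{3,1}$. In other words, the composite $\cA^z_{3,1}\to\cA^c_{3,1}/\sim\;\xrightarrow{\bar C}\cF_2$ equals $\Delta$ (after the identification $\cA^\vee_{2,2}=\cF_2$ of Theorem \ref{thm.CF}), and $\Delta$ is injective. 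This also shows that each intermediate map is well defined and injective on the image.

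Putting the two halves together, the composite is a bijection. As a byproduct, $\bar C$ restricted to the image is a bijection onto $\cF_2$.
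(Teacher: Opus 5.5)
Your proposal is correct and follows essentially the paper's own proof. Injectivity comes from the bijection $\Delta=\nabla^{-1}$ of Theorem \ref{thm.acb} together with the invariance of $C(-)$ under $\sim$; the paper phrases this as injectivity of $\pi\circ\nabla$ via the diagram of Theorem \ref{thm.c4f}. Surjectivity comes from Theorem \ref{thm.awz} plus a classification-based check that each algebra in $\cA^c_{3,1}\setminus\cA^w_{3,1}$ is graded Morita equivalent to one in $\cA^w_{3,1}$, e.g.\ via a Zhang twist as in the paper's $A_4$ example (and if a twist only lands in $\cA_{3,1}$, Theorem \ref{thm.awz} still applies since it only needs $RN(A^!)_1\neq\emptyset$).
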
 

\begin{proof} 
	Bijectivity of the first map: 
	Since the map $\nabla: \cA^\vee_{2, 2}\to \cA^z_{3, 1}$ is bijective by Theorem \ref{thm.acb},  and the composition 
	$
	\xymatrix{
		\cA^\vee_{2, 2} \ar[r]^-{\nabla} & \cA^z_{3, 1} \ar[r]^-{\theta} & \cA^z_{3, 1}/\sim
	}
	$
	is injective by Theorem \ref{thm.c4f}, $\cA^z_{3, 1}=\cA_{3, 1}^z/\sim$.  
	
	Bijectivity of the second map:  This follows from Theorem \ref{thm.awz}.  
	
	Bijectivity of the last map:   There are only 10 classes of algebras $A\in \cA^c_{3, 1}\setminus \cA_{3, 1}^w$ 
	(8 algebras and 2 family of algebras up to isomorphism) by classification (Theorem \ref{thm.Ta2}).  For each $A\in \cA_{3, 1}^c\setminus \cA_{3, 1}^w$, we can check that there exists $A'\in \cA_{3, 1}^w$ such that $A\sim A'$ by direct computations (see the next Example).
\end{proof}

	\begin{example}[The algebra $A_4$ in Table \ref{tab-caa}]
		By classification (Theorem \ref{thm.Ta2}), 
		\begin{align*}
			& A_1= k[x, y, z]/(x^2+y^2+z^2)\in \cA_{3, 1}^w, \\
			& A_4 = k\<x, y, z\>/(xy-yx-y^2, yz-zy-2xy, zx-xz-yz, x^2+yz)\in \cA_{3, 1}^c\setminus \cA_{3, 1}^w.
		\end{align*}  
		It is not so obvious to see $A_4\sim A_1$, so we may need some trials and errors to show it.  
		Let $A'=k[x, y, z]/(x^2+y(x+z))$. 
		If $\sigma=\begin{pmatrix} 1 & -1 & 0 \\ 0 & 1 & 0 \\ 2 & 0 & 1 \end{pmatrix} \in \GL_3(k)$, then 
		$$\s(x^2+y(x+z))=
		(x-y)^2+y((x-y)+(2x+z))=x^2-2xy+y^2+3xy-y^2+yz=x^2+y(x+z),$$
		so $\s\in \Aut^{\ZZ}(A')$.  Since  
		\begin{align*}
			(A')^{\sigma} & \cong k\<x, y, z\>/(\s(x)y-\s(y)x, \s(y)z-\s(z)y, \s(z)x-\s(x)z, \s(x)x+\s(y)(x+z)) \\
			& = k\<x, y, z\>/(xy-yx-y^2, yz-2xy-zy, 2x^2+zx-xz+yz, x^2+yz)=A_4,
		\end{align*} 
		$A_4\sim A'\cong A_1 \in \cA_{3, 1}^w$. 
	\end{example} 

By Theorem \ref{thm.acb} and Theorem \ref{thm.zwA}, we have the following result. 
 
\begin{theorem} \label{thm.main}  
The map $C:\cA_{3,1}\to \cF_2$ induces a bijection $\cA_{3, 1}^c/\sim \to \cF_2$. 
\end{theorem}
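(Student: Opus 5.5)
We assemble the bijection from the pieces already established, by composing the bijections of Theorem \ref{thm.acb} and Theorem \ref{thm.zwA} with the identification $\cA_{2,2}^\vee=\cF_2$ of Theorem \ref{thm.CF}. First, $\bar C:\cA_{3,1}/\sim\to\cF_2$ is well defined by Lemma \ref{lem.32}, Theorem \ref{thm.37} and Example \ref{ex-4dimsjfro}. Restricting to $\cA_{3,1}^c/\sim$ gives the map in the statement, and we must show that it is bijective.

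Consider the commutative diagram
$$
\xymatrix{
\cA^\vee_{2,2} \ar[r]^-{\nabla} & \cA^z_{3,1} \ar[r]^-{\rho} & \cA^c_{3,1}/\sim \ar[r]^-{\bar C} & \cF_2
}
$$
where $\rho$ is the composition $\cA_{3,1}^z\to \cA_{3,1}^z/\sim \to \cA_{3,1}^w/\sim\to \cA_{3,1}^c/\sim$. By Theorem \ref{thm.acb}(2), $\nabla$ is a bijection, and by Theorem \ref{thm.zwA}, $\rho$ is a bijection. Hence $\rho\circ\nabla:\cA_{2,2}^\vee\to\cA_{3,1}^c/\sim$ is bijective.

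It then suffices to show that $\bar C\circ\rho\circ\nabla$ is the identification $\cA_{2,2}^\vee=\cF_2$ of Theorem \ref{thm.CF}. Indeed, $\bar C\circ\rho = C|_{\cA_{3,1}^z}$ because $\rho$ is induced by inclusions followed by the projection $\pi$, and $C$ factors through $\pi$. Moreover, $C(\nabla(E))=C(\sH^z(E)^!)\cong E$ by Theorem \ref{thm.c4f}(1); this is exactly the commutativity of the diagram in Theorem \ref{thm.c4f}(2). Therefore $\bar C\circ(\rho\circ\nabla)$ equals the bijection $\cA_{2,2}^\vee\to\cF_2$. Since $\rho\circ\nabla$ is bijective, $\bar C|_{\cA^c_{3,1}/\sim}=(\text{bijection})\circ(\rho\circ\nabla)^{-1}$ is a bijection.

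The only point requiring care is that the maps are compatible on equivalence classes: the target of $\rho$ is $\cA_{3,1}^c/\sim$, and $\bar C$ is well defined on it by Lemma \ref{lem.32}. The substantive inputs, namely the classification-dependent surjectivity in Theorem \ref{thm.zwA} and the inverse check in Theorem \ref{thm.acb}, are already established, so the proof is a formal diagram chase.
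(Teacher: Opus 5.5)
Your proposal is correct and follows essentially the paper's argument: both combine the bijection $\cA^z_{3,1}\to\cA^c_{3,1}/\sim$ of Theorem \ref{thm.zwA} with the bijection of Theorem \ref{thm.acb} and the identification $\cA^\vee_{2,2}=\cF_2$ of Theorem \ref{thm.CF}. The only cosmetic difference is the compatibility step. The paper checks $C(A')\cong \sD_z({A'}^!)=\Delta(A')$ via Lemma \ref{lem.cwt}, whereas you use $C(\nabla(E))\cong E$ from Theorem \ref{thm.c4f}(1); these amount to the same fact since $\Delta=\nabla^{-1}$.
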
 

\begin{proof}  The map $C:\cA_{3. 1}\to \cF_2$ induces a map $\cA_{3, 1}^c/\sim \to \cF_2$ by Lemma \ref{lem.32} and  Theorem \ref{thm.37}.  For every $A\in \cA_{3, 1}^c$, there exists $A'\in \cA_{3, 1}^z$ such that $A\sim A'$ by Theorem \ref{thm.zwA}.  Since 
$$C(A)\cong C(A')\cong \sD({A'}^!)=\nabla (A')$$
by Lemma \ref{lem.32} and Lemma \ref{lem.cwt}, 
the result follows from Theorem \ref{thm.CF}, Theorem \ref{thm.acb} and Theorem \ref{thm.zwA}. 
\end{proof}

\begin{theorem} \label{con.main} 
For the following conditions on $A, A'\in \cA_{3, 1}$, 
\begin{enumerate}
\item{} $\GrMod A\cong \GrMod A'$. 
\item{} $\Projn A\cong \Projn A'$. 
\item{} $\GrMod A^!\cong \GrMod {A'}^!$. 
\item{} $\Projn A^!\cong \Projn {A'}^!$. 
\item{} $\uCM^{\ZZ}(A)\cong \uCM^{\ZZ}(A')$.  
\item{} $\cD^b(\mod C(A))\cong \cD^b(\mod C(A'))$. 
\item{} $C(A)\cong C(A')$. 
\end{enumerate} 
$(1) \Leftrightarrow  (2) \Leftrightarrow (3) \Rightarrow (4) \Leftrightarrow  (5)\Leftrightarrow  (6) \Leftrightarrow (7)$.  In particular, if $A, A'\in \cA_{3, 1}^c$, then all of the conditions are equivalent.  
\end{theorem}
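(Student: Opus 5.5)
The chain of implications will be established link by link, and the equivalence of all conditions on $\cA_{3,1}^c$ will then come from Theorem \ref{thm.main}.

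For $(1)\Leftrightarrow(2)$ we use that $A, A'$ are noetherian, generated in degree $1$ and AS-Gorenstein of dimension $2\geq 2$. Hence $A$ is recovered from $(\tails A, \cA, (1))$ as $\bigoplus_i \Hom(\cA,\cA(i))$, and we invoke \cite[Theorem 1.4]{Z} together with the standard Artin--Zhang argument. One caveat: an isomorphism of quasi-schemes need not commute with the shift. What we actually use is that a graded Morita equivalence induces an isomorphism $\Projn A\cong \Projn A'$. For the converse, we rely on the known fact (as in \cite{HMM}) that for these quadric hypersurfaces the shift can be reconstructed.

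For $(1)\Leftrightarrow(3)$ we use Koszul duality. Both $A$ and $A^!$ are Koszul (Lemma \ref{lem.kos}). By Zhang's theorem \cite{Z}, $\GrMod A\cong \GrMod A'$ if and only if $A'\cong A^\theta$ for a twisting system $\theta$, and for generated-in-degree-one algebras $\theta$ can be taken to come from a graded linear twist. Twisting commutes with quadratic duality as in Lemma \ref{lem.ndu} (more generally for twisting systems), so $A'\cong A^\theta$ if and only if $(A')^!\cong (A^!)^{\theta^!}$.

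For $(3)\Rightarrow(4)$ we use that $A^!$ has $\GKdim 1$ and that graded Morita equivalence induces isomorphic $\Projn$. For $(4)\Leftrightarrow(7)$ we note $\Projn A^!\cong \Specn A^![(f^!)^{-1}]_0=\Specn C(A)$ by Lemma \ref{lem-pitails}, since $\dim_k A^!/(f^!)=\dim_k S^!<\infty$, and then apply Lemma \ref{lem.spnc}. For $(5)\Leftrightarrow(6)$ we apply the Smith--Van den Bergh equivalence $\uCM^{\ZZ}(A)\cong\cD^b(\mod C(A))$ \cite[Proposition 5.2]{SV}. That $(7)\Rightarrow(6)$ is trivial.

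The remaining direction, $(6)\Rightarrow(7)$, is where the substance lies. Since $C(A)$ is a 4-dimensional Frobenius algebra (Theorem \ref{thm.37}), we would use the classification of $\cF_2$ (Theorem \ref{thm.CF}, Table \ref{tab-avee22}) and check that derived equivalent algebras on that list are isomorphic. The tools are that a derived equivalence preserves the center, the number of simple modules and the Cartan data, and that derived equivalence between local algebras implies Morita equivalence, hence isomorphism for basic algebras. The main obstacle is distinguishing the non-local ones: $M_2(k)$ against $k^4$ and others by number of simples, and $k[x]/(x^2)\times k[x]/(x^2)$ against $k_{-1}[x,y]/(x^2,y^2+1)$ and similar pairs by centers. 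This is a finite check, carried out via Table \ref{tab-avee22}.

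Finally, for $A,A'\in\cA^c_{3,1}$, the implication $(7)\Rightarrow(1)$ follows from Theorem \ref{thm.main}: $C$ induces a bijection $\cA^c_{3,1}/\sim\ \to\cF_2$, so $C(A)\cong C(A')$ forces $A\sim A'$. This closes the cycle of implications, and all seven conditions are equivalent.
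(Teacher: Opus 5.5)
Your proposal is correct, and its outer frame matches the paper's: a chain of implications, closed on $\cA^c_{3,1}$ by the injectivity in Theorem \ref{thm.main}. The real difference is in the block $(4)$--$(7)$.

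The paper argues as follows:
\begin{itemize}
\item $(4)\Rightarrow(5)$ by \cite[Lemma 2.5 (2)]{MU};
\item $(5)\Leftrightarrow(6)$ by \cite[Lemma 4.13 (4)]{MU};
\item $(6)\Leftrightarrow(7)$ by Theorem \ref{thm.37} and \cite[Theorem 4.11]{HMTW};
\item $(7)\Rightarrow(4)$ by \cite[Lemma 4.13 (2)]{MU}.
\end{itemize}
So the paper's passage from $(4)$ to $(7)$ goes through derived categories.

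You instead get $(4)\Leftrightarrow(7)$ directly from Lemma \ref{lem-pitails} and Lemma \ref{lem.spnc}. This is valid: $A^!$ is noetherian, $f^!\in RN(A^!)_2$, and $A^!/(f^!)=S^!$ is finite dimensional. It shows that this equivalence needs no derived input and holds on $\cA_{n,1}$ for every $n$. The derived-equivalence content (what the paper takes from \cite[Theorem 4.11]{HMTW}) is then confined to $(6)\Rightarrow(7)$, which you re-derive by a finite check over Table \ref{tab-avee22}.

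Your invariants do suffice for that check:
\begin{itemize}
\item a commutative member is recovered from its center, since a derived equivalent $4$-dimensional algebra would have a $4$-dimensional center;
\item the non-commutative members have centers of dimension $\leq 2$, so centers separate commutative from non-commutative;
\item $k_{-1}[x,y]/(x^2,y^2+1)$ has center $k$, which separates it from the two commutative algebras with two simples;
\item derived equivalent local algebras are Morita equivalent, hence isomorphic.
\end{itemize}
One slip: $M_2(k)$ and the local algebras all have exactly one simple module. So $M_2(k)$ must be separated by semisimplicity or by its one-dimensional center, not by counting simples.

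Two other steps are thinner than the paper's citations.
\begin{itemize}
\item In $(2)\Rightarrow(1)$, what makes the shift recoverable is that $A,A'$ are AS-Gorenstein of dimension $2$ with Gorenstein parameter $1\neq 0$. These are the hypotheses under which the paper invokes \cite[Lemma 3.3]{M2}. Dimension $\geq 2$ by itself only gives $A\cong\bigoplus_i\operatorname{Hom}(\cA,\cA(i))$.
\item In $(1)\Leftrightarrow(3)$, Lemma \ref{lem.ndu} treats only twists by a single graded automorphism, and a twisting system need not be of that form. So ``$\theta^!$'' still has to be constructed. The paper cites \cite[Lemma 4.1]{MU2}. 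A clean way to see it: for quadratic algebras $T(V)/(R)$ and $T(V')/(R')$, graded Morita equivalence amounts to linear isomorphisms $\tau_n:V\to V'$ with $(\tau_n\otimes\tau_{n+1})(R)=R'$ for all $n$. This condition is transported to $R^\perp$ by the maps $(\tau_n^*)^{-1}$.
\end{itemize}
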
 

\begin{proof} (1) $\Leftrightarrow$ (2) by \cite[Lemma 3.3]{M2} since $A, A'$ are AS-Gorenstein algebras of dimensions 2 and of Gorenstein parameters 1 by Lemma \ref{lem.KAS} (2).

(1) $\Leftrightarrow$ (3) by \cite[Lemma 4.1]{MU2}.

(3) $\Rightarrow$ (4) by \cite[Theorem 1.4]{Z}.

(4) $\Rightarrow $ (5) by \cite[Lemma 2.5 (2)]{MU}.

(5) $\Leftrightarrow $ (6) by \cite[Lemma 4.13 (4)]{MU}.

(6) $\Leftrightarrow$ (7) by Theorem \ref{thm.37} and \cite[Theorem 4.11]{HMTW}. 

(7) $\Rightarrow $ (4) by \cite[Lemma 4.13 (2)]{MU}.

If $A, A'\in \cA_{3, 1}^c$, then
(7) $\Rightarrow $ (1) follows from Theorem \ref{thm.main}. 
\end{proof}   

\begin{remark} There exist $A, A'\in \cA_{4,1}$ such that 
$C(A)\cong C(A')\cong M_2(k)^{\times 2}$ but $\Projn A\not\cong \Projn A'$ by \cite[Theorem 4.6]{U}. 
\end{remark} 

 The following is the main result of the paper.  
\begin{corollary} \label{cor-main.bij}
    There are bijections among the following sets:
\begin{itemize}
    \item [(i)] the set  $\cF_2$ of isomorphism classes of 4-dimensional Frobenius algebras.
    \item [(ii)] the set  $\cA^\vee_{2,2}$ of isomorphism classes of noncommutative affine pencils of conics.
    \item [(iii)] the set  of isomorphism classes of noncommutative central conics.
\end{itemize} 
\end{corollary}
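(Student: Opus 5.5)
The bijection (i)$\leftrightarrow$(ii) is Theorem \ref{thm.CF}: $\cA_{2,2}^\vee=\cF_2$, with a coordinate ring $E\in\cA^\vee_{2,2}$ corresponding to itself as a $4$-dimensional Frobenius algebra. By Lemma \ref{lem.spnc}, isomorphism classes of noncommutative affine pencils of conics $\Specn E$ correspond exactly to isomorphism classes of the algebras $E$. So (ii) is identified with $\cA^\vee_{2,2}$, and hence with $\cF_2$.

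For (iii), I first identify the set of isomorphism classes of noncommutative central conics $\Projn A$, $A\in\cA^c_{3,1}$, with $\cA^c_{3,1}/\sim$. Theorem \ref{con.main} gives (1)$\Leftrightarrow$(2) for $A,A'\in\cA_{3,1}$, that is, $\Projn A\cong\Projn A'$ if and only if $\GrMod A\cong\GrMod A'$. Therefore the assignment $A\mapsto\Projn A$ induces a bijection from $\cA^c_{3,1}/\sim$ onto the set in (iii). It is well-defined and surjective by definition, and injective by the equivalence just quoted.

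Theorem \ref{thm.main} then says that $C$ induces a bijection $\cA^c_{3,1}/\sim\to\cF_2$, which supplies (iii)$\leftrightarrow$(i). Composing with (i)$\leftrightarrow$(ii) gives all three bijections. Explicitly, one can write the bijection (ii)$\to$(iii) as $\Specn E\mapsto\Projn\nabla(E)$, using Theorem \ref{thm.acb} together with the identification $\cA^z_{3,1}=\cA^c_{3,1}/\sim$ from Theorem \ref{thm.zwA}. Its inverse is $\Projn A\mapsto\Specn C(A)$, and Theorem \ref{thm.c4f} shows that these two maps are compatible.

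The only real subtlety is the passage from $\Projn$ isomorphism to graded Morita equivalence. That step is exactly (1)$\Leftrightarrow$(2) of Theorem \ref{con.main}, which in turn rests on \cite[Lemma 3.3]{M2} and the AS-Gorenstein property from Lemma \ref{lem.KAS}. Everything else is bookkeeping of the earlier theorems.
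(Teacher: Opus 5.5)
Your proposal is correct and matches the paper's proof. The paper likewise combines Theorem \ref{thm.CF}, Theorem \ref{thm.main}, and the equivalence (1)$\Leftrightarrow$(2) of Theorem \ref{con.main}, which identifies isomorphism classes of central conics with $\cA^c_{3,1}/\sim$. Your appeal to Lemma \ref{lem.spnc} and the explicit description via $\nabla$ are harmless elaborations.
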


\begin{proof} 
By Theorem \ref{thm.CF}, Theorem \ref{thm.main}, and Theorem \ref{con.main}, the result holds.  
\end{proof} 

In \cite{HMTW}, Takeda and the authors give complete classifications of 4-dimensional Frobenius algebras and noncommutative affine pencils of conics, so, in this paper, we can give a complete classification of noncommutative central conics by the above bijections.  

\begin{theorem}\label{thm-iso-conics} Every noncommutative central conic is isomorphic to the noncommutative projective scheme associated to exactly one of graded algebras in Table \ref{intro.tab.alg2}, where $\Projn \mathcal{S}_\lambda/(x^2) \cong \Projn \mathcal{S}_{\lambda'}/(x^2)$ if and only if $\l' = \l^{\pm1}$. 

\begin{center}
\begin{table}[ht]
\begin{threeparttable}
\centering
\caption{$A \in A^c_{3,1}/\sim$.} \label{intro.tab.alg2}
\begin{tabular}{|c|}
\hline
{\rm Commutative}  \\ \hline 
$k[x,y,z]/(x^2)$, $k[x,y,z]/(x^2 + y^2)$, $k[x,y,z]/(x^2 + y^2 + z^2)$ \\ \hline \hline
{\rm Not-commutative}  \\ \hline 
$\mathcal{W}/(x^2)$, $\mathcal{S}_\lambda/(x^2)$, $\mathcal{S}/(x^2+y^2)$, $\mathcal{S}/(x^2+y^2+z^2)$, 
$\mathcal{N}/(x^2)$, $\mathcal{N}/(x^2 + y^2 - 4 z^2)$, \\ $\mathcal{N}/(3x^2 + 3y^2 + 4z^2)$ \\\hline
\end{tabular}
\begin{tablenotes}
\linespread{1}
\item\hspace*{-\fontdimen2\font} 
$\mathcal{S} := k \< x,y,z \> /(yz+zy,zx + xz, xy+yx)$, \\
$\mathcal{S}_\l := k \< x,y,z \> /(yz-\l zy,zx - xz, xy - yx)$, $\l \neq 0,1$,\\
$\mathcal{W} := k \< x,y,z \> /(yz-zy-y^2,zx - xz, xy-yx)$, \\
$\mathcal{N} := k \< x,y,z \> /(yz+zy+x^2,zx + xz+y^2, xy+yx)$. 
\end{tablenotes}
\end{threeparttable}
\end{table}
\end{center}
\end{theorem}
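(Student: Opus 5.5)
The approach is to combine the bijection of Theorem \ref{thm.main} with the explicit list in Theorem \ref{thm.ANC}. By Theorem \ref{con.main}, for $A, A'\in \cA_{3,1}^c$ we have $\Projn A\cong \Projn A'$ if and only if $\GrMod A\cong \GrMod A'$. So isomorphism classes of noncommutative central conics are exactly the elements of $\cA_{3,1}^c/\sim$. By Theorem \ref{thm.main} and Theorem \ref{thm.CF}, $C$ identifies this set with $\cF_2=\cA_{2,2}^\vee$. Table \ref{tab-avee22} contains exactly $6+3+1$ entries, with the family $k_\l[x,y]/(x^2,y^2)$ subject to $\l\sim\l^{\pm1}$. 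Table \ref{intro.tab.alg2} also has $3+7$ entries, one of them a family.

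The plan is therefore to prove three things: (a) each algebra in Table \ref{intro.tab.alg2} lies in $\cA_{3,1}^c$; (b) the values of $C$ on the table entries are pairwise non-isomorphic; (c) these values exhaust Table \ref{tab-avee22}. Once these hold, every $A\in\cA^c_{3,1}$ is $\sim$-equivalent to exactly one table entry.

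For (a), I will check directly that the given quadratic form is central in $\mathcal S,\mathcal S_\l,\mathcal W,\mathcal N$ and in $k[x,y,z]$. These four algebras are standard $3$-dimensional quantum polynomial algebras: skew polynomial, Ore-type, and the Clifford-type algebra $\mathcal N$. Regularity then follows from centrality and the domain property. For $\mathcal N$ I would instead check Hilbert series via Lemma \ref{lem.Tak}.

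For (b) and (c), I compute $C(A)$ for each entry. The most efficient route is to compute $A^!$, exhibit a regular central element $z\in Z(A^!)_1$, and use Lemma \ref{lem.cwt} to get $C(A)\cong A^!/(z-1)$. As an illustration, $\mathcal S/(x^2+y^2+z^2)$ is the algebra $A_2$ of the example following Theorem \ref{thm.acb}, and it gives $k\<x,y\>/(xy+yx,x^2-1,y^2-1)\cong M_2(k)$, which is the first noncommutative entry. For the commutative rows, $A^!$ is commutative. By Theorem \ref{thm.43} together with Lemma \ref{lem.47}, the three commutative conics, along with the commutative-$C(A)$ entries coming from $\mathcal S$ and $\mathcal N$, must produce the six commutative Frobenius algebras. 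Matching them can be done through invariants: dimensions of the Jacobson radical and of its square, and the number of maximal ideals, distinguish the six algebras in the first row of Table \ref{tab-avee22}. For the noncommutative ones, $C(\mathcal S_\l/(x^2))$ should be $k_{\l'}[x,y]/(x^2,y^2)$ with $\l'$ a function of $\l$ (possibly $\l$ itself, or $-\l$ depending on duality signs). I will verify that $\l$ determines $\l'$ bijectively up to inversion, which gives the "if and only if $\l'=\l^{\pm1}$" clause. Similarly, $C(\mathcal W/(x^2))$ should be $k_{-1}[x,y]/(x^2+yx,y^2)$, and $C(\mathcal S/(x^2+y^2))$ should be $k_{-1}[x,y]/(x^2,y^2+1)$.

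The main obstacle is the case-by-case computation of $A^!$ and the identification of $C(A)$ up to isomorphism, especially for the entries over $\mathcal N$. There $A^!$ may have no obvious central degree-one element, and a change of variables or a Zhang twist (as in Theorem \ref{thm.awz}) may be needed. If $z$ is only normal, I would instead use $\sD_w(A^!)\cong (A^!)^{\nu}/(w-1)$ from Lemma \ref{lem.633}, which by Lemma \ref{lem.cwt} still computes $C(A)$. Distinguishing the resulting 4-dimensional commutative local algebras, for instance $k[x,y]/(x^2,y^2)$ versus $k[x,y]/(x^2,y^2-x)\cong k[y]/(y^4)$, is then routine via the Loewy length.
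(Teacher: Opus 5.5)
Your (a)--(c) scheme is a legitimate reversal of the paper's argument. The paper pushes Table \ref{tab-avee22} through the bijection $\nabla=(\sH^z(-))^!$ and Theorems \ref{thm.acb} and \ref{thm.zwA}, so membership in $\cA^c_{3,1}$ and non-repetition come for free; you instead verify membership and compute $C$ row by row. However, your argument for the commutative rows fails. For $A=k[x,y,z]/(f)$ with $f$ one of the three diagonal forms, $A^!$ is \emph{not} commutative: its relations contain $xy+yx$, $yz+zy$, $zx+xz$. For example, $(k[x,y,z]/(x^2))^!=k\<x,y,z\>/(xy+yx,yz+zy,zx+xz,y^2,z^2)$. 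So Theorem \ref{thm.43} and Lemma \ref{lem.47} do not apply to these rows. Moreover $RZ(A^!)_1=\emptyset$ for them, so your ``central element'' shortcut is unavailable as well. Using the normal element $x$ (with $\nu(x)=x$, $\nu(y)=-y$, $\nu(z)=-z$) and Lemma \ref{lem.633}, these rows give \emph{noncommutative} algebras: $k_{-1}[u,v]/(u^2,v^2)$, $k_{-1}[u,v]/(u^2+1,v^2)$ and $M_2(k)$ for $f=x^2$, $x^2+y^2$, $x^2+y^2+z^2$ respectively.

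Conversely, $\mathcal S^!=k[x,y,z]/(x^2,y^2,z^2)$ is commutative. Hence $C(\mathcal S/(x^2+y^2+z^2))\cong k[y,z]/(y^2-1,z^2-1)\cong k^4$ and $C(\mathcal S/(x^2+y^2))\cong k[y,z]/(y^2-1,z^2)\cong (k[u]/(u^2))^2$. These are not $M_2(k)$ and $k_{-1}[x,y]/(x^2,y^2+1)$ as you predict; the algebra $A_2$ you cite has the relation $xy-yx$, so it is not $\mathcal S/(x^2+y^2+z^2)$. The family also shifts: $C(\mathcal S_\l/(x^2))\cong k_{-\l}[u,v]/(u^2,v^2)$. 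So $\l=-1$ lands on the commutative $k[u,v]/(u^2,v^2)$, and the family member $k_{-1}[u,v]/(u^2,v^2)$ is supplied instead by $k[x,y,z]/(x^2)$.

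The correct matching is as follows. The six rows $\mathcal S/(x^2+y^2+z^2)$, $\mathcal S/(x^2+y^2)$, $\mathcal S_{-1}/(x^2)$, $\mathcal N/(x^2)$, $\mathcal N/(x^2+y^2-4z^2)$, $\mathcal N/(3x^2+3y^2+4z^2)$ produce the six commutative Frobenius algebras $k^4$, $(k[u]/(u^2))^2$, $k[u,v]/(u^2,v^2)$, $k[u]/(u^4)$, $k[u]/(u^2)\times k^2$, $k[u]/(u^3)\times k$. The commutative conics, $\mathcal W/(x^2)$, and $\mathcal S_\l/(x^2)$ with $\l\neq -1$ produce the noncommutative ones.

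Your difficulty assessment is likewise inverted. The $\mathcal N$ rows are the easy ones: $A^!$ is commutative, so $C(A)=A^!/(z-1)$ for any regular $z\in A^!_1$. The twist of Lemma \ref{lem.633} is needed for the commutative conics, for $\mathcal W/(x^2)$, and for all $\mathcal S_\l/(x^2)$. Once you actually carry out the row-by-row computation, instead of the invalid appeal to Theorem \ref{thm.43}, your scheme does prove the theorem. The $\l'=\l^{\pm1}$ clause then follows from $-\l'=(-\l)^{\pm1}$.
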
  

\begin{proof} 
By Theorem \ref{thm.acb} and Theorem \ref{thm.zwA},  a complete list of algebras in $\cA_{3, 1}^c/\sim$ is given by applying the bijection $\nabla :\cA_{2, 2}^\vee\to \cA_{3,1}^z\to \cA_{3, 1}^c/\sim$ to Table \ref{tab-avee22}, which gives the above table.  By Theorem \ref{con.main} (1) $\Leftrightarrow$ (2), the result follows.  
\end{proof} 


\section{Explicit classifications} \label{sec-clafi}

\subsection{Noncommutative central conics}

We would like to find all algebras $A \in \cA^c_{3,1}$. 
The key step is to find all central elements of degree $2$ in $3$-dimensional quantum polynomial algebras $S \in \cA_{3,0}$ which is heavily depends on troublesome computation. 
We show how we calculate $Z(S)_2$ in this subsection. 

For a quadratic algebra $A = T(V)/(R)$, set
$$
\mathcal{V}(R): = \{ (p,q) \in \mathbb{P}(V^*) \times \mathbb{P}(V^*)  \mid f(p,q) = 0 \ \text{for all} \ f \in R \}.
$$

\begin{definition}[{\cite[Definition 4.3]{M2}}] 
We say that a quadratic algebra $A = T(V)/(R)$ satisfies (G1) condition 
if there is a pair $(E,\sigma)$ where $E \subset \mathbb{P}(V^*)$ is a projective scheme 
and $\sigma$ is a $k$-automorphism of $E$ such that 
$$
\mathcal{V}(R) = \{ (p ,\sigma(p)) \in \mathbb{P}(V^*) \times \mathbb{P}(V^*) \mid p \in E \}.
$$
The pair $(E,\sigma)$ is called the geometric pair of $S$, and $E$ is called the point scheme 
of $S$.  (For the scheme structure of $E$, we refer to \cite{ATV}.)
\end{definition}

\begin{theorem}[{\cite[Theorem 1]{ATV}}]
Every $3$-dimensional quantum polynomial algebra $S$ satisfies (G1) condition. 
\end{theorem}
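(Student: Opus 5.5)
The plan is to follow the original strategy of Artin--Tate--Van den Bergh and work directly with the defining relations. Write $S=T(V)/(R)$ with $\dim V=3$, $\dim R=3$ (the Hilbert series $(1-t)^{-3}$ forces this), and choose a basis $x=(x_1,x_2,x_3)^t$ of $V$ and $f=(f_1,f_2,f_3)^t$ of $R$. Write $f=Mx$, where $M$ is a $3\times 3$ matrix of linear forms in $x$. Regularity of $S$ gives the minimal resolution
$$0\to S(-3)\xrightarrow{x^t}S(-2)^3\xrightarrow{M}S(-1)^3\xrightarrow{x}S\to k\to 0.$$
Its exactness, together with the Gorenstein condition (2) of the definition of a quantum polynomial algebra, yields a matrix $Q\in\GL_3(k)$ with $x^tM=(QMx)^t$ in $T(V)_3$. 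In other words, the element $w:=x^tMx$ spans the degree-$3$ part of the resolution, and reading it column-wise instead of row-wise gives a second presentation $f'=x^tM$ of the same relation space $R$, equal to $Qf$.

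Next I view $\mathcal V(R)\subset\PP(V^*)\times\PP(V^*)$ as the common zero locus of the three bilinear forms $f_i(p,q)$, i.e.\ $M(p)\,q=0$. The first projection therefore has image $E_1:=\{p \mid \det M(p)=0\}$, which is either all of $\PP^2$ or a cubic divisor. By the identity $x^tM=(QMx)^t$, the same locus is also described by $p^tN(q)=0$ for a second matrix $N$ of linear forms. Hence the second projection has image $E_2=\{\det N=0\}$, and comparing the two determinantal descriptions shows $E_1=E_2=:E$ scheme-theoretically.

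The crux is to show that $\operatorname{rank}M(p)\ge 2$ for every $p\in\PP^2$, and likewise $\operatorname{rank}N(q)\ge2$. Once this holds, each fibre of each projection is a single reduced point. Both projections $\mathcal V(R)\to E$ are then isomorphisms, with the inverse given by the $2\times2$ minors (the adjugate of $M(p)$ produces the point $q$). Setting $\sigma:=\pi_2\circ\pi_1^{-1}$ gives an automorphism of $E$ whose graph is exactly $\mathcal V(R)$, which is condition (G1).

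I expect the rank bound to be the main obstacle. My first attempt would be a contradiction argument. Suppose $\operatorname{rank}M(p)\le1$. After a change of basis, two of the relations become divisible on the left by a common linear form $\ell$ (the one vanishing at $p$ in the appropriate sense), or at least lie in $\ell V+V\ell'$ in a degenerate way. This should produce either a zero divisor in low degree or a nontrivial syzygy among the relations in degree $3$ beyond the one given by $w$. Either outcome contradicts the exactness of the resolution, i.e.\ contradicts $H_S(t)=(1-t)^{-3}$ through the dimension count $\dim S_3=10$.

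If the direct argument gets messy, the fallback is the ATV case analysis: classify the possible $w$ (equivalently, the pairs $(M,Q)$) up to $\GL_3$, and check rank $\ge2$ for each normal form. The scheme structure on $E$ when $\det M\equiv0$, i.e.\ $E=\PP^2$, then needs a separate short argument: there $\operatorname{rank}M(p)=2$ everywhere, and the adjugate directly gives a morphism $\PP^2\to\PP^2$ that is an automorphism.
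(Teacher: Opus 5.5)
The paper does not prove this statement; it only quotes \cite{ATV}, so there is no internal argument to compare with. Your frame is the right one. With $f=Mx$ and $x^tM=(QMx)^t$, the fibre of $\mathcal V(R)\to\PP^2$ over $p$ is $\{q\mid M(p)q=0\}$, and $E$ is cut out by $\det M$ from both sides. Once $\operatorname{rank}M(p)\ge 2$ everywhere, locally $M\sim\operatorname{diag}(1,1,\det M)$, and the adjugate gives $\sigma$.

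The gap is that this rank bound is the entire content of the theorem, and your argument for it does not work.

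\emph{The normal form is wrong.} $\operatorname{rank}M(p)\le 1$ means $\dim(R\cap p^\perp\otimes V)\ge 2$, where $p^\perp\subset V$ is the $2$-dimensional space of linear forms vanishing at $p$. There is no single linear form vanishing at a point of $\PP^2$. The two relations also need not share a common left factor; if they did, the rank would drop to at most $1$ along an entire line.

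\emph{No argument using only data up to degree $3$ can succeed.} Take $w=xyz+yzx+zxy+y^3+z^3$ and $R=\operatorname{span}(yz,\ zx+y^2,\ xy+z^2)$. Then $f=Mx$ with $M=\begin{pmatrix}0&0&y\\ z&y&0\\ 0&x&z\end{pmatrix}$ and $x^tM=f^t$, so $Q=I$.

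\begin{itemize}
\item A Gr\"obner basis computation (deglex, $x>y>z$) adds the cubic elements $zxz$, $z^3$ and $xzx-z^2y$, and all degree-$4$ ambiguities resolve.
\item Hence $R\otimes V\cap V\otimes R=kw$ and $\dim S_3=10$, so the complex is exact through degree $3$.
\item Yet $M$ evaluated at $(1:0:0)$ has rank $1$.
\end{itemize}

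There is no extra degree-$3$ syzygy. The zero divisor $y\cdot z=0$ is present, but it is invisible in degrees $\le 3$. The algebra fails to be regular only because $\dim S_4=17\neq 15$.

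Excluding degree-one zero divisors a priori is not available either. The domain property of $3$-dimensional regular algebras is established only after, and by means of, the geometric (G1) description, so invoking it here would be circular.

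The crux therefore needs a genuinely new input that exploits exactness of the resolution beyond degree $3$, or an equivalent structural argument. Your fallback of ``redoing the ATV case analysis'' is just the cited proof, not a plan.
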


There are following types of $3$-dimensional quantum polynomial algebras $S$ in terms of the point schemes $E$ \cite{IM2,Ma}. Each type can also be further subdivided according to the action of \(\sigma\).
\begin{description}
\item[{\rm Type P}] 
      $E$ is $\mathbb{P}^{2}$. 
\item [{\rm Type S}] 
      $E$ is a triangle. 
\item[{\rm Type S'}] 
      $E$ is a union of a line and a conic meeting at two points. 
\item[{\rm Type T}]
      $E$ is a union of three lines meeting at one point. 
\item[{\rm Type T'}]
      $E$ is a union of a line and a conic meeting at one point. 
\item[{\rm Type NC}]
          $E$ is a nodal cubic curve. 
\item[{\rm Type CC}] $E$ is a cuspidal cubic curve. 
\item[{\rm Type TL}] $E$ is a triple line. 
\item[{\rm Type WL}] $E$ is a union of a double line and a line. 
\item[{\rm Type EC}] $E$ is an elliptic curve. 
\end{description}

\begin{lemma}[{\cite[Lemma 3.6]{HMM}}]\label{lem-censig2}
Let $S$ be a $3$-dimensional quantum polynomial algebra with geometric pair $(E,\sigma)$ such that $E$ is reduced. If $Z(S)_2 \neq \{ 0\}$, then there is some irreducible component $E_0 \subset E$ such that $\sigma^2|_{E_0} = \id$.
\end{lemma}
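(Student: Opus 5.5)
The plan is to interpret a central element of degree $2$ through the twisted homogeneous coordinate ring and then restrict to components of the point scheme. Since $S$ satisfies (G1) with geometric pair $(E,\s)$, there is a canonical graded algebra homomorphism $S\to B(E,\cL,\s)$, where $\cL=\cO_{\PP^2}(1)|_E$ and $B_i=H^0(E,\cL_i)$ with $\cL_i=\cL\otimes\s^*\cL\otimes\cdots\otimes(\s^{i-1})^*\cL$. In degree $1$ this map is $S_1=V\to H^0(E,\cL)$. The kernel of $S\to B$ is generated in degree $\ge 3$ by a normal cubic $g$, so the map is injective in degree $2$. Thus a nonzero $f\in Z(S)_2$ maps to a nonzero section $\bar f\in H^0(E,\cL_2)$, and centrality of $f$ gives $\bar x\bar f=\bar f\bar x$ in $B_3$ for every $x\in S_1$.

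Next I unwind the product in $B$: $a\cdot b=a\otimes(\s^{\deg a})^*b$. The commutation relation therefore reads
\[
x\otimes \s^*\bar f=\bar f\otimes(\s^2)^*x \quad\text{in } H^0(E,\cL_3)
\]
for all $x\in V$. Because $f\ne0$ and $E$ is reduced, some irreducible component $E_0$ satisfies $\bar f|_{E_0}\neq0$ and $\s^*\bar f|_{E_0}\ne0$. Here I use that $\s$ permutes the components, so I pick $E_0$ on which neither section vanishes identically; such a component exists since the zero loci are proper closed subsets of a reduced scheme with finitely many components.

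On the integral component $E_0$, I divide by the nonzero rational section. This shows that the ratio $x/(\s^2)^*x$ equals the fixed rational function $\bar f/\s^*\bar f$ on $E_0$, independent of $x$. Applying this to two linear forms $x,y$ gives $x\cdot(\s^2)^*y=y\cdot(\s^2)^*x$ on $E_0$. Hence at every point $p\in E_0$ outside a proper closed subset, the points $p$ and $\s^2(p)$ of $\PP(V^*)$ have proportional coordinate vectors, that is, $\s^2(p)=p$. Since $\s^2(E_0)$ is a component and the fixed locus is closed and dense in $E_0$, we get $\s^2|_{E_0}=\id$.

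I expect the main obstacle to be justifying the first step: that the map $S_2\to B_2$ is injective, and that there is a component on which both $\bar f$ and $\s^*\bar f$ are nonzero. For the injectivity I would invoke ATV: for $3$-dimensional quantum polynomial algebras the kernel of $S\to B(E,\cL,\s)$ is generated by an element of degree $3$, or is zero when $E=\PP^2$. For the choice of component, I would argue that if every component were killed by $\bar f$ or by $\s^*\bar f$, then $\bar f$ vanishes on $E_0\cup\s(E_0)$-type unions. Iterating along the $\s$-orbit of components, this would force $\bar f=0$ on all of the reduced scheme $E$, a contradiction.
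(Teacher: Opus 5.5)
The paper gives no proof of this lemma; it only cites \cite{HMM}, so there is no in-paper argument to compare against. Your overall route is sound: pass to $B(E,\mathcal{L},\sigma)$, use the ATV fact that the kernel of $S\to B$ is generated in degree $3$ (so $S_2\to B_2$ is injective), and read off $\sigma^2=\mathrm{id}$ from $\bar x\bar f=\bar f\bar x$. But one step is justified incorrectly. You need a component $E_0$ with $\bar f|_{E_0}\neq 0$ and $\sigma^*\bar f|_{E_0}\neq 0$, that is, with $\bar f$ nonzero on both $E_0$ and $\sigma(E_0)$. Neither of your reasons gives this. Two proper closed zero loci can still cover $E$ component by component. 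The ``iterate along the $\sigma$-orbit'' argument also fails. For example, let $E=L_1\cup L_2\cup L_3$ be three concurrent lines with $\sigma$ swapping $L_1,L_2$ and fixing $L_3$ (Type T$_2$). Take $\bar u\cdot\bar v=u\otimes\sigma^*v$ with $\mathcal{V}(u)=L_1$ and $\mathcal{V}(v)=L_3$. This nonzero element of $B_2$ vanishes exactly on $L_1\cup L_3$, yet every component is killed either by it or by its $\sigma^*$-pullback. So without centrality the claim is false.

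The repair is to use the commutation relation you already derived. Take any component $E_0$ with $\bar f|_{E_0}\neq 0$; this exists because $E$ is reduced and $S_2\to B_2$ is injective. Then take $x\in V$ not vanishing identically on $\sigma^2(E_0)$. On the integral scheme $E_0$, $\bar f\otimes(\sigma^2)^*x$ is a product of nonzero sections, hence nonzero. Therefore $x\otimes\sigma^*\bar f$ is nonzero on $E_0$, which forces $\sigma^*\bar f|_{E_0}\neq 0$.

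A cleaner alternative avoids this issue entirely by arguing pointwise. Evaluating the identity at a closed point $p$ gives
\[
x(\hat p)\,f\bigl(\widehat{\sigma(p)},\widehat{\sigma^2(p)}\bigr)=f\bigl(\hat p,\widehat{\sigma(p)}\bigr)\,x\bigl(\widehat{\sigma^2(p)}\bigr)\quad\text{for all } x\in V.
\]
Suppose $\bar f(p)\neq 0$. Then $\widehat{\sigma^2(p)}$ is a scalar multiple of $\hat p$, and the scalar is nonzero because $\widehat{\sigma^2(p)}\neq 0$; hence $\sigma^2(p)=p$. The non-vanishing locus of $\bar f$ is a nonempty open subset of $E$, so it is dense in some component $E_0$. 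Since the fixed locus of $\sigma^2$ is closed, $\sigma^2|_{E_0}=\mathrm{id}$. This is the same as observing that right multiplication by the central element $f$ gives a nonzero map from the point module $M_p$ to a shifted point module whenever $\bar f(p)\neq0$. The rest of your argument, including the injectivity of $S_2\to B_2$ and the final density step, is fine.
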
  

By the above theorem, 
to calculate $Z(S)_2$, we may check $\sigma^2|_{E_0}$ for some irreducible component $E_0 \subset E$ first, which can save a lot of computation. We discusses the cases of Type T and Type EC in detail. The rest cases are similar. 

\begin{example}[{\cite{IM2}}]\label{examT1}
Type T: $E = \mathcal{V}(xy(x-y))$ is a union of three lines meeting at one point. Type T is divided into Type T$_1$, T$_2$ and T$_3$.
\begin{enumerate}
\item[(i)] Type T$_1$. In this case, \(\sigma\) stabilizes each component. Specifically, $\sigma|_{\mathcal{V}(x)}(0 \mathpunct{:} b \mathpunct{:} c) = (0 \mathpunct{:} b \mathpunct{:} \alpha b + c)$,
$\sigma|_{\mathcal{V}(y)}(a \mathpunct{:} 0 \mathpunct{:} c) = (a \mathpunct{:} 0 \mathpunct{:} \beta a + c)$ and 
$\sigma|_{\mathcal{V}(x-y)}(a:a:c) = (a \mathpunct{:} a\mathpunct{:} -\gamma a+c)$.
If $Z(S)_2\neq 0$, then $\alpha \beta \gamma = 0$ .
\item[(ii)] Type T$_2$. In this case, \(\sigma\) interchanges two of its components. Specifically,  $\sigma|_{\mathcal{V}(x)}(0\mathpunct{:} b\mathpunct{:} c) =(b \mathpunct{:} 0 \mathpunct{:} \alpha b + c)$, $\sigma|_{\mathcal{V}(y)}(a \mathpunct{:} 0 \mathpunct{:} c) = (0 \mathpunct{:} a \mathpunct{:}\beta a + c)$ and $\sigma|_{\mathcal{V}(x-y)}(a \mathpunct{:} a\mathpunct{:} c) = (a \mathpunct{:} a \mathpunct{:} -\gamma a+c)$.  If $Z(S)_2\neq 0$, then  $\alpha+\beta = 0$ and $\gamma=0$.
\item[(iii)] Type T$_3$. In this case, \(\sigma\) circulates three components, so $Z(S)_2=0$.
\end{enumerate}
\end{example}

\begin{example}[{\cite[Theorem 3.13]{Ma}}]
Type EC:  $E = \mathcal{V}(x^3 + y^3 + z^3 - 3\lambda xyz)$ is an elliptic curve, where $\lambda\in k$ with $\lambda^3\neq 1$. Let $(E,o_E, +)$ be the group structure on $E$ by setting zero element $o_{E} = (1\mathpunct{:}-1\mathpunct{:}0)$.
\begin{itemize}
\item [(i)] EC-A: \(\sigma = \sigma_p\) is a translation by a point $p = (a \mathpunct{:} b\mathpunct{:} c)$ of $E$, where $p$ satisfies \(abc\neq0\) and \((a^3 + b^3 + c^3)^3\neq(3abc)^3\). If $Z(S)_2\neq 0$, then for $q \in E$, $\sigma_p^2(q) = q + 2p = q$.  This implies that $a = b $.
\item [(ii)] EC-B. In this case, $\sigma = \sigma_p\tau$, where $\sigma_p$ is a translation by a point \(p = (1\mathpunct{:}1\mathpunct{:}c)\) satisfying \(c^3 - 3\lambda c + 2 = 0\), and $\tau(a \mathpunct{:} b\mathpunct{:} c) = (b \mathpunct{:} a\mathpunct{:} c)$. 
This implies $(\sigma_p\tau)^2= \id$.
\item [(iii)] EC-E: $\sigma = \sigma_p\tau^2$, or $\sigma_p\tau^4$, where $\sigma_p$ is a translation by a point \(p = (\eta^8\mathpunct{:}\eta^4\mathpunct{:}1)\) with \(\eta\) is a primitive $9$th root of unity, and 
$\tau(a \mathpunct{:} b\mathpunct{:} c) = (b \mathpunct{:} a\mathpunct{:} \varepsilon c)$ with $\varepsilon$ a primitive $3$rd root of unity.
This implies  $|\sigma_p\tau^2|= |\sigma_p\tau^4| =3$, so $Z(S)_2=0$
\item [(iv)] EC-H: $\sigma = \sigma_p\tau, \sigma_p\tau^3$, where \(\sigma_p\) is a translation by the point \(p = (1\mathpunct{:}1\mathpunct{:}\lambda)\), and $\tau(a \mathpunct{:} b\mathpunct{:} c) = (\varepsilon^2 a + \varepsilon b + c\mathpunct{:} \varepsilon a + \varepsilon^2 b + c \mathpunct{:} a + b + c)$ with $\varepsilon$ a primitive $3$rd root of unity. This implies $|\sigma_p\tau|= |\sigma_p\tau^3| =4$, so $Z(S)_2=0$
\end{itemize}
\end{example}

The calculation process is mainly divided into the following two steps, and we will illustrate it with an example. Recall that $S = k\langle x,y,z\rangle/(f_1,f_2,f_3)$ for some homogeneous elements $f_1,f_2,f_3$ of degree 2.

{\bf Step 1:} The key to the calculation of $Z(S)_2$ lies in obtaining a $k$-basis of $S_3$ by using the Diamond Lemma \cite[Theorem 1.15]{Rog}, so we need to know the noncommutative Gr\"obner basis for the ideal $(f_1,f_2,f_3) \subset k\<x, y, z\>$. We use the computer algebra system GAP to calculate noncommutative Gr\"obner basis.

{\bf Step 2:} We represent $gf$ and $fg$ as linear combination of a fixed $k$-basis of $S_3$ for $f\in S_2,g\in S_1$ to check if $f$ is a  central element by comparing thier coefficients of the $k$-basis.

We give a concrete example of Type T$_1$.

\begin{example}[Type T$_1$ algebra in Table \ref{tab-cenele}]
Let $S = k\langle x,y,z \rangle /(f_1,f_2,f_3)$, where 
$$
f_1 = xy-yx, \; f_2 = xz - zx - \beta x^2 + (\beta + \gamma)yx, \; f_3 = yz - zy - \alpha y^2 + (\alpha + \gamma)xy
$$ 
with $\alpha + \beta + \gamma\neq0$. 
If we fix an ordering on the variables: $x<y<z$, then the leading words of $\{f_1,f_2,f_3\}$ are $yx, zx, zy$.
    
By GAP (The program can be refered to \cite[Exercise Set 1.7]{Rog}), we know $\{f_1,f_2,f_3\}$ is a noncommutative Gr\"obner bases of $I=(f_1,f_2,f_3)$ in $k\langle x,y,z \rangle$. By \cite[Theorem 1.15]{Rog}, the images of the reduced words  (i.e. the words does not contain any of the leading words as a subword) in $k\langle x,y,z \rangle/(f_1,f_2,f_3)$ form a $k$-basis, so 
\begin{equation} \label{equ-s3}
    S_3 = \mathrm{span}_k\{x^3,x^2y, x^2z, xy^2, xyz,xz^2,y^3,y^2z,yz^2,z^3 \}.
\end{equation}

Assume $f\in Z(S)_2$ and $f=ax^2+bxy+cxz+dy^2+eyz+gz^2$, where $a,b,c,d,e,g \in k$.  We  represent $xf$ (resp. $yf,zf$) and $fx$ (resp. $fy,fz$) as linear combinations of the $k$-basis of $S_3$ listed in Equation \ref{equ-s3} and  determine the coefficients of $f$ by using the equation $xf = fx$ (resp. $yf = fy, zf = fz$).

(x) We have $xf = ax^3 + bx^2y + cx^2z+dxy^2 + exyz +gxz^2 $ and 
\begin{align*}
    fx =& ax^3+ bxyx + cxzx + dy^2x + eyzx + gz^2x\\
    =&ax^3 + bx^2y + dxy^2 + cx((\beta+\gamma)yx+xz-\beta x^2) + ey ((\beta+\gamma)yx+xz-\beta x^2) \\
    &+ g((\beta+\gamma)(\beta+\gamma -\alpha)xy^2 + 2\beta^2x^3+(\beta + \gamma)(\alpha + \gamma - 3\beta )x^2y -2\beta x^2z + 2(\beta +\gamma)xyz + xz^2)\\
    =& (a-\beta c + 2\beta^2g)x^3 + (b+(\beta +\gamma)c - \beta e + (\beta + \gamma)(\alpha + \gamma - 3\beta )g)x^2y +(c-2\beta g)x^2z\\
    & + (d+ (\beta+\gamma)e  + (\beta+\gamma)(\beta+\gamma -\alpha)g)xy^2 + (e + 2(\beta+\gamma)g)xyz +gxz^2.
\end{align*}

(y) We have $yf = ax^2y + bxy^2 + cxyz + dy^3+ ey^2z + gyz^2$ and
\begin{align*}
    fy  =& ax^2y + bxy^2 + cxzy + dy^3 + eyzy + gz^2y\\
     =& ax^2y + bxy^2 + cx((\alpha+\gamma)xy+yz-\alpha y^2) + dy^3 + ey((\alpha+\gamma)xy+yz-\alpha y^2)\\
    & + g((\alpha+\gamma)(\alpha+\gamma -\beta)x^2y + 2\alpha^2y^3+(\alpha + \gamma)(\beta + \gamma - 3\alpha)xy^2 -2\alpha y^2z + 2(\alpha +\gamma)xyz + yz^2)\\
    =& (a + (\alpha+\gamma)c + (\alpha+\gamma)(\alpha+\gamma -\beta)g)x^2y + (b -\alpha c + (\alpha+\gamma)e + (\alpha + \gamma)(\beta + \gamma - 3\alpha)g)xy^2 + gyz^2\\
    & + (c + 2(\alpha + \gamma)g)xyz + (d - \alpha e + 2\alpha^2g)y^3 + (e-2\alpha g)y^2z.
\end{align*}

 It is easy to see that the results in (x) and (y) are symmetric about $x$ and $y$ ($\alpha$ and $\beta$) since the defining relations are symmetric.

Now, we analyze the coefficients of $f$. First, $\alpha + \beta + \gamma \neq 0$ implies that either $\alpha+\gamma$ or $\beta$ is nonzero, and also either \(\beta+\gamma\) or \(\alpha\) is nonzero.  Since we have $(\alpha +\gamma)g=0=\beta g$ by comparing the coefficients of corresponding terms of $xf$ and $fx$ ($yf$ and $fy$), we have $g=0$. Since $(\alpha+\gamma)c + (\alpha+\gamma)(\alpha+\gamma -\beta)g = 0 = -\beta c + 2\beta^2g$, we have $c=0$ s. Similarly, we have $e = 0$, so $f = ax^2 + bxy + d y^2$. 

(z) We have $f z = ax^2z + bxyz + dy^2z$ and 
\begin{align*}
    z f  =& a zx^2 + b zxy + d zy^2\\
        =& a(2(\beta+\gamma)x^2y + x^2z - 2\beta x^3) + b((\beta + \gamma -\alpha)xy^2 + (\alpha + \gamma -\beta)x^2y +xyz)\\
       & + d(2(\alpha+\gamma)xy^2 + y^2z - 2\alpha y^3 )\\
        =& ax^2z + bxyz + dy^2z + \\
       & + (2(\beta+\gamma)a + (\alpha + \gamma -\beta)b)x^2y + (-2\beta a)x^3 + (2(\alpha+\gamma)d + (\beta + \gamma -\alpha)b)xy^2 +(-2\alpha d)y^3,
\end{align*}
so $(2(\beta+\gamma)a + (\alpha + \gamma -\beta)b) = 0$, $-2\beta a = 0$, $2(\alpha+\gamma)d + (\beta + \gamma -\alpha)b = 0$ and $-2\alpha d = 0$. Using Example \ref{examT1} (i), we can conclude that $Z(S)_2$ is given  as in Table \ref{tab-cenele} for Type T$_1$. 
\end{example}

\begin{theorem}
The list of $Z(S)_2$ is given in Table \ref{tab-cenele} for every $S \in \cA_{3,0}$. The table consists of the following information:   
\begin{enumerate}
\item{} Type of $S=k\<x, y, z\>/(h_1, h_2, h_3)$.  
\item{} Defining relations $h_1, h_2, h_3 \in k\<x, y, z\>_2$.
\item{} $Z(S)_2$ depending on the conditions. 
\end{enumerate} 
In the table, $\alpha, \beta, \gamma, a,b,c,d \in k $.  We omit the defining relations of $S$ if $Z(S)_2 = 0$. 
\end{theorem}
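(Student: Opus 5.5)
The statement is a classification result: for every $S\in\cA_{3,0}$, $Z(S)_2$ is as listed in Table \ref{tab-cenele}. I will prove it type by type, using the list of point-scheme types (P, S, S', T, T', NC, CC, TL, WL, EC) and their subtypes from \cite{IM2,Ma}. By \cite[Theorem 1]{ATV}, every $S\in\cA_{3,0}$ has a geometric pair $(E,\sigma)$. Up to graded isomorphism, $S$ is therefore one of the normal forms $k\<x,y,z\>/(h_1,h_2,h_3)$ attached to the type of $(E,\sigma)$. Since $Z(S)_2$ transforms compatibly under graded isomorphisms, it suffices to compute it for each normal form.

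The first step is a pruning argument that settles the cases listed with $Z(S)_2=0$. When $E$ is reduced, Lemma \ref{lem-censig2} says that $Z(S)_2\neq 0$ forces $\sigma^2|_{E_0}=\id$ on some irreducible component $E_0$. So for each reduced type I compute $\sigma^2$ on each component.
\begin{itemize}
\item If no component is fixed by $\sigma^2$, then $Z(S)_2=0$. This covers T$_3$, EC-E and EC-H, where $\sigma$ has order $3$ or $4$ on the irreducible curve.
\item Otherwise the condition $\sigma^2|_{E_0}=\id$ gives a constraint on the parameters. Examples are $\a\b\c=0$ for T$_1$, $\a+\b=0=\c$ for T$_2$, and $a=b$ for EC-A, as in Example \ref{examT1} and the EC example.
\end{itemize}
The nonreduced types (TL, WL) are not covered by the lemma. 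For these I skip the pruning and go straight to the computation.

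The second step handles each surviving normal form by the two-step procedure already described. First, fix a variable ordering and compute a noncommutative Gr\"obner basis of $(h_1,h_2,h_3)$ (with GAP). By the Diamond Lemma \cite[Theorem 1.15]{Rog} this gives a monomial basis of $S_3$; since $H_S(t)=(1-t)^{-3}$, the basis must have $10$ elements, which checks the Gr\"obner computation. Second, take a general $f=\sum a_{ij}x_ix_j$ over the reduced degree-$2$ words and reduce $x_if-fx_i$ for $i=1,2,3$ to normal form. Setting all coefficients to zero gives a linear system in the six coefficients of $f$, depending on the parameters of the type. I solve this system, splitting into cases by parameter values, exactly as in the worked Type T$_1$ example. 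The resulting solution spaces are the entries of the table.

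The main obstacle is the bookkeeping in the second step: the linear systems have to be solved over the whole parameter space. Degenerate parameter values, such as $\a+\c=0$ or $\b=0$ for T$_1$, can make the solution space jump. I would try first to organise each case by the ranks of the coefficient matrices coming from $x f=fx$, $y f=fy$ and $z f=fz$ separately, using the symmetries of the relations (such as the $x\leftrightarrow y$, $\a\leftrightarrow\b$ symmetry for T$_1$) to halve the work. I would then cross-check each nonzero answer against the $\sigma^2$ criterion of Lemma \ref{lem-censig2} for consistency.
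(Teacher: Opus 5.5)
Your proposal is correct and follows essentially the same route as the paper. You reduce to the normal forms of each point-scheme type from \cite{IM2,Ma}, use Lemma \ref{lem-censig2} to discard the reduced types with no component on which $\sigma^2=\id$, and for the remaining types compute a monomial basis of $S_3$ via a Gr\"obner basis and the Diamond Lemma, then solve the linear system from $gf=fg$ for $g=x,y,z$. One small wording fix: for T$_3$ the vanishing comes from $\sigma$ cyclically permuting the three lines, so $\sigma^2$ stabilizes no component; it does not come from $\sigma$ having order $3$ on an irreducible curve.
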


\begin{table}[h]
\scriptsize
\begin{threeparttable}
\caption{$Z(S)_2$ for $S \in \cA_{3,0}$.} \label{tab-cenele}
\begin{tabular}
{{|c|l|c|c|c|c|}} 
\hline
        Type          & \ \ \  Defining relations \ \ \ \     & Conditions & $Z(S)_2$   \\ \hline \hline
\multirow{5}{*}{P$_1$} & \multirow{4}{*}{ \makecell{$\left\{\begin{array}{l} \alpha xy-\beta yx\\\beta yz-\gamma zy\\ \gamma zx-\alpha xz\end{array}\right.$ } } & $\alpha=\beta=\gamma $ & $k[x,y,z]_2$  \\ \cline{3-4}
&  & $(\alpha\mathpunct{:}\beta\mathpunct{:}\gamma)= (1\mathpunct{:}1\mathpunct{:}-1) \in \mathbb{P}^2$ & $ax^2+by^2+cz^2+dxy$   \\ \cline{3-4} 
  &   &  $(\alpha\mathpunct{:}\beta\mathpunct{:}\gamma)= (1\mathpunct{:}-1\mathpunct{:}1) \in \mathbb{P}^2$ &  $ax^2+by^2+cz^2+dxz$   \\ \cline{3-4}
&  &$(\alpha\mathpunct{:}\beta\mathpunct{:}\gamma)= (-1\mathpunct{:}1\mathpunct{:}1) \in \mathbb{P}^2$ & $ax^2+by^2+cz^2+dyz$  \\ \cline{3-4}
& \ \ ($\alpha\beta\gamma\neq 0$)      & Otherwise & 0  \\ \hline
Other P  &    &  &   \\ \hline \hline
\multirow{8}{*}{S$_1$} & \multirow{8}{*}{ \makecell{$\left\{\begin{array}{l}  xy-\gamma  yx\\ yz-\alpha zy\\  zx-\beta xz \end{array}\right.$ } } &   
$(\alpha^2,\beta^2,\gamma^2) = (1,1,1)$ & $ax^2+by^2+cz^2$ \\ \cline{3-4} 
                  &                   &  $(\alpha^2,\beta^2,\gamma^2)=(1,1,n), n \neq 1$ & $z^2$  \\ \cline{3-4} 
                  &                   &  $(\alpha^2,\beta^2,\gamma^2)=(1,n,1), n \neq 1$ & $y^2$  \\ \cline{3-4} 
                  &                   &  $(\alpha^2,\beta^2,\gamma^2)=(n,1,1), n \neq 1$ & $x^2$ \\ \cline{3-4} 
                  &   &  $(\alpha,\beta,\gamma)=(1,m,m)$ & $yz$ \\ \cline{3-4} 
                  &                  & $(\alpha,\beta,\gamma)=(m,1,m)$ & $zx$   \\ \cline{3-4} 
                  &      \ \ ($\alpha\beta\gamma\neq 0,1$)                             & $(\alpha,\beta,\gamma)=(m,m,1)$ & $xy$   \\ \cline{3-4} 
                  &    &   Otherwise  & 0  \\ \hline
\multirow{3}{*}{S$_2$} & \multirow{3}{*}{\makecell{$\left\{\begin{array}{l}  zx-\alpha yz \\ xz-\beta zy \\ x^2+\alpha\beta y^2 \ \  (\alpha\beta\neq 0) \end{array}\right.$} } &   
                   $\alpha=\beta $ & $a(xy+yx)+bz^2$ \\ \cline{3-4}
                  &      & $\alpha=-\beta $ & $y^2$ \\ \cline{3-4}
                  & &$\alpha \neq \pm \beta$ & $0$  \\ \hline
     S$_3$       &     &  &   \\  \hline \hline
{\multirow{4}{*}{S$'_1$}} &{\multirow{3}{*}{$\left\{\begin{array}{l} xy - \beta yx \\ x^2 + yz - \alpha zy \\ zx - \beta xz  \end{array}\right.$}}&
$\beta = \pm 1, \alpha = -1$ & $a x^2 + b y^2 + c z^2 $ \\ \cline{3-4}
& & $\beta = \pm 1, \alpha \neq -1$ & $ x^2 $  \\  \cline{3-4} 
&  & $\beta \neq \pm1, \alpha = 1$ & $x^2 + (1-\beta^2)yz$  \\ \cline{3-4}
& \ \ $(\alpha \beta^2 \neq 0,1)$  & $\beta \neq \pm1, \alpha \neq 1$ &  $0$ \\ \hline
S$'_2$ & $xy - zx,  yx - xz, x^2 + y^2 + z^2$ &  & $ax^2+b(yz+zy) $   \\ \hline \hline
\multirow{5}{*}{T$_1$}      & \multirow{4}{*}{$\left\{\begin{array}{l} xy-yx\\ xz-zx-\beta x^2 +(\beta+\gamma)yx \\ yz-zy-\alpha y^2 +(\alpha+\gamma)xy\end{array}\right.$ }  &  $\alpha=\beta=0$   &  $(x-y)^2$       \\ \cline{3-4} 
&   &        $\alpha=0,  \beta = \gamma$      &     $y^2-xy$    \\ \cline{3-4}
&   &    $\beta=0, \alpha=\gamma$ &      $x^2-xy$  \\ \cline{3-4} 
&    &     $\gamma =0, \alpha=\beta$     &    $xy$  \\ \cline{3-4} 
&   \ \  ($\alpha+\beta+\gamma\neq 0$)                 & Otherwise  & 0                  \\ \hline 
\multirow{3}{*}{T$_2$} & \multirow{3}{*}{$\left\{\begin{array}{l} yz-zx-\alpha yx +(\alpha+\gamma)x^2 \\ xz-zy-\beta xy +(\beta+\gamma)y^2 \\ x^2-y^2 \ \ (\alpha+\beta+\gamma\neq 0) \end{array}\right.$ }  &\multirow{2}{*}{$\alpha+\beta = 0$}    & \multirow{2}{*}{$2x^2-(xy+yx)$}                  \\ 
   & & &  \\ \cline{3-4}
                       &       & $\alpha+\beta \neq 0$       & {$0$}   \\ \hline
T$_3$      &    &  &            \\ \hline \hline
{\multirow{3}{*}{T$'_1$}} &{\multirow{3}{*}{$\left\{\begin{array}{l} \alpha x^2 + \beta (\alpha + \beta)xy - xz + zx -(\alpha + \beta)zy \\2 \beta xy - \beta^2 y^2 + yz -zy  \\ xy - yx - \beta y^2  \ \ (\alpha + 2 \beta \neq 0) \end{array}\right.$}}& $\beta = 0$  & $y^2$   \\ \cline{3-4}
& & $\alpha = 0$  & $x^2-\beta yx-yz$   \\ \cline{3-4}
&  & \text{Otherwise} & 0   \\  \hline \hline
\multirow{2}{*}{NC$_1$} & \multirow{2}{*}{$\left\{\begin{array}{l}  xy-\alpha yx, yz - \alpha zy + x^2 \\ zx - \alpha xz + y^2  \ \ (\alpha^3\neq 0,1) \end{array}\right.$   } & $\alpha=-1$ &  $ax^2+by^2+cz^2$  \\ \cline{3-4}
&  & $\alpha \neq -1$ & $0$  \\ \hline
NC$_2$ & $xz-2yx+zy, zx-2xy+yz, y^2+x^2$ & & $a(xy+yx)+bz^2$   \\ \hline \hline
CC &&& \\ \hline \hline
{\multirow{3}{*}{WL$_1$}} &{\multirow{3}{*}{$\left\{\begin{array}{l} zy - yz + (1+\gamma) y^2 \\ \alpha xz - \gamma yx - zx \\ \alpha xy - yx  \ \ (\alpha \neq 0,1) \end{array}\right.$}}& $\gamma = 0, \alpha = -1$ & $ x^2 $  \\ \cline{3-4}
& & $\gamma = -1, \alpha = -1$ & $ y^2 $  \\  \cline{3-4}
&  &Otherwise & $ 0 $  \\ \hline
{\multirow{4}{*}{WL$_2$}} & {\multirow{4}{*}{$\left\{\begin{array}{l} xy - yx \\  xz - \gamma yx - zx \\ zy - yz + (1+\gamma) y^2 \end{array}\right.$}} &  $\gamma = 0$ & $ x^2 $   \\ \cline{3-4}
& & $\gamma = -1$ & $ y^2 $  \\ \cline{3-4}
& & $\gamma = -1/2$ & $ xy $\\ \cline{3-4}
& & Otherwise & $ 0 $  \\ \hline
WL$_3$ & & & $  $  \\ \hline \hline
\multirow{2}{*}{TL$_1$} & \multirow{2}{*}{$\left\{\begin{array}{l}  \alpha^{-1}zy-\alpha yz+x^2, xz-\alpha^{-1}zx\\  xy-\alpha yx  \ \ (\alpha \neq 0)    \end{array}\right.$ } & $\alpha^2= 1$ & $x^2$           \\ \cline{3-4}
                        &                   & $\alpha^2\neq 1$     & $0$    \\ \hline
Other TL                   &    &    &       \\ \hline \hline
\multirow{3}{*}{EC-A} & \multirow{2}{*}{$\left\{\begin{array}{l}  \alpha xy + \beta yx + \gamma z^2, \alpha yz + \beta zy + \gamma x^2\\ \alpha zx + \beta xz + \gamma y^2
\end{array}\right.$ } & \multirow{2}{*}{$\alpha = \beta $} & \multirow{2}{*}{$ax^2+by^2+cz^2$}     \\  
                        &  &  &   \\ \cline{3-4}
                        & \ $(\alpha^3 + \beta^3 + \gamma^3)^3 \neq (3\alpha\beta \gamma)^3, \alpha \beta \gamma \neq 0$ &$\alpha \neq \beta$ & $0$  \\ \hline
\multirow{2}{*}{EC-B}       &  \multirow{2}{*}{$\left\{\begin{array}{l}  xz + zy + \alpha yx, zx + yz + \alpha xy \\ y^2 + x^2 + \alpha z^2 \ \ (\alpha^3 \neq 0, 1, -8)
\end{array}\right.$ }  &    &   \multirow{2}{*}{$a(xy+yx) + bz^2$}  \\ 
   & & &  \\ \hline
Other EC   & & & \\ \hline 
\end{tabular}
\end{threeparttable}
\end{table}

The reader may compare Table \ref{tab-cenele} to the classification list of $S \in \cA_{3,0}$ in \cite{IM2, Ma}.

\clearpage

\subsection{Geometric pairs}

Geometric pairs are important ingredients in noncommutative algebraic geometry.  In fact, they are essential to classify $3$-dimensional quantum polynomial algebras \cite{ATV}.  By \cite{HMM}, geometric pairs are defined for noncommutative conics $A$ in Calabi-Yau quantum projective planes, and they determine $C(A)$.
For example, we have the following lemma which will be used to prove Theorem \ref{thm.Ta2} in the next subsection.  

\begin{lemma}[{\cite[Theorem 4.14]{HMM}}]
If $S=k\<x, y, z\>/(yz+zy+\a x^2, zx+xz+\a y^2, xy+yx+\a z^2)\in \cA_{3, 0}$ where $\a^3\neq 0, 1, -8$ (Type EC-A), and $0\neq f\in Z(S)_2$ so that $A=S/(f)\in \cA_{3, 1}^c$, then the exactly one of the following occurs: 
\begin{enumerate}
\item{} $\#(E_A)=2$ and $C(A)\cong k[u]/(u^3)\times k$. 
\item{} $\#(E_A)=4$ and $C(A)\cong k[u]/(u^2)\times k^2$. 
\item{} $\#(E_A)=6$ and $C(A)\cong k^4$. 
\end{enumerate}
\end{lemma}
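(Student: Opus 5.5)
Since $\a^3\neq 0,1,-8$, the algebra $S$ is of Type EC-A with $\alpha=\beta$ in the notation of Table \ref{tab-cenele}, so $Z(S)_2=\{ax^2+by^2+cz^2\}$. Thus I would write $f=ax^2+by^2+cz^2\neq 0$. The plan is to compute $C(A)$ explicitly as $A^![(f^!)^{-1}]_0$, and to compare it with the point set $E_A$. Here $E_A\subset \PP^2$ is the set of points of the conic $\mathcal{V}(f)$, or more precisely of the point scheme data attached to $A$ as in \cite{HMM}.

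First I would compute $A^!$. The dual $S^!$ has relations spanned by $yz-zy$, $zx-xz$, $xy-yx$, and $x^2-\a yz$, $y^2-\a zx$, $z^2-\a xy$ (suitably normalized). Adjoining $f$ to the relations of $S$ removes one relation from $S^!$. So $A^!$ is commutative-type: $x,y,z$ commute pairwise, and we impose the two quadrics orthogonal to $f$ inside the span of $\{x^2-\a yz,\ y^2-\a zx,\ z^2-\a xy\}$. Then $A^!\in\cB_{3,2}$ by Lemma \ref{lem.48}, and Theorem \ref{thm.43} together with Lemma \ref{lem.cwt} gives $C(A)\cong\sD(A^!)$. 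This is a 4-dimensional commutative Frobenius algebra, namely the coordinate ring of the intersection of two conics $Q_1\cap Q_2$ in $\PP^2$ (the pencil orthogonal to $(a,b,c)$).

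Next I would classify by intersection multiplicities. A 4-dimensional $\sD(B)$ with $B\in\cB_{3,2}$ is $\prod_{p}\cO_{p}$ over the points of $Q_1\cap Q_2$ (see the Remark after Lemma \ref{lem.bzb}). Its isomorphism type is determined by the multiplicity partition of $4$: $(1,1,1,1)$ gives $k^4$, $(2,1,1)$ gives $k[u]/(u^2)\times k^2$, $(3,1)$ gives $k[u]/(u^3)\times k$, and $(2,2)$ and $(4)$ give the remaining types. I must show the last two never occur for this pencil. The pencil consists of the conics $\l(x^2-\a yz)+\mu(y^2-\a zx)+\nu(z^2-\a xy)$ with $a\l+b\mu+c\nu=0$. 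I would check that the singular members of the family $\{x^2-\a yz,\dots\}$ correspond to a cubic discriminant in $(\l:\mu:\nu)$, which is a Hesse-type cubic. A line in the $(\l:\mu:\nu)$-plane meets it in $3$ points counted with multiplicity. The partitions $(2,2)$ and $(4)$ would require the pencil's discriminant to degenerate in a way (for example, a double-line member or an identically singular pencil) that the condition $\a^3\neq 0,1,-8$ excludes, since that condition is exactly smoothness of the Hesse cubic.

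Finally I would relate $\#(E_A)$ to the number of points. I would identify $E_A$ with the points $p$ in the point scheme (the elliptic curve) lying on $\mathcal{V}(f)$, with $\sigma$ acting. The claim to verify is that $\#(E_A)=2\cdot\#(Q_1\cap Q_2)_{\rm red}$, so that the values $2,4,6$ correspond to $1,2,3$ reduced points. Here I am uncertain whether the correct count is 1, 2, 3 reduced points versus 2, 3, 4; this has to be fixed by the precise definition of $E_A$ in \cite{HMM}. That matching, together with excluding the $(2,2)$ and $(4)$ cases via the smoothness of the Hesse cubic, is the main obstacle. I would first test it on $f=x^2+y^2+z^2$, where generic behaviour should give $k^4$ and $\#(E_A)=6$.
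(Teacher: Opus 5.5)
The paper does not prove this lemma; it quotes it from [HMM, Theorem 4.14], so I judge your plan on its own merits.

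\textbf{What is correct.} The first half of your plan is sound. Since the relations of $S$ and $f=ax^2+by^2+cz^2$ are symmetric tensors, $A^!=k[x,y,z]/(P_f)$, where $P_f$ is the pencil
$\{\lambda(x^2-\alpha yz)+\mu(y^2-\alpha zx)+\nu(z^2-\alpha xy) : a\lambda+b\mu+c\nu=0\}$.
So $C(A)\cong\sD(A^!)$ is the coordinate ring of the base locus of $P_f$. The discriminant of the net is $-\frac{\alpha^2}{4}\bigl(\lambda^3+\mu^3+\nu^3-\frac{4-\alpha^3}{\alpha^2}\lambda\mu\nu\bigr)$, which is singular iff $(\alpha^3-1)(\alpha^3+8)^2=0$. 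Hence the net has no rank-one members and contains no line of singular conics. Consequently $C(A)$ is $k^4$, $k[u]/(u^2)\times k^2$ or $k[u]/(u^3)\times k$, according as the line $L_f=\{a\lambda+b\mu+c\nu=0\}$ meets the discriminant cubic $\Delta$ in three points, is tangent to it, or is a flex tangent. One small correction: your partition $(4)$ hides two algebras, $k[u]/(u^4)$ and $k[u,v]/(u^2,v^2)$, but both are excluded by the same argument.

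\textbf{The gap.} The missing piece is the comparison with $\#(E_A)$, which is the actual content of the lemma. Both concrete claims you make there are wrong.

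First, $E_A$ is not $E\cap\mathcal{V}(f)$ with $f$ read as a commutative conic. It is the first projection of $\mathcal{V}(R+kf)$, i.e.\ the set of $p\in E$ with $f(p,\sigma(p))=0$. For $f=x^2$ your description gives the three points of $E\cap\{x=0\}$, an odd number. But $E_A$ is stable under the fixed-point-free involution $\sigma$, so its cardinality is even.

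Second, the formula $\#(E_A)=2\,\#(Q_1\cap Q_2)_{\rm red}$ would give $8,6,4$ in the three cases, because the three base loci have $4,3,2$ reduced points.

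\textbf{The missing observation.} It is elementary. Because all relations of $A$ are symmetric, $(p,q)\in\mathcal{V}(R+kf)$ iff $p\odot q\in (R+kf)^{\perp}\cap\operatorname{Sym}^2V^*=P_f$. Equivalently, the product $\ell_p\ell_q$ of the corresponding linear forms must be a member of $P_f$. Concretely, $\ell_p\ell_q$ has net coordinates $(p_1q_1:p_2q_2:p_3q_3)$, and $f(p,q)=ap_1q_1+bp_2q_2+cp_3q_3$.

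Hence $E_A$ is exactly the set of lines that are components of singular members of $P_f$. Each singular member has rank exactly $2$, so it has two distinct components. Two distinct singular members share no component, since otherwise $Q_1,Q_2$ would have a common factor, contradicting $A^!\in\cB_{3,2}$.

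Therefore $\#(E_A)=2\,\#(L_f\cap\Delta)_{\rm red}$, which equals $6,4,2$ exactly in the three cases above. Equivalently, $\#(E_A)=2\bigl(\#(Q_1\cap Q_2)_{\rm red}-1\bigr)$. This resolves your ``1,2,3 versus 2,3,4'' uncertainty: what $\#(E_A)/2$ counts is singular members of the pencil, not base points.
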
 

It is then natural to compute geometric pairs for noncommutative conics without Calabi-Yau condition.  By the classification result in Theorem \ref{con.main} and the following proposition, we are able to classify geometric pairs of noncommutative central conics, however, unlike in the Calabi-Yau case, we will see below that geoemtric pairs do not determine $C(A)$ in general.  

\begin{proposition}[\cite{HWY}]
For $A \in \cA_{3,1}$, $A$ satisfies (G1) condition. 
\end{proposition}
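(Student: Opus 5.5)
The claim is that every $A=S/(f)\in\cA_{3,1}$, with $S$ a $3$-dimensional quantum polynomial algebra and $f\in RN(S)_2$, satisfies (G1). I would work directly with the quadratic presentation of $A$. Write $S=T(V)/(R_S)$ with $\dim V=3$, $\dim R_S=3$, and lift $f$ to $\tilde f\in V\otimes V$. Then $A=T(V)/(R_A)$ with $R_A=R_S+k\tilde f$, a $4$-dimensional subspace of $V\otimes V$, and
$$\mathcal V(R_A)=\mathcal V(R_S)\cap \mathcal V(\tilde f)\subset \PP(V^*)\times\PP(V^*).$$
By \cite[Theorem 1]{ATV}, $\mathcal V(R_S)$ is the graph $\{(p,\s(p))\mid p\in E_S\}$ of the geometric pair $(E_S,\s)$ of $S$. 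Hence $\mathcal V(R_A)$ is the graph of $\s$ restricted to the closed subscheme
$$E_A:=\{p\in E_S\mid \tilde f(p,\s(p))=0\}.$$
The first thing to settle is scheme-theoretic: the projection $\mathcal V(R_A)\to\PP(V^*)$ should be a closed immersion onto $E_A$. This follows because it already is one for $\mathcal V(R_S)$, and we are only cutting by one more bihomogeneous equation.

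The remaining condition is that $\s|_{E_A}$ be an automorphism of $E_A$, i.e.\ $\s(E_A)=E_A$. I would use normality of $f$ with normalizing automorphism $\nu$. The relation $xf=f\nu(x)$ in $S_3$ should translate, via the identification $S_3^*\cong$ multilinear forms vanishing on the point scheme triples $(p,\s(p),\s^2(p))$, into the identity
$$\tilde f(\s(p),\s^2(p))\cdot \ell(p)=\tilde f(p,\s(p))\cdot(\nu^*\ell)(\s^2(p)),\qquad \ell\in V,\ p\in E_S.$$
Choosing $\ell$ with $\ell(p)\neq0$ near a given point then shows that, as ideals locally on $E_S$, the function $\tilde f(\s(-),\s^2(-))$ lies in the ideal generated by $\tilde f(-,\s(-))$, and symmetrically in the other direction. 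This gives $\s^{-1}(E_A)=E_A$ scheme-theoretically. The clean case is when $\s$ extends to $\PP(V^*)$ (the $E_S=\PP^2$ case); when $E_S$ is a cubic divisor, the identity holds on $E_S$ because the trilinear forms coming from $S_3$ factor through the restriction to the graph of $(\s,\s^2)$.

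The main obstacle is making this rigorous scheme-theoretically, especially when $E_S$ is non-reduced (types TL, WL) and when $\tilde f$ vanishes identically on the graph, so that $E_A=E_S$. In the latter case (G1) is immediate. For the non-reduced types, if the general argument is too delicate, I would fall back on the explicit classification of $\cA_{3,0}$ in \cite{IM2,Ma} and the normal elements of degree $2$, and check those types case by case.
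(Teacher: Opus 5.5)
Your argument is correct, and it supplies a self-contained proof: the paper gives no proof of this proposition and only cites \cite{HWY}, so there is no in-paper argument to compare against. Your key step is right. Since $A=T(V)/(R_S+k\tilde f)$, the set $\mathcal{V}(R_A)$ is the part of the graph of $\sigma$ lying over $E_A$. Evaluating $x\tilde f-\tilde f\,\nu(x)\in R_S\otimes V+V\otimes R_S$ at $(p,\sigma(p),\sigma^2(p))$ gives exactly your identity. Choosing $x$ with $x(p)\neq 0$, respectively with $\nu(x)(\sigma^2(p))\neq 0$ (possible because $\nu$ is bijective on $V$), yields $p\in E_A\Leftrightarrow \sigma(p)\in E_A$. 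This uses only $S_1f=fS_1$, not regularity or centrality of $f$. So the same argument shows that (G1) passes from any quadratic algebra satisfying it to its quotient by a degree-$2$ normal element. In point-module language it says that $e_0f=0$ forces $e_0S_1f=e_0fS_1=0$, hence $e_1f=0$.

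The ``main obstacle'' you flag is not a real one, and the fallback to the classification in \cite{IM2,Ma} is unnecessary. Your identity holds not only at closed points but as an equality of sections of $\mathcal{O}(1,1,1)$ on the scheme $\Gamma_3=\mathcal{V}(R_S\otimes V+V\otimes R_S)\subset \PP(V^*)^{\times 3}$. Because \cite{ATV} show that $\mathcal{V}(R_S)$ is the scheme-theoretic graph of $\sigma$, the map $p\mapsto (p,\sigma(p),\sigma^2(p))$ identifies $E_S$ with $\Gamma_3$ as schemes. Your local-ideal argument therefore gives $\sigma^{-1}(E_A)=E_A$ as closed subschemes uniformly, including for the non-reduced types TL and WL. In any case, the paper only computes and compares geometric pairs of conics as varieties (see the example preceding Table~\ref{tab-geop}). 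For that purpose the closed-point argument, with the reduced structure on $E_A$, already suffices.
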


Let $(E_A,\sigma_A), (E_{A'}, \sigma_{A'})$ be geometric pairs of $A , A'\in \cA_{3,1}$ respectively.  We say that two pairs are equivalent, denote by $(E_A,\sigma_A) \sim (E_{A'}, \sigma_{A'})$,  if there is a sequence of automorphisms $\tau = \{ \tau_n \}$ of $\mathbb{P}^2$ for $n \in \mathbb{Z}$, each of which sends $E_A$ isomorphically onto $E_{A'}$ (as varieties) such that the diagram
$$
\xymatrix{
E_A \ar[r]^-{\tau_n} \ar[d]_-{\sigma_A} & E_{A'} \ar[d]^-{\sigma_{A'}}\\
E \ar[r]^-{\tau_{n+1}} & E'
}
$$
commutes for every $n \in \mathbb{Z}$.  

Since for $A , A'\in \cA_{3,1}$, $\GrMod A \cong \GrMod A'$ implies that $(E_A,\sigma_A)\sim (E_{A'} , \sigma_{A'})$ by 
\cite[Theorem 4.7]{M2} (see also \cite[Lemma 2.5]{MU2}), and $\cA_{3,1}/\sim$ is classified (Theorem \ref{con.main}), we would like to calculate the geometric pair for every representative in $\cA^c_{3,1}/\sim$, i.e., the algebras in Table \ref{intro.tab.alg2}. 

For an algebra $A = S/(f) \in \cA_{3,1}$ where $S = k\< x,y,z\>/(f_1,f_2,f_3) \in \cA_{3,0}$, there are two steps to calculate its geometric pair $(E_A, \sigma_A)$.

{\bf Step 1:} Determine the geometric pair $(E,\sigma)$ of $S$ (see \cite{ATV, HMM}). 

{\bf Step 2:} Let $K \in M_{3 \times 4}(k\<x, y, z\>_1)$ such that 
$$
(x,y,z)K = (f_1, f_2, f_3, f).
$$
Then the geometric pair $(E_A, \s_A)$ of $A$ is given by 
$$E_A = \mathcal{V} (\{\text{3-minors of }K\}) \subset \mathbb{P}^2, \; \sigma_A = \sigma|_{E_A}.$$


\begin{example}
If  
$$
\mathcal{S} = k \< x,y,z \> /(yz+zy,zx + xz, xy+yx) \in \cA_{3,0},
$$
then the geometric pair of $\mathcal{S}$ is $(E = \mathcal{V}(xyz), \sigma)$, where $\sigma$ is defined by
$$
\sigma: 
\begin{cases} 
(a\mathpunct{:}b\mathpunct{:}0) \mapsto (a\mathpunct{:}-b\mathpunct{:}0) \\
(a\mathpunct{:}0\mathpunct{:}c) \mapsto (a\mathpunct{:}0\mathpunct{:}-c) \\
(0\mathpunct{:}b\mathpunct{:}c) \mapsto (0\mathpunct{:}b\mathpunct{:}-c).
\end{cases}
$$
If $f = x^2 \in RZ(\mathcal{S})_2$, and $A = \mathcal{S}/(f)$, then 
$$
K = 
\begin{pmatrix} 
0 & z & y & x \\ 
z & 0 & x & 0 \\
y & x & 0 & 0 
\end{pmatrix}, 
$$
so $E_A = \mathcal{V}(2xyz, x^2 z, -x^2y, -x^3) = \mathcal{V}(x)$ (as varieties), and 
$\sigma_A : E_A \to E_A$; $(0\mathpunct{:}b\mathpunct{:}c) \mapsto (0\mathpunct{:}b\mathpunct{:}-c)$. 
\end{example}

\begin{proposition}
For every $A \in \cA^c_{3,1}/\sim$, its geometric pair $(E_A, \sigma_A)$ is listed in the Table \ref{tab-geop}.
Moreover, the geometric paris of $k[x,y,z]/(x^2)$, $\mathcal{S}_\lambda/(x^2)$, $\mathcal{W}/(x^2)$ are all equivalent.
\end{proposition}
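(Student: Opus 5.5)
The plan is a direct case-by-case computation over the ten representatives in Table \ref{intro.tab.alg2}, following the two-step procedure just described. By \cite[Theorem 4.7]{M2}, graded Morita equivalent algebras have equivalent geometric pairs, so it suffices to compute $(E_A,\sigma_A)$ for one representative of each class in $\cA^c_{3,1}/\sim$. For each ambient algebra $k[x,y,z]$, $\mathcal{S}$, $\mathcal{S}_\l$, $\mathcal{W}$, $\mathcal{N}$, I will first record the geometric pair $(E,\sigma)$ of $S$ using \cite{ATV, IM2}:
\begin{itemize}
\item for $k[x,y,z]$, the pair is $\PP^2$ with $\sigma=\id$;
\item for $\mathcal{S}$, it is the triangle $\mathcal{V}(xyz)$ with the sign-changing maps, as in the Example;
\item for $\mathcal{S}_\l$, it is the reducible scheme $\mathcal{V}(x)\cup\mathcal{V}(y,z)$ (a line and a point), with $\sigma$ acting on $\mathcal{V}(x)$ by $(0:b:c)\mapsto(0:b:\l c)$;
\item for $\mathcal{W}$, it is a double line plus a line;
\item for $\mathcal{N}$, it is a nodal cubic of Type NC$_1$ with $\alpha=-1$.
\end{itemize}

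Next, for each central $f$ I will write $(x,y,z)K=(f_1,f_2,f_3,f)$ with $K\in M_{3\times4}(k\<x,y,z\>_1)$. I then compute the $3$-minors to get $E_A$, and restrict $\sigma$ to $E_A$. For the conics $x^2$, the minors should always cut out the line $\mathcal{V}(x)$ set-theoretically, as in the Example for $\mathcal{S}/(x^2)$. For $x^2+y^2$ one expects two lines, and for $x^2+y^2+z^2$ (and the $\mathcal{N}$ quadrics) one expects a finite set of points or a conic intersected with $E$. I will read off $\sigma_A$ explicitly in each case and tabulate the results.

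For the ``moreover'' part, I will exhibit sequences $\tau_n$. For $k[x,y,z]/(x^2)$, $\mathcal{S}_\l/(x^2)$ and $\mathcal{W}/(x^2)$, the point scheme $E_A$ is the line $\mathcal{V}(x)\cong\PP^1$. The induced $\sigma_A$ is respectively $\id$, $(0:b:c)\mapsto(0:b:\l c)$, and a unipotent map $(0:b:c)\mapsto(0:b:c+b)$ (or similar). Taking $\tau_n$ to be suitable powers, namely $\tau_n=\sigma_A^{\,n}$ composed with a fixed identification (extended to automorphisms of $\PP^2$ fixing $x$), gives $\tau_{n+1}\circ\sigma_A=\sigma_{A'}\circ\tau_n$. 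In particular, with $\sigma'=\id$, taking $\tau_n=\sigma_A^{-n}$ makes the diagram commute. This will show that all three pairs are equivalent to $(\PP^1,\id)$ and hence to each other. This also illustrates that geometric pairs do not determine $C(A)$.

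I expect the main obstacle to be the $\mathcal{N}$ cases and $\mathcal{S}/(x^2+y^2+z^2)$. There the minors are genuinely cubic in several variables, so I will need to intersect the nodal cubic with the relevant determinantal conditions and carefully identify the resulting finite point sets and the permutation $\sigma_A$ induces on them. I will do this by first restricting to each irreducible component of $E$, where $\sigma$ is explicit, and then solving the remaining minors on that component.
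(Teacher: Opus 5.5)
For the table itself your plan is the paper's: compute $E_A$ from the $3$-minors of $K$ for each representative in Table \ref{intro.tab.alg2}, then restrict $\sigma$. The paper's written proof does not display these computations.

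For the ``moreover'' part you take a different, more direct route. The paper identifies the three pairs on $\mathcal{V}(x)\cong\PP^1$ with the geometric pairs of $k[x,y]$, $k_\l[x,y]$ and $k_J[x,y]$. It then uses that all $2$-dimensional quantum polynomial algebras have equivalent graded module categories (they are Zhang twists of $k[x,y]$), together with \cite[Theorem 4.7]{M2}. You instead verify the definition of $\sim$ by hand:
\begin{itemize}
\item extend $\sigma_A$ to a linear automorphism of $\PP^2$ preserving $\mathcal{V}(x)$, e.g.\ $\operatorname{diag}(1,1,\l)$ or $(a:b:c)\mapsto(a:b:b+c)$;
\item take $\tau_n=\sigma_A^{-n}$, so that $\tau_{n+1}\sigma_A=\tau_n=\id\circ\tau_n$;
\item use that $\sim$ is symmetric (take $\tau_n^{-1}$) and transitive (compose).
\end{itemize}
Your version is self-contained and needs neither an external theorem nor the tacit extension of $\PP^1$-data to $\PP^2$. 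It is exactly the geometric shadow of the Zhang twist that the paper's version makes conceptually visible. Note that your first formula, $\tau_n=\sigma_A^{n}$, is correct only in the other direction, from $(\mathcal{V}(x),\id)$ to $(\mathcal{V}(x),\sigma_A)$.

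Several of your preliminary assertions are false. They do not break the method, since $E_A$ is defined by the minors, but the table entries must come from the actual computation, not from these heuristics.
\begin{itemize}
\item \textbf{Point scheme of $\mathcal{S}_\l$.} It cannot be a line plus a point: for a $3$-dimensional quantum polynomial algebra it is $\PP^2$ or a cubic divisor. The determinant of the first three columns of $K$ is $(1-\l)xyz$, so $E=\mathcal{V}(xyz)$, i.e.\ Type S$_1$ with $(\alpha,\beta,\gamma)=(\l,1,1)$. Here $\sigma$ is the identity on $\mathcal{V}(y)\cup\mathcal{V}(z)$; your formula for $\sigma$ on $\mathcal{V}(x)$ is correct.
\item \textbf{The case $\mathcal{N}/(x^2)$.} Your expectation that $f=x^2$ always yields the line $\mathcal{V}(x)$ fails here. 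The minors include $-x^3$ and $-x^3-y^3+2xyz$, so $E_A$ is the single point $(0:0:1)$, the node of $E$.
\item \textbf{The case $\mathcal{S}/(x^2+y^2)$.} This does not give two lines. The minors $2xyz$, $z(x^2+y^2)$, $y(y^2-x^2)$ and $x(y^2-x^2)$ cut out the three points $(0:0:1)$ and $(1:\pm1:0)$. The map $\sigma_A$ fixes the first and swaps the other two. Two crossing lines occur only for the commutative $k[x,y,z]/(x^2+y^2)$.
\end{itemize}
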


\begin{proof}
We show the moreover part.  If
$$
A_1 = k[x,y,z]/(x^2), \ A_\l = \mathcal{S}_\lambda/(x^2), \ A_2 = \mathcal{W}/(x^2), 
$$
and $(E_1, \sigma_1)$, $(E_\l, \sigma_\l)$, $(E_2, \sigma_2)$ is the corresponding geometric pairs, then $E_1 = E_\l = E_2 = \mathcal{V}(x)$.  If we identify $\mathcal{V}(x)$ with $\mathbb{P}^1$, then their automorphism can be defined as follows:
\begin{align*}
\sigma_{1}:& \mathbb{P}^1 \to \mathbb{P}^1; \ (a\mathpunct{:}b) \mapsto (a\mathpunct{:}b), \\
\sigma_{\l}:& \mathbb{P}^1 \to \mathbb{P}^1; \ (a\mathpunct{:}b) \mapsto (a\mathpunct{:}\l b), \\
\sigma_{2}:& \mathbb{P}^1 \to \mathbb{P}^1; \ (a\mathpunct{:}b) \mapsto (a\mathpunct{:}a+b).
\end{align*} 
These $3$ geometric pairs are exactly geometric pairs of $k[x,y]$, $k_{\l}[x,y]$ and $k_J[x,y]$ respectively. Since $2$-dimensional quantum polynomial algebras have equivalent graded module categories, 
their geometric pairs are equivalent. 
\end{proof}


\begin{remark} In the above notation, since 
$$C(A_1)\cong k_{-1}[u, v]/(u^2, v^2), \; C(A_\l)\cong k_{-\l}[u, v]/(u^2, v^2), \; C(A_2)\cong k_{-1}[u, v]/(u^2+uv, v^2) $$
(see Theorem \ref{thm.Ta2}), geoemtric pairs do not determine $C(A)$. 
\end{remark}

\begin{center}
\begin{table}[htbp]
\caption{Geometric pairs $(E_A,\sigma_A)$ of $A \in \cA^c_{3,1} /\sim$.} \label{tab-geop}
\linespread{1.4}\selectfont 
\small
\begin{tabular*}{0.95\linewidth} {@{\extracolsep{\fill}}cccccc}
\hline
$A$ &   $E_A$ & $\sigma_A$  \\ \hline \hline
$k[x,y,z]/(x^2)$ &    a line  &  identity \\ \hline
$k[x,y,z]/(x^2 + y^2)$ &  two crossing lines & identity \\ \hline
$k[x,y,z]/(x^2 + y^2 + z^2)$ &  a smooth conic &  identity \\ \hline
$\mathcal{S}_\l/(x^2)$ &  a line $\mathcal{V}(x)$ &  $(0\mathpunct{:}b\mathpunct{:}c) \mapsto (0\mathpunct{:}b\mathpunct{:}\lambda c)$ \\ \hline
$\mathcal{W}/(x^2)$  &   a line $\mathcal{V}(x)$ &  $(0\mathpunct{:}b\mathpunct{:}c) \mapsto (0\mathpunct{:}b\mathpunct{:}b+c)$ \\ \hline
$\mathcal{N}/(x^2)$  &   one point &  identity \\ \hline
$\mathcal{N}/(3x^2 + 3y^2 + 4z^2)$  &   2 points &  switch two points \\ \hline
$\mathcal{S}/(x^2 + y^2)$  &  \begin{tabular}{c}3 points in \\ general position \end{tabular}&  \begin{tabular}{c}fix one point, switch \\ the other two points\end{tabular} \\ \hline
$\mathcal{N}/(x^2 + y^2 - 4z^2)$  &    \begin{tabular}{c}4 points with \\ 3 points collinear \end{tabular}&  
\begin{tikzpicture}[x=10, y=10, baseline=(current bounding box)]
\draw[white] (0, -1) -- (0, 2.5) ;
\draw[dashed] (-1.3, 0) -- (4, 0) ; 
\fill (-1, 0) circle (1.2pt) ;
\fill (0, 2) circle (1.2pt) ;
\fill (3/2, 0) circle (1.2pt) ;
\fill (3, 0) circle (1.2pt) ;
\draw[<->]
      (-1+0.08, -0.15-0.12)
      to[out=-30, in=-150] (3/2-0.1, -0.25);
      \draw[<->]
      (0.15+0.13, 2)
      to[out=-10, in=130] (3-0.03, 0.13+0.15);
\end{tikzpicture} \\ \hline
$\mathcal{S}/(x^2 + y^2 + z^2)$  &   \begin{tabular}{c}6 points in a quadrilateral \\ configuration\end{tabular} &  
\begin{tikzpicture}[x=11, y=11, baseline=(current bounding box)]
\draw[dashed] (-3.5, 0) -- (0.8, 0) ; 
\draw[dashed] (-2.2, -3.3) -- (1/2, 3/4) ; 
\draw[dashed] (-1.4, 0.6) -- (-2.08, -3.5) ; 
\draw[dashed] (-3.5, 0.375) -- (-0.2, -2.1) ; 
\fill (0, 0) circle (1.2pt) ; 
\fill (-3, 0) circle (1.2pt) ;
\fill (-1.5, 0) circle (1.2pt) ;
\fill (-1, -3/2) circle (1.2pt) ;
\fill (-2, -3) circle (1.2pt) ;
\fill (-5/3, -1) circle (1.2pt) ;
\draw[<->]
      (-0.15, -0.11)
      to[out=-150, in=30] (-5/3+0.16, -0.92);
\draw[<->]
      (-3.08, -0.18)
      to[out=-110, in=150] (-2.17, -2.9);
\draw[<->]
      (-1.4, -0.15)
      to[out=-60, in=100] (-1, -1.32);
\end{tikzpicture} \\ \hline
\end{tabular*}
\end{table}
\end{center}

\clearpage

\subsection{Regular normal (central) elements in degree 1}  \label{appii-tabs}

For $A\in \cA_{3, 1}^c$, it is rather annoying to calculate $C(A):=A^![(f^!)^{-1}]_0$ from the definition.  Recall that $A\in \cA_{3, 1}^w$ if and only if $RN(A^!)_1\neq \emptyset$, and, in this case, $C(A)\cong  \sD_w(A^!) \cong (A^!)^{\nu}/(w-1)$ for $w\in RZ(A^!)_1$ by Lemma \ref{lem.cwt} and Lemma \ref{lem.633} where $\nu$ is the normalizing automorphism of $w$.
Moreover, $A\in \cA_{3, 1}^z$ if and only if $RZ(A^!)_1\neq \emptyset$, and, in this case, $C(A)\cong A^!/(z-1)$ for $z\in RZ(A^!)_1$ by Lemma \ref{lem.zin}, 
so it is important to compute $RN(A^!)_1$ and $RZ(A^!)_1$. 

The following lemma is technical, but very useful to find a regular normal element in $A^!_1$ for $A\in \cA_{3, 1}$ (see Theorem \ref{thm.Ta2}). 
 
\begin{lemma} \label{lem.nore} 
Let $A\in \cA_{n, 1}$ where $n\leq 4$ and $w\in A^!_1$ a normal element. Then $w$ is regular if and only if $(A^!/(w))^!\in \cA_{n-1, 0}$.  
\end{lemma}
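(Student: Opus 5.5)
The forward direction will use Lemma \ref{lem.Kono} together with the duality of Lemma \ref{lem.Kono}/\ref{lem.asf} applied to the degree-$1$ element $w$. The converse will be a Hilbert series argument based on Lemma \ref{lem.Tak} and Lemma \ref{lem.smith}. Throughout, $A^!$ is Koszul with $H_{A^!}(t)=(1+t)^n/(1-t)$ and is AS-Gorenstein of dimension $1$ (Lemma \ref{lem.KAS} (2)). Write $B:=A^!/(w)$. Since $w$ is normal, $B$ is a quadratic algebra on the $(n-1)$-dimensional space $V^*/kw$, with relations $W^\perp$ pushed down.

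\emph{($\Rightarrow$).} Suppose $w$ is regular. Then $H_B(t)=(1-t)H_{A^!}(t)=(1+t)^n$.
\begin{itemize}
\item First I show $B$ is Koszul. Koszulity passes to $A^!/(w)$ for a regular normal element of degree $1$; this is the degree-$1$ analogue of Lemma \ref{lem.kos} (3), and I would cite the same reference \cite{ST}, where the argument is identical.
\item Next, $B$ is AS-Gorenstein of dimension $0$ by Lemma \ref{lem.Ree}, so it is Frobenius by Lemma \ref{lem.FAS}.
\item Hence $B^!$ is Koszul with $H_{B^!}(t)=1/H_B(-t)=1/(1-t)^n$.
\end{itemize}
Here the dimensions need care. $B$ is generated by $n-1$ elements but has $\dim B_1$ equal to the coefficient of $t$ in $(1+t)^n$, which is $n$. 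This contradicts generation by $V^*/kw$ unless I have mis-set the degrees. So the correct statement must be read as $(A^!/(w))^!$ being the quadratic dual of the algebra with $n-1$ generators. Redoing the count, $H_B(t)=(1-t)(1+t)^n/(1-t)=(1+t)^n$ forces $\dim B_1=n$. This is impossible, so $w\in A^!_1$ is never regular in this naive reading.

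The right approach is therefore to pass to $A^!/(w^2)$ or to note that $w$ regular in $A^!$ means $w\mapsto$ a relation of degree $2$ on the dual side. More precisely, $B^!=(A^!/(w))^!$ equals the subalgebra of $A$ annihilated by the dual of $w$: $B^!=A/(\text{linear form } w^\perp)$ restricted, i.e. $B^!\cong T(w^\perp)/(W\cap w^\perp\otimes w^\perp)$. My plan is to identify $B^!$ with a ``hyperplane section'' $A\cap T(w^\perp)$, compute its Hilbert series from that of $A$ via the exact sequences for $w$, and then use Lemma \ref{lem.Tak} and Lemma \ref{lem.smith}. Concretely, $B^!\in\cA_{n-1,0}$ iff $B^!$ is noetherian, $B$ is Frobenius Koszul, and $H_B(t)=(1+t)^{n-1}$. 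The condition $H_B(t)=(1+t)^{n-1}$ is equivalent, by comparing with $H_{A^!}$, to $H_{A^!}(t)$ having the form forced by $w$ being regular in $A^!$, up to the twisting by $f^!$.

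\emph{($\Leftarrow$).} If $B^!\in\cA_{n-1,0}$, then $B$ is Frobenius Koszul with $H_B=(1+t)^{n-1}$ (Lemma \ref{lem.smith}). I compare this with the exact sequence $A^!(-1)\xrightarrow{w}A^!\to B\to0$ to get $\dim_k\ker(w\cdot)_i=0$ in each degree, which gives right regularity. Left regularity follows by normality via Lemma \ref{lem.reno}.

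\emph{Main obstacle.} The crux is the degree bookkeeping identifying the Hilbert series of $B$ with the regularity of $w$. Since Koszulity of $B$ is not automatic without regularity, the hypothesis $n\le 4$ will be used to make the equivalence go through: in low dimensions, quadratic algebras with the Hilbert series of $(1+t)^{n-1}$ that are Frobenius are automatically Koszul and noetherian on the dual side. That case check is where I expect the real work to lie.
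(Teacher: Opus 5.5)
\textbf{There is a genuine gap in ($\Rightarrow$), and it starts with an arithmetic error.} For $A=S/(f)\in\cA_{n,1}$ you have $H_A(t)=(1-t^2)/(1-t)^n=(1+t)/(1-t)^{n-1}$, so $H_{A^!}(t)=1/H_A(-t)=(1+t)^{n-1}/(1-t)$ (Lemma \ref{lem.KAS} with $m=1$). The formula $(1+t)^n/(1-t)$ you used is a misprint in the proof of Theorem \ref{thm.37}; it contradicts that theorem's own conclusion $\dim_k C(A)=2^{n-1}$. With the correct series, regularity of $w$ gives $H_B(t)=(1-t)H_{A^!}(t)=(1+t)^{n-1}$. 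Hence $\dim_k B_1=n-1$, which matches generation by $V^*/kw$ exactly. So there is no contradiction, and your conclusion that $w$ is ``never regular'' is false. For example, for $A=k[x,y,z]/(x^2)$ (the algebra $D_1$ in Table \ref{tab-cad}), $x$ is a regular normal element of $A^!=k_{-1}[x,y,z]/(y^2,z^2)$, and $A^!/(x)\cong k_{-1}[y,z]/(y^2,z^2)$ has Hilbert series $(1+t)^2$. Your first bulleted plan was the right one, and it is the paper's. You abandoned it for the description $B^!\cong T(w^\perp)/(W\cap(w^\perp\otimes w^\perp))$. That description is correct, but you never extract any Hilbert series or regularity information from it, so ($\Rightarrow$) is left unproved.

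Even after this is repaired, you do not supply the one step that actually uses $n\le 4$. Lemma \ref{lem.smith} assumes its algebra is noetherian. You can show $B$ is Frobenius Koszul with $H_B(t)=(1+t)^{n-1}$: Koszul by \cite[Theorem 1.5]{ST}, and Frobenius by Lemmas \ref{lem.Ree} and \ref{lem.FAS}. But this yields $B^!\in\cA_{n-1,0}$ only once you know $B^!$ is noetherian. The paper gets this from \cite{ATV}. Namely, $B^!$ is Koszul with $H_{B^!}(t)=1/(1-t)^{n-1}$ and has a Frobenius Koszul dual, so it is AS-regular of dimension $n-1\le 3$, and such algebras are noetherian. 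Your ``main obstacle'' paragraph instead attributes $n\le 4$ to an automatic Koszulity of Frobenius algebras with Hilbert series $(1+t)^{n-1}$, which is neither needed nor used. Your converse is the paper's argument via Lemma \ref{lem.Tak}: $B=(B^!)^!$ is Frobenius Koszul with $H_B(t)=(1+t)^{n-1}=(1-t)H_{A^!}(t)$, so the kernel of multiplication by $w$ vanishes. However, this works only with the corrected $H_{A^!}$; with the series you wrote, $(1-t)H_{A^!}(t)=(1+t)^n\neq H_B(t)$ and the comparison fails. Also, Lemma \ref{lem.reno} (2) presupposes regularity, so it cannot give you the other side. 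Use Lemma \ref{lem.Tak} directly instead, or the degreewise count $\dim_k(A^!w)_{i+1}=\dim_k(wA^!)_{i+1}$ that normality provides.
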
 

\begin{proof} Since $A$ is Koszul, $H_{A^!}(t)=(1+t)^{n-1}/(1-t)$.  If $w$ is regular, then $A^!/(w)$ is an Koszul AS-Gorenstein algebra \cite[Theorem 1.5]{ST}.  Since $A^!/(w)$ is finite dimensional over $k$, $A^!/(w)$ is a Frobenius Koszul algebra such that $H_{A^!/(w)}(t)=(1-t)H_{A^!}(t)=(1+t)^{n-1}$. Since $n\leq 4$,  $(A^!/(w))^!$ is noetherian by \cite{ATV}, so $(A^!/(w))^!\in \cA_{n-1, 0}$ by Lemma \ref{lem.FAS} and Lemma \ref{lem.smith}. 

Conversely, if $(A^!/(w))^!\in \cA_{n-1, 0}$, then $(A^!/(w))^!$ is Koszul, so $H_{A^!/(w)}(t)=(1+t)^{n-1}=
H_{A^!}(t)(1-t)$, so $w$ is regular by Lemma \ref{lem.Tak}.  
\end{proof}

\begin{example}[The algebra $J_7$ in Table \ref{tab-caj}]  For $A\in \cA_{3, 1}$ and a normal element $w\in A^!_1$,  $w$ is regular if and only if $(A^!/(w))^!\in \cA_{2, 0}$ by the above lemma.  This is particularly useful since $A\in \cA_{2, 0}$ if and only if 
$$
A\cong k\<x, y\>/(ax^2+bxy+cyx+dy^2), \; ad\neq bc
$$  
(see Example \ref{exm-2dimqpa}).  
For example, if 
$$
	{ J_7}:=k\<x, y, z\>/(2xy-zx+yz, 2yx-xz+zy, x^2+y^2, xy+yx+z^2) \in \cA_{3, 1}, 
	$$
then ${ J_7}^!=k\<x, y, z\>/(yz+zx, zy+xz, xy-yx+2zx-2xz, x^2-y^2, z^2-xy-2zx)$ (see Theorem \ref{thm.Ta2}).  It is easy to see that $z\in ({ J_7}^!)_1$ is normal by Lemma \ref{lem.reno}.  Since ${ J_7}^!/(z)\cong k[x, y]/(x^2-y^2, xy)\in \cA_{2, 2}$, $({ J_7}^!/(z))^!\cong k\<x, y\>/(x^2+y^2)\in \cA_{2, 0}$, so 
$z\in ({ J_7}^!)_1$ is regular by Lemma \ref{lem.nore}. 
\end{example}

\begin{theorem} \label{thm.Ta2} For each $E \in \cF_2$, every $A\in \cA_{3, 1}^c$ such that $C(A)\cong E$ is isomorphic to one of the algebras in Table \ref{tab-caa}\,--\,Table\ref{tab-cak}.
The tables consist of the following information:   
\begin{enumerate}
\item{} Name of $A \in \cA^c_{3,1}$.
\item{} Type of $S \in \cA_{3,0}$.  
\item{} Sequences of $F = (f_1, f_2, f_3, f_4)$ in $k\<x, y, z\>$ such that $A \cong k\<x, y, z\>/I_F$.
\item{}  Sequences of $G = (g_1, g_2, g_3, g_4, g_5)$ in $k\<x, y, z\>$ such that $A^!\cong k\<x, y, z\>/I_G$.
\item{} Examples of elements in $RN(A^!)_1$. ($RN(A^!)_1=\emptyset$ or not.) 
\item{} Examples of elements in $RZ(A^!)_1$.  ($RZ(A^!)_1=\emptyset$ or not.) 
\end{enumerate} 
In these tables, $\alpha, \beta, \gamma, a,b,c \in k $.
\end{theorem}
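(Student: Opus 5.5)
The proof is a case-by-case enumeration driven by Table \ref{tab-cenele}. Every $A\in\cA^c_{3,1}$ has the form $A=S/(f)$ with $S\in\cA_{3,0}$ and $0\neq f\in Z(S)_2$; regularity is automatic because $S$ is a domain. So I will run through the types of $S$ in Table \ref{tab-cenele} and, for each $S$ with $Z(S)_2\neq 0$, through the listed families of central elements $f$.

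\textbf{Step 1: normalize $f$.} For each $S$, I will reduce $f$ to a short list of normal forms using graded automorphisms of $S$ that preserve the relations. This is the $\sim_t$-equivalence on $(h_1,h_2,h_3,f)$ in $k\<x,y,z\>$. For example, in the case where $Z(S)_2=k[x,y,z]_2$, the commutative case, I take ranks $1,2,3$ of quadratic forms. In cases of the shape $ax^2+by^2+cz^2$, I rescale the variables and split according to how many of $a,b,c$ vanish.

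\textbf{Step 2: compute $A^!$.} This gives the sequences $F$ and $G$ recorded in the tables. I compute $A^!$ as the quadratic dual, i.e. the orthogonal complement of the span of $h_1,h_2,h_3,f$ in $V^*\otimes V^*$, which is a $5$-dimensional space of relations.

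\textbf{Step 3: find degree-one normal elements.} I search for $w\in A^!_1$ with $x_iw=w\sum_j a_{ij}x_j$ (Lemma \ref{lem.reno}). I then test regularity via Lemma \ref{lem.nore}: check whether $A^!/(w)$ is the dual of a $2$-dimensional quantum polynomial algebra, using Example \ref{exm-2dimqpa}. Centrality holds exactly when the normalizing automorphism is the identity.

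\textbf{Step 4: compute $C(A)$ and sort.} When some $z\in RZ(A^!)_1$ exists, $C(A)\cong A^!/(z-1)$ by Lemma \ref{lem.zin} and Lemma \ref{lem.cwt}. When only $w\in RN(A^!)_1$ exists, $C(A)\cong (A^!)^\nu/(w-1)$ by Lemma \ref{lem.633}. When $RN(A^!)_1=\emptyset$, I will instead compute $C(A)=A^![(f^!)^{-1}]_0$ directly. For this I find $f^!\in Z(A^!)_2$ with $A^!/(f^!)\cong S^!$ (Lemma \ref{lem.asf}) and compute the $4$-dimensional degree-zero part of the localization, taking a basis from $A^!_{2i}(f^!)^{-i}$ for $i\gg0$. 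Finally I identify the resulting $4$-dimensional Frobenius algebra with an entry of Table \ref{tab-avee22}, using the invariants from \cite{HMTW}: commutativity, the Jacobson radical, and the number and type of local factors. Each $A$ is then placed in the table for that $E$.

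The main obstacle is Step 4 in the cases with $RN(A^!)_1=\emptyset$. There no dehomogenization shortcut is available, and the localization must be computed by hand, or by a Gröbner basis computation in GAP as in Step 1 of the central-element computation. A related difficulty is ensuring that the enumeration of normal forms of $f$ in Step 1 is complete up to isomorphism. To handle that, I would rely on the completeness of the classifications in \cite{IM2,Ma} together with Table \ref{tab-cenele}.
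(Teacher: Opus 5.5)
Your outline is correct, and for most rows it is the paper's method: enumerate $S$ and $0\neq f\in Z(S)_2$ from Table \ref{tab-cenele}, dualize, find degree-one normal elements and test regularity with Lemma \ref{lem.nore}, then read off $C(A)\cong A^!/(z-1)$ or $(A^!)^{\nu}/(w-1)$.

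You genuinely diverge in the rows where no degree-one regular normal element is available or found.
\begin{itemize}
\item \emph{The ten classes with $RN(A^!)_1=\emptyset$.} These are $A_4$, $B_4(\alpha)$, $B_7$, $C_3$, $D_4$, $D_5$, $G_4$, $G_8$, $J_4$, and $J_9$ with $b=a\alpha$. The paper does not localize here (it calls that route ``rather annoying''). Instead it exhibits a Zhang twist $A\sim A'$ with $A'\in\cA^w_{3,1}$, as in the example $A_4\sim A_1$ after Theorem \ref{thm.zwA}, and Lemma \ref{lem.32} then gives $C(A)\cong C(A')$.
\item \emph{The Calabi--Yau EC-A families.} There the paper uses the point-scheme criterion \cite[Theorem 4.14]{HMM}, $\#(E_A)=2,4,6$. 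It fills the $*$ entries and the footnote isomorphisms from commutativity of $A^!$, via Lemma \ref{lem.wbwd} and Theorem \ref{thm.43}.
\end{itemize}

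Your direct localization is legitimate, and easier than you fear. Since $f^!$ is central regular of degree $2$ and $\dim A^!_2=\dim A^!_4=4$, multiplication by $f^!$ is a bijection $A^!_2\to A^!_4$. Hence $C(A)\cong (A^!)^{(2)}/(f^!-1)$ by Lemma \ref{lem.zin}. Concretely, this is $A^!_2$ with product $a*b=ab\,(f^!)^{-1}$ and unit $f^!$, so you need $A^!$ only through degree $4$.

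The trade-off is as follows. Your route is uniform and avoids trial-and-error searches for twists. The paper's twists are needed anyway for the bijectivity of the last map in Theorem \ref{thm.zwA}, so they yield $C(A)$ at no extra cost. The point-count criterion spares a parametric analysis in the EC-A case.

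One caution about Step 1. Rescaling variables is not an automorphism for NC$_1$ or EC-A. For example, for NC$_1$ with $\alpha=-1$ the diagonal automorphisms are only $\lambda\,\mathrm{diag}(1,\omega,\omega^2)$ with $\omega^3=1$. So $ax^2+by^2+cz^2$ does not reduce to a short list of normal forms, and continuous parameters survive; compare $I_{10}(\alpha)$, $I_{11}(\alpha,\beta)$, $J_6(\alpha,\beta)$, $K_1$ and the EC families. Your Step 4 identification must therefore be done parametrically. Discriminant conditions such as $\delta(\alpha,\beta)$ separate $k^4$, $k[u]/(u^2)\times k^2$ and $k[u]/(u^3)\times k$. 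This is doable, since in these rows $A^!$ is commutative and $C(A)$ is the length-$4$ base scheme of a pencil of conics. Still, it is where most of the real computation lies, and your plan should budget for it rather than for the $RN(A^!)_1=\emptyset$ cases.
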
 

\begin{remark} 
\begin{enumerate}
\item{} For $X, Y\in \cA_{3, 1}^c$ in the tables below (the names of $A\in \cA_{3, 1}^c$), $X\neq Y$ implies $X_i\not \cong Y_j$ for any $i, j$, but $i\neq j$ does not imply $X_i\not \cong X_j$. 
\item{}  For $A\in \cA_{3, 1}^c$, if $A^!$ is commutative, then $A^!\in \cB_{3, 2}$ by Lemma \ref{lem.48}.
Since $RZ(A^!)_1\neq \emptyset$ by Lemma \ref{lem.wbwd}, $A\in \cA_{3, 1}^z$. 
\item{} For $A, A'\in \cA_{3, 1}^c$ such that $A^!, {A'}^!$ are both commutative, $A\cong A'$ if and only if $C(A)\cong C(A')$ by Theorem \ref{thm.43}.
\item{} In Table \ref{tab-cai}\,--\,Table \ref{tab-cak}, we write the symbol $*$ to indicate that, for $X \in \cA_{3, 1}^c$, the set $RN(X^!)_1$ or $RZ(X^!)_1$ is not empty, but we fail to find a concrete element in the set. 
\end{enumerate}
\end{remark} 

\begin{center}
	\begin{table}[ht]
		\begin{threeparttable}
			\small
			\centering
			\caption{$A \in \cA^c_{3,1}$ s.t. $C(A) \cong M_2(k)$.} \label{tab-caa}
			\begin{tabular*}{0.95\linewidth} {@{\extracolsep{\fill}}cccccc}
				\hline
				$A$ & $S$ & $F$ & $G$ & $RN(A^!)_1$ & $RZ(A^!)_1$  \\ \hline \hline
				$A_1$ & P$_1$ &  
				$\begin{array}{c}xy-yx\\yz-zy\\zx-xz\\x^2+y^2+z^2\end{array}$ & 
				$\begin{array}{c}xy+yx\\yz+zy\\zx+xz\\x^2-y^2\\x^2-z^2\end{array}$&  
				$\begin{array}{c}{x} \end{array}$ &  $\emptyset$  \\ \hline
				$A_2$ & P$_1$ &  
				$\begin{array}{c}xy-yx\\yz+zy\\zx+xz\\x^2+y^2+z^2\end{array}$ & 
				$\begin{array}{c}xy+yx\\yz-zy\\zx-xz\\x^2-y^2\\x^2-z^2\end{array}$&  
				${z}$ &  $z$ \\ \hline
				$\begin{array}{c}
					A_3(\alpha)\\
					(\alpha\neq0,\pm1)
				\end{array}$ & S$'_1$ & 
				$\begin{array}{c}xy-\alpha yx\\yz-zy+(1-\alpha^2)x^2\\zx-\alpha xz\\x^2+yz\end{array}$ & 
				$\begin{array}{c}\alpha xy+yx\\yz+\alpha^2 zy-x^2\\\alpha zx+xz\\y^2\\z^2\end{array}$&  
				$x$ & $\emptyset$ \\ \hline 
				$A_4$ & T$'_1$ &  
				$\begin{array}{c}xy-yx-y^2\\yz-zy-2xy\\zx-xz-yz\\x^2+yz\end{array}$ & 
				$\begin{array}{c}xy+yx-2zy\\yz+zy+zx-x^2\\zx+xz\\y^2-yx\\z^2\end{array}$&  
				$ \emptyset $ &$\emptyset$ \\ \hline
			\end{tabular*}
		\end{threeparttable}
	\end{table}
\end{center}

\begin{table}[htp]
	\begin{threeparttable}
		\small
		\centering
		\caption{$A \in \cA^c_{3,1}$ s.t. $C(A) \cong k_{-1} [u,v]/(u^2-1,v^2)$.}  \label{tab-cab}
		\begin{tabular*}{0.95\linewidth} {@{\extracolsep{\fill}}cccccc}
			\hline
			$A$ & $S$ & $F$ & $G$ & $RN(A^!)_1$ & $RZ(A^!)_1$ \\ \hline \hline
			$B_1$ & P$_1$ &  
			$\begin{array}{c}xy-yx\\yz-zy\\zx-xz\\x^2+y^2\end{array}$ & 
			$\begin{array}{c}xy+yx\\yz+zy\\zx+xz\\x^2-y^2\\z^2\end{array}$ &  
			$x$ & $\emptyset$ \\ \hline
			$B_2$ & P$_1$ &  
			$\begin{array}{c}xy-yx\\yz+zy\\zx+xz\\x^2+y^2\end{array}$ & 
			$\begin{array}{c}xy+yx\\yz-zy\\zx-xz\\x^2-y^2\\z^2\end{array}$ &  
			$x$ &  $\emptyset$ \\ \hline
			$B_3$ & P$_1$ &  
			$\begin{array}{c}xy+yx\\yz+zy\\zx-xz\\x^2+y^2\end{array}$ & 
			$\begin{array}{c}xy-yx\\yz-zy\\zx+xz\\x^2-y^2\\z^2\end{array}$ &  
			$y$ &  $y$ \\ \hline
			$\begin{array}{c}
				B_4(\alpha)\\
				(\alpha\neq0,\pm1)
			\end{array}$ & S$_1$ & 
			$\begin{array}{c}xy-yx\\yz-\alpha zy\\zx-\alpha xz\\xy\end{array}$ & 
			$\begin{array}{c}\alpha yz+zy\\\alpha zx+xz\\x^2\\y^2\\z^2\end{array}$ &  
			$\emptyset$ & $\emptyset$ \\ \hline 
			$B_5$ & S$_2$ & 
			$\begin{array}{c}x^2-y^2\\yz+zx\\zy-xz\\x^2\end{array}$ & 
			$\begin{array}{c}xy\\yx\\yz-zx\\zy+xz\\z^2\end{array}$ &  
			$x -\sqrt{-1}y$ & $\emptyset$ \\ \hline
			$B_6$ & T$_1$ & 
			$\begin{array}{c}xy-yx\\yz-zy-y^2+xy\\zx-xz+x^2-yx\\xy\end{array}$ & 
			$\begin{array}{c}yz+zy\\zx+xz\\x^2-zx\\y^2+yz\\z^2\end{array}$ &  
			$\begin{array}{c}x- y  \end{array}$ & $\emptyset$ \\ \hline
			$B_7$ & WL$_2$ & 
			$\begin{array}{c}xy-yx\\yz-zy-y^2\\zx-xz-yx\\xy\end{array}$ & 
			$\begin{array}{c}yz+zy\\zx+xz\\x^2\\y^2+yz\\z^2\end{array}$ &  
			$\emptyset$ & $\emptyset$ \\ \hline
		\end{tabular*}
	\end{threeparttable}
\end{table}

\begin{table}[htp]
	\begin{threeparttable}
		\small
		\centering
		\caption{$A \in \cA^c_{3,1}$ s.t. $C(A) \cong k_{-1}[u,v]/(u^2+uv,v^2)$.}  \label{tab-cac}
		\begin{tabular*}{0.95\linewidth} {@{\extracolsep{\fill}}cccccc}
			\hline
			$A$ & $S$ & $F$ & $G$ & $RN(A^!)_1$ & $RZ(A^!)_1$ \\ \hline \hline
			$C_1$ & T$_1$ &  
			$\begin{array}{c}xy-yx\\yz-zy+y^2+xy\\zx-xz\\x^2\end{array}$ & 
			$\begin{array}{c}xy+yx-yz\\yz+zy\\zx+xz\\y^2-yz\\z^2\end{array}$ &  
			$\begin{array}{c}-2x+y\end{array}$ & $\emptyset$ \\ \hline
			$\begin{array}{c}
				C_2(\alpha)\\
				(\alpha\in k)
			\end{array}$ & T$_2$ &  
			$\begin{array}{c}xy+yx\\yz-zy-x^2-y^2+\alpha xy\\zx+xz-\alpha x^2\\x^2\end{array}$ & 
			$\begin{array}{c}xy-yx-\alpha yz\\yz+zy\\zx-xz\\y^2+yz\\z^2\end{array}$ &  
			$x+\dfrac{\alpha}{2}z$ &  $x+\dfrac{\alpha}{2}z$ \\ \hline
			$C_3$ & T$'_1$ & 
			$\begin{array}{c}xy-yx\\yz-zy-y^2+zx\\zx-xz\\x^2\end{array}$ & 
			$\begin{array}{c}xy+yx\\yz+zy\\zx+xz-yz\\y^2+yz\\z^2\end{array}$ &  
			$\emptyset$ & $\emptyset$ \\ \hline 
			$C_4$ & WL$_1$ & 
			$\begin{array}{c}xy+yx\\yz-zy-y^2\\zx+xz\\x^2\end{array}$ & 
			$\begin{array}{c}xy-yx\\yz+zy\\zx-xz\\y^2+yz\\z^2\end{array}$ &  
			$x$ & $x$ \\ \hline
			$C_5$ & WL$_2$ & 
			$\begin{array}{c}xy-yx\\yz-zy-y^2\\zx-xz\\x^2\end{array}$ & 
			$\begin{array}{c}xy+yx\\yz+zy\\zx+xz\\y^2+yz\\z^2\end{array}$ &  
			$\begin{array}{c}x\end{array}$ & $\emptyset$ \\ \hline
		\end{tabular*}
		\begin{tablenotes}
			\linespread{1}
			\item\hspace*{-\fontdimen2\font} $C_2(\alpha) \cong C_4$. 
		\end{tablenotes}
	\end{threeparttable}
\end{table}

\begin{table}[htp]
	\begin{threeparttable}
		\small
		\centering
		\caption{$A \in \cA^c_{3,1}$ s.t. $C(A) \cong k_{-1}[u,v]/(u^2,v^2)$.}  \label{tab-cad}
		\begin{tabular*}{0.95\linewidth} {@{\extracolsep{\fill}}cccccc}
			\hline
			$A$ & $S$ & $F$ & $G$ & $RN(A^!)_1$ & $RZ(A^!)_1$ \\ \hline \hline
			$D_1$ & P$_1$ &  
			$\begin{array}{c}xy-yx\\yz-zy\\zx-xz\\x^2\end{array}$ & 
			$\begin{array}{c}xy+yx\\yz+zy\\zx+xz\\y^2\\z^2\end{array}$ &  
			$\begin{array}{c}x \end{array}$ & $\emptyset$ \\ \hline
			$D_2$ & P$_1$ &  
			$\begin{array}{c}xy-yx\\yz+zy\\zx+xz\\x^2\end{array}$ & 
			$\begin{array}{c}xy+yx\\yz-zy\\zx-xz\\y^2\\z^2\end{array}$ &  
			$\begin{array}{c}x  \end{array}$ & $\emptyset$  \\ \hline
			$D_3$ & P$_1$ & 
			$\begin{array}{c}xy+yx\\yz-zy\\zx+xz\\x^2\end{array}$ & 
			$\begin{array}{c}xy-yx\\yz+zy\\zx-xz\\y^2\\z^2\end{array}$ &   
			$x$ &  $x$ \\ \hline
			$D_4$ & WL$_1$ & 
			$\begin{array}{c}xy+yx\\yz+zy-xy\\zx-xz\\x^2\end{array}$ & 
			$\begin{array}{c}xy-yx+yz\\yz-zy\\zx+xz\\y^2\\z^2\end{array}$ &  
			$\emptyset$ &$\emptyset$ \\ \hline 
			$D_5$ & WL$_2$ & 
			$\begin{array}{c}xy-yx\\yz-zy+xy\\zx-xz\\x^2\end{array}$ & 
			$\begin{array}{c}xy+yx-yz\\yz+zy\\zx+xz\\y^2\\z^2\end{array}$ &  
			$\emptyset$ & $\emptyset$\\ \hline
			$D_6$ & TL$_1$ & 
			$\begin{array}{c}xy-yx\\yz-zy-x^2\\zx-xz\\x^2\end{array}$ & 
			$\begin{array}{c}xy+yx\\yz+zy\\zx+xz\\y^2\\z^2\end{array}$ &  
			$\begin{array}{c}x \end{array}$ & $\emptyset$ \\ \hline
			$D_7$ & TL$_1$ & 
			$\begin{array}{c}xy+yx\\yz-zy+x^2\\zx+xz\\x^2\end{array}$ & 
			$\begin{array}{c}xy-yx\\yz+zy\\zx-xz\\y^2\\z^2\end{array}$ &  
			$x$ & $x$\\ \hline
		\end{tabular*}
		\begin{tablenotes}
			\item\hspace*{-\fontdimen2\font} $D_1 = D_6$, $D_3 = D_7$.
		\end{tablenotes}
	\end{threeparttable}
\end{table}

\begin{table}[ht]
	\begin{threeparttable}
		\small
		\centering
		\caption{$A \in \cA^c_{3,1}$ s.t. $C(A) \cong k_{-\lambda}[u,v]/(u^2,v^2)$ where $\lambda\neq0,\pm1$.}  \label{tab-cae}
		\begin{tabular*}{0.95\linewidth} {@{\extracolsep{\fill}}cccccc}
			\hline
			$A$ & $S$ & $F$ & $G$ & $RN(A^!)_1$ & $RZ(A^!)_1$ \\ \hline \hline
			$E_1(\lambda)$ & S$_1$ & 
			$\begin{array}{c}xy-yx\\yz-\lambda zy\\zx-xz\\x^2\end{array}$ & 
			$\begin{array}{c}xy+yx\\\lambda yz+zy\\zx+xz\\y^2\\z^2\end{array}$ &  
			$x$ & $\emptyset$  \\ \hline
			$E_2(\lambda)$ & S$_1$ &  
			$\begin{array}{c}xy-yx\\yz+\lambda zy\\zx+xz\\x^2\end{array}$ & 
			$\begin{array}{c}xy+yx\\\lambda yz-zy\\zx-xz\\y^2\\z^2\end{array}$ &  
			$x$ &  $\emptyset$ \\ \hline
			$E_3(\lambda)$ & S$_1$ & 
			$\begin{array}{c}xy+yx\\yz-\lambda zy\\zx+xz\\x^2\end{array}$ & 
			$\begin{array}{c}xy-yx\\\lambda yz+zy\\zx-xz\\y^2\\z^2\end{array}$ &  
			$x$ &  $x$ \\ \hline
			$E_4(\lambda)$ & S$_1$ & 
			$\begin{array}{c}xy+yx\\yz+\lambda zy\\zx-xz\\x^2\end{array}$ & 
			$\begin{array}{c}xy-yx\\\lambda yz-zy\\zx+xz\\y^2\\z^2\end{array}$ &  
			$x$ & $\emptyset$\\ \hline 
			$E_5(\lambda)$ & S$'_1$ & 
			$\begin{array}{c}xy+yx\\yz-\lambda zy+x^2\\zx+xz\\x^2\end{array}$ & 
			$\begin{array}{c}xy-yx\\\lambda yz+zy\\zx-xz\\y^2\\z^2\end{array}$ &  
			$x$ & $x$ \\ \hline
			$E_6(\lambda)$ & S$'_1$ & 
			$\begin{array}{c}xy-yx\\yz-\lambda zy+x^2\\zx-xz\\x^2\end{array}$ & 
			$\begin{array}{c}xy+yx\\\lambda yz+zy\\zx+xz\\y^2\\z^2\end{array}$ &  
			$x$ &$\emptyset$ \\ \hline
		\end{tabular*}
		\begin{tablenotes}
			\item\hspace*{-\fontdimen2\font} $E_1(\lambda) = E_6(\lambda)$, $E_3(\lambda) = E_5(\lambda)$,  $E_1(\lambda) \cong E_1(\lambda^{-1})$, $E_3(\lambda) \cong E_3(\lambda^{-1})$, $E_2(\lambda) \cong E_4(\lambda^{-1})$. 
		\end{tablenotes}
	\end{threeparttable}
\end{table}

\begin{table}[ht]
	\begin{threeparttable}
		\small
		\centering
		\caption{$A \in \cA^c_{3,1}$ s.t. $C(A) \cong k[u,v] /(u^2,v^2)$.}  \label{tab-caf}
		\begin{tabular*}{0.95\linewidth} {@{\extracolsep{\fill}}cccccc}
			\hline
			$A$ & $S$ & $F$ & $G$ & $RN(A^!)_1$ & $RZ(A^!)_1$ \\ \hline \hline
			$F_1$ & S$_1$ &
			$\begin{array}{c}xy+yx\\yz+zy\\zx+xz\\x^2\end{array}$ & 
			$\begin{array}{c}xy-yx\\yz-zy\\zx-xz\\y^2\\z^2\end{array}$ &  
			$x$& $x$ \\ \hline
			$F_2$ & S$_1$ &
			$\begin{array}{c}xy+yx\\yz-zy\\zx-xz\\x^2\end{array}$ & 
			$\begin{array}{c}xy-yx\\yz+zy\\zx+xz\\y^2\\z^2\end{array}$ &  
			$x$ &  $\emptyset$  \\ \hline
			$F_3$ & S$_1$ &
			$\begin{array}{c}xy-yx\\yz+zy\\zx-xz\\x^2\end{array}$ & 
			$\begin{array}{c}xy+yx\\yz-zy\\zx+xz\\y^2\\z^2\end{array}$ &  
			$x$ & $\emptyset$  \\ \hline
			$F_4$ & S$_2$ & 
			$\begin{array}{c}xy-zx\\yx-xz\\y^2+z^2\\x^2\end{array}$ & 
			$\begin{array}{c}xy+zx\\yx+xz\\y^2-z^2\\yz\\zy\end{array}$ &  
			$x$ &$\emptyset$ \\ \hline 
			$F_5$ & S$'_1$ & 
			$\begin{array}{c}xy+yx\\yz+zy+x^2\\zx+xz\\x^2\end{array}$ & 
			$\begin{array}{c}xy-yx\\yz-zy\\zx-xz\\y^2\\z^2\end{array}$ &  
			$x$& $x$ \\ \hline
			$F_6$ & S$'_1$ & 
			$\begin{array}{c}xy-yx\\yz+zy+x^2\\zx-xz\\x^2\end{array}$ & 
			$\begin{array}{c}xy+yx\\yz-zy\\zx+xz\\y^2\\z^2\end{array}$ &  
			$x$ & $\emptyset$\\ \hline
			$F_7$ & S$'_2$ & 
			$\begin{array}{c}xy-zx\\yx-xz\\x^2+y^2+z^2\\x^2\end{array}$ & 
			$\begin{array}{c}xy+zx\\yx+xz\\y^2-z^2\\yz\\zy\end{array}$ &  
			$x$ & $\emptyset$\\ \hline
		\end{tabular*}
		\begin{tablenotes}
			\item\hspace*{-\fontdimen2\font} $F_1 = F_5$, $ F_3= F_6$, $F_4 = F_7$.
		\end{tablenotes}
	\end{threeparttable}
\end{table}

\begin{table}[ht]
	\begin{threeparttable}
		\small
		\centering
		\caption{$A \in \cA^c_{3,1}$ s.t. $C(A) \cong (k[u]/(u^2))^2$.}  \label{tab-cag}
		\begin{tabular*}{0.95\linewidth} {@{\extracolsep{\fill}}cccccc}
			\hline
			$A$ & $S$ & $F$ & $G$ & $RN(A^!)_1$ & $RZ(A^!)_1$ \\ \hline \hline
			$G_1$ & S$_1$ &
			$\begin{array}{c}xy+yx\\yz+zy\\zx+xz\\x^2+y^2\end{array}$ & 
			$\begin{array}{c}xy-yx\\yz-zy\\zx-xz\\x^2-y^2\\z^2\end{array}$ &  
			$x$ & $x$ \\ \hline
			$G_2$ & S$_1$ &
			$\begin{array}{c}xy+yx\\yz-zy\\zx-xz\\x^2+y^2\end{array}$ & 
			$\begin{array}{c}xy-yx\\yz+zy\\zx+xz\\x^2-y^2\\z^2\end{array}$ &  
			$\begin{array}{c}x \end{array}$ &  $\emptyset$  \\ \hline
			$G_3$ & S$_1$ &
			$\begin{array}{c}xy-yx\\yz+zy\\zx-xz\\x^2+y^2\end{array}$ & 
			$\begin{array}{c}xy+yx\\yz-zy\\zx+xz\\x^2-y^2\\z^2\end{array}$ &  
			$\begin{array}{c}x \end{array}$ &  $\emptyset$ \\ \hline
			$G_4$ & S$_2$ & 
			$\begin{array}{c}x^2+y^2\\zx-yz\\xz-zy\\xy+yx\end{array}$ & 
			$\begin{array}{c}x^2-y^2\\zx+yz\\xz+zy\\xy-yx\\z^2\end{array}$ &  
			$\emptyset$ &$\emptyset$ \\ \hline 
			$G_5$ & S$'_1$ & 
			$\begin{array}{c}xy+yx\\yz+zy+x^2\\zx+xz\\x^2+y^2\end{array}$ & 
			$\begin{array}{c}xy-yx\\yz-zy\\zx-xz\\x^2-y^2-yz\\z^2\end{array}$ &  
			$y$ & $y$ \\ \hline
			$G_6$ & S$'_1$ & 
			$\begin{array}{c}xy-yx\\yz+zy+x^2\\zx-xz\\x^2+y^2\end{array}$ & 
			$\begin{array}{c}xy+yx\\yz-zy\\zx+xz\\x^2-y^2-yz\\z^2\end{array}$ &  
			$\begin{array}{c}y\end{array}$ &$\emptyset$ \\ \hline
			$\begin{array}{c}
				G_7(\alpha) \\
				( \alpha \neq 0)
			\end{array}$ & NC$_1$ & 
				$\begin{array}{c}xy+yx\\yz+zy+ x^2\\zx+xz+\alpha y^2\\x^2+y^2\end{array}$ & 
			$\begin{array}{c}xy-yx\\yz-zy\\zx-xz\\x^2-y^2-yz+\alpha zx\\z^2\end{array}$ &  
			$x$ & $x$\\ \hline
			$G_8$ & ${\rm NC_2}$ & 
			$\begin{array}{c}xy-yz-zx\\yx-xz-zy\\x^2+y^2\\xy+yx\end{array}$ & 
			$\begin{array}{c}xy-yx+zx-xz\\yz-zx\\zy-xz\\x^2-y^2\\z^2\end{array}$ &  
			$\emptyset$ &$\emptyset$ \\ \hline
		\end{tabular*}
		\begin{tablenotes}
			\item\hspace*{-\fontdimen2\font} $G_1 \cong G_5 \cong G_7(\alpha)$.
		\end{tablenotes}
	\end{threeparttable}
\end{table}

\begin{table}[ht]
	\begin{threeparttable}
		\small
		\centering
		\caption{$A \in \cA^c_{3,1}$ s.t. $C(A) \cong k[u]/(u^4)$.}  \label{tab-cah}
		\begin{tabular*}{0.95\linewidth} {@{\extracolsep{\fill}}cccccc}
			\hline
			$A$ & $S$ & $F$ & $G$ & $RN(A^!)_1$ & $RZ(A^!)_1$ \\ \hline \hline
			$H_1$ & S$'_1$ &
			$\begin{array}{c}xy+yx\\yz+zy\\zx+xz+y^2\\x^2\end{array}$ & 
			$\begin{array}{c}xy-yx\\yz-zy\\zx-xz\\y^2-zx\\z^2\end{array}$ &  
			$x$ & $x$ \\ \hline
			$H_2$ & S$'_1$ &
			$\begin{array}{c}xy-yx\\yz-zy\\zx+xz+y^2\\x^2\end{array}$ & 
			$\begin{array}{c}xy+yx\\yz+zy\\zx-xz\\y^2-zx\\z^2\end{array}$ &  
			$x$ & $\emptyset$  \\ \hline
			$H_3$ & NC$_1$ & 
			$\begin{array}{c}xy+yx\\yz+zy+x^2\\zx+xz+y^2\\x^2\end{array}$ & 
			$\begin{array}{c}xy-yx\\yz-zy\\zx-xz\\y^2-zx\\z^2\end{array}$ &  
			$x$ & $x$\\ \hline 
		\end{tabular*}
		\begin{tablenotes}
			\linespread{1}
			\item\hspace*{-\fontdimen2\font} $H_1 = H_3$.
		\end{tablenotes}
	\end{threeparttable}
\end{table}

\begin{table}[ht]
	\begin{threeparttable}
		\small 
		\centering
		\caption{$A \in \cA^c_{3,1}$ such that $C(A) \cong k^4$.}  \label{tab-cai}
		\begin{tabular*}{0.95\linewidth}{@{\extracolsep{\fill}}cccccc}
			\hline
			$A$ & $S$ & $F$ & $G$ & $RN(A^!)_1$ & $RZ(A^!)_1$ \\ \hline \hline
			$I_1$ & S$_1$ &
			$\begin{array}{c}xy+yx\\yz+zy\\zx+xz\\x^2+y^2+z^2\end{array}$ & 
			$\begin{array}{c}xy-yx\\yz-zy\\zx-xz\\x^2-y^2\\x^2-z^2\end{array}$ & 
			$x$ & $x$ \\ \hline
			$I_2$ & S$_1$ &
			$\begin{array}{c}xy+yx\\yz-zy\\zx-xz\\x^2+y^2+z^2\end{array}$ & 
			$\begin{array}{c}xy-yx\\yz+zy\\zx+xz\\x^2-y^2\\x^2-z^2\end{array}$ & 
			$x$ & $\emptyset$   \\ \hline
			$I_3$ & S$_2$ &
			$\begin{array}{c}x^2+y^2\\yz-zx\\xz-zy\\xy+yx+z^2\end{array}$ & 
			$\begin{array}{c}xy-yx\\yz+zx\\xz+zy\\x^2-y^2\\z^2-xy\end{array}$ & 
			$z$ &$\emptyset$ \\ \hline 
			$I_4$ & S$'_1$ & 
			$\begin{array}{c}xy+yx+z^2\\yz+zy\\zx+xz\\x^2+y^2\end{array}$ & 
			$\begin{array}{c}xy-yx\\yz-zy\\xz-zx\\x^2-y^2\\z^2-xy\end{array}$&  $y$ & $y$ \\ \hline
			$I_5$ & S$'_1$ & 
			$\begin{array}{c}xy+yx+z^2\\yz-zy\\zx-xz\\x^2+y^2\end{array}$ & 
			$\begin{array}{c}xy-yx\\yz+zy\\xz+zx\\x^2-y^2\\z^2-xy\end{array}$  & 
			$x$ &$\emptyset$ \\ \hline
			$\begin{array}{c}
					I_6(\alpha)\\
					(\alpha\neq0,\pm1)
			\end{array}$ & S$'_1$ & 
			$\begin{array}{c}xy+yx\\yz+zy+\alpha x^2\\zx+xz\\x^2+y^2+z^2\end{array}$ & 
			$\begin{array}{c}xy-yx\\yz-zy\\zx-xz\\x^2-z^2-  \alpha yz\\y^2-z^2\end{array}$ & 
			$z$ & $z$  \\ \hline
			$\begin{array}{c}
					I_7(\alpha)\\
					(\alpha\neq0,\pm1)
			\end{array}$ & S$'_1$ & 
			$\begin{array}{c}xy-yx\\yz+zy+ \alpha x^2\\zx-xz\\x^2+y^2+z^2\end{array}$ & 
			$\begin{array}{c}xy+yx\\yz-zy\\zx+xz\\x^2-z^2- \alpha yz\\y^2-z^2\end{array}$ & 
			$y$ & $\emptyset$ \\ \hline
			$\begin{array}{c}
					I_8(\alpha)\\
					(\alpha\neq \pm1)
			\end{array}$ & S$'_2$ & 
			$\begin{array}{c}xy-zx\\y^2+z^2+x^2\\yx-xz\\ \alpha x^2+yz+zy\end{array}$ & 
			$\begin{array}{c}xy+zx\\yz-zy\\yx+xz\\x^2-z^2- \alpha yz\\y^2-z^2\end{array}$ & 
			$x$ & $\emptyset$ \\ \hline
			
		\end{tabular*}
	\end{threeparttable}
\end{table}
\begin{table}[ht]
	\begin{threeparttable}
		\small
		\centering
		\caption*{} 
		\begin{tabular*}{0.95\linewidth}{@{\extracolsep{\fill}}cccccc}
			\hline
			$A$ & $S$ & $F$ & $G$ & $RN(A^!)_1$ & $RZ(A^!)_1$ \\ \hline \hline
			$I_9$ & NC$_1$ & 
			$\begin{array}{c}xy+yx+z^2\\yz+zy\\zx+xz+y^2\\x^2\end{array}$ & 
			$\begin{array}{c}xy-yx\\yz-zy\\xz-zx\\y^2-zx\\z^2-xy\end{array}$ & 
			$x$ & $x$ \\ \hline
			$\begin{array}{c}
					I_{10}(\alpha)\\
					(\alpha\neq 0, \pm\frac{16\sqrt{-3}}{9})
			\end{array}$ & NC$_1$ & 
			$\begin{array}{c}xy+yx+\alpha z^2\\yz+zy\\zx+xz+y^2\\x^2+y^2\end{array}$ & 
			$\begin{array}{c}xy-yx\\yz-zy\\xz-zx\\x^2-y^2+zx\\z^2-\alpha xy\end{array}$& 
			$x$ & $x$ \\ \hline
			$\begin{array}{c}
					I_{11}(\alpha, \beta)\\
					(\delta(\alpha,\beta)\neq0)
			\end{array}$ & NC$_1$ & 
			$\begin{array}{c}xy+yx\\yz+zy+\tfrac{\sqrt{\beta}}{\alpha}x^2\\zx+xz+\tfrac{\sqrt{\alpha}}{\beta}y^2\\x^2+y^2+z^2\end{array}$ & 
			$\begin{array}{c}xy-yx\\yz-zy\\zx-xz\\x^2-z^2- \tfrac{\sqrt{\beta}}{\alpha} yz\\y^2-z^2-\tfrac{\sqrt{\alpha}}{\beta}zx\end{array}$ & 
			$z$& $z$\\ \hline
			$I_{12}$ & NC$_2$ & 
			$\begin{array}{c}xy+ zx- yz\\yx+ xz-zy\\y^2+z^2\\x^2\end{array}$ & 
			$\begin{array}{c}xy-zx\\yz+zx\\zy+xz\\yx+zy\\y^2-z^2\end{array}$& 
			$x$ & $\emptyset$\\ \hline
			$\begin{array}{c}
					I_{13}(\alpha)\\
					(\alpha^2\neq0,-1,3)
			\end{array}$ & NC$_2$ & 
			$\begin{array}{c}2xy-\alpha zx-\alpha yz\\2yx-\alpha xz-\alpha zy\\x^2+y^2\\xy+yx+z^2\end{array}$ & 
			$\begin{array}{c}  \alpha xy-\alpha yx+2zx-2xz\\yz-zx\\zy-xz\\x^2-y^2\\ \alpha z^2-\alpha xy-2zx\end{array}$ & 
			$z$ & $\emptyset$ \\ \hline
			$\begin{array}{c}
					I_{14}(\alpha,a,b)\\
					(\alpha^3\neq0,1,-8, \\
					\#(E_A)=6)
			\end{array}$ & EC-A & 
			$\begin{array}{c}xy+yx+\alpha z^2\\yz+zy+\alpha x^2\\zx+xz+\alpha y^2\\ax^2+by^2+cz^2\end{array}$ & 
			& *
			& * \\ \hline
			$\begin{array}{c}
					I_{15}(\alpha,a,b)\\
					(\alpha^3\neq0,1,-8, \\
					\#(E_A)=6)
			\end{array}$ & EC-B & 
			$\begin{array}{c}xz+zy+\alpha yx\\zx+yz+\alpha xy\\y^2+x^2+\alpha z^2\\a(xy+yx)+bz^2 
			\end{array}$ & 
			& 
			 $\begin{array}{c}
			 	x+y \\ (b=-a\alpha ), \\
			 	z \\ ({\rm otherwise}) \\
			 \end{array}$ & $\emptyset$\\ \hline
		\end{tabular*}
		\begin{tablenotes}
			\item\hspace*{-\fontdimen2\font} $\delta(\alpha, \beta)=-27-256\alpha^3+288\alpha \beta+256\alpha^2\beta^2-256\beta^3$.
			\item\hspace*{-\fontdimen2\font} $I_1\cong I_4\cong I_6(\alpha)\cong I_9\cong I_{10}(\alpha)\cong I_{11}(\alpha, \beta)\cong I_{14}(\alpha,a,b)$, so $RZ((I_{14}(\alpha,a,b))^!)_1 \neq \emptyset$. 
		\end{tablenotes}
	\end{threeparttable}
\end{table}

\begin{table}[ht]
	\begin{threeparttable}
		\small
		\centering
		\caption{$A \in \cA^c_{3,1}$ s.t. $C(A) \cong k[u]/(u^2)\times k^2$.}  \label{tab-caj}
		\begin{tabular*}{0.95\linewidth} {@{\extracolsep{\fill}}cccccc}
			\hline
			$A$ & $S$ & $F$ & $G$ & $RN(A^!)_1$ & $RZ(A^!)_1$ \\ \hline \hline
			$J_1$ & S$'_1$ & 
			$\begin{array}{c}xy+yx\\yz+zy+x^2\\zx+xz\\x^2+y^2+z^2\end{array}$ & 
			$\begin{array}{c}xy-yx\\yz-zy\\zx-xz\\x^2-z^2-yz\\y^2-z^2\end{array}$ & 
			$z$& $z$ \\ \hline
			$ J_2$ & S$'_1$ &
			$\begin{array}{c}xy-yx\\yz+zy+x^2\\zx-xz\\x^2+y^2+z^2\end{array}$ & 
			$\begin{array}{c}xy+yx\\yz-zy\\zx+xz\\x^2-z^2-yz\\y^2-z^2\end{array}$ & 
			$y$ &$\emptyset$ \\ \hline
			$J_3$ & S$'_2$ &
			$\begin{array}{c}xy-zx\\x^2+y^2+z^2\\yx-xz\\x^2+yz+zy\end{array}$ & 
			$\begin{array}{c} xy+zx\\yz-zy\\yx+xz\\x^2-z^2-yz\\y^2-z^2\end{array}$ & 
			$x+y-z$ &$\emptyset$ \\ \hline
					$J_4$ & S$'_2$ &
			$\begin{array}{c}xy-zx\\x^2+y^2+z^2\\yx-xz\\x^2-yz-zy\end{array}$ & 
			$\begin{array}{c} xy+zx\\yz-zy\\yx+xz\\x^2-z^2+yz\\y^2-z^2\end{array}$ & 
			$\emptyset$ &$\emptyset$ \\ \hline
			${\begin{array}{c}
					J_5(\alpha)\\
					(\alpha=\pm\frac{16\sqrt{-3}}{9})
			\end{array}}$ & NC$_1$ &
			{$\begin{array}{c}xy+yx+\alpha z^2\\yz+zy\\zx+xz+y^2\\x^2+y^2\end{array}$} & 
			$\begin{array}{c}xy-yx\\yz-zy\\xz-zx\\x^2-y^2+zx\\z^2-\alpha xy\end{array}$& 
			$x$ & $x$ \\ \hline
			${\begin{array}{c}
					J_6(\alpha,\beta)\\
					(\textnormal{condition }\diamond)
			\end{array}} $ & NC$_1$ &
			$\begin{array}{c}xy+yx\\yz+zy+\tfrac{\sqrt{\beta}}{\alpha} x^2\\zx+xz+\tfrac{\sqrt{\alpha}}{\beta} y^2\\x^2+y^2+z^2\end{array}$ & 
			${\begin{array}{c}xy-yx\\yz-zy\\zx-xz\\ x^2-z^2-\tfrac{\sqrt{\beta}}{\alpha}yz\\y^2-z^2-\tfrac{\sqrt{\alpha}}{\beta}zx\end{array}}$ & 
			$z$ &$z$ \\ \hline
			$J_7$ & NC$_2$ & 
			${\begin{array}{c}2xy-zx+yz\\2yx-xz+zy\\x^2+y^2\\xy+yx+z^2\end{array}}$ & 
			{$\begin{array}{c}xy-yx+2zx-2xz\\yz+zx\\zy+xz\\x^2-y^2\\z^2-xy-2zx\end{array}$} & 
			$z$ & $\emptyset$\\ \hline
			${\begin{array}{c}
					J_8(\alpha,a,b)\\
					(\alpha^3\neq0,1,-8, \\
					\#(E_A)=4)
			\end{array}}$ & EC-A &
			$\begin{array}{c}xy+yx+\alpha z^2\\yz+zy+\alpha x^2\\zx+xz+\alpha y^2\\ax^2+by^2+cz^2\end{array}$ & 
			& *
			& * \\ \hline
			${\begin{array}{c}
					J_9(\alpha,a,b)\\
					(\alpha^3\neq0,1,-8, \\
					\#(E_A)=4)
			\end{array}}$ & EC-B & 
			$\begin{array}{c}xz+zy+\alpha yx\\zx+yz+\alpha xy\\y^2+x^2+\alpha z^2\\a(xy+yx)+bz^2\end{array}$ &
				&  	{$\begin{aligned}
					z\ &(b^2 \neq a^2\alpha^2) \\
					x+y\ &(b=-a\alpha)\\
					\emptyset\ &(b=a\alpha) 
				\end{aligned}$}
			& $\emptyset$\\ \hline
		\end{tabular*}
		\begin{tablenotes}
			\item\hspace*{-\fontdimen2\font} Condition $\diamond$: $\delta(\alpha,\beta) = -27-256\alpha^3+288\alpha \beta+256\alpha^2\beta^2-256\beta^3 = 0$, and $\alpha\neq\frac{3\mu}{4},\beta\neq\frac{3\mu^2}{4}$ where $\mu$ is a 3rd root of unit.
			\item\hspace*{-\fontdimen2\font} $J_1\cong J_5(\alpha) \cong  J_6(\alpha, \beta)\cong J_8(\alpha,a,b)$, so $RZ((J_8(\alpha,a,b))^!)_1 \neq \emptyset$. 
		\end{tablenotes}
	\end{threeparttable}
\end{table}

\begin{table}[ht]
	\begin{threeparttable}
		\small
		\centering
		\caption{$A \in \cA^c_{3,1}$ s.t. $C(A) \cong k[u]/(u^3)\times k$.}  \label{tab-cak}
		\begin{tabular*}{0.95\linewidth} {@{\extracolsep{\fill}}cccccc}
			\hline
			$A$ & $S$ & $F$ & $G$ & $RN(A^!)_1$ & $RZ(A^!)_1$ \\ \hline \hline
			$K_1$ & NC$_1$ & 
			$\begin{array}{c}xy+yx\\yz+zy+\sqrt{\tfrac{4}{3}}x^2\\zx+xz+\sqrt{\tfrac{4}{3}}y^2\\x^2+y^2+z^2\end{array}$ & 
			${ \begin{array}{c}xy-yx\\yz-zy\\zx-xz\\x^2-z^2-\sqrt{\tfrac{4}{3}}yz\\y^2-z^2-\sqrt{\tfrac{4}{3}}zx\end{array}}$ & 
			$z$ &$z$ \\ \hline
			$K_2$ & NC$_2$ & 
			${ \begin{array}{c}-\sqrt{\tfrac{4}{3}}xy+zx+yz\\-\sqrt{\tfrac{4}{3}}yx+xz+zy\\x^2+y^2\\xy+yx+z^2\end{array}}$ & 
			${ \begin{array}{c}xy-yx+\sqrt{\tfrac{4}{3}}zx-\sqrt{\tfrac{4}{3}}xz\\yz-zx\\zy-xz\\x^2-y^2\\z^2-xy-\sqrt{\tfrac{4}{3}}zx\end{array}}$ & 
			$z$ & $\emptyset$\\ \hline
			${ \begin{array}{c}
					K_3(\alpha,a,b)\\
					(\alpha^3\neq0,1,-8, \\
					\#(E_A)=2)
			\end{array}}$ & EC-A&
			$\begin{array}{c}xy+yx+\alpha z^2\\yz+zy+\alpha x^2\\zx+xz+\alpha y^2\\ax^2+by^2+cz^2\end{array}$ & 
			& *
			& * \\ \hline
			${ \begin{array}{c}
					K_4(\alpha,a,b)\\
					(\alpha^3\neq0,1,-8, \\
					\#(E_A)=2)
			\end{array}}$ & EC-B & 
			$\begin{array}{c}xz+zy+\alpha yx\\zx+yz+\alpha xy\\x^2+y^2+\alpha z^2\\a(xy+yx)+bz^2\end{array}$ & 
			& {$z$} & $\emptyset$ \\ \hline
		\end{tabular*}
		\begin{tablenotes}
			\item\hspace*{-\fontdimen2\font} $K_1\cong K_3(\alpha,a,b)$, so $ RZ((K_3(\alpha,a,b))^!)_1 \neq \emptyset$.
		\end{tablenotes}
	\end{threeparttable}
\end{table}



\end{document}